\theoremstyle{definition}
\newtheorem*{class}{Main Theorem 1}
\newtheorem*{maintheorem}{Main Theorem 2}
\newtheorem*{ack}{Acknowledgements}
\newtheorem*{thm*}{Theorem}
\newtheorem{prop}{Proposition}[section]
\newtheorem{lemma}[prop]{Lemma}
\newtheorem{thm}[prop]{Theorem}
\newtheorem{definition}[prop]{Definition}
\newtheorem{notation}[prop]{Notation}
\newtheorem{corollary}[prop]{Corollary}
\newtheoremstyle{pourlesremarques}{\topsep}{\topsep}{\normalfont}{}{\bfseries}{.}{ }{}
\theoremstyle{pourlesremarques}
\newtheorem{rem}[prop]{Remark}
\newtheorem*{rem*}{Remark}
\newtheoremstyle{pourlesexemples}{\topsep}{\topsep}{\normalfont}{}{\bfseries}{.}{ }{}
\theoremstyle{pourlesexemples}
\newtheorem{ex}[prop]{Example}
\def\presuper#1#2%
\newcommand{\St}{{\operatorname{St}}}
\newcommand{\GJ}{{\operatorname{GJ}}}
\newcommand{\RS}{{\operatorname{RS}}}
\newcommand{\ab}{{\operatorname{ab}}}
\newcommand{\gen}{{\operatorname{gen}}}
\def\Rep{\operatorname{Rep}}
\def\Irr{\operatorname{Irr}}
\newcommand{\Indec}{\operatorname{Indec}}
\newcommand{\Nilp}{\operatorname{Nilp}}
\newcommand{\J}{\mathrm{J}}
\newcommand{\Ind}{\operatorname{Ind}}
\newcommand{\ind}{\operatorname{ind}}
\newcommand{\LLC}{\operatorname{LLC}}
\newcommand{\Hom}{\operatorname{Hom}}
\renewcommand{\subset}{\subseteq}
\newcommand{\WD}{\mathrm{D}}
\newcommand{\W}{\mathrm{W}}
\newcommand{\G}{\mathrm{G}}
\newcommand{\GL}{\operatorname{GL}}
\newcommand{\tr}{\operatorname{tr}}
\newcommand{\Sp}{\operatorname{Spec}}
\newcommand{\Ker}{\mathrm{Ker}}
\newcommand{\w}{\varpi}
\renewcommand{\l}{\lambda}
\newcommand{\La}{\mathrm{L}}
\newcommand{\V}{\mathrm{V}}
\newcommand{\C}{\mathrm{C}}
\newcommand{\CV}{\mathrm{CV}}
\newcommand{\Ze}{\mathrm{Z}}
\newcommand{\F}{\mathbb{F}}
\newcommand{\Q}{\mathbb{Q}}
\newcommand{\N}{\mathbb{N}}
\newcommand{\m}{\mathfrak{m}}
\newcommand{\M}{\mathrm{Mat}}
\newcommand{\Z}{\mathbb{Z}}
\newcommand{\e}{\epsilon}
\newcommand{\1}{\mathbf{1}}
\def\End{\operatorname{End}}
\def\Iso{\operatorname{Iso}}
\def\acyc{\operatorname{acyc}}
\def\cyc{\operatorname{cyc}}
\def\ss{\operatorname{ss}}
\def\sc{\operatorname{sc}}
\def\c{\operatorname{c}}
\def\b{\operatorname{b}}
\def\tnb{\operatorname{tnb}}
\def\GL{\operatorname{GL}}
\def\Id{\operatorname{Id}}
\def\\Hom{\operatorname{\Hom}}
\def\Irr{\operatorname{Irr}}
\def\Fr{\operatorname{Fr}}
\def\dim{\operatorname{dim}}
\def\St{\operatorname{St}}
\def\Ql{\overline{\mathbb{Q}_{\ell}}}
\def\Rep{\operatorname{Rep}}
\def\Zl{\overline{\mathbb{Z}_{\ell}}}
\def\Fl{\overline{\mathbb{F}_{\ell}}}
\def\Fr{\mathrm{Frob}}
\def\Gal{\operatorname{Gal}}
\def\Vect{\operatorname{Vect}}
\def\leq{\leqslant}
\def\geq{\geqslant}
\def\Rep{\operatorname{Rep}}
\newcommand{\D}{\Delta}
\def\presuper#1#2%
\DeclareRobustCommand{\rvdots}{%
  \vbox{
    \baselineskip4\p@\lineskiplimit\z@
    \kern-\p@
    \hbox{.}\hbox{.}\hbox{.}  \hbox{.}
  }}
\title{The~$\ell$-modular local Langlands correspondence and local factors}
\author{R. Kurinczuk\footnote{Robert Kurinczuk, Department of Mathematics, Imperial College London, SW7 2AZ. U.K. \newline Email:~robkurinczuk@gmail.com, Tel: +44(0)7921 221967}, N. Matringe\footnote{Nadir Matringe, Universit\'e de Poitiers, Laboratoire de Math\'ematiques et Applications,
T\'el\'eport 2 - BP 30179, Boulevard Marie et Pierre Curie, 86962, Futuroscope Chasseneuil Cedex. France. \newline Email:~Nadir.Matringe@math.univ-poitiers.fr}}
\date{\today}
\begin{document}
\maketitle

\begin{abstract}
\noindent   Let~$F$ be a non-archimedean local field of residual characteristic~$p$, $\ell\neq p$ be a prime number, and~$\W_F$ the Weil group of~$F$.   We classify the indecomposable~$\W_F$-semisimple Deligne~$\Fl$-representations in terms of the irreducible~$\Fl$-representations of~$\W_F$, and extend constructions of~Artin--Deligne~local factors to this setting.  Finally, we define a variant of the~$\ell$-modular local Langlands correspondence which satisfies a preservation of local factors statement for generic representations.
\end{abstract}
\section{Introduction}
Let~$F$ denote a non-archimedean local field of residual cardinality~$q$ and residual characteristic~$p$.  Let~$\ell$ be a prime different to~$p$. We consider only smooth representations of locally profinite groups, and call them 
$\ell$-adic when they act on $\Ql$-vector spaces, and $\ell$-modular when they act on $\Fl$-vector spaces.  Let $\W_F$ denote the Weil group of $F$. 

The local Langlands correspondence $\LLC$ for~$\GL_n(F)$ is a canonical bijection between the set of isomorphism classes of $\ell$-adic irreducible representations of~$\GL_n(F)$ and the set of isomorphism classes of~$\ell$-adic~$n$-dimensional~$\W_F$-semisimple Deligne representations, generalizing the Artin reciprocity map of local class field theory.  A nice property of LLC is that the Rankin--Selberg local factors of a pair of irreducible~$\Ql$-representations of~$\GL_n(F)$ and~$\GL_m(F)$, and the Artin--Deligne local factors of the corresponding tensor product of representations of~$\W_F$ are equal, and moreover this condition characterizes $\LLC$ completely.

In \cite{Vbook}, Vign\'eras develops the theory of modular representations of reductive~$p$-adic groups over~$\Fl$.  For general linear groups, this culminates in the~$\ell$-modular local Langlands correspondence \cite{Viginv}: a bijection between the set of isomorphism classes of~$\ell$-modular irreducible representations of~$\GL_n(F)$ and the set of isomorphism classes of $\ell$-modular~$n$-dimensional~$\W_F$-semisimple Deligne representations with nilpotent Deligne operator. Vign\'eras characterizes her correspondence by compatibility with LLC and congruences, although not naively (see Section \ref{section Vcorresp} where we recall Vign\'eras' results precisely).  

The theory of Rankin--Selberg local factors of Jacquet, Shalika and Piatetski-Shapiro has a natural extension at least to $\ell$-modular generic representations of~$\GL_n(F)$ and~$\GL_m(F)$, \cite{KM17}.  However, via the~$\ell$-modular local Langlands correspondence these factors do not agree with the factors of Artin--Deligne.  

In this work, we classify the~$\ell$-modular indecomposable~$\W_F$-semisimple Deligne representations, extend the definitions of Artin--Deligne factors to this setting, and define an~$\ell$-modular local Langlands correspondence where in the generic case, the Rankin--Selberg factors of representations on one side equal the Artin--Deligne factors of the corresponding representations on the other.   

We now recall our definitions and conventions, and state our results precisely.  Let~$R$ be an algebraically closed field of characteristic~$\ell$ different from~$p$. Let~$\nu:\W_F\rightarrow R^\times$ be the unique character trivial on the inertia subgroup of~$\W_F$ and sending a geometric Frobenius element to~$q^{-1}$.  Fix a nontrivial character~$\psi:F\rightarrow R^\times$.

A \emph{Deligne~$R$-representation} of~$\W_F$ is a pair~$(\Phi,U)$ with~$\Phi$ a finite dimensional smooth representation of~$\W_F$ and~$U\in\Hom_{\W_F}(\nu\Phi,\Phi)$ the associated Deligne operator.  We say a Deligne~$R$-representation~$(\Phi,U)$ of~$\W_F$ is \emph{nilpotent} if~$U$ is a nilpotent endomorphism of the vector space of~$\Phi$, and~\emph{$\W_F$-semisimple} if~$\Phi$ is a semisimple representation of~$\W_F$.  We write~$(\Phi,U)^\vee$ for the \emph{dual} of~$(\Phi,U)$, see Definition \ref{dualdef}.  All Deligne~$\Ql$-representations of~$\W_F$ are nilpotent, however as we shall soon see this is not the case for Deligne~$\Fl$-representations.  

We say that indecomposable Deligne~$R$-representations~$(\Phi,U),(\Phi',U')$ of~$\W_F$ are \emph{equivalent} if 
there exists~$\lambda\in R^\times$ such that~$(\Phi,\lambda U)\simeq(\Phi',U')$, and we extend this definition to general Deligne~$R$-representations of~$\W_F$ thanks to the Krull--Schmidt theorem (see Definition \ref{equivalence} and Remark \ref{remark Krull}).  

Before stating our classification results, we need to define two fundamental examples. For $\Phi$ an isomorphism class of an $R$-representation $\W_F$, we denote by $\Z_\Phi$ the set $\{\nu^k\Phi,\ k\in \Z\}$. 
\begin{enumerate}[{(1)}]
\item Let~$\Psi$ be an irreducible~$\Fl$-representation of~$\W_F$, then there is a minimal positive integer~$o(\Psi)$ such that we have an isomorphism~$\nu^{o(\Psi)}\Psi$ to~$\Psi$.  Let~$I$ be such an isomorphism.  Define a Deligne~$\Fl$-representation~$C(\Psi,I)=(\Phi(\Psi),C_I)$ of~$\W_F$  by
\begin{align*}
\Phi(\Psi)&=\bigoplus_{k=0}^{o(\Psi)-1} \nu^{k}\Psi\\
C_I(x_0,\dots,x_{o(\Psi)-1})&=( I(x_{o(\Psi)-1}),x_0,\dots,x_{o(\Psi)-2}).
\end{align*}
In Lemma \ref{irreducible}, we show that~$C(\Psi,I)$ is irreducible.  The equivalence class of~$C(\Psi,I)$ depends only on~$\Z_{\Psi}$, and we denote it by~$C(\Z_\Psi)$.  
\item For a positive integer~$r$, we define a~$\W_F$-semisimple nilpotent Deligne~$\Fl$-representation
 \[[0,r-1]=(\Phi(r),N(r))\] by
\begin{align*}
\Phi(r)&=\bigoplus_{k=0}^{r-1} \nu^k,\\
N(r)(x_0,\ldots,x_{r-1})&=(0,x_0,\ldots,x_{r-2}).\end{align*}
In Lemma \ref{segment indec}, we show that~$[0,r-1]$ is indecomposable.
\end{enumerate}

We prove the following classification:

\begin{class}[Theorems \ref{simple} and \ref{indec classif}]
Let~$(\Phi,U)$ be an indecomposable $\W_F$-semisimple Deligne~$\Fl$-representations.
\begin{enumerate}[{(1)}]
\item There exists an irreducible~$\Fl$-representation~$\Psi$ of~$\W_F$ and a positive integer~$r$ such that either:
\begin{enumerate}[{(a)}]
\item\label{casea}\[\displaystyle (\Phi,U)\simeq [0,r-1]\otimes \Psi;\]
\item\label{caseb} there exists an isomorphism~$I$ from~$\nu^{o(\Psi)}\Psi$ to~$\Psi$ such that
\[\displaystyle (\Phi,U)\simeq [0,r-1] \otimes C(\Psi,I).\]
\end{enumerate}
\item Under the notation of the last part,~$\pi$ is irreducible if and only if~$r=1$.  
\item In case (\ref{casea}) the isomorphism class of~$(\Phi,U)$ coincides with its equivalence class, and determines the isomorphism class of~$\Psi$ and~$r$ are uniquely. 
\item In case (\ref{caseb}) the equivalence class $[\Phi,U]$ of~$[0,r-1]\otimes C(\Psi,I)$ only depends on $r$ and $\Z_\Psi$ and we denote it by 
$[0,r-1]\otimes C(\Z_\Psi)$. The datum $(r,\Z_\Psi)$ is uniquely determined by $[\Phi,U]$.
\end{enumerate}
\end{class}

Let~$(\Phi,U)$ be a~$\W_F$-semisimple Deligne~$R$-representation of~$\W_F$.  For an indeterminant~$X$, we put
\[L(X,(\Phi,U))=\det((\Id-X\Phi(\Fr))\mid_{\Ker(U)^{I_F}})^{-1},\]
it is an Euler factor.  Using results of~\cite{Deligne73}, we associate to~$(\Phi,U)$ a local~$\gamma$-factor~$\gamma(X,(\Phi,U),\psi)$, which does not see the operator~$U$, and put
\[\e(X,(\Phi,U),\psi)=\gamma(X,(\Phi,U),\psi)\frac{L(X,(\Phi,U))}{L(q^{-1}X^{-1},(\Phi,U)^{\vee})}.\]
Of course, these definitions coincide with the standard ones when~$R=\Ql$, as well as in the case $R=\Fl$ when 
$(\Phi,U)$ is of the form $(\Phi,0)$, see Section \ref{Sectionlocalfactors}.  

Following the~$\Ql$-case, we wish to define Artin--Deligne local factors of pairs of~$\W_F$-semisimple Deligne~$\Fl$-representations of~$\W_F$ via their tensor product.  However, there are two immediate obstacles:
\begin{enumerate}[{(1)}]
\item The tensor product of semisimple representations of~$\W_F$ is not necessarily semisimple.  We give an explicit example in Example \ref{counter examples} (\ref{Part3}). 
\item The tensor product does not preserve equivalence, see Example \ref{non equiv tensor}.
\end{enumerate}
Both of these problems have natural solutions and we define the semisimple tensor product~$\otimes_{\ss}$ of~$\W_F$-semisimple Deligne~$\Fl$-representations of~$\W_F$ in Section \ref{tensor section}.  
%
%

Finally, we move on to our results on the~$\ell$-modular local Langlands correspondence.  We call an isomorphism class of~$\W_F$-semisimple nilpotent Deligne~$\Fl$-representations of~$\W_F$ a \emph{$\V$-parameter}, and denote by~$\V$ the bijection of~\cite{Viginv}:
\begin{center}
\begin{tikzpicture}[node distance=2cm, auto,  decoration={brace}]
	\node[
           text width=14em,
           minimum height=2em,
           text centered] (GL) {Irreducible $\Fl$-representations\\ of~$\GL_n(F)$ up to isomorphism};
 	\node[right=of GL,
           text width=13.5em,
           minimum height=2em,
           text centered] (Gal) {$\V$-parameters of dimension~$n$.}
	   edge[>=stealth,<-,
           shorten <=10pt,
           shorten >=10pt,] node[above]{$\V$} (GL) ;
            \draw [decorate,line width=1pt]  ([yshift=0pt]GL.south west) -- ([yshift=0pt]GL.north west);
    \draw [decorate,line width=1pt]  ([yshift=0pt]GL.north east) -- ([yshift=0pt]GL.south east);
         \draw [decorate,line width=1pt]  ([yshift=0pt]Gal.south west) -- ([yshift=0pt]Gal.north west);
     \draw [decorate,line width=1pt]  ([yshift=0pt]Gal.north east) -- ([yshift=0pt]Gal.south east);
\end{tikzpicture}
\end{center}

For a pair~$\pi,\pi'$ of generic~$\Fl$-representations of~$\GL_n(F),\GL_m(F)$ respectively we denote by
\[L(X,\pi\times\pi'),\quad \e(X,\pi\times\pi',\psi),\quad \gamma(X,\pi\times\pi',\psi),\]
the local factors defined in~\cite{KM17}.

The motivation for our next results is that the correspondence~$\V$ does not preserve local factors of generic representations, for example it is not true that $L(X,\pi\times \pi')=L(X,\V(\pi)\otimes_{\ss}\V(\pi'))$, see Example \ref{non preservation of local factors}.  In Definition \ref{CVdefinition}, we define an injective map
%
\begin{center}
\begin{tikzpicture}[node distance=0cm, auto, decoration={brace}]
	\node[
           text width=3em,
           minimum height=1em,text centered] (CV) {$\C\V:$};
	\node[right=of CV,
           text width=6em,
           minimum height=1em,
           text centered] (G) {$\V$-parameters};
           \node[right=of G,
           text width=1.5cm,
           minimum height=1em,
           text centered] (GL) {};           
 	\node[right=of GL,
           text width=17em,
           minimum height=1em,
           text centered] (Gal) {Equivalence classes of $\W_F$-semisimple Deligne~$\Fl$-representations of~$\W_F$}
	   edge[>=stealth,<-,
           shorten <=10pt,
           shorten >=10pt,] node[above]{} (G) ;
 \draw [decorate,line width=1pt]  ([yshift=0pt]G.south west) -- ([yshift=0pt]G.north west);
    \draw [decorate,line width=1pt]  ([yshift=0pt]G.north east) -- ([yshift=0pt]G.south east);
         \draw [decorate,line width=1pt]  ([yshift=0pt]Gal.south west) -- ([yshift=0pt]Gal.north west);
     \draw [decorate,line width=1pt]  ([yshift=0pt]Gal.north east) -- ([yshift=0pt]Gal.south east);
\end{tikzpicture}
\end{center}

which is not the natural inclusion, we call an element in its image a~\emph{$\C$-parameter.}  We can now state our main result, the first three properties of which are immediate consequence of the analogues for $\V$ and the definition of $\CV$:
%

\begin{maintheorem}\label{C}[Theorem \ref{preservation of local factors}]
For positive integers~$n$, the bijections~$\C=\C\V\circ \V$:
\begin{center}
\begin{tikzpicture}[node distance=2cm, auto, decoration={brace}]
	\node[
           text width=14em,
           minimum height=1em,
           text centered] (GL) {Irreducible $\Fl$-representations\\ of~$\GL_n(F)$ up to isomorphism};
 	\node[right=of GL,
           text width=13em,
           minimum height=1em,
           text centered] (Gal) {$\C$-parameters of dimension~$n$}
	   edge[>=stealth,<-,
           shorten <=10pt,
           shorten >=10pt,] node[above]{$\C$} (GL) ;
 \draw [decorate,line width=1pt]  ([yshift=0pt]GL.south west) -- ([yshift=0pt]GL.north west);
    \draw [decorate,line width=1pt]  ([yshift=0pt]GL.north east) -- ([yshift=0pt]GL.south east);
         \draw [decorate,line width=1pt]  ([yshift=0pt]Gal.south west) -- ([yshift=0pt]Gal.north west);
     \draw [decorate,line width=1pt]  ([yshift=0pt]Gal.north east) -- ([yshift=0pt]Gal.south east);
\end{tikzpicture}
\end{center}
satisfy the following properties:

Let~$\pi$ be an irreducible~$\Fl$-representation of~$\GL_n(F)$.
\begin{enumerate}[{(1)}]
\item For all characters~$\chi:\GL_n(F)\rightarrow \Fl^\times$, 
\[\C(\chi\pi)=\chi\C(\pi).\]
\item Letting~$c_{\pi}$ denote the central character of~$\pi$, then using local class field theory
\[c_{\pi}=\det(\C(\pi)).\]
\item Commutation with (smooth) duals:
\[C(\pi^\vee)=C(\pi)^\vee.\]
\item If~$\pi$ is generic, for all generic~$\Fl$-representations~$\pi'$ of~$\GL_m(F)$ with~$1\leq m$,
\begin{align*}
L_{}(X,\pi\times \pi')&=L(X,\C(\pi)\otimes_{\ss} \C(\pi')),\\
\gamma_{}(X,\pi\times \pi',\psi)&=\gamma(X,\C(\pi)\otimes_{\ss} \C(\pi'),\psi),\\
\e_{}(X,\pi\times \pi',\psi)&=\e(X,\C(\pi)\otimes_{\ss} \C(\pi'),\psi).\end{align*}
\end{enumerate}
\end{maintheorem}

In Example \ref{Cexample}, we give examples of the~$\C$-correspondence.

As a corollary of our results, if one defines the~$L,\epsilon,\gamma$-factors of a~$\V$-parameter~$\Phi$ to be the local factors of the corresponding~$\C$-parameter~$\C\V(\Phi)$, then we have a \emph{preservation of local factors} for generic representations result for~$\V$. However, as this construction goes via the associated~$\C$-parameter it feels more natural to us to state the result in terms of~$\C$.

It is tempting to say that~$\C$ should preserve local factors of pairs of irreducible~$\Fl$-representations of~$\GL_n(F)$ beyond the generic setting, for example the \emph{Godement--Jacquet local factors} defined in~\cite{MinguezZeta}.  This holds for~$n=2$ for the factors of [ibid.], thanks to the explicit computations of~M\'inguez in this case.

Finally, motivated by the corresponding result for~$\LLC$, one might wonder to what extent the list of properties of Theorem 2 characterize the correspondence.  We leave this question for future work.

\begin{ack}
We thank David Helm and Alberto M\'inguez for useful conversations.  This work was started at the Imperial College London and continued at the Universit\'e de Poitiers, and the authors would like to thank them for their hospitality.  We would like to thank the Heilbronn Institute for Mathematical Research for supporting these visits.  The second author benefited from support from the grant ANR-13-BS01-0012 FERPLAY.
\end{ack}

\section{General Notations}

Let~$F$ be a non-archimedean local field with finite residue field~$k_F$ of characteristic $p$ and cardinality~$q$. For a positive integer~$n$, we denote by~$F_n$ the unique (up to isomorphism) unramified extension of~$F$ of degree~$n$. 

Let~$\ell$ be a prime number different from~$p$.  We fix an algebraic closure~$\Ql$ of the~$\ell$-adic numbers~$\Q_{\ell}$.  We denote by~$\Zl$ the ring of integers of~$\Ql$ and by~$\m$ its maximal ideal. We put~$\Fl=\Zl/\m$, it is an algebraic closure of the finite field~$\F_{\ell}$ with~$\ell$ elements.  

We consider smooth representations of locally profinite groups on~$\Ql$ and~$\Fl$-vector spaces, and the connections between them.  Henceforth, all representations considered are implicitly assumed to be smooth and, for convenience of stating results which apply to both cases~$\Ql$ and~$\Fl$, we let~$R$ denote either field.  We abbreviate ``representation on an~$R$-vector space'' to~\emph{$R$-representation}.  When~$R=\Fl$ we say we are in the \emph{$\ell$-modular case}, and when~$R=\Ql$ we say we are in the~\emph{$\ell$-adic case}.

Let~$H$ be a locally profinite group.  Let~$\Rep(H,R)$ denote the abelian category of~$R$-representations of~$H$, and~$\Irr(H,R)$ denote the set of isomorphism classes of irreducible~$R$-representations of~$H$.  Let~$\pi,\pi'$ be~$R$-representations of~$H$.  We denote by~$\pi^\vee$ the (smooth) contragredient of~$\pi$, and put
\[\End_H(\pi)= \Hom_H(\pi,\pi),\text{ and }\Iso_H(\pi,\pi')=\{\phi\in\Hom_H(\pi,\pi'):\phi\text{ is bijective}\}.\]

Let~$K$ be a closed subgroup of~$H$. We fix a square root of~$q$ in~$\Zl$ and take its image in $\Fl$ so we have 
a fixed square root of $q$ in $R$, which is compatible with reduction modulo $\ell$. We denote by
\[\Ind_K^H:\Rep(K,R)\rightarrow \Rep(H,R)\] the usual normalized (with respect to this square root of $q$) induction functor.

Strictly speaking an~$R$-representation consists of a pair~$(\Phi,V)$ with~$V$ an~$R$-vector space and 
\[\Phi:H\rightarrow \GL_R(V)\] a group homomorphism.  However, as we have done already, we will often denote the pair~$(\Phi,V)$ just by~$\Phi$, and in this case we will use~$V_{\Phi}$, or even~$\Phi$, to denote the underlying vector space upon which~$H$ acts via~$\Phi$.   We also make no distinction between an~$R$-representation and its isomorphism class, and will write~$\Phi\in\Rep(H,R)$ for~$\Phi$ is an object in the category~$\Rep(H,R)$, i.e.~an~$R$-representation of~$H$.  

\section{Representations of $\W_F$}\label{Weil group}

\subsection{Notations}

We refer to \cite[Chapter 7]{BHbook} for the definitions and facts stated here concerning Weil groups. We fix a separable algebraic closure~$\overline{F}$ of~$F$, and will suppose that all finite extensions we consider are contained in~$\overline{F}$.  For a finite extension~$E/F$ we let~$\G_E=\Gal(\overline{F}/E)$ denote the \emph{absolute Galois group} of~$E$;~$I_E$ denote the \emph{inertia subgroup} of~$\G_E$;~$P_E$ denote the \emph{wild inertia subgroup}, it is the pro-$p$ Sylow subgroup of~$I_E$;~$\W_E$ denote the \emph{Weil group} of~$E$.  We fix a geometric Frobenius element~$\Fr$ in~$\W_F$.  We have 
\[\W_F=I_F\rtimes \Fr^\Z,\] and, in particular,~$\W_F$ is unimodular as~$I_F$ is compact.

If~$E$ is a finite extension of~$F$, and~$\pi$ is a representation of~$\W_F$, we denote by~$\pi_E$ the restriction of~$\pi$ to~$\W_E$.   An~$R$-representation of~$\W_F$ is called \textit{unramified} if it is trivial on~$I_F$. If such a representation is irreducible, then it is necessarily a character as~$\W_F/I_F$ is abelian. In fact, if~$\Psi$ is an irreducible representation of~$\W_F$ such that~$\Psi^{I_F}$ (the~$I_F$-fixed subspace of~$\Psi$) is nonzero, then~$\Psi^{I_F}$ is a nonzero~$\W_F$-invariant subspace of~$\Psi$, hence~$\Psi=\Psi^{I_F}$ is an unramified character.  We denote by~$\nu$ the unramified character of~$\W_F$ which satisfies~$\nu(\Fr)=q^{-1}$, and by~$X^u(\W_F)$ the group of unramified characters of~$\W_F$.  

An~$R$-irreducible representation of~$\W_F$ is called \textit{tamely ramified} if it is trivial on~$P_F$. We denote by~$\Irr^{\tr}(\W_F, R)$ the set of isomorphism classes of tamely ramified irreducible representations of~$\W_F$.

Let~$\Psi\in \Irr(\W_F,R)$, we put
\[\Z_\Psi=\{\nu^k\Psi,\ k\in \Z\}\] and call such a set an \textit{irreducible line}.  
When~$R=\Fl$, the set~$\Z_\Psi$ is finite, and we denote by~$o(\Psi)$ its cardinality 
\[o(\Psi)=|\Z_\Psi|.\]
The integer~$o(\Psi)$ clearly divides the order of~$\nu$ which is the order of~$q$ modulo~$\ell$.  In particular 
$o(\Psi)$ divides~$\ell-1$ and is hence prime to~$\ell$. We say that~$\Psi$ is \textit{banal} if~$o(\Psi)>1$.

\subsection{Irreducible representations of $\W_F$}

We now recall a description of the irreducible~$R$-representations of~$\W_F$ as induced representation which is well suited to studying congruences between~$\Ql$-representations. 

Let~$E$ be a finite Galois extension of~$F$ and~$\pi\in\Irr(\W_E, R)$. For~$\sigma\in \Gal(E/F)\simeq \W_F/\W_E$, 
we denote by~$\pi^\sigma$ the~$R$-representation of~$\W_E$ defined on the same underlying space as~$\pi$ by
\[\pi^\sigma(w)=\pi(\sigma w  \sigma^{-1}),\] 
for all~$w\in \W_E$. 

We say that~$\pi$ is~$\Gal(E/F)$-regular if the set \[\{\pi^\sigma: \sigma\in \Gal(E/F)\}\subset \Irr(\W_E, R)\] is 
of cardinality~$|\Gal(E/F)|$. 

\begin{lemma}\label{mackey irred criterion}
Let~$E$ be a finite Galois extension of~$F$ and~$\tau$ an~$R$-representation of~$\W_F$. Then~$\Ind_{\W_E}^{\W_F}(\tau)$ is irreducible if and only if~$\tau$ is irreducible and~$\Gal(E/F)$-regular.
\end{lemma}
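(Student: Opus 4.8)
The plan is to prove Lemma \ref{mackey irred criterion} by combining Frobenius reciprocity with Mackey theory for the normal subgroup $\W_E\triangleleft\W_F$ with finite quotient $\Gal(E/F)$; this is the standard argument, but one must be slightly careful because we allow $R=\Fl$, so Schur's lemma still applies (the ground field is algebraically closed, and irreducible smooth representations of these groups have finite-dimensional underlying spaces since $\W_E$ is a closed subgroup of finite index and any irreducible smooth representation of $\W_F$ or $\W_E$ is finite-dimensional), but complete reducibility of $\Ind_{\W_E}^{\W_F}(\tau)$ is not automatic. The key tool is the equality, valid over any field for a finite-index normal subgroup,
\[
\End_{\W_F}\big(\Ind_{\W_E}^{\W_F}(\tau)\big)\;\simeq\;\bigoplus_{\sigma\in\Gal(E/F)}\Hom_{\W_E}(\tau,\tau^{\sigma}),
\]
which comes from Frobenius reciprocity followed by the Mackey decomposition of $\mathrm{Res}_{\W_E}\Ind_{\W_E}^{\W_F}(\tau)\simeq\bigoplus_{\sigma\in\Gal(E/F)}\tau^{\sigma}$ (there is one double coset per element of $\Gal(E/F)$, with trivial intersection correction since $\W_E$ is normal). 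Since normalized induction only differs from ordinary induction by an unramified twist and $\W_E$ has finite index, I will note that $\Ind$ here is both left and right adjoint to restriction, so Frobenius reciprocity applies in the needed direction.

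First I would establish the direction that $\Ind_{\W_E}^{\W_F}(\tau)$ irreducible implies $\tau$ irreducible and $\Gal(E/F)$-regular. If $\tau$ had a proper nonzero subrepresentation $\tau_0$, then $\Ind_{\W_E}^{\W_F}(\tau_0)$ would be a proper nonzero subrepresentation of $\Ind_{\W_E}^{\W_F}(\tau)$ (induction from a finite-index subgroup is exact and faithful), contradicting irreducibility; hence $\tau$ is irreducible. If $\tau$ were not $\Gal(E/F)$-regular, then $\tau^{\sigma}\simeq\tau$ for some $\sigma\neq 1$, so by Schur's lemma $\Hom_{\W_E}(\tau,\tau^{\sigma})$ is one-dimensional, and the displayed formula gives $\dim\End_{\W_F}(\Ind_{\W_E}^{\W_F}(\tau))\geq 2$; but an irreducible representation over an algebraically closed field has a one-dimensional endomorphism ring (here I use that the underlying space is finite-dimensional, so Schur's lemma in its strong form holds), a contradiction.

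Conversely, suppose $\tau$ is irreducible and $\Gal(E/F)$-regular. Then $\Hom_{\W_E}(\tau,\tau^{\sigma})=0$ for $\sigma\neq 1$ by Schur and irreducibility of the pairwise non-isomorphic $\tau^{\sigma}$, and $\Hom_{\W_E}(\tau,\tau)=R$, so the displayed formula yields $\End_{\W_F}(\Ind_{\W_E}^{\W_F}(\tau))=R$. It remains to deduce irreducibility from having a one-dimensional endomorphism ring — this is the step requiring the most care in the modular setting, since a scalar endomorphism ring does not in general force irreducibility without some finiteness or semisimplicity input. I would resolve it by using that $\mathrm{Res}_{\W_E}\Ind_{\W_E}^{\W_F}(\tau)=\bigoplus_{\sigma}\tau^{\sigma}$ is semisimple of finite length with pairwise non-isomorphic simple constituents; any $\W_F$-subrepresentation $W$ is then $\W_E$-semisimple and a sum of some subset of these lines, and projection onto each line that appears gives, upon composing with the inclusion and using Frobenius reciprocity, a nonzero element of $\End_{\W_F}(\Ind_{\W_E}^{\W_F}(\tau))=R$, i.e.\ a nonzero scalar, forcing $W$ to be the whole space or zero. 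The main obstacle is thus making this last deduction rigorous over $\Fl$; the cleanest route is to observe that because the restriction to $\W_E$ is multiplicity-free semisimple, every $\W_F$-subrepresentation is a direct summand as a $\W_E$-module and is determined by which of the distinct lines $\tau^{\sigma}$ it contains, and the $\W_F$-action (via $\Gal(E/F)$ permuting the lines transitively) forces any nonzero such subset to be everything, yielding irreducibility without needing $\Ind_{\W_E}^{\W_F}(\tau)$ itself to be semisimple.
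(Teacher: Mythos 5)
Your proof is correct and takes essentially the same route as the paper's: both directions use the chain $\End_{\W_F}(\Ind_{\W_E}^{\W_F}\tau)\simeq\bigoplus_{\sigma}\Hom_{\W_E}(\tau^{\sigma},\tau)$ coming from Frobenius reciprocity plus Mackey, and the converse is settled by observing that $\Ind_{\W_E}^{\W_F}(\tau)|_{\W_E}$ is multiplicity-free semisimple and that $\W_F$ permutes the $\tau^{\sigma}$ transitively. The only thing worth flagging is that your first attempt at the converse (projecting onto a line and ``using Frobenius reciprocity'' to land in $\End_{\W_F}$) is not quite sound as stated, since that projection is only a $\W_E$-map; but you correctly abandon it in favor of the transitivity-of-the-$\Gal(E/F)$-action argument, which is exactly what the paper does.
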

\begin{proof}
If~$\Ind_{\W_E}^{\W_F}(\tau)$ is irreducible then~$\tau$ is as well, by exactness of the induction functor. Moreover its endomorphism algebra is isomorphic to~$R$ by Schur's lemma. However by Frobenius reciprocity and Mackey theory we have: 
\begin{align*}
\End_{W_F}(\Ind_{\W_E}^{\W_F}(\tau))&\simeq \Hom_{W_E}(\Ind_{\W_E}^{\W_F}(\tau)\vert_{W_E},\tau)\\
&\simeq \Hom_{W_E}\left({\textstyle\bigoplus_{\sigma\in \Gal(E/F)}} \tau^\sigma,\tau\right)\\
&\simeq \prod_{\sigma\in \Gal(E/F)}\Hom_{W_E}(\tau^\sigma,\tau).\end{align*} This latter ring is isomorphic to~$R$ 
if and only if~$\tau$ is regular. 

Conversely, if~$\tau$ is irreducible and~$\Gal(E/F)$-regular, let~$W$ be a nonzero~$\W_F$-stable subspace of 
$\Ind_{\W_E}^{\W_F}(\tau)$. Then it is a~$\W_E$-stable subspace of~$\bigoplus_{\sigma\in \Gal_F(E)} \tau^\sigma$ by Mackey theory.  In particular as the regularity assumption implies that this direct sum is the decomposition into isotypic components of~$\Ind_{\W_E}^{\W_F}(\tau)\vert_{\W_E}$, the space~$W$ contains~$\tau^\sigma$ for some~$\sigma$. As it is stable under~$\W_F$, it contains all~$\tau^\sigma$, hence it is equal to~$\Ind_{\W_E}^{\W_F}(\tau)$.
\end{proof}

We recall the following well-known description of the irreducible~$R$-representations of~$\W_F$. They are a consequence of \cite[Lemma 2.3, Proposition 2.1]{BHmodl} in the~$\ell$-adic case, and of \cite[2.6]{Vpartialcorrespondence} in the modular case.

\begin{thm}\label{irreps of the Weil group}
\begin{itemize}
\item\label{Part1Irreps} Take $\Psi\in \Irr^{\tr}(\W_F, R)$ of dimension $n$, then there is a $\Gal_{F}(F_n)$-regular tamely ramified character $\chi$ of $\W_{F_n}$ such that 
\[\Psi=\Ind_{\W_{F_n}}^{\W_F}(\chi),\] the character $\chi$ being unique up to conjugation by $\Gal_{F}(F_n)$.
\item\label{Part2Irreps} Take $\Psi\in \Irr(\W_F,R)$, then there exist a (finite) tamely ramified extension $E$ of $F$, 
$\Psi^{\tr}\in \Irr^{\tr}(\W_E, R)$, and $\tau\in \Irr(\W_E, R)$ which restricts irreducibly to $P_F$, such that 
\[\Psi=\Ind_{\W_E}^{\W_F}(\Psi^{\tr}\otimes \tau),\] the representation $\Psi^{\tr}\otimes \tau$ being unique up to 
to conjugation by $\Gal_{F}(E)$. Moreover for fixed $\Psi$ and $\tau$, the representation $\Psi^{\tr}$ is unique.
\end{itemize}
\end{thm}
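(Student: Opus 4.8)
The plan is to prove Theorem \ref{irreps of the Weil group} in two stages, first the tamely ramified case (\ref{Part1Irreps}) and then reducing the general case (\ref{Part2Irreps}) to it. For part (\ref{Part1Irreps}), let $\Psi \in \Irr^{\tr}(\W_F,R)$ have dimension $n$. Since $\Psi$ is trivial on $P_F$, it factors through the tame quotient $\W_F/P_F$, which has a transparent structure: it is (topologically) generated by a generator $s$ of the tame inertia $I_F/P_F \cong \varprojlim \mu_m$ (product of $\Z_t$ over primes $t \neq p$) and a Frobenius lift $\Fr$ with the relation $\Fr s \Fr^{-1} = s^q$. The restriction $\Psi|_{I_F/P_F}$ is a representation of an abelian (pro-cyclic-type) group, so it decomposes into characters; pick a character $\chi_0$ appearing in it. Because $\Fr$-conjugation permutes these characters by $\chi_0 \mapsto \chi_0^q$ (equivalently replaces $\chi_0$ by its $\nu$-twist's restriction), and $\Psi$ is irreducible, the $\W_F$-orbit of $\chi_0$ must exhaust the isotypic decomposition; one shows the orbit has size exactly $n$ and that $\Psi \cong \Ind_{\W_{F_n}}^{\W_F}(\chi)$ for a character $\chi$ of $\W_{F_n}$ extending $\chi_0$. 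That $\chi$ can be taken tamely ramified, that it is $\Gal(F_n/F)$-regular (this is forced: non-regularity would make the induced representation reducible by Lemma \ref{mackey irred criterion}, or make $\Psi$ have smaller dimension), and that the construction is independent of choices up to $\Gal(F_n/F)$-conjugacy, all follow by unwinding Frobenius reciprocity together with Lemma \ref{mackey irred criterion}. The one genuinely modular point to check is that $o(\Psi)$ / the orbit size behaves well in characteristic $\ell$; but since $o(\Psi) \mid \ell-1$ is prime to $\ell$, counting arguments with orbits go through as in characteristic zero.

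For part (\ref{Part2Irreps}), the strategy is Clifford theory applied to the normal pro-$p$ subgroup $P_F \trianglelefteq \W_F$. Given $\Psi \in \Irr(\W_F,R)$, restrict to $P_F$ and pick an irreducible constituent $\rho$; since $P_F$ is pro-$p$ and $\ell \neq p$, its irreducible $R$-representations are "the same" as in characteristic zero (semisimplicity, no modular pathology), so $\rho$ is well-behaved. Let $E/F$ be the fixed field corresponding to the stabilizer in $\W_F/P_F$ of the isomorphism class of $\rho$ — this stabilizer is open (as $\rho$ is smooth, hence factors through a finite quotient of $P_F$ and has finitely many $\W_F$-conjugates), so $E/F$ is finite; one must check it is tamely ramified, which holds because $P_F$ acts trivially on the relevant finite quotient so the extension is unramified-over-tame in the appropriate sense, i.e.\ $P_E = P_F$. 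By Clifford theory $\Psi = \Ind_{\W_E}^{\W_F}(\tau')$ for $\tau' \in \Irr(\W_E,R)$ lying over $\rho$, so $\tau'|_{P_F}$ is $\rho$-isotypic, hence $\tau'$ restricts irreducibly to $P_F$ up to... — more precisely one arranges $\tau'|_{P_E}$ irreducible. Then one factors $\tau' = \Psi^{\tr} \otimes \tau$: choose $\tau \in \Irr(\W_E,R)$ restricting irreducibly to $P_F$ with the same $P_F$-behaviour (its existence is a projective-representation / cohomological statement — lifting a projective representation of $\W_E/P_E$ to an honest one, using that the obstruction in $H^2$ of a group of order prime to $\ell$ over an algebraically closed field vanishes), and set $\Psi^{\tr} = \tau' \otimes \tau^\vee$, which is then trivial on $P_E$, i.e.\ tamely ramified. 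Uniqueness assertions (of $\Psi^{\tr} \otimes \tau$ up to $\Gal(E/F)$-conjugacy, and of $\Psi^{\tr}$ given $\Psi$ and $\tau$) come from the uniqueness in Clifford theory (the constituent $\rho$ and its stabilizer are canonical up to conjugacy) together with the fact that twisting $\tau$ changes $\Psi^{\tr}$ by the inverse twist.

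The main obstacle I expect is the factorization $\tau' = \Psi^{\tr}\otimes\tau$ in part (\ref{Part2Irreps}): producing the auxiliary representation $\tau$ of $\W_E$ that restricts irreducibly to $P_F$ and "absorbs" the wild part of $\tau'$. This is where one needs that a certain $2$-cocycle (the Schur multiplier obstruction for extending the projective action of $\W_E/P_E$ on the space of $\rho$) is trivial; over an algebraically closed field and for the relevant finite group of order prime to $\ell$ this is standard, but it must be invoked carefully, and one should point to \cite[2.6]{Vpartialcorrespondence} (and \cite{BHmodl} in the $\ell$-adic case) where this is carried out rather than redo it. A secondary technical point is verifying that the field $E$ produced by Clifford theory is genuinely tamely ramified, i.e.\ $P_E = P_F$; this is because $P_F$ is already the wild inertia and the stabilizer subgroup contains $P_F$, so the extension $E/F$ corresponds to a subgroup of the tame quotient and hence $E/F$ is at worst tamely ramified. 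Everything else is bookkeeping with Frobenius reciprocity, Mackey's formula, and Lemma \ref{mackey irred criterion}.
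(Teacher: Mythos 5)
The paper does not actually prove this theorem: it is stated as a recollection, with the proof deferred to \cite[Lemma 2.3, Proposition 2.1]{BHmodl} in the $\ell$-adic case and \cite[2.6]{Vpartialcorrespondence} in the modular case. Your sketch reproduces, in outline, exactly the Clifford-theoretic strategy those references carry out: reduce to the tame case by inducing from the stabilizer of an irreducible constituent of $\Psi\vert_{P_F}$, observe that the stabilizer contains $P_F$ so that $P_E=P_F$ and $E/F$ is tame, and then factor the Clifford component as (tame)$\otimes$(wild) after lifting the projective action on the wild constituent; the tame case is handled by analysing $\W_F/P_F \cong I_F/P_F \rtimes \Fr^{\Z}$ and inducing a character from $\W_{F_n}$. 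So you are following the same route the paper points to rather than an alternative one.

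One slip in the write-up: you set $\Psi^{\tr} = \tau' \otimes \tau^\vee$ and claim it is trivial on $P_E$. That is only so when $\dim\rho = 1$; in general $(\tau'\otimes\tau^\vee)\vert_{P_E} \cong \rho^{\oplus m}\otimes\rho^\vee$ contains nontrivial $P_E$-constituents. The correct object is $\Psi^{\tr}=\Hom_{P_E}(\tau,\tau')$, i.e. the $P_E$-invariants of $\tau'\otimes\tau^\vee$; it is $m$-dimensional, naturally a $\W_E/P_E$-representation, and the evaluation map $\tau\otimes\Hom_{P_E}(\tau,\tau')\to\tau'$ is an isomorphism of $\W_E$-representations because $\tau'\vert_{P_E}$ is $\rho$-isotypic and $\End_{P_E}(\rho)=R$. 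With that correction, and with the projective-lifting step carried out as in the cited references (where the Schur multiplier vanishing is justified), your argument is sound.
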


For $\Psi\in \Irr(\W_F,R)$, we denote by $R(\Psi)$ the group of unramified characters of $\W_F$ fixing $\Psi$:
\[R(\Psi)=\{\mu\in X^u(\W_F),\ \chi\Psi=\Psi\}.\]

\begin{corollary}\label{ramification group}
Write $\Psi\in \Irr(\W_F,R)$ under the form $\Psi=\Ind_{\W_E}^{\W_F}(\Psi^{\tr}\otimes \tau)$ as in the second point of Theorem \ref{irreps of the Weil group}, and 
write $\Psi^{tr}=\Ind_{\W_{E_n}}^{\W_E}(\chi)$ for $\chi$ a $\Gal_{E}(E_n)$-regular tamely ramified character $\chi$ of $\W_{E_n}$. 
Set $r=[E_n:F]$, and $e$ the ramification index of $E_n/F$. The map $\mu\mapsto \mu(\Fr)$ is an isomorphism between $R(\pi)$ and the group of $r/e$-th roots of unity in 
$R^\times$.\end{corollary}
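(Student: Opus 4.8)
The plan is to reduce the computation of $R(\Psi)$ to an unramified-twist calculation on the character $\chi$ of $\W_{E_n}$, using the inductive structure $\Psi = \Ind_{\W_E}^{\W_F}(\Psi^{\tr}\otimes\tau)$ and $\Psi^{\tr} = \Ind_{\W_{E_n}}^{\W_E}(\chi)$, and then identify the resulting group of unramified characters with a group of roots of unity via evaluation at Frobenius. First I would record the basic behaviour of unramified characters under restriction: if $\mu$ is an unramified character of $\W_F$, then $\mu|_{\W_{E_n}}$ is the unramified character of $\W_{E_n}$ sending the geometric Frobenius $\Fr_{E_n}$ of $E_n$ to $\mu(\Fr)^{f}$, where $f = f(E_n/F)$ is the residue degree; since $r = [E_n:F] = ef$ with $e$ the ramification index, we have $f = r/e$. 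The key point is that $\mu\Psi \simeq \Psi$ if and only if, after unwinding the two inductions, $\mu$ restricted to $\W_{E_n}$ fixes $\chi\otimes(\text{wild part})$; because $\tau$ is fixed by every unramified twist (unramified characters are trivial on $P_F\supseteq$ the relevant wild inertia, and more precisely the isomorphism class of $\tau\otimes(\text{anything unramified on }\W_{E_n})$ is governed by its restriction to $P_F$), the condition collapses to $(\mu|_{\W_{E_n}})\,\chi \simeq \chi^{\sigma}$ for some $\sigma$ in the relevant Galois group — and here the $\Gal_E(E_n)$-regularity of $\chi$ (together with the analogous regularity built into the $E$-level induction) forces $\sigma = \mathrm{id}$, so the condition is simply $(\mu|_{\W_{E_n}})\chi = \chi$, i.e. $\mu|_{\W_{E_n}} = \mathbf{1}$.

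Next I would make the "if and only if" of the previous paragraph precise. For the forward direction: if $\mu\Psi\simeq\Psi$, apply $\Hom_{\W_F}$ and Frobenius reciprocity/Mackey as in the proof of Lemma~\ref{mackey irred criterion} to descend the isomorphism to the $\W_{E_n}$-level, landing on the statement $(\mu|_{\W_{E_n}})\chi\simeq\chi^{\sigma}$ for some $\sigma\in\Gal_F(E_n)$ in the double-coset index set; regularity of $\chi$ pins down $\sigma$ to lie in the subgroup fixing $\chi$ up to unramified twist, and unramifiedness of $\mu$ then forces $\mu|_{\W_{E_n}}$ trivial. Conversely, if $\mu|_{\W_{E_n}} = \mathbf{1}$, then $\mu$ commutes with both inductions (projection formula: $\Ind(\pi\otimes\mu|) \simeq (\Ind\pi)\otimes\mu$ when $\mu$ is defined on the larger group), so $\mu\Psi \simeq \Ind_{\W_E}^{\W_F}\big(\mu|_{\W_E}\otimes\Psi^{\tr}\otimes\tau\big) \simeq \Ind_{\W_E}^{\W_F}(\Psi^{\tr}\otimes\tau) = \Psi$, using that $\mu|_{\W_E}$ restricted further to $\W_{E_n}$ is trivial and the same projection-formula argument one level down. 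Hence $R(\Psi) = \{\mu\in X^u(\W_F) : \mu|_{\W_{E_n}} = \mathbf{1}\}$.

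Finally I would translate $\mu|_{\W_{E_n}} = \mathbf{1}$ into the stated root-of-unity condition. Since $\W_F = I_F\rtimes\Fr^{\Z}$ and any unramified $\mu$ is determined by $z := \mu(\Fr)\in R^\times$, and since $\Fr^{f}$ is a geometric Frobenius of $E_n$ modulo $I_F$ (with $f = r/e$), the restriction $\mu|_{\W_{E_n}}$ is the unramified character of $\W_{E_n}$ sending its geometric Frobenius to $z^{f} = z^{r/e}$. Thus $\mu|_{\W_{E_n}} = \mathbf{1}$ iff $z^{r/e} = 1$, so $\mu\mapsto\mu(\Fr)$ is an injective group homomorphism $R(\Psi)\hookrightarrow\{\zeta\in R^\times : \zeta^{r/e} = 1\}$, and it is surjective because every such $\zeta$ defines an unramified character of $\W_F$ whose restriction to $\W_{E_n}$ is trivial. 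I expect the main obstacle to be the first direction of the "if and only if" — i.e. showing cleanly that a twist fixing $\Psi$ must already be trivial on $\W_{E_n}$ rather than merely conjugate-fixing $\chi$: one must carefully combine the regularity hypotheses at both the $E$- and $E_n$-levels with the rigidity of the wild part $\tau$ (which forces the double-coset element to normalise the data), and then invoke that an unramified twist of $\chi$ equal to a $\Gal$-conjugate of $\chi$ is, by regularity, an unramified twist equal to $\chi$ itself, hence trivial. The projection formula and the Frobenius-degree bookkeeping $f = r/e$ are routine once set up.
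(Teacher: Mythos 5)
Your overall strategy matches the paper's: reduce $\mu\in R(\Psi)$ to the condition $\mu|_{\W_{E_n}}=\mathbf{1}$, then note that an unramified character $\mu$ of $\W_F$ restricts on $\W_{E_n}$ to the unramified character sending $\Fr_{E_n}$ to $\mu(\Fr)^{r/e}$, so $\mu\mapsto\mu(\Fr)$ identifies $R(\Psi)$ with the $r/e$-th roots of unity. The paper performs the reduction via the two uniqueness clauses of Theorem~\ref{irreps of the Weil group} (first $\mu_E\Psi^{\tr}=\Psi^{\tr}$, then $\mu_{E_n}\chi=\chi^\sigma$ for some $\sigma\in\Gal_E(E_n)$), whereas you route it through Mackey theory; both lead to the same intermediate statement. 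Your last paragraph on the Frobenius bookkeeping is correct and routine.

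However, the pivot of the argument has a genuine gap, which you half-identify and then fill incorrectly. Once you reach $(\mu|_{\W_{E_n}})\chi\simeq\chi^\sigma$, you try to conclude $\mu|_{\W_{E_n}}=\mathbf{1}$ by invoking regularity of $\chi$: ``an unramified twist of $\chi$ equal to a $\Gal$-conjugate of $\chi$ is, by regularity, an unramified twist equal to $\chi$ itself, hence trivial.'' This is logically backwards: $\Gal_E(E_n)$-regularity says that $\chi^\sigma\neq\chi$ for $\sigma\neq 1$; it does not by itself preclude $\chi^\sigma/\chi$ from being a nontrivial unramified character. The fact that actually carries the argument — and which the paper proves — is the computation that $\chi^\sigma/\chi$ is \emph{always} trivial on $\Fr_{E_n}$: any $\sigma\in\Gal_E(E_n)$ lifts to a power $\Fr_E^k$ of a Frobenius of $E$, and taking $\Fr_{E_n}=\Fr_E^n$ one has $\Fr_E^k\Fr_{E_n}\Fr_E^{-k}=\Fr_{E_n}$, whence $\chi^\sigma(\Fr_{E_n})=\chi(\Fr_{E_n})$. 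Since an unramified character of $\W_{E_n}$ is determined by its value on $\Fr_{E_n}$, this forces $\mu|_{\W_{E_n}}=\mathbf{1}$. This computation, not regularity, is the essential point; without it your proof does not close. (Regularity would only enter if one wanted to deduce in addition that $\sigma=1$, which the corollary does not need.)
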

\begin{proof}
Let $e$ be the ramification index of $E/F$ and $t=[E:F]$. One can take $\Fr_{E}=\Fr^{t/e}$ (where $\Fr_E$ stands for a geometric Frobenius element in $\W_E$). Then an unramified character $\mu$ belongs to 
$R(\Psi)$ if and only if \[\Ind_{\W_E}^{\W_F}(\mu_E\Psi^{\tr}\otimes \tau)=\Ind_{\W_E}^{\W_F}(\Psi^{\tr}\otimes \tau),\] which is equivalent to 
\[\mu_E \Psi^{\tr}= \Psi^{\tr}.\]
This is in turn equivalent to $\mu_{E_n}\chi$ is conjugate to $\chi$ by $\Gal_{E}(E_n)$, i.e. $\mu_{E_n}$ is of the form 
$\chi^{\sigma}/\chi$ for $\sigma\in \Gal_{E}(E_n)$. However $\Gal_{E}(E_n)$ is generated by $\Fr_{E_n}$, but as one can 
take $\Fr_{E_n}=(\Fr_{E})^n=\Fr^{nt/e}$, a character of the form 
$\chi^{\sigma}/\chi$ is trivial on $\Fr_{E_n}$, hence if it is moreover unramified, it is trivial. Thus 
\[\mu\in R(\Psi) \Leftrightarrow \mu_{E_n}=1  \Leftrightarrow \mu_{E_n}(\Fr_{E_n})=1 \Leftrightarrow \mu(\Fr)^{nt/e}=1.\]
\end{proof}

As an immediate consequence we have:

\begin{corollary}\label{powers of nu in ramification group}
With the notations as in Corollary \ref{ramification group}, we obtain that $o(\Psi)=o(\nu^{r/e})=o(\nu_E^n)$.  
\end{corollary}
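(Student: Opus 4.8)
The plan is to unwind the definition of $o(\Psi)$ and feed it into Corollary~\ref{ramification group}. Recall that $o(\Psi)$ is by definition the least positive integer $k$ with $\nu^k\Psi\simeq\Psi$, equivalently the least positive $k$ with $\nu^k\in R(\Psi)$, so that $\{k\in\Z:\nu^k\in R(\Psi)\}=o(\Psi)\Z$.

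First I would note that, since $\mu\mapsto\mu(\Fr)$ is injective on $X^u(\W_F)$, Corollary~\ref{ramification group} says precisely that $R(\Psi)=\{\mu\in X^u(\W_F):\mu(\Fr)^{r/e}=1\}$. Hence, for $k\in\Z$, one has $\nu^k\in R(\Psi)$ if and only if $q^{-kr/e}=\nu^k(\Fr)^{r/e}=1$ in $R^\times$, that is, if and only if the unramified character $(\nu^{r/e})^k=\nu^{kr/e}$ is trivial, i.e.\ if and only if $k$ is a multiple of the order of the character $\nu^{r/e}$. Denoting that order by $o(\nu^{r/e})$ (equivalently, the multiplicative order of $q^{r/e}$ in $R^\times$), this gives $o(\Psi)=o(\nu^{r/e})$.

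It then remains to match $o(\nu^{r/e})$ with $o(\nu_E^n)$, for which I would express $r/e$ through residue degrees. By the first point of Theorem~\ref{irreps of the Weil group} applied to $\Psi^{\tr}$ over $E$, the field $E_n$ is the unramified extension of $E$ of degree $n$, so $f(E_n/E)=n$; by multiplicativity of residue degrees, writing $f$ for the residue degree of $E/F$, we get $r/e=f(E_n/F)=n\,f(E/F)=nf$. On the other hand, with $\Fr_E=\Fr^{f}$ as in the proof of Corollary~\ref{ramification group}, the character $\nu_E$ is the restriction of $\nu$ to $\W_E$ and satisfies $\nu_E(\Fr_E)=q^{-f}$, whence $\nu_E^n(\Fr_E)=q^{-nf}=q^{-r/e}=\nu^{r/e}(\Fr)$. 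Since the order of an unramified character equals the multiplicative order of its value at a geometric Frobenius element, $o(\nu_E^n)=o(\nu^{r/e})$, which completes the chain $o(\Psi)=o(\nu^{r/e})=o(\nu_E^n)$.

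There is no genuine obstacle here: as the paper indicates, this is an immediate consequence of Corollary~\ref{ramification group}. The only points to watch are bookkeeping ones — keeping the three extensions $E/F$, $E_n/E$ and $E_n/F$ and their ramification and residue data straight, and keeping in mind that on powers of $\nu$ the symbol $o(-)$ stands for the multiplicative order of the corresponding character (so, unlike $|\Z_{\nu^{r/e}}|$, it need not equal $o(\nu)$).
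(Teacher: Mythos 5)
Your proposal is correct and is exactly the intended unwinding: the paper gives no separate proof, labelling this an immediate consequence of Corollary~\ref{ramification group}, and what you write is the straightforward chain one would give. You correctly read off from the isomorphism $\mu\mapsto\mu(\Fr)$ that $\nu^k\in R(\Psi)$ iff $q^{-kr/e}=1$, so $o(\Psi)$ is the multiplicative order of $q^{r/e}$; and the identification $r/e=n\,f(E/F)$ together with $\nu_E=\nu|_{\W_E}$, $\Fr_E=\Fr^{f(E/F)}$ gives $\nu_E^n(\Fr_E)=q^{-r/e}$, hence the equality of orders. Your closing caveat about the notation $o(\nu^{r/e})$ is a good catch: here it must be read as the multiplicative order of that unramified character, not as $|\Z_{\nu^{r/e}}|$, which would always equal $o(\nu)$.
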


\subsection{Lattices and reduction modulo $\ell$}

An $\ell$-adic finite dimensional representation $\Theta$ of $\W_F$ is called \textit{integral} if there is a $\Zl[\W_F]$-stable lattice in $V$. In general when we say \textit{a lattice} in the space of a representation of $\W_F$, we shall always refer to a $\W_F$-stable lattice. We denote by $\Irr(\W_F,\Ql)_e$ the set of isomorphism classes of integral irreducible representations of 
$\W_F$. 

By \cite[Proposition 28.6]{BHbook}, if $\Theta\in \Irr(\W_F,\Ql)$, then there is an unramified character $\mu$ of $\W_F$ 
such that $\mu \Theta$ extends to an element of $\Irr_{\Ql}(\G_F)$. The group $\G_F$ being profinite, the representation $\mu\Theta$ is automatically integral, and we deduce that $\Theta$ is integral if and only if $\mu$ takes values in $\Zl^\times$. As explained 
in \cite[1.8]{Vproposal}, this happens if and only if $\det(\Theta)$ takes values in $\Zl^\times$, hence one deduces the well-known fact:

\begin{lemma}[{\cite[Lemma 1.9]{Vproposal}}] The representation~$\Theta\in 
\Irr(\W_F,\Ql)$ is integral if and only if $\det(\Theta)$ is integral.
\end{lemma}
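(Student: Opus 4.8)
The plan is to reduce the statement to the standard fact that an irreducible $\ell$-adic representation is integral if and only if its central (here: determinant) character takes values in $\Zl^\times$, which is exactly what the preceding paragraph sets up via \cite[Proposition 28.6]{BHbook} and \cite[1.8]{Vproposal}. So the work is essentially bookkeeping: one direction is trivial, the other needs the twisting argument already recalled.

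First I would dispose of the easy implication. If $\Theta$ is integral, fix a $\Zl[\W_F]$-stable lattice $L\subset V_\Theta$; then $\Theta(w)$ preserves $L$ for every $w\in\W_F$, so in a $\Zl$-basis of $L$ each $\Theta(w)$ lies in $\GL_{\dim\Theta}(\Zl)$, whence $\det(\Theta(w))\in\Zl^\times$. Thus $\det(\Theta)$ takes values in $\Zl^\times$, i.e. the lattice $\Zl\subset\Ql$ is stable under $\det(\Theta)$, so $\det(\Theta)$ is integral. Note this direction does not use irreducibility.

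For the converse, suppose $\det(\Theta)$ is integral. By \cite[Proposition 28.6]{BHbook} there is an unramified character $\mu$ of $\W_F$ such that $\mu\Theta$ extends to a representation of the profinite group $\G_F$; since $\G_F$ is profinite (hence compact) and the representation is smooth, $\mu\Theta$ stabilises a lattice and is therefore integral. Now $\det(\Theta)=\mu^{-n}\det(\mu\Theta)$ where $n=\dim\Theta$; as $\det(\mu\Theta)$ is integral (being the determinant of an integral representation, by the easy direction) and $\det(\Theta)$ is integral by hypothesis, we conclude that $\mu^n$ is integral, and since $\mu$ is a character this forces $\mu(\Fr)^n\in\Zl^\times$, hence $\mu(\Fr)\in\Zl^\times$ (a unit has unit $n$-th roots in the valued field $\Ql$), so $\mu$ itself is integral. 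Finally $\Theta=\mu^{-1}(\mu\Theta)$ is a twist of an integral representation by an integral character: tensoring a $\Zl$-lattice stable under $\mu\Theta$ with the rank-one $\Zl$-lattice underlying $\mu^{-1}$ gives a $\Zl[\W_F]$-stable lattice in $V_\Theta$, so $\Theta$ is integral.

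The only genuine content, and hence the "main obstacle" if one insists on self-containedness, is the existence of the unramified twist $\mu$ making $\mu\Theta$ factor through $\G_F$ — but this is precisely \cite[Proposition 28.6]{BHbook}, which we are entitled to cite, so in practice the proof is short and the remaining steps are the routine lattice manipulations sketched above; indeed the cleanest write-up simply invokes \cite[Lemma 1.9]{Vproposal} directly, the argument above being the unwinding of its proof.
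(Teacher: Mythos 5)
Your proof is correct and takes the same route as the paper: the preamble to the lemma already sets up the argument by twisting by an unramified character $\mu$ (via \cite[Proposition 28.6]{BHbook}) so that $\mu\Theta$ extends to the profinite group $\G_F$ and is therefore integral, and then comparing determinants to control $\mu$. You have simply unwound the step the paper delegates to \cite[1.8]{Vproposal} — extracting $\mu(\Fr)\in\Zl^\times$ from $\mu(\Fr)^n\in\Zl^\times$ — and recorded the routine lattice-twist at the end; nothing is missing.
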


Let~$\Theta\in \Irr(\W_F,\Ql)_e$ and choose a lattice~$L$ in~$V_{\Theta}$.   In this setting, we have a Brauer--Nesbitt principle:

\begin{lemma}[{\cite[1.10]{Vproposal}}]
The semi-simplification of the~$\Fl$-representation~$L/\m L$ is independent of the choice of~$L$
\end{lemma}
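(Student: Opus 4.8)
The plan is to reduce to the classical Brauer--Nesbitt theorem for finite groups, using the material recalled above to replace $\W_F$ by a finite quotient.

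First I would remove the ramification. By \cite[Proposition~28.6]{BHbook} there is an unramified character $\mu$ of $\W_F$ such that $\mu\Theta$ extends to a smooth representation of $\G_F$; since $\Theta$ is integral, $\mu$ takes values in $\Zl^\times$, so it is integral with reduction $\bar\mu\colon\W_F\to\Fl^\times$. As $\mu$ is scalar-valued in $\Zl^\times$, a $\Zl$-lattice in $V_\Theta$ is $\W_F$-stable for $\Theta$ if and only if it is $\W_F$-stable for $\mu\Theta$, and for such $L$ one has $L/\m L\cong(\text{its reduction via }\mu\Theta)\otimes\bar\mu^{-1}$ as $\Fl[\W_F]$-modules. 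Twisting by $\bar\mu^{-1}$ is an exact autoequivalence of $\Rep(\W_F,\Fl)$ commuting with semisimplification, so it suffices to treat $\mu\Theta$; equivalently, we may assume $\Theta$ extends to $\G_F$. But $\G_F$ is profinite, hence compact, so an irreducible smooth $\Ql$-representation of it is finite-dimensional with open --- hence finite-index --- kernel, and therefore factors through a finite quotient; restricting along $\W_F\hookrightarrow\G_F$ we conclude that $\Theta$ factors through a finite quotient $\bar G$ of $\W_F$.

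Now a $\W_F$-stable $\Zl$-lattice $L$ in $V_\Theta$ is a $\Zl[\bar G]$-module, $L/\m L$ is a finite-dimensional $\Fl[\bar G]$-module, and its semisimplification is well defined by Jordan--H\"older; it remains to show independence of $L$. Given two lattices $L_1,L_2$, after replacing $L_2$ by $cL_2$ for a suitable $c\in\Zl\setminus\{0\}$ --- which does not change the isomorphism class of $L_2/\m L_2$ as an $\Fl[\bar G]$-module, multiplication by $c$ being $\bar G$-equivariant --- we may assume $L_2\subset L_1$. This is precisely the Brauer--Nesbitt theorem for the finite group $\bar G$. To prove it, one descends $L_1,L_2$ to lattices $L_{1,0},L_{2,0}$ over the ring of integers $\mathcal{O}_K$ of a finite extension $K/\Q_\ell$ large enough that both descend --- possible because $\bar G$ is finite, so $V_\Theta$ and the two lattices are already defined over a finite extension; then $\mathcal{O}_K$ is a genuine discrete valuation ring with finite residue field $k\subset\Fl$ and $L_i/\m L_i=\Fl\otimes_k(L_{i,0}/\p_K L_{i,0})$. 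Since base change $k\to\Fl$ is exact, it induces a homomorphism $G_0(k[\bar G])\to G_0(\Fl[\bar G])$, so it is enough to prove $[L_{1,0}/\p_K L_{1,0}]=[L_{2,0}/\p_K L_{2,0}]$ in $G_0(k[\bar G])$. Over the discrete valuation ring $\mathcal{O}_K$ one inducts on the (now finite) length of $L_{1,0}/L_{2,0}$; the inductive step reduces to the case where this module is simple, i.e.\ $\p_K L_{1,0}\subset L_{2,0}\subset L_{1,0}$, and then the two exact sequences of $k[\bar G]$-modules
\begin{gather*}
0\to L_{2,0}/\p_K L_{1,0}\to L_{1,0}/\p_K L_{1,0}\to L_{1,0}/L_{2,0}\to 0,\\
0\to \p_K L_{1,0}/\p_K L_{2,0}\to L_{2,0}/\p_K L_{2,0}\to L_{2,0}/\p_K L_{1,0}\to 0,
\end{gather*}
together with the isomorphism $\p_K L_{1,0}/\p_K L_{2,0}\cong L_{1,0}/L_{2,0}$ given by multiplication by a uniformizer, yield the equality of classes in $G_0(k[\bar G])$; pushing forward to $G_0(\Fl[\bar G])$ gives $(L_1/\m L_1)^{\ss}\cong(L_2/\m L_2)^{\ss}$.

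The genuinely non-formal points are the reduction to a finite quotient (handled by \cite[Proposition~28.6]{BHbook} and compactness of $\G_F$) and, inside the finite-group case, the fact that $\Zl$ is not a discrete valuation ring: a torsion $\Zl$-module such as $\Zl/(c)$ has no simple submodule, so there is no composition series and no length to induct on, and one really must pass to a genuine DVR $\mathcal{O}_K$ first. Everything else is bookkeeping.
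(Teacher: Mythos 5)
Your argument is correct, and it is essentially the standard proof of the Brauer--Nesbitt principle for $\W_F$; note that the paper itself gives no proof here, only a citation to Vign\'eras \cite[1.10]{Vproposal}, so there is nothing in the text to compare against line by line. The two genuine reductions you identify are precisely the ones that matter: (i) twisting by an integral unramified $\mu$ to extend to the compact group $\G_F$ and hence to a finite quotient, using that $\mu$ scalar and unitary means it neither changes the set of stable lattices nor (after the exact autoequivalence of twisting by $\bar\mu^{-1}$) the semisimplified reduction; and (ii) descending from $\Zl$ to a finite extension $\OO_K$ so that a length induction is available, since $\Zl$ is a valuation ring with divisible value group and torsion modules such as $\Zl/(c)$ have no composition series. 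Your rescaling step, the two short exact sequences, and the uniformizer isomorphism $\p_K L_{1,0}/\p_K L_{2,0}\cong L_{1,0}/L_{2,0}$ correctly carry out the finite-group Brauer--Nesbitt argument in $G_0(k[\bar G])$, and pushing forward along the exact base change $k\to\Fl$ finishes the proof. One minor remark: the descent step could be bypassed entirely by comparing Brauer characters --- the reduction mod $\m$ of the characteristic polynomial of each $\Theta(g)$ on $L$ is visibly independent of $L$, and two semisimple $\Fl[\bar G]$-modules with the same Brauer character are isomorphic --- but your route is equally standard and fully rigorous as written.
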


We let 
\[r_\ell(\Theta)=[L/\m L]^{ss}\]
denote this semisimplification.  If~$r_\ell(\Theta)$ is irreducible, we let \[\overline{\Theta}=r_{\ell}(\Theta),\] and say that~$\Theta$ \emph{lifts}~$\overline{\Theta}$.

Suppose now~$\Psi\in \Irr(\W_F,\Fl)$ (because we treat~$\ell$-adic and modular representations uniformly, in both cases we will often use~$\Psi$ for an element of $\Irr(\W_F,\Fl)$, but when needed for reduction modulo $\ell$, we will use $\Theta$ in the $\ell$-adic case as we did above). In this case, one can always lift~$\Psi$ to an irreducible representation in~$\Irr(\W_F,\Ql)_e$, this follows from the description of~$\Irr(\W_F,R)$ given in Theorem \ref{irreps of the Weil group}, and is explained in~\cite[2.6]{Vpartialcorrespondence}.    While not unique in general, when we wish to choose a lift of~$\Psi$ to~$\Irr(\W_F,\Ql)_e$ we will denote it by~$\tilde{\Psi}$.

On the other hand, it is not true that the reduction modulo $\ell$ of $\Theta\in \Irr(\W_F,\Ql)_e$ is irreducible in general, however it is well understood, and described 
in \cite[1.16]{Vproposal}. We first fix some notations. 
Take $\Theta\in \Irr(\W_F,\Ql)_e$ and let $\beta$ be an integral unramified character of $\W_F$ such that 
$\beta\Theta$ extends to $\G_F$. Write $\Psi$ under the form $\Ind_{\W_E}^{\W_F}(\Psi^{\tr}\otimes \tau)$ as in Theorem \ref{irreps of the Weil group}. Write
$\Theta^{\tr}$ under the form $\Ind_{\W_{E_n}}^{\W_E}(\chi)$ for some positive $n$ and some tamely ramified character $\chi$ of $\W_{E_n}$. 
Both $\Theta^{\tr}$ and $\chi$ are integral, and $\chi'=\beta_{E_n}\chi$ is a character of finite order, which can uniquely be written $\chi'=\alpha_{E_n} \chi'_\ell$ for 
$\alpha$ a $\Gal_{E}(E_d)$-regular character of $\W_{E_d}$ where $d$ a divisor of $n$, 
and $\chi'_\ell$ is of order a power of $\ell$. The representation $\Ind_{\W_{E_d}}^{\W_{E}}(\overline{\alpha})$ is an irreducible representation of 
$\W_E$, so 
\[\Psi(\alpha,\beta):=\overline{\beta_E}^{-1} \Ind_{\W_{E_d}}^{\W_{E}}(\overline{\alpha})= \Ind_{\W_{E_d}}^{\W_{E}}(\overline{\beta_{E_d}^{-1}\alpha})\] too. 
The representation $r_\ell(\tau)$ is also irreducible, and we thus write it $\overline{\tau}$. 
The quotient $m=n/d$ is either equal to $1$, or of the form $o(\nu_E^d)\ell^a$ (here $\nu$ is of course the $\ell$-modular absolute value) for an integer $a\geq 0$.

\begin{prop}\label{reduction of irreps}
Take $\Theta\in \Irr(\W_F,\Ql)_e$, fix the notations as above, then: 
\[r_\ell(\Theta)=\bigoplus_{k=0}^{m-1} \nu^{k} \Ind_{\W_{E}}^{\W_{F}}(\Psi(\alpha,\beta)\otimes \overline{\tau})\] 
where $ \Ind_{\W_{E}}^{\W_{F}}(\Psi(\alpha,\beta)\otimes \overline{\tau})$ hence all its twists are irreducible.
\end{prop}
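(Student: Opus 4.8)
The plan is to reduce the general statement to the two ingredients it is built from: the behaviour of reduction modulo $\ell$ on tamely ramified characters of $\W_{E_n}$ (the abelian, essentially class-field-theoretic heart of the matter) and the fact that reduction modulo $\ell$ commutes with induction from $\W_E$ to $\W_F$ in the situations at hand. First I would recall that $r_\ell$ commutes with tensoring by an integral character and with (finite) induction, since a $\W_F$-stable lattice in $\Ind_{\W_E}^{\W_F}(L')$ can be taken to be $\Ind_{\W_E}^{\W_F}(L')$ for a $\W_E$-stable lattice $L'$, and reduction is exact; combined with the Brauer--Nesbitt invariance of $[L/\m L]^{ss}$ this lets me write $r_\ell(\Theta) = \overline{\beta_E}^{-1}\,\Ind_{\W_E}^{\W_F}\!\big(r_\ell(\beta_E\Theta^{\tr})\otimes r_\ell(\tau)\big)$, using $\Theta = \Ind_{\W_E}^{\W_F}(\Theta^{\tr}\otimes\tau)$, the fact that $r_\ell(\tau)=\overline{\tau}$ is irreducible (it stays irreducible on $P_F$, a pro-$p$ group with $p\neq\ell$, so reduction is harmless there), and that $\beta$ is unramified hence its reduction is an honest character.

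The crux is then to compute $r_\ell\big(\Ind_{\W_{E_n}}^{\W_E}(\chi')\big)$ for the finite-order character $\chi' = \beta_{E_n}\chi = \alpha_{E_n}\,\chi'_\ell$. Here I would use that $\chi'_\ell$ has $\ell$-power order, so reduces to the trivial character, giving $r_\ell(\chi') = \overline{\alpha_{E_n}}$; then $r_\ell\big(\Ind_{\W_{E_n}}^{\W_E}(\chi')\big) = \big[\Ind_{\W_{E_n}}^{\W_E}(\overline{\alpha_{E_n}})\big]^{ss}$, and since $\alpha$ is $\Gal_E(E_d)$-regular and $\overline{\alpha_{E_n}} = (\overline\alpha)_{E_n}$, Mackey theory over the cyclic group $\Gal_E(E_n)$ expresses this as $\bigoplus_{\sigma\in\Gal_E(E_d)\backslash\Gal_E(E_n)} \Ind_{\W_{E_d}}^{\W_E}(\overline\alpha)^{\sigma}$. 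The $\Gal_E(E_d)$-regularity of $\alpha$ forces $\Ind_{\W_{E_d}}^{\W_E}(\overline\alpha)$ to be irreducible (Lemma \ref{mackey irred criterion}), and the $m = n/d$ conjugates are precisely the unramified twists $\nu_E^{k}\,\Ind_{\W_{E_d}}^{\W_E}(\overline\alpha)$ for $0\le k\le m-1$ — because conjugating $\overline{\alpha_{E_n}}$ by Frobenius multiplies the induced representation by the unramified character $\nu_E$ and these twists cycle with period $m$ (this is exactly Corollary \ref{powers of nu in ramification group} applied over $E$, identifying $m$ with $1$ or $o(\nu_E^d)\ell^a$). Inducing up to $\W_F$, twisting back by $\overline{\beta_E}^{-1}$ to form $\Psi(\alpha,\beta)$, and using that $\nu_E$ restricted appropriately matches $\nu$ gives the displayed sum $\bigoplus_{k=0}^{m-1}\nu^k\,\Ind_{\W_E}^{\W_F}(\Psi(\alpha,\beta)\otimes\overline\tau)$.

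It remains to see that each summand is irreducible, i.e.\ that $\Ind_{\W_E}^{\W_F}(\Psi(\alpha,\beta)\otimes\overline\tau)$ is irreducible. By Lemma \ref{mackey irred criterion} this amounts to checking that $\Psi(\alpha,\beta)\otimes\overline\tau$ is irreducible and $\Gal(E/F)$-regular; irreducibility follows as above, and regularity I would deduce by comparing with the $\ell$-adic statement: $\Theta = \Ind_{\W_E}^{\W_F}(\Theta^{\tr}\otimes\tau)$ is irreducible so $\Theta^{\tr}\otimes\tau$ is $\Gal(E/F)$-regular, and regularity is detected on restriction to $P_F$ together with tame data — reduction mod $\ell$ does not collapse the relevant conjugates because the obstruction would be an $\ell$-power-order phenomenon that is already accounted for in passing from $n$ to $d$. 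I expect the main obstacle to be precisely this last regularity/non-collapse bookkeeping — making sure that the only merging of conjugates under reduction is the one producing the factor $m$, and that distinct $\Gal(E/F)$-orbits stay distinct — which is where one genuinely needs the structure theory of \cite[1.16]{Vproposal} rather than soft exactness arguments; everything else is a formal consequence of commuting $r_\ell$ with induction and tensoring plus Mackey over cyclic groups.
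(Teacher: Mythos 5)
The paper itself does not prove Proposition~\ref{reduction of irreps}: it is imported wholesale from Vign\'eras's work (``it is well understood, and described in \cite[1.16]{Vproposal}''), with the surrounding paragraphs only fixing notation. So there is no ``paper's own proof'' to compare against, only a citation, and your reconstruction should be read as an attempt to fill in what that reference contains. The overall plan you sketch --- commute $r_\ell$ with $\Ind_{\W_E}^{\W_F}$, with twisting by the integral unramified character $\beta$, observe that $r_\ell(\tau)$ stays irreducible because $\tau\mid_{P_F}$ is irreducible and $P_F$ is pro-$p$ with $p\neq\ell$, reduce the problem to $r_\ell\bigl(\Ind_{\W_{E_n}}^{\W_E}(\chi')\bigr)$, kill $\chi'_\ell$ in reduction, and invoke $\Gal_E(E_d)$-regularity and Lemma~\ref{mackey irred criterion} for irreducibility --- is the right sequence of reductions, and you have correctly identified that the real content is the ``non-collapse'' bookkeeping which ultimately forces you back to the structure theory of \cite[1.16]{Vproposal}.

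One step as written is not correct, even allowing for sketchiness. You write that ``Mackey theory over the cyclic group $\Gal_E(E_n)$ expresses this as $\bigoplus_{\sigma\in\Gal_E(E_d)\backslash\Gal_E(E_n)} \Ind_{\W_{E_d}}^{\W_E}(\overline\alpha)^{\sigma}$.'' For $\sigma$ lying in a quotient of $\W_E$, conjugating $\Ind_{\W_{E_d}}^{\W_E}(\overline\alpha)$ by $\sigma$ produces an isomorphic representation of $\W_E$, and likewise $\Ind_{\W_{E_d}}^{\W_E}(\overline\alpha^\sigma)\simeq\Ind_{\W_{E_d}}^{\W_E}(\overline\alpha)$, so that sum would just be $m$ copies of the same thing, not the $m$ distinct unramified twists the proposition asserts. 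Moreover the indexing set $\Gal_E(E_d)\backslash\Gal_E(E_n)$ does not literally parse, since $\Gal_E(E_d)$ is a quotient, not a subgroup, of $\Gal_E(E_n)$. The correct mechanism producing the factor $m$ and the twists $\nu^k$ is transitivity of induction through the intermediate extension $E_d$, the projection formula $\Ind_{\W_{E_n}}^{\W_{E_d}}(\overline\alpha_{E_n})=\overline\alpha\otimes\Ind_{\W_{E_n}}^{\W_{E_d}}(\mathbf 1)$, and the structure of the regular $\Fl$-representation of the cyclic group $\Gal(E_n/E_d)\simeq\Z/m\Z$: with $m=m'\ell^a$ and $m'=o(\nu_E^d)$, this regular representation semisimplifies to $\ell^a$ copies of each of the $m'$ unramified characters trivial on $\W_{E_n}$, and these are exactly $\nu_{E_d}^j=(\nu_E^j)_{E_d}$ for $0\le j<m'$ by Corollary~\ref{powers of nu in ramification group}. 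Pushing the $\nu_E^j$ through $\Ind_{\W_{E_d}}^{\W_E}$ and then $\Ind_{\W_E}^{\W_F}$ via the projection formula gives the displayed $\bigoplus_{k=0}^{m-1}\nu^k(\cdots)$. If you replace the Mackey claim with this argument, the rest of your outline goes through, subject to the one genuinely non-formal point you flagged: that $\overline\alpha$ remains $\Gal_E(E_d)$-regular (this holds because $\alpha$ has order prime to $\ell$ so reduction is faithful on it) and that $\Psi(\alpha,\beta)\otimes\overline\tau$ remains $\Gal(E/F)$-regular, for which one really does need the content of \cite[1.16]{Vproposal}.
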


According to Corollary, we can restate Proposition \ref{reduction of irreps} in the following less precise but simpler form.

\begin{prop}\label{reduction of irreps simpler}
Take $\Theta\in \Irr(\W_F,\Ql)_e$, then either $r_\ell(\Theta)$ is irreducible, or it is of the form 
\[r_\ell(\Theta)=\ell^a({\textstyle\bigoplus_{k=0}^{o(\Psi)-1}} \nu^k \Psi)\] for $a\geq 0$ and $\Psi \in \Irr(\W_F,\Fl)$.
\end{prop}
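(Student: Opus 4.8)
The plan is to deduce Proposition \ref{reduction of irreps simpler} directly from the more precise Proposition \ref{reduction of irreps} together with Corollary \ref{powers of nu in ramification group}, so essentially no new representation-theoretic work is needed: one just has to bookkeep the combinatorics of the integer $m=n/d$.

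First I would recall the setup of Proposition \ref{reduction of irreps}: given $\Theta\in\Irr(\W_F,\Ql)_e$, we write $\Xi:=\Ind_{\W_E}^{\W_F}(\Psi(\alpha,\beta)\otimes\overline{\tau})$, which is irreducible (as stated there), so that
\[
r_\ell(\Theta)=\bigoplus_{k=0}^{m-1}\nu^k\Xi .
\]
If $m=1$, then $r_\ell(\Theta)=\Xi$ is irreducible and we are in the first case of the statement. So assume $m>1$; by the discussion preceding Proposition \ref{reduction of irreps}, $m$ is then of the form $o(\nu_E^d)\ell^a$ for some integer $a\geq 0$, where $d\mid n$ and $E$ is the relevant tamely ramified extension.

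The key step is to identify $o(\nu_E^d)$ with $o(\Xi)$, so that the exponent range $k=0,\dots,m-1$ breaks up into $\ell^a$ copies of a full $\nu$-orbit of $\Xi$ of length $o(\Xi)$. For this I would apply Corollary \ref{powers of nu in ramification group} (and Corollary \ref{ramification group}) to the irreducible $\Fl$-representation $\Psi:=\Xi$ of $\W_F$: writing $\Xi=\Ind_{\W_E}^{\W_F}(\Psi(\alpha,\beta)\otimes\overline{\tau})$ with $\Psi(\alpha,\beta)=\Ind_{\W_{E_d}}^{\W_E}(\overline{\beta_{E_d}^{-1}\alpha})$ and $\overline{\beta_{E_d}^{-1}\alpha}$ a $\Gal_E(E_d)$-regular tamely ramified character, the corollary gives $o(\Xi)=o(\nu_E^{d})$ (this is exactly the shape of the data in Corollary \ref{ramification group}, with $E_n$ there replaced by $E_d$ here and $r/e$ there equal to the residue degree $d$ of $E_d/E$, since $E_d/E$ is unramified). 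Hence $m=o(\Xi)\,\ell^a$. Because $\nu^{o(\Xi)}\Xi\simeq\Xi$, the sum $\bigoplus_{k=0}^{m-1}\nu^k\Xi$ equals $\ell^a\bigl(\bigoplus_{k=0}^{o(\Xi)-1}\nu^k\Xi\bigr)$. Setting $\Psi=\Xi\in\Irr(\W_F,\Fl)$ (and noting $o(\Psi)=o(\Xi)$) gives precisely
\[
r_\ell(\Theta)=\ell^a\Bigl(\,\bigoplus_{k=0}^{o(\Psi)-1}\nu^k\Psi\,\Bigr),
\]
which is the second case.

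The only mildly delicate point — and the one I would expect to be the main obstacle — is making the identification $o(\nu_E^d)=o(\Xi)$ airtight, i.e. correctly matching the indices $(r,e,n)$ appearing in Corollary \ref{ramification group} with the indices $(E,E_d,d)$ and $\tau$ coming out of the reduction-mod-$\ell$ description; in particular one must check that passing from $\Psi(\alpha,\beta)$ to $\Psi(\alpha,\beta)\otimes\overline\tau$ and then inducing to $\W_F$ does not change the relevant order, which is exactly the content of Corollary \ref{powers of nu in ramification group} applied to $\Xi$ (the factor $\overline\tau$ restricts irreducibly to $P_F$ and contributes the "non-tame" part, leaving the unramified-twist stabilizer governed by the tame induction data). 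Everything else is formal: the irreducibility of each $\nu^k\Xi$ is already asserted in Proposition \ref{reduction of irreps}, and the rewriting of the direct sum is immediate once $m=o(\Psi)\ell^a$ is known. I would also remark that the statement "either $r_\ell(\Theta)$ is irreducible, or ..." with $a\geq 0$ is slightly redundant, since $a$ could be $0$ with $o(\Psi)>1$, or $o(\Psi)=1$ with $a>0$ forcing reducibility; but the dichotomy as phrased ($m=1$ versus $m>1$) is the clean way to organize the two cases.
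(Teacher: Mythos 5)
Your proposal follows essentially the same route as the paper's own two-line proof: invoke Proposition \ref{reduction of irreps} to write $r_\ell(\Theta)=\bigoplus_{k=0}^{m-1}\nu^k\Xi$, use Corollary \ref{powers of nu in ramification group} to identify $o(\nu_E^d)$ with $o(\Xi)$, and then regroup the sum into $\ell^a$ full $\nu$-orbits. One small imprecision in your parenthetical aside: in the notation of Corollary \ref{ramification group} the quantity $r/e$ is the residue degree of $E_d/F$, which is $d\cdot f(E/F)$ rather than $d$ itself; this does not affect the conclusion, since $o(\nu^{r/e})=o(\nu_E^d)$ is exactly what Corollary \ref{powers of nu in ramification group} records.
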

\begin{proof}
In Proposition \ref{reduction of irreps}, the integer $m$ is of the form $o(\nu_E^d)\ell^a$ when not equal to $1$, but 
$\Psi=Ind_{\W_{E}}^{\W_{F}}(\Psi(\alpha,\beta)\otimes \overline{\tau})$ satisfies $o(\Psi)=o(\nu_E^d)$ thanks to 
Corollary \ref{powers of nu in ramification group}. The result is now a consequence of $o(\Psi)$'s definition.
\end{proof}

We deduce the following corollary that we shall use later. 

\begin{corollary}\label{unramified in reduction}
Let $\Theta\in \Irr(\W_F,\Ql)_e$. Then the following are equivalent:
\begin{enumerate}[{(1)}] 
\item $r_\ell(\Theta)^{I_F}\neq 0$.
\item There is $n\geq 1$ equal to $1$ or otherwise of the form $n=o(\nu)\ell^r$ with $r\geq 0$, and $\chi$ an integral character of $\mathrm{W}_{E_n}$ 
with $\overline{\chi}=\mu_{E_n}$ for $\mu$ an unramified character of $\W_F$, such that 
\[\Theta= \Ind_{\mathrm{W}_{E_n}}^{\mathrm{W}_F}(\chi).\]  
\end{enumerate}
When they are satisfied \[{r_\ell(\Theta)}^{I_F}=r_\ell(\Theta)={\textstyle\bigoplus_{k=0}^{n-1}} \nu^k \mu=\ell^r 
({\textstyle\bigoplus_{k=0}^{o(\nu)-1}} \nu^k \mu).\]
\end{corollary}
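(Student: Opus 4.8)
The plan is to deduce this corollary from Proposition~\ref{reduction of irreps simpler} together with Corollary~\ref{unramified in reduction}'s own internal consistency, by chasing through when the reduction of an integral irreducible $\ell$-adic representation contains a nonzero $I_F$-fixed vector. First I would observe that $r_\ell(\Theta)^{I_F}\neq 0$ forces every irreducible constituent of $r_\ell(\Theta)$ to be unramified: indeed $r_\ell(\Theta)^{I_F}$ is a $\W_F$-stable subspace (as $I_F\triangleleft\W_F$), and by Brauer--Nesbitt it is a sum of constituents; any irreducible constituent meeting it is itself unramified by the discussion in the ``Notations'' subsection (an irreducible representation with nonzero $I_F$-fixed space is an unramified character). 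Conversely if some constituent is an unramified character $\mu$, then by Proposition~\ref{reduction of irreps simpler} \emph{all} constituents are twists $\nu^k\Psi$ of a single $\Psi$ (or $r_\ell(\Theta)$ is irreducible), and since one of them, $\mu$, is unramified and $\nu$ is unramified, they are \emph{all} unramified; hence $r_\ell(\Theta)=r_\ell(\Theta)^{I_F}$. This already gives the final displayed equality modulo identifying the precise shape.

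Next I would pin down the shape using Proposition~\ref{reduction of irreps} in its precise form. Writing $\Theta=\Ind_{\W_E}^{\W_F}(\Theta^{\tr}\otimes\tau)$ and $\Theta^{\tr}=\Ind_{\W_{E_n}}^{\W_E}(\chi)$ as in the setup preceding Proposition~\ref{reduction of irreps}, the condition that $r_\ell(\Theta)$ is unramified forces $\overline\tau$ to be trivial on $P_F$, hence (as $\tau$ restricts irreducibly to $P_F$) $\tau$ is a tamely ramified character and we may absorb it, i.e. take $E=F$ and $\Theta=\Ind_{\W_{E_n}}^{\W_F}(\chi)$ for a tamely ramified integral character $\chi$ of $\W_{E_n}$ (renaming $E_n$ for the unramified extension $F_n$). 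Then $r_\ell(\Theta)=\bigoplus_{k=0}^{m-1}\nu^k\Ind_{\W_{F_d}}^{\W_F}(\overline{\beta_{F_d}^{-1}\alpha})$, and unramifiedness of $\Ind_{\W_{F_d}}^{\W_F}(\overline{\beta_{F_d}^{-1}\alpha})$ forces $d=1$, so $\overline{\beta^{-1}\alpha}=:\mu$ is an unramified character of $\W_F$ and $m=n$. The constraint on $m=n$ from the setup — either $1$ or of the form $o(\nu_E^d)\ell^a=o(\nu)\ell^a$ since $d=1$ and $E=F$ — gives exactly the stated form $n=o(\nu)\ell^r$ (taking $r=a$). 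Finally, unwinding $r_\ell(\chi)=\overline\chi$ being the restriction $\mu_{F_n}$ of an unramified character $\mu$ of $\W_F$, and using $\Ind_{\W_{F_n}}^{\W_F}(\mu_{F_n})=\mu\otimes\Ind_{\W_{F_n}}^{\W_F}(\1)=\bigoplus_{k=0}^{n-1}\nu^k\mu$ (since $\Ind_{\W_{F_n}}^{\W_F}(\1)$ is the regular representation of $\W_F/\W_{F_n}\cong\W_F/(I_F\rtimes\Fr^{n\Z})$, which as an unramified representation is $\bigoplus_{k=0}^{n-1}\nu^k$ after identifying characters of $\Z/n$), yields the displayed formula, the last equality $\bigoplus_{k=0}^{n-1}\nu^k\mu=\ell^r\bigoplus_{k=0}^{o(\nu)-1}\nu^k\mu$ being immediate from $n=o(\nu)\ell^r$ and $\nu$ having order $o(\nu)$.

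For the direction (2)$\Rightarrow$(1), I would simply run the computation above in reverse: given $\Theta=\Ind_{\W_{F_n}}^{\W_F}(\chi)$ with $\overline\chi=\mu_{F_n}$, Brauer--Nesbitt (or directly, exactness of reduction on a chosen lattice compatible with induction) gives $r_\ell(\Theta)=[\Ind_{\W_{F_n}}^{\W_F}(\overline\chi)]^{\ss}=[\Ind_{\W_{F_n}}^{\W_F}(\mu_{F_n})]^{\ss}=\bigoplus_{k=0}^{n-1}\nu^k\mu$, which is visibly unramified and nonzero, so (1) holds. I expect the main obstacle to be purely bookkeeping: correctly matching the extension $E$, the integers $n,d,m$, and the characters $\alpha,\beta,\chi$ of Proposition~\ref{reduction of irreps} to the cleaner data in the corollary, and in particular checking carefully that $r_\ell(\Theta)^{I_F}\neq 0$ really does force $d=1$ and $\overline\tau$ trivial rather than merely forcing \emph{one} constituent to be unramified — this uses Proposition~\ref{reduction of irreps simpler}'s assertion that the non-irreducible reductions are \emph{isotypic up to $\nu$-twist}, which is the crucial rigidity. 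No genuinely hard analytic or cohomological input is needed; everything is a consequence of the explicit description of $r_\ell$ already established.
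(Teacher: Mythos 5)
Your proposal is correct and follows essentially the same route as the paper's proof: both feed the hypothesis into Proposition~\ref{reduction of irreps}, observe that one unramified constituent forces all constituents of $r_\ell(\Theta)$ to be unramified characters, and then pin down $d=1$, $E=F$, and $\overline\chi=\mu_{E_n}$ from that. The one small difference is how you deduce $E_d=F$ and that $\tau$ is a tame character: you go via triviality of $\overline\tau$ on $P_F$ (and hence of $\tau$, since $p\neq\ell$), concluding that $\Theta$ is tamely ramified and then re-normalizing the decomposition with $E=F$, $\tau=\1$; the paper instead reads $E_d=F$ and $\dim\overline\tau=1$ directly off the dimension $[E_d:F]\cdot\dim\overline\tau=1$ of the unramified character $\Ind_{\W_E}^{\W_F}(\Psi(\alpha,\beta)\otimes\overline\tau)$, which is a touch more economical and avoids having to reparse the data $(n,d,m,\alpha,\beta,\chi)$ after changing the decomposition. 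Both are sound; yours just requires a short extra justification for the ``absorb $\tau$ and take $E=F$'' step (namely, that a tamely ramified $\Theta$ admits a decomposition in Theorem~\ref{irreps of the Weil group} with $E=F$, $\tau=\1$).
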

\begin{proof}
If $\Theta= \Ind_{\mathrm{W}_{E_n}}^{\mathrm{W}_F}(\chi)$ with $\chi$ as in the statement, then 
\[{r_\ell(\Theta)}^{I_F}=r_\ell(\Theta)=\sum_{k=0}^{n-1} \nu^k \mu\] thanks to Proposition \ref{reduction of irreps}. Conversely, suppose that $r_\ell(\Theta)^{I_F}\neq 0$. Write $\Ind_{\W_E}^{\W_F}(\Theta^{\tr}\otimes \tau)$ and $\Theta^{\tr}=\Ind_{\W_{E_n}}^{\W_E}(\chi)$ as discussed before Proposition \ref{reduction of irreps}. Thanks to [ibid.], we have \[r_\ell(\Theta)=\bigoplus_{k=0}^{m-1} \nu^{k} \Ind_{\W_{E}}^{\W_{F}}(\Psi(\alpha,\beta)\otimes \overline{\tau})\] with all representations $\Ind_{\W_{E}}^{\W_{F}}(\Psi(\alpha,\beta)\otimes \overline{\tau})$ irreducible. However one of them has nonzero $I_F$-fixed subspace by assumption, so they must all be unramified characters. The fact that they are characters implies that $F=E=E_d$, i.e. $d=1$, and that 
$\chi=(\beta^{-1}\alpha)_{E_n}\chi'_\ell$, so that $\overline{\chi}=(\overline{\beta^{-1}\alpha})_{E_n}$. Moreover setting 
$\mu=\overline{\beta^{-1}\alpha}$, the character $\mu_{E_n}$ must be unramified, hence $\mu$ as well, as $I_{E_n}=I_F$. This proves the converse implication.
\end{proof}

\subsection{Tensor products of modular representations of $\W_F$}\label{tensor for W_F}

We now study~$\Psi\otimes \Psi'$ when~$\Psi$ and~$\Psi'$ belong to~$\Irr(\W_F,\Fl)$. First we observe some fundamental differences with the case of characteristic zero, where such a tensor 
product is semisimple: this will have consequences later in the paper, we will have to define a semisimple tensor product for $\W_F$-semisimple Deligne representations. In fact when~$R=\Ql$, 
by \cite[Proposition 28.7]{BHbook}, a finite dimensional representation~$\Phi$ of~$\W_F$ is semisimple if and only if it is \textit{Frobenius semisimple}, i.e. if and only if 
$\Phi(\Fr)$ is semisimple. As an immediate consequence, the tensor product of two semisimple~$\Ql$-representations of~$\W_F$ is itself semisimple. In the modular case the situation is totally different: in general, Frobenius semisimple does not imply semisimple, neither does semisimple imply Frobenius semisimple, and the tensor product does 
not preserve semisimplicity. We give a list of examples illustrating these differences, we use the following lemma:
 
\begin{lemma}\label{uniserial example}
Let~$E/F$ be a quadratic extension and~$\ell=2$.  The~$\Fl$-representation~$\Ind_{\W_E}^{\W_F}(\1_{W_E})$ is uniserial of length~$2$, with both subquotients isomorphic to~$\1_{\W_F}$.
\end{lemma}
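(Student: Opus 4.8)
The plan is to compute $\Ind_{\W_E}^{\W_F}(\1_{\W_E})$ directly and read off its socle filtration over $\Fl$ when $\ell=2$. Since $E/F$ is quadratic, $\W_E$ has index $2$ in $\W_F$; write $\W_F = \W_E \sqcup \W_E\tau$ for some $\tau\in\W_F\setminus\W_E$. The representation $\Ind_{\W_E}^{\W_F}(\1_{\W_E})$ is then two-dimensional over $\Fl$, and on it $\W_E$ acts trivially while $\tau$ acts by swapping the two basis vectors $e_1=[1,\1]$ and $e_2=[\tau,\1]$ (the normalized induction here is just ordinary induction since $\W_F/\W_E$ is finite and we are inducing a trivial character). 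So the action of $\W_F$ factors through $\W_F/\W_E \cong \Z/2\Z$, and the representation is the regular representation $\Fl[\Z/2\Z]$.

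Next I would observe that over a field of characteristic $2$, the group algebra $\Fl[\Z/2\Z]$ is not semisimple: the element $e_1+e_2$ spans a one-dimensional subrepresentation on which the nontrivial element acts trivially, i.e. a copy of $\1_{\W_F}$, and the quotient $\Fl[\Z/2\Z]/\langle e_1+e_2\rangle$ is again one-dimensional with trivial action, i.e. again $\1_{\W_F}$. This gives a nonsplit extension $0\to\1_{\W_F}\to\Ind_{\W_E}^{\W_F}(\1_{\W_E})\to\1_{\W_F}\to0$; it is nonsplit because the image of $\tau$ is the Jordan block $\begin{psmallmatrix}0&1\\1&0\end{psmallmatrix}$ which, in characteristic $2$, is conjugate to $\begin{psmallmatrix}1&1\\0&1\end{psmallmatrix}\neq\Id$, so the representation is not trivial and hence not a direct sum of two trivial characters. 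Therefore the only proper nonzero subrepresentation is $\langle e_1+e_2\rangle$, which shows the module is uniserial of length $2$ with both composition factors isomorphic to $\1_{\W_F}$.

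There is essentially no obstacle here: the only point needing a word of care is the normalization convention for $\Ind$ — one should note that the fixed square root of $q$ plays no role since the modulus character of the pair $(\W_F,\W_E)$ is trivial ($\W_F$ is unimodular, as recalled in the excerpt, and $[\W_F:\W_E]$ is finite), so normalized and unnormalized induction agree and the computation of the underlying module is the naive one. The mild subtlety that $e_1+e_2$ might fail to be a genuine vector if $2=0$ forces it to zero does not occur: $e_1$ and $e_2$ are distinct basis vectors, so $e_1+e_2\neq0$. I would present the argument in exactly this order: identify the representation with $\Fl[\W_F/\W_E]=\Fl[\Z/2\Z]$, exhibit the unique line $\langle e_1+e_2\rangle$, identify the sub and quotient as $\1_{\W_F}$, and finally check nonsplitness via the Jordan form of the transposition in characteristic $2$.
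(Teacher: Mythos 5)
Your argument is correct, but it takes a genuinely different route from the paper's. You first observe that $\W_E$ is normal of index two (automatic since every index-two subgroup is normal), so the action factors through $\W_F/\W_E\cong\Z/2\Z$ and the induced representation is just the regular representation $\Fl[\Z/2\Z]\cong\Fl[X]/(X-1)^2$; from this local ring you read off the unique proper nonzero submodule $\langle e_1+e_2\rangle$ and verify nonsplitness by the Jordan form of the transposition in characteristic $2$. The paper stays at the level of adjunction: Frobenius reciprocity gives that $\1_{\W_F}$ occurs as a submodule with multiplicity one; the quotient is a character $\mu$, and self-duality of $\Ind_{\W_E}^{\W_F}(\1)$ forces $\mu^{-1}$ to occur as a submodule too, so Frobenius reciprocity again gives $\mu_E=\1$, hence $\mu^2=\1$, hence $\mu=\1$ when $\ell=2$; the multiplicity-one count then rules out semisimplicity. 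Your version is more concrete and immediately exhibits the socle filtration; the paper's version is basis-free and closer in spirit to the Frobenius-reciprocity manipulations used elsewhere in the paper. Both are complete. One small remark: your worry that $e_1+e_2$ could vanish in characteristic $2$ is a non-issue for the reason you give (they are distinct basis vectors), so that sentence can be dropped; and the observation that normalized and unnormalized induction agree is correct (the relevant groups are unimodular and the index is finite) and worth keeping.
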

\begin{proof}
It is two-dimensional, and contains~$\1_{\W_F}$ as a submodule with multiplicity one by Frobenius reciprocity law. 
The quotient~$\Ind_{\W_E}^{\W_F}(\1_{\W_E})/\1_{\W_F}$ is a character~$\mu$, so~$\mu^{-1}$ is a submodule of~$\Ind_{\W_E}^{\W_F}(\1)$ which is self-dual.   Hence by Frobenius reciprocity again~$\mu_E=\1$ so~$\mu^2=\1$, but the condition~$\ell=2$ implies that~$\mu$ is then trivial. All in all,~$\Ind_{\W_E}^{\W_F}(\1_{\W_E})$ is indecomposable with head and socle~$\1_{\W_F}$.
\end{proof}
Of course, one can concoct similar examples for any prime~$\ell$ using degree~$\ell$ extensions.

\begin{ex}\label{counter examples}
For these examples we take~$\ell=2$.  In all our examples we choose a separable quadratic extension~$E/F$ with Galois involution~$\sigma$; a character~$\chi:\W_E\rightarrow\overline{\mathbb{F}_2}^\times$, and consider the~$2$-dimensional representation
\[\Phi=\Ind_{\W_E}^{\W_F}(\chi),\]
the~$\overline{\mathbb{F}_2}$-representation induced from~$\chi$.  We fix a geometric Frobenius element~$\Fr_E$ in~$\W_E$.

By the Artin reciprocity isomorphism of local class field theory, we can identify the topologically abelianized quotient~$\W_E^{\ab}$ of~$\W_E$ with the group~$E^\times$, and similarly~$\W_F^{ab}$ with~$F^\times$.  Via this isomorphism the Frobenius elements~$\Fr,\Fr_E$ corresponds to uniformizers~$\w,\w_E$ of~$F$ and~$E$ respectively.  We also recall that the restriction functor from~$\W_F^{\ab}$ to~$\W_E^{\ab}$ (relative to the natural inclusion~$\W_E^{ab}$ into~$\W_F^{ab}$) corresponds to the norm~$N_{E/F}:E^\times\rightarrow F^\times$.  In the second example, we use this identification to from characters of~$\W_F$ and~$\W_E$ to characters of~$F^\times$ and~$E^\times$ respectively.
\begin{enumerate}[{(1)}]
\item  \textit{Frobenius semisimple does not imply semisimple}:  We take~$E/F$ ramified and~$\chi=\1_{\W_E}$. Then~$\Fr_E=\Fr$ acts trivially on~$\Phi$ as~$\Phi_E=\1_{\W_E}\oplus \1_{\W_E}$, however~$\Phi$ is not semisimple according to  Lemma \ref{uniserial example}.

 \item \textit{Semisimple does not imply Frobenius semisimple}: 
 take~$F=\Q_2$ and~$E=F_2$ the unique (up to isomorphism) quadratic unramified extension.  Write 
\[E^\times\simeq \Z\times (\Z/3\Z) \times (1+2\Z_2),\] and, by abuse of notation, we identify~$E^\times$ with this product.   Notice that the norm~$N_{k_E/k_F}$ is trivial as a character of 
$k_E^\times$ 
because~$k_F^\times=\{1\}$, in particular \[N_{E/F}(\Z/3\Z)=\{1\}.\] We take~$\chi$ a non trivial character of~$\Z/3\Z$ with values in~$\overline{\F_2}$ and extend it trivially on~$\Z$ and extend it on~$(1+2\Z_2)$ to a character~$\chi$ of~$E^\times$. Then~$\chi$ does not factor through~$N_{E/F}$, but it satisfies~$\chi(\w_E)=1$. 
The corresponding character~$\chi$ of~$\W_E$ is thus~$\Gal(E/F)$-regular, but satisfies~$\chi(\Fr_E)=1$. Hence 
the representation~$\Ind_{\W_E}^{\W_F}(\chi)$ is an irreducible representation of~$\W_F$. If~$\Fr$ acted via a scalar on this representation, then~$\Ind_{\W_E}^{\W_F}(\chi)$ would be an irreducible representation of~$I_F=I_E$ ($E/F$ is unramified). This is absurd as its restriction to~$\W_E$ is a direct sum of two lines. 
However~$\Fr_E$ acts trivially on~$\Ind_{\W_E}^{\W_F}(\chi)_{E}=\chi\oplus \chi^\sigma$, and moreover one can take~$\Fr_E=\Fr^2$. 

We conclude that~$\Phi$ is irreducible hence semisimple,~$\Phi(\Fr)$ is not a scalar hence not~$\Id$, but~$\Phi(\Fr)^2=\Id$, hence~$\Phi(\Fr)$ is not semisimple because~$\ell=2$.

\item\label{Part3}\label{otimescounter}  \textit{$\otimes$ does not preserve semi-simplicity}: 
take~$\chi$ to be~$\Gal(E/F)$-regular, hence~$\Phi$ and~$\Phi^\vee$ are irreducible. 
The following decomposition of their tensor product, which follows from Mackey theory, 
\[ \Phi \otimes \Phi^\vee=\Ind_{\W_E}^{\W_F}(\1_{\W_E}) \oplus \Ind_{\W_E}^{\W_F}(\chi^{-1}\chi^{\sigma})\] 
and Lemma \ref{uniserial example} show that it is not semisimple.

\end{enumerate}
\end{ex}

\begin{rem}\label{SerreRemark} By the main result of \cite{Serre-tensor}, the tensor product of two semisimple~$\Fl$-representations~$\Phi$ and~$\Phi'$ of respective dimensions~$d$ and~$d'$ is semisimple if~$d+d'-2<\ell$. Example \ref{counter examples} (\ref{Part3}) above fortunately falls outside the range of Serre's result!
\end{rem}

If~$\Phi$ and~$\Phi'$ are two semisimple representations of~$\W_F$, we denote by \[\Phi\otimes_{\ss}\Phi'=(\Phi\otimes \Phi')_{\ss}\] 
the semisimplifaction of their tensor product.  

To end this section, we want to understand~$(\Psi\otimes \Psi')^{I_F}$ when~$\Psi$ and~$\Psi'$ are two irreducible  banal~$\Fl$-representations of~$\W_F$. It is the central result of this section and will be used later.

\begin{prop}\label{banal tensor unramified reduction}
Let~$\Psi$ and~$\Psi'$ be two banal irreducible representations of~$\W_F$, and let 
$\tilde{\Psi}$ and~$\tilde{\Psi}'$ be two~$\ell$-adic lifts of~$\Psi$ and~$\Psi'$. Choose 
$L$ and~$L'$ lattices in~$V_{\tilde{\Psi}}$ and~$V_{\tilde{\Psi}'}$ so that \[\Psi\simeq L\otimes_{\Zl} \Fl\] and 
\[\Psi'\simeq L'\otimes_{\Zl} \Fl.\] Set~$M=L\otimes L'$ and for any~$\Ql$-subspace~$W$ of~$V_{\tilde{\Psi}}\otimes V_{\tilde{\Psi}'}$, set~$\overline{W}=(W\cap M)\otimes_{\Zl} \Fl$. Then  
 \[\overline{(V_{\tilde{\Psi}}\otimes V_{\tilde{\Psi}'})^{I_F}}=(V_\Psi\otimes V_{\Psi'})^{I_F}\] and 
 \[((V_\Psi\otimes V_{\Psi'})^{I_F})_{\ss}\simeq (V_\Psi\otimes_{\ss} V_{\Psi'})^{I_F}.\]
\end{prop}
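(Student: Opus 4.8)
The plan is to deduce both identities from a single structural fact: that the tensor product $\tilde\Psi \otimes \tilde\Psi'$, being a semisimple $\Ql$-representation of $\W_F$ (tensor of semisimples in characteristic zero is semisimple, by the Frobenius-semisimplicity criterion recalled just before Proposition~\ref{banal tensor unramified reduction}), decomposes as a direct sum of irreducibles $\bigoplus_i \Theta_i$, and that each $\Theta_i$ is \emph{integral} — so that reduction mod $\ell$ commutes with this decomposition up to semisimplification. Integrality of each $\Theta_i$ follows from the lemma of \cite{Vproposal}: $\det(\tilde\Psi \otimes \tilde\Psi')$ is a power (times a determinant twist) of $\det\tilde\Psi$ and $\det\tilde\Psi'$, both integral, so $\det\Theta_i$ is integral for each $i$, hence $\Theta_i$ is integral. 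Concretely I would choose lattices: inside $M = L \otimes L'$, which is a $\W_F$-stable lattice in $V_{\tilde\Psi}\otimes V_{\tilde\Psi'}$, pick for each $i$ a lattice $M_i = \Theta_i \cap M$ (up to the usual caveat that $M$ need not split as $\bigoplus M_i$, but $\bigoplus_i M_i \subseteq M$ with finite index prime-to-$\ell$ after scaling — this is where one must be a little careful).

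First I would establish the first identity, $\overline{(V_{\tilde\Psi}\otimes V_{\tilde\Psi'})^{I_F}} = (V_\Psi\otimes V_{\Psi'})^{I_F}$. The inclusion $\subseteq$ is formal: if $v \in (V_{\tilde\Psi}\otimes V_{\tilde\Psi'})^{I_F} \cap M$ then its image $\bar v$ in $M/\m M = V_\Psi \otimes V_{\Psi'}$ is $I_F$-fixed, since reduction is $\W_F$-equivariant. For the reverse inclusion I would argue representation by representation: the $I_F$-fixed subspace $(V_{\tilde\Psi}\otimes V_{\tilde\Psi'})^{I_F}$ is the sum of those $\Theta_i$ which are unramified characters (an irreducible $\W_F$-rep has nonzero $I_F$-fixed vectors iff it is an unramified character, as recalled in the subsection on notations), and likewise $(V_\Psi \otimes V_{\Psi'})^{I_F}$ is built from the unramified-character constituents of $r_\ell(\tilde\Psi\otimes\tilde\Psi')$. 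The point is then that by Corollary~\ref{unramified in reduction}, or more directly by Proposition~\ref{reduction of irreps simpler}, an integral irreducible $\Theta_i$ which is \emph{not} an unramified character has reduction $r_\ell(\Theta_i)$ containing no unramified character at all when $o(\Psi)=1$ — and here is exactly where banality of $\Psi$ and $\Psi'$ is used: one checks that any irreducible constituent of $\tilde\Psi\otimes\tilde\Psi'$ whose reduction contains an unramified character must itself already be an unramified character (its $I_F$-restriction is forced to be trivial). Hence the unramified part is "seen integrally": $(V_\Psi\otimes V_{\Psi'})^{I_F}$ is precisely the reduction of $\bigoplus_{\Theta_i \text{ unram.}} M_i$, which lies in $\overline{(V_{\tilde\Psi}\otimes V_{\tilde\Psi'})^{I_F}}$.

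For the second identity, $((V_\Psi\otimes V_{\Psi'})^{I_F})_{\ss} \simeq (V_\Psi \otimes_{\ss} V_{\Psi'})^{I_F}$, I would use the first identity together with the Brauer--Nesbitt principle: $(V_\Psi\otimes_{\ss}V_{\Psi'})$ is by definition $(V_\Psi\otimes V_{\Psi'})_{\ss} = r_\ell(\tilde\Psi\otimes\tilde\Psi')$ (the reduction, up to semisimplification, of the chosen lattice $M$, independent of $M$), so its $I_F$-fixed part is $\bigoplus_{\Theta_i \text{ unram.}} r_\ell(\Theta_i)$. On the other hand $((V_\Psi\otimes V_{\Psi'})^{I_F})_{\ss}$ is, by the first identity, the semisimplification of $\overline{\bigoplus_{\Theta_i\text{ unram.}} M_i}$, which is again $\bigoplus_{\Theta_i\text{ unram.}} r_\ell(\Theta_i)$ — the non-unramified $\Theta_j$ contribute nothing to the $I_F$-fixed space, as just established, so taking $I_F$-fixed vectors of $V_\Psi\otimes V_{\Psi'}$ and semisimplifying agrees with semisimplifying first then taking $I_F$-fixed vectors. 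The main obstacle I anticipate is the bookkeeping around lattices: $M$ is not literally $\bigoplus_i M_i$, so one must justify that $(W\cap M)\otimes \Fl$ for $W = (V_{\tilde\Psi}\otimes V_{\tilde\Psi'})^{I_F}$ really does compute the span of the unramified constituents of the reduction and nothing spurious — this is where one invokes that $P_F$ acts through a finite quotient and the relevant module categories over $\Zl$ are well-behaved, and where the banality hypothesis $o(\Psi), o(\Psi') \neq$ (things that create extra unramified constituents in reductions, as in Proposition~\ref{reduction of irreps simpler}) does the real work.
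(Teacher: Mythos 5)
Your overall strategy --- decompose $\tilde\Psi\otimes\tilde\Psi'$ into irreducibles $\Theta_i$, identify $(V_{\tilde\Psi}\otimes V_{\tilde\Psi'})^{I_F}$ with the sum of the unramified character constituents, and argue that the non-character constituents contribute nothing to $I_F$-invariants after reduction mod $\ell$ --- does match the skeleton of the paper's proof. But the claim carrying all the weight is asserted, not proved: you write that ``one checks that any irreducible constituent of $\tilde\Psi\otimes\tilde\Psi'$ whose reduction contains an unramified character must itself already be an unramified character (its $I_F$-restriction is forced to be trivial).'' This is precisely what is \emph{not} automatic. Corollary~\ref{unramified in reduction} shows the opposite is a live possibility: an integral irreducible $\Theta$ of dimension $n>1$ \emph{can} reduce to $\bigoplus_k \nu^k\mu$ with $\mu$ unramified, namely when $\Theta=\Ind_{\W_{E_n}}^{\W_F}(\chi)$ with $n=o(\nu)\ell^r$ and $r_\ell(\chi)=\mu_{E_n}$; Proposition~\ref{reduction of irreps simpler} says the same in looser form. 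Nothing about being a constituent of $\tilde\Psi\otimes\tilde\Psi'$ rules this out a priori, so the banality hypothesis must intervene through a genuine argument, not a citation. (Your phrase ``when $o(\Psi)=1$'' also reads as confused, since banality means $o(\Psi)>1$.)

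The paper's proof of exactly this point occupies the bulk of its argument: assuming some ramified constituent $\Psi_k=\Ind_{\W_E}^{\W_F}(\chi)$ has $\overline{V_{\Psi_k}}^{I_F}\neq 0$, it builds an explicit $\W_F$-stable lattice in $\Ind_{\W_E}^{\W_F}(\chi)$ via the Mackey decomposition and the projections $p_s$, shows that a nonzero surjection $\tilde\Psi\otimes\tilde\Psi'\rightarrow\Ind_{\W_E}^{\W_F}(\chi)$ reduces to a surjection $\Psi\otimes\Psi'\rightarrow\Ind_{\W_E}^{\W_F}(\mu_E)$, then dualizes and uses Frobenius reciprocity to conclude that $\mu\nu^k$ is a quotient of $\Psi\otimes\Psi'$ for \emph{every} $k\in\Z$; this forces $\Psi'^\vee\simeq\mu\nu^k\Psi$ for all $k$, hence $\nu\Psi\simeq\Psi$, contradicting banality of $\Psi$. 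None of this appears in your proposal, so the heart of the proposition is missing. A smaller but real problem: your parenthetical that $\bigoplus_i M_i\subseteq M$ ``with finite index prime-to-$\ell$ after scaling'' is false in general --- the index typically does have $\ell$-torsion, which is exactly what makes reduction mod $\ell$ interesting; if it were prime to $\ell$ the statement would be nearly automatic. The paper avoids this trap by never claiming the lattice splits, instead studying one quotient constituent at a time via an explicit surjection.
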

\begin{proof}
 We write 
\[\tilde{\Psi}\otimes \tilde{\Psi}'=\bigoplus_{i}\mu_i\oplus\bigoplus_k \Psi_k\] where~$\mu_i$ are unramified characters, and 
$\Psi_k$ the other ramified irreducible representations of~$\W_F$ occurring. In particular,
\[(\Phi\otimes \Phi')^{I_F}=\bigoplus_{i}\mu_i.\] 

The inclusion 
\[\overline{(V_{\tilde{\Psi}}\otimes V_{\tilde{\Psi}'})^{I_F}}\subset (V_\Psi\otimes V_{\Psi'})^{I_F}\] is clear. Suppose that the inclusion was strict, this would imply that one of the~$\Psi_k$'s is such that 
$(\overline{V_{\Psi_k}})^{I_F}$ is nonzero. For the sake of contradiction we suppose that it is the case. According to Corollary \ref{unramified in reduction}, setting either~$E=F$ or~$E=F_{o(\nu)\ell^m}$ for~$m\geq 0$: 
\[\Psi_k=\Ind_{\mathrm{W}_E}^{\mathrm{W}_F}(\chi)\] for~$\chi$ an integral~$\Gal_F(E)$-regular character of~$E^*$ such that 
$r_\ell(\chi)=\mu_E$ with~$\mu$ an unramified character of~$\W_F$.

Now by definition, the space~$\Hom_{\mathrm{W}_F}(\tilde{\Psi}\otimes \tilde{\Psi}',\Ind_{\mathrm{W}_E}^{\mathrm{W}_F}(\chi))$ is nonzero. 
Take~$U$ a nonzero element in this space, as~$\Ind_{\mathrm{W}_E}^{\mathrm{W}_F}(\chi)$ is irreducible~$U$ is surjective. 
By Mackey theory \[\Ind_{\mathrm{W}_E}^{\mathrm{W}_F}(\chi)\vert_{\mathrm{W}_E}=\bigoplus_{s\in \mathrm{W}_F/\mathrm{W}_E} s.D\] where 
$D$ is a line over~$\Ql$ on which~$\mathrm{W}_E$ acts as~$\chi$, and~$\mathrm{W}_E$ more generally acts on~$s.D$ as~$\chi^s$. Denote by~$p$ the projection on~$D$ with respect to 
$\bigoplus_{s\neq 1} s.D$, then~$p(U(M))=O_D$ is a lattice in~$D$ by \cite[9.3, (vi)]{Vbook}, and \[N=\bigoplus_{s\in \mathrm{W}_F/\mathrm{W}_E} s.O_D=\Ind_{\mathrm{W}_E}^{\mathrm{W}_F}(O_D)\]
 is a~$\W_F$-stable lattice in~$\Ind_{\mathrm{W}_E}^{\mathrm{W}_F}(\chi)$. Notice that for~$s\in \mathrm{W}_F/\mathrm{W}_E$, the map 
~$p_s= s\circ p \circ s^{-1}$ the projection on~$D$ with respect to~$\bigoplus_{s'\neq s} s'.D$. We have 
\[p_s(U(M))=s(p(U(s^{-1}.M)))=s(p(U(M)))=s.O_D,\] hence \[U(M)=\bigoplus_{s\in \mathrm{W}_F/\mathrm{W}_E} p_s(U(M))=\bigoplus_{s\in \mathrm{W}_F/\mathrm{W}_E} 
 s.O_D=N.\]
 
Tensoring by~$\Fl$, we obtain a surjective~$\W_F$-intertwining operator~$\overline{U}$ from 
$\Psi\otimes \Psi'$ onto \[\Ind_{\mathrm{W}_E}^{\mathrm{W}_F}(r_\ell(\chi))=\Ind_{\mathrm{W}_E}^{\mathrm{W}_F}(\mu_E),\] i.e. 
$\Ind_{\mathrm{W}_E}^{\mathrm{W}_F}(\mu_E)$ is a quotient of~$\Psi\otimes \Psi'$. Dualizing, we obtain that 
\[\Ind_{\mathrm{W}_E}^{\mathrm{W}_F}(\mu_E^{-1})\] 
 is a submodule of~$\Psi^\vee\otimes {\Psi'}^\vee$. 
By Frobenius reciprocity, we also have for any~$k\in \Z$:
\[\Hom_{\mathrm{W}_F}(\mu^{-1}\nu^{-k},\Ind_{\mathrm{W}_E}^{\mathrm{W}_F}(\mu_E^{-1}))\simeq 
\Hom_{\mathrm{W}_E}((\mu^{-1}\nu^{-k})_E,\mu_E^{-1})\]
\[\simeq \Hom_{\mathrm{W}_E}(\mu_E^{-1},\mu_E^{-1})\simeq \Fl.\] Hence 
for any~$k$,~$\mu^{-1}\nu^{-k}$ is a submodule of~$\Ind_{\mathrm{W}_E}^{\mathrm{W}_F}(\mu_E^{-1}))$ which is itself 
a submodule of~$\Psi^\vee\otimes {\Psi'}^\vee$. Dualizing again, we deduce that~$\mu\nu^{k}$ is a quotient of~$\Psi\otimes \Psi'$ 
for all~$k$. However this would imply that~${\Psi'}^\vee\simeq \mu\nu^{k}\Psi$ for all~$k$, and this would in particular imply that~$\nu\Psi\simeq \Psi$ which is absurd. Hence we just proved that 
 \[\overline{(V_{\tilde{\Psi}}\otimes V_{\tilde{\Psi}'})^{I_F}}=(V_\Psi\otimes V_{\Psi'})^{I_F}.\] In fact we proved that~$\overline{\Psi_k}^{I_F}=\{0\}$ for all~$\Psi_k$. However~$r_\ell(\Psi_k)^{I_F}=\{0\}$ if and only if~$\overline{\Psi_k}^{I_F}=\{0\}$ according to Corollary \ref{unramified in reduction}, hence the isomorphism \[[(V_\Psi\otimes V_{\Psi'})^{I_F}]_{\ss}\simeq (V_\Psi\otimes_{\ss} V_{\Psi'})^{I_F}.\]
\end{proof}

\section{Deligne representations}\label{motivation}

Here we classify in terms of irreducible representations of $\W_F$ what we call~$\W_F$-semisimple Deligne representations (see Definition \ref{def deligne rep}) up to a certain equivalence relation (Definition \ref{equivalence}). The purpose of doing this is that in the modular case, we will parametrize (see Section \ref{section C corresp}) irreducible representations of $\GL(n,F)$ by $n$-dimensional equivalence classes of semisimple Deligne representations.

\subsection{Definitions, notations and basic properties}

Here is our definition of Deligne representations, which as we shall see, specializes to the usual definition when $R=\Ql$.

\begin{definition}\label{def deligne rep}
\begin{itemize}
\item  A \emph{Deligne~representation} of~$\W_F$ is a pair~$(\Phi,U)$ where~$\Phi$ is a finite dimensional representation of~$\W_F$, and~$U\in\Hom_{\W_F}(\nu\Phi,\Phi)$. 
\item We say that $(\Phi,U)$ is a \textit{$\W_F$-semisimple Deligne representation} of $\W_F$ if 
$\Phi$ is semisimple as a representation of $\W_F$.
\end{itemize}
For~$(\Phi,U),(\Phi',U')$ Deligne representations of~$\W_F$, we write
\[\Hom_{\WD}((\Phi,U),(\Phi',U'))=\{f\in\Hom_{\mathrm{W}_F}(\Phi,\Phi'): f\circ U=U'\circ f\},\]
\[\End_{\WD}(\Phi,U)=\Hom_{\WD}((\Phi,U),(\Phi,U)),\] and 
\[\Iso_{\WD}((\Phi,U),(\Phi',U'))=\Hom_{\WD}((\Phi,U),(\Phi,U))\cap \Iso_{\W_F}(\Phi, \Phi').\]
If this latter space is non empty, we say that $(\Phi,U)$ and $(\Phi,U')$ are isomorphic.
\end{definition}

\begin{notation}
We will sometimes write $V_{(\Phi,U)}$ for $V_{\Phi}$, and also write $d_{(\Phi,U)}$ or $d_\Phi$ for 
$\dim_R(V_\Phi)$.
\end{notation}

For $U$ and endomorphism of a finite dimensional~$R$-vector space, we denote by~$\Sp(U)$ the set of its eigenvalues.

\begin{rem}\label{U=N in char zero}
\begin{enumerate}[{(1)}]
\item Take $\Phi$ a finite dimensional representation of~$\W_F$, and $U\in \End_{W_F}(V_\Phi)$ with Jordan decomposition 
$D+N$ ($D$ semismple and $N$ nilpotent). Then $U\in\Hom_{W_F}(\nu\Phi,\Phi)$ if and only if $D$ and $N$ are in $\Hom_{W_F}(\nu\Phi,\Phi)$. 
\item\label{U=N in char zero2} If $R=\Ql$, and $U\in\Hom_{W_F}(\nu\Phi,\Phi)$, then $U=N$ as $\mathrm{Spec}(U)$ is stable under multiplication by $q$.
\end{enumerate}
\end{rem}

\begin{notation}
The Jordan decomposition of $U$ will always be denoted by $D+N$, unless explicitly stated.
\end{notation}

As we only consider Deligne representations of~$\W_F$, we will suppress the ``of~$\W_F$'' in our notation. 
The direct sum of two Deligne representations $(\Phi,U)$ and $(\Phi',U')$ is defined as \[(\Phi,U)\oplus(\Phi',U')=(\Phi\oplus\Phi',U\oplus U'),\] notice that it preserves $\W_F$-semi-simplicity.
Let 
us introduce some further notations.

\begin{notation}
We introduce the following notation:
\begin{itemize}
\item~$\Rep_{\ss}(\WD, R)$ for the set of isomorphism classes of~$\W_F$-semisimple Deligne~$R$-representations.
\item~$\Indec_{\ss}(\WD, R)$ for the Deligne~$R$-representations in~$\Rep_{\ss}(\WD, R)$ which are indecomposable under direct sum of Deligne~$R$-representations.
\item~$\Irr_{\ss}(\WD, R)$ for the irreducible Deligne~$R$-representations in~$\Rep_{\ss}(\WD, R)$.\newline(Note that~$(\Phi,U)\in\Irr_{\ss}(\WD, R)$ does not imply~$\Phi\in\Irr(W_F,R)$.)
\item~$\Nilp_{\ss}(\WD, R)$ for the Deligne~$R$-representations~$(\Phi,U)\in\Rep_{\ss}(\WD, R)$ with~$U=N$ nilpotent. In particular, by Remark \ref{U=N in char zero} (\ref{U=N in char zero2}),
\[\Rep_{\ss}(\WD, \Ql)=\Nilp_{\ss}(\WD, \Ql)\] 
\end{itemize}
\end{notation}

For $(\Phi,U)$ and $(\Phi',U')$ in $\Rep_{\ss}(\WD, R)$, then 
 is also in $\Rep_{\ss}(\WD, R)$.

\begin{definition}\label{dualdef}
The \emph{dual} of a Deligne~$R$-representation~$(\Phi,U)$ with~$U=D+N$ is defined by 
\[(\Phi,U)^\vee=(\Phi^\vee,D^\vee-N^\vee).\] 
\end{definition}

Clearly,~$\Rep_{\ss}(\WD, R)$ is stable under direct sum and duals:
\begin{lemma}
For~$(\Phi,U),(\Phi',U')\in\Rep_{\ss}(\WD, R)$, then \[(\Phi,U)\oplus(\Phi',U'),(\Phi,U)^\vee\in \Rep_{\ss}(\WD, R).\]
\end{lemma}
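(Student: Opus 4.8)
The plan is to treat the two assertions separately, each being essentially a bookkeeping check that the relevant linear-algebra operation is compatible with twisting by $\nu$.

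For the direct sum, I would first note that $\Phi\oplus\Phi'$ is semisimple as a representation of $\W_F$, since a direct sum of semisimple representations is semisimple. It then remains to observe that $\nu(\Phi\oplus\Phi')=\nu\Phi\oplus\nu\Phi'$ and that under this identification $U\oplus U'$ lies in $\Hom_{\W_F}(\nu\Phi\oplus\nu\Phi',\Phi\oplus\Phi')$, which is immediate from $U\in\Hom_{\W_F}(\nu\Phi,\Phi)$ and $U'\in\Hom_{\W_F}(\nu\Phi',\Phi')$. Hence $(\Phi,U)\oplus(\Phi',U')=(\Phi\oplus\Phi',U\oplus U')\in\Rep_{\ss}(\WD,R)$, as already noted in the text.

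For the dual, write $U=D+N$ as usual. The representation $\Phi^\vee$ is semisimple because the contragredient of an irreducible finite-dimensional representation is irreducible and the contragredient of a direct sum is the direct sum of the contragredients. By Remark \ref{U=N in char zero}(1) both $D$ and $N$ lie in $\Hom_{\W_F}(\nu\Phi,\Phi)$, so it suffices to show that for any $f\in\Hom_{\W_F}(\nu\Phi,\Phi)$ the transpose $f^\vee$ lies in $\Hom_{\W_F}(\nu\Phi^\vee,\Phi^\vee)$; applying this to $f=D$ and $f=N$ and forming $D^\vee-N^\vee$ then finishes the proof. The verification of this claim is a direct computation: the relation $f\circ(\nu\Phi)(w)=\Phi(w)\circ f$ for all $w\in\W_F$, transposed and rewritten using $\Phi^\vee(w)=\Phi(w^{-1})^{T}$ and $\nu(w^{-1})=\nu(w)^{-1}$, becomes exactly $f^\vee\circ(\nu\Phi^\vee)(w)=\Phi^\vee(w)\circ f^\vee$.

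There is no genuine obstacle here; the only point requiring a moment's care is tracking the inverse on the $\nu$-twist when passing to contragredients, so that one confirms $(\nu\Phi)^\vee\cong\nu^{-1}\Phi^\vee$ and hence that $f^\vee$, a priori a map $\Phi^\vee\to\nu^{-1}\Phi^\vee$, is indeed a valid Deligne operator for $\Phi^\vee$ after the canonical twist. One may also remark, although it is not needed for the statement, that since $D^\vee$ and $N^\vee$ commute with $D^\vee$ semisimple and $-N^\vee$ nilpotent, $D^\vee-N^\vee$ is the Jordan decomposition of the Deligne operator of $(\Phi,U)^\vee$, consistent with the notational convention fixed above.
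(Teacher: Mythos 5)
Your proof is correct. The paper itself offers no proof of this lemma: it is prefaced by ``Clearly,'' and left as an observation. Your write-up supplies the details and correctly identifies the only point of substance, namely that transposing the intertwining relation sends $\Hom_{\W_F}(\nu\Phi,\Phi)$ into $\Hom_{\W_F}(\nu\Phi^\vee,\Phi^\vee)$ because the canonical isomorphism $(\nu\Phi)^\vee\simeq\nu^{-1}\Phi^\vee$ lands on the correct side after twisting. A small stylistic simplification is available: instead of applying the transpose separately to $D$ and $N$, one could first observe that $U^\vee\in\Hom_{\W_F}(\nu\Phi^\vee,\Phi^\vee)$ by the same computation, note that $D^\vee+N^\vee$ is the Jordan decomposition of $U^\vee$ (since $D$, $N$ commute, $D$ is semisimple, $N$ is nilpotent, and these properties pass to the transposes), and then invoke Remark~\ref{U=N in char zero}(1) for $\Phi^\vee$ to get both pieces in the right Hom-space, whence $D^\vee-N^\vee$ as well. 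Either route works; yours is perfectly sound.
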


The \emph{tensor product} of Deligne~$R$-representations is defined by the formula 
\[(\Phi,U)\otimes (\Phi',U')=(\Phi\otimes \Phi',U \otimes \Id\oplus \Id\otimes U').\]

The set~$\Rep_{\ss}(\WD, R)$ is not stable under this operation, according to Example \ref{counter examples}, \ref{otimescounter}.   We will introduce a \emph{semisimple} tensor product~$\otimes_{\ss}$ in Section \ref{tensor section}.  Of course, whenever~the tensor product is~$\W_F$-semisimple we will have~$\otimes_{\ss}=\otimes$, which will be the case when the characteristic is not too small in relation to the dimensions of the representations by Remark \ref{SerreRemark}.

Now we introduce an equivalence relation~$\sim$ on~$\Rep_{\ss}(\WD, R)$. We do this as we shall later parametrize irreducible representations of the group~$\GL_n(F)$ by equivalence classes rather than isomorphism classes of~Deligne~$R$-representations.

\begin{definition}\label{equivalence}
The definition is in two steps:
\begin{enumerate}[{(1)}]
\item  Deligne~$R$-representations~$(\Phi,U), (\Phi',U')\in \Indec_{\ss}(\WD, R)$ are \emph{equivalent}, denoted~$(\Phi,U)\sim (\Phi',U')$, if there exists~$\l\in R^\times$ such that 
\[(\Phi',U')\simeq (\Phi,\l U).\]
\item\label{equivalence2} In the general case,~$(\Phi,U),(\Phi',U')\in \Rep_{\ss}(\WD, R)$ are \emph{equivalent}, denoted~$(\Phi,U)\sim (\Phi',U')$, if one can decompose~$(\Phi',U')=\bigoplus_{i=1}^r (\Phi_i,U_i)$ and~$(\Phi,U)=\bigoplus_{i=1}^r (\Phi_i,U_i)$ such that~$(\Phi_i,U_i)\sim (\Phi_i,U_i)$ in~$\Indec_{\ss}(\WD, R)$.
\end{enumerate}
\end{definition}

\begin{rem}\label{remark Krull}
To see that Definition \ref{equivalence} defines an equivalence relation we use that the decomposition of a Deligne~$R$-representation into indecomposable Deligne~$R$-representations is unique. This will be a consequence of our classification of indecomposable~$\W_F$-semisimple Deligne representations: we show as a consequence of Propositions \ref{Sp are indec} and \ref{construction indec}, and Theorems \ref{V_0} that the endomorphism ring of such a representation is local, which is what is needed in the proof of the Krull--Schmidt theorem.
\end{rem}

\begin{notation}We introduce square brackets to denote equivalence classes:
\begin{itemize}
\item For~$(\Phi,U)\in \Rep_{\ss}(\WD, R)$, we let~$[\Phi,U]$ denote its equivalence class.
\item~$[\Rep_{\ss}(\WD, R)]=\Rep_{\ss}(\WD, R)/\sim$.
\item~$[\Irr_{\ss}(\WD, R)]=\Irr_{\ss}(\WD, R)/\sim$.
\item~$[\Indec_{\ss}(\WD, R)]=\Indec_{\ss}(\WD, R)/\sim$.
\item~$[\Nilp_{\ss}(\WD, R)]=\Nilp_{\ss}(\WD, R)/\sim$.
\end{itemize}
\end{notation}

Let~$[\Phi,U],[\Phi',U']\in\Rep_{\ss}(\WD, R)$, for any choice of representatives their direct sums and duals are equivalent, so we let
\begin{align*}
[\Phi,U]\oplus [\Phi',U']&=[(\Phi,U)\oplus (\Phi',U')]\\
[\Phi,U]^\vee&=[(\Phi,U)^\vee],
\end{align*}
giving a well defined direct sum and dual on~$[\Rep_{\ss}(\WD, R)]$. 

In the~$\ell$-adic case, we recall that~$\Rep_{\ss}(\WD, \Ql)=\Nilp_{\ss}(\WD, \Ql)$, hence the following proposition shows that in this case one gains nothing by introducing the equivalence relation~$\sim$.

\begin{prop}\label{iso vs equiv nilp}
The canonical surjection from~$\Nilp_{\ss}(\WD, R)$ onto~$[\Nilp_{\ss}(\WD, R)]$ is the identity, we write: 
\[[\Nilp_{\ss}(\WD, R)]=\Nilp_{\ss}(\WD, R).\]
\end{prop}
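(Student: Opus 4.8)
The plan is to show that for an \emph{indecomposable} nilpotent $\W_F$-semisimple Deligne $R$-representation $(\Phi,N)$, the only way it can be equivalent to another such representation is via an actual isomorphism; the general case then follows from part (\ref{equivalence2}) of Definition \ref{equivalence} and uniqueness of the indecomposable decomposition (Remark \ref{remark Krull}). So suppose $(\Phi,N)\in\Nilp_{\ss}(\WD,R)$ is indecomposable and $\lambda\in R^\times$; I claim $(\Phi,\lambda N)\simeq(\Phi,N)$. The key observation is that conjugating the Deligne operator by a suitable $\W_F$-equivariant automorphism of $\Phi$ rescales $N$ on each ``$\nu$-string''. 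More precisely, I will invoke the (forthcoming) classification: an indecomposable nilpotent $\W_F$-semisimple Deligne representation is of the form $[0,r-1]\otimes\Psi$ for some $\Psi\in\Irr(\W_F,R)$ (this is case (\ref{casea}) of Main Theorem 1, and it is exactly the nilpotent indecomposables since the $C(\Psi,I)$-type pieces have non-nilpotent Deligne operator). For such a model, $\Phi=\bigoplus_{k=0}^{r-1}\nu^k\Psi$ with $N$ the shift $(x_0,\dots,x_{r-1})\mapsto(0,x_0,\dots,x_{r-2})$.

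The heart of the argument is then a direct linear-algebra computation: define $f\in\Aut_{\W_F}(\Phi)$ by $f(x_0,\dots,x_{r-1})=(x_0,\lambda x_1,\lambda^2 x_2,\dots,\lambda^{r-1}x_{r-1})$, using that each $\nu^k\Psi$ is $\W_F$-stable so scaling it is $\W_F$-equivariant. One checks $f\circ N = \lambda^{-1}\, N\circ f$, hence $f$ intertwines $(\Phi,\lambda N)$ with $(\Phi,N)$ (up to relabelling which of the two carries the $\lambda$; adjusting $f$ to use powers $\lambda^{-k}$ gives the direction one wants). Therefore $(\Phi,\lambda N)\simeq(\Phi,N)$ as Deligne representations, so the equivalence class and isomorphism class of an indecomposable nilpotent $\W_F$-semisimple Deligne representation coincide. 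Concretely, I would write: ``It suffices by Definition \ref{equivalence}(\ref{equivalence2}) and Remark \ref{remark Krull} to treat the indecomposable case. By the classification (Theorem \ref{indec classif}, case (\ref{casea})) an indecomposable object of $\Nilp_{\ss}(\WD,R)$ is isomorphic to $[0,r-1]\otimes\Psi$; an explicit $\W_F$-automorphism rescaling the summands $\nu^k\Psi$ by $\lambda^{-k}$ conjugates $N$ to $\lambda N$, so $(\Phi,N)\simeq(\Phi,\lambda N)$ for every $\lambda\in R^\times$.''

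For the general case, let $(\Phi,N),(\Phi',N')\in\Nilp_{\ss}(\WD,R)$ be equivalent. Decompose both into indecomposables as in Definition \ref{equivalence}(\ref{equivalence2}), so $(\Phi,N)=\bigoplus_i(\Phi_i,N_i)$ and $(\Phi',N')=\bigoplus_i(\Phi'_i,N'_i)$ with $(\Phi_i,N_i)\sim(\Phi'_i,N'_i)$ in $\Indec_{\ss}(\WD,R)$. Each $(\Phi_i,N_i)$ lies in $\Nilp_{\ss}(\WD,R)$ (a summand of a nilpotent operator is nilpotent), so by the indecomposable case $(\Phi_i,N_i)\simeq(\Phi'_i,N'_i)$, and taking the direct sum gives $(\Phi,N)\simeq(\Phi',N')$. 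Hence the canonical surjection $\Nilp_{\ss}(\WD,R)\to[\Nilp_{\ss}(\WD,R)]$ is a bijection, i.e.\ the identity, which is the assertion.

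The main obstacle is purely one of \emph{logical dependency / exposition}: the clean proof wants to cite the classification of indecomposables (case (\ref{casea}) of Main Theorem 1, proved later as Theorem \ref{indec classif}), and one must make sure this is not circular --- the Krull--Schmidt input needed to even \emph{define} the equivalence relation is flagged in Remark \ref{remark Krull} as coming from the same classification, so the honest order is: prove the structural results for indecomposables first, deduce Krull--Schmidt, and only then state Proposition \ref{iso vs equiv nilp}. If instead one wants a self-contained proof at this point in the text, the alternative is to argue directly that a $\W_F$-equivariant $\lambda$-rescaling of $N$ on $\Ker(N^k)/\Ker(N^{k-1})$-graded pieces is realized by a $\W_F$-automorphism of $\Phi$ --- this requires knowing the graded pieces are $\W_F$-stable and that $N$ restricts appropriately, which is again most transparent after the classification. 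Either way the linear algebra itself is routine; the care is entirely in the sequencing.
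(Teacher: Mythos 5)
Your proof takes a genuinely different route from the paper's, and the circularity you flag at the end is not merely an expository nuisance but a real obstruction to your main line of argument at this point in the text. The paper's proof does \emph{not} reduce to indecomposables and does \emph{not} invoke any classification. For an arbitrary nilpotent $\W_F$-semisimple $(\Phi,N)$ with nilpotency index $r$, it uses semisimplicity of $\Phi$ to build $\W_F$-stable complements $S_k$ of $\Ker(N^k)$ inside $\Ker(N^{k+1})$, refined so that $N(S_k)\subset S_{k-1}$, giving a $\W_F$-stable grading $V_\Phi=\bigoplus_{k=0}^{r-1}S_k$; then $P=\lambda^{i}\Id_{S_i}$ is a $\W_F$-automorphism satisfying $P(\lambda N)=NP$, so $(\Phi,N)\simeq(\Phi,\lambda N)$ directly. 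This is precisely the ``self-contained alternative'' you sketch and then dismiss as ``most transparent after the classification''; in fact it works cleanly \emph{before} any classification, and it is what the paper does.

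Your main argument instead reduces to the model $[0,r-1]\otimes\Psi$ via the classification of nilpotent indecomposables. But in this paper that classification is Theorem~\ref{V_0}, whose proof explicitly begins ``As in the proof of Proposition~\ref{iso vs equiv nilp}, we construct $\W_F$-stable subspaces $S_i$\dots''. So invoking Theorem~\ref{V_0} to prove Proposition~\ref{iso vs equiv nilp} is circular as the paper stands; a reordering would not spare you the $S_i$-construction either, since Theorem~\ref{V_0} needs it, so the ``structural results first'' order you propose would have to rederive exactly the filtration argument that the paper places here. The explicit computation you give on $[0,r-1]\otimes\Psi$ (the automorphism $f(x_0,\dots,x_{r-1})=(x_0,\lambda x_1,\dots,\lambda^{r-1}x_{r-1})$ with $f\circ N=\lambda\,N\circ f$) is correct and is the same rescaling trick as the paper's $P$; the only real gap is that you import the grading from the yet-to-be-proved classification rather than constructing it from iterated kernels.
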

\begin{proof}
Take~$(\Phi,N)$ a Deligne representation with~$N$ nilpotent, it is enough to prove 
that~$(\Phi,N)$ and~$(\Phi,\l N)$ are isomorphic if~$\l\in R^\times$. Let~$r$ be the 
nilpotency index of~$N$. As the iterated kernels~$\Ker(N^k)$ are all~$\W_F$-stable, we can construct 
a~$\W_F$-stable complement~$S_{r-1}$ of~$\Ker(N^{r-1})$ in~$\Ker(N^r)=V_{\Phi}$. Then~$N(S_{r-1})$ is 
also~$\W_F$-stable, and \[N(S_{r-1})\cap \Ker(N^{r-2})=\{0\}.\] Hence~$N(S_{r-1})\oplus \Ker(N^{r-2})$ admits 
a~$\W_F$-stable complement~$U_{r-1}$ in~$\Ker(N^{r-1})$, and \[S_{r-2}=N(S_{r-1})\oplus U_{r-1}\] is 
a~$\W_F$-stable complement of~$\Ker(N^{r-2})$ in~$\Ker(N^{r-1})$ such that~$N(S_{r-1})\subset S_{r-2}$. 
Continuing in this fashion, we obtain for~$k=0,\dots, r-1$, a~$\W_F$-stable complement~$S_k$ of~$\Ker(N^k)$ 
in~$\Ker(N^{k+1})$ such that~$N(S_{k})\subset S_{k-1}$ for~$k\geq 1$, and \[V_\Phi=\bigoplus_{k=0}^{r-1} S_k.\]
We define~$P\in \GL(V_{\Phi})$ to be equal to~$\l^{i} \mathrm{Id}_{S_i}$ on~$S_i$. Then~$P$ commutes with~$\Phi(w)$ for all~$w\in \mathrm{W}_F$, and as is checked on each~$S_i$, one has 
$P\l N= NP$.  Hence~$P$ defines the required isomorphism.
\end{proof}

\begin{corollary}
We have~$[\Rep_{\ss}(\WD, \Ql)]=\Nilp_{\ss}(\WD, \Ql)$.
\end{corollary}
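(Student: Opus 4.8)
The statement is an immediate combination of two facts already established. First, Remark~\ref{U=N in char zero}(\ref{U=N in char zero2}) tells us that in the $\ell$-adic case every Deligne operator $U$ in $\Hom_{W_F}(\nu\Phi,\Phi)$ is automatically nilpotent, because $\mathrm{Spec}(U)$ is stable under multiplication by $q$ and a finite set of complex numbers stable under multiplication by $q\neq 1$ in $\Ql^\times$ can only be $\{0\}$ (or empty). Hence, as a set of isomorphism classes, $\Rep_{\ss}(\WD,\Ql)=\Nilp_{\ss}(\WD,\Ql)$; this identification is recorded already in the Notation block following Remark~\ref{U=N in char zero}.

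Second, Proposition~\ref{iso vs equiv nilp} applied with $R=\Ql$ gives that the canonical surjection $\Nilp_{\ss}(\WD,\Ql)\to[\Nilp_{\ss}(\WD,\Ql)]$ is the identity, i.e.\ on nilpotent Deligne representations the equivalence relation $\sim$ coincides with isomorphism. So the plan is simply: combine these to get
\[
[\Rep_{\ss}(\WD,\Ql)]=[\Nilp_{\ss}(\WD,\Ql)]=\Nilp_{\ss}(\WD,\Ql).
\]

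There is no real obstacle here — this corollary is purely a bookkeeping consequence of Remark~\ref{U=N in char zero} and Proposition~\ref{iso vs equiv nilp}. The only thing worth double-checking is that passing to equivalence classes is compatible with the set-level identification $\Rep_{\ss}(\WD,\Ql)=\Nilp_{\ss}(\WD,\Ql)$, which is clear since the equivalence relation $\sim$ on $\Rep_{\ss}(\WD,\Ql)$ restricts to the one on $\Nilp_{\ss}(\WD,\Ql)$ (both are defined in the same way via the Krull--Schmidt decomposition). Thus the corollary follows with no further argument.
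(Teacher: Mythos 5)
Your proposal is correct and follows exactly the paper's own route: the paper recalls immediately before Proposition~\ref{iso vs equiv nilp} that $\Rep_{\ss}(\WD,\Ql)=\Nilp_{\ss}(\WD,\Ql)$ by Remark~\ref{U=N in char zero}, and the corollary is then stated as the specialization of that proposition to $R=\Ql$. Nothing more is needed.
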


In particular, for~$[\Phi,U],[\Phi',U']$ in~$\Rep_{\ss}(\WD, \Ql)$, the equivalence class~$[(\Phi,U)\otimes (\Phi',U')]$ is independent of the choice of representatives for~$[\Phi,U],[\Phi',U']$.   This is not true anymore when~$R=\Fl$, first we already saw in example \ref{counter examples} that~$\W_F$-simplicity is not preserved, but even if it is,~$[\Phi,U]\otimes [\Phi',U']$ is still not well-defined in general. However there is a also a natural solution to this problem as we shall see in Section \ref{tensor section}. We shall indeed define a tensor product on~$[\Rep_{\ss}(\WD, \Fl)]$ which is associative and distributive with respect to~$\oplus$ on the left and on the right. We give an example:

\begin{ex}\label{non equiv tensor}
Take~$\ell\neq 2$,~$\Phi=\1\in \Irr(\W_F,\Fl)$ and suppose that~$q\equiv 1[\ell]$, then 
\[(\1,\Id)\otimes (\1,\Id)=(\1,2\Id),~\text{whereas}~(\1,\Id)\otimes (\1,-\Id)=(\1,0),\] which are inequivalent. 
However, for all pairs~$(\l,\mu)\in (\Fl^\times)^2$ outside the hyperplane of~$(\Fl)^2$ defined by~$\mu+\l=0$, we have 
\[[(\1,\l\Id)\otimes (\1,\mu\Id)]=[1,\Id]\] and we shall set \[[\1,\Id]\otimes [\1,\Id]=[\1,\Id].\]
\end{ex}


However, there are two cases where the tensor product behaves well which will prove fundamental later:

\begin{lemma}\label{good case tensor}
Let~$(\Phi,U),(\Phi',U')\in \Rep_{\ss}(\WD, \Fl)$.
\begin{enumerate}[{(1)}]
 \item\label{goodtensor1} If~$\Phi$ is a direct sum of characters, then~$(\Phi,U)\otimes (\Phi',U')$ is~$\W_F$-semisimple.
 \item\label{goodtensor2} If moreover~$U=N$ is nilpotent, and if~$(\Phi_1,U_1)\sim (\Phi,U)$ and~$(\Phi_1',U_1')\sim (\Phi',U')$, 
 then \[(\Phi_1,U_1)\otimes (\Phi_1',U_1')\sim (\Phi,U)\otimes (\Phi',U').\]
\end{enumerate}
\end{lemma}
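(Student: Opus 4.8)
The plan is to reduce everything to linear algebra over $\Fl$ by exploiting that $\Phi$ being a direct sum of characters makes the tensor operation very rigid. For part (\ref{goodtensor1}), write $\Phi=\bigoplus_{j} \chi_j$ with each $\chi_j$ a character of $\W_F$, and note that since each $\chi_j$ is a line, $U$ decomposes accordingly: $\Hom_{\W_F}(\nu\Phi,\Phi)=\bigoplus_{i,j}\Hom_{\W_F}(\nu\chi_i,\chi_j)$, and each summand is either $0$ or one-dimensional. Concretely $\Phi\otimes\Phi'=\bigoplus_j \chi_j\Phi'$, and $\chi_j\Phi'$ is semisimple because $\Phi'$ is, so $\Phi\otimes\Phi'$ is $\W_F$-semisimple regardless of $\ell$; the Deligne operator $U\otimes\Id\oplus\Id\otimes U'$ is a legitimate element of $\Hom_{\W_F}(\nu(\Phi\otimes\Phi'),\Phi\otimes\Phi')$ by definition of the tensor product of Deligne representations. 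So (\ref{goodtensor1}) is essentially immediate: the only content is that twisting a semisimple $\W_F$-representation by a character preserves semisimplicity, which is clear.

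For part (\ref{goodtensor2}), I would argue as follows. Decompose $(\Phi',U')$ into indecomposable $\W_F$-semisimple Deligne representations; since $\Phi$ is a sum of characters and tensoring distributes over $\oplus$, it suffices to treat the case where $(\Phi',U')$ is itself indecomposable, and then further reduce to $(\Phi,U)=(\chi,N_0)$ with $\chi$ a single character (because $(\Phi,N)=\bigoplus_j(\chi_j,N_j)$ refines compatibly, using that $N$ nilpotent forces each diagonal block $N_j\in\End_{\W_F}(\chi_j)=\Fl$ to be $0$, hence each $(\chi_j, N_j)$ — wait, the off-diagonal entries of $N$ between different $\chi_j$'s matter, so more carefully: $(\Phi,N)$ with $N$ nilpotent splits as a direct sum of indecomposables each of which is of the form $[0,s-1]\otimes\Psi$ by the Main Theorem, and $\Psi$ must be an unramified character since $\Phi$ is a sum of characters; so $(\Phi,N)\sim\bigoplus([0,s_t-1]\otimes\mu_t)$ for unramified $\mu_t$). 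Thus the heart of the matter is: for $\mu$ an unramified character, $s\geq 1$, and $(\Phi',U')\in\Rep_{\ss}(\WD,\Fl)$, show that if $(\Phi_1',U_1')\sim(\Phi',U')$ then $([0,s-1]\otimes\mu)\otimes(\Phi_1',U_1')\sim([0,s-1]\otimes\mu)\otimes(\Phi',U')$. Twisting by the character $\mu$ commutes with everything and preserves equivalence classes of indecomposables (it does not touch the operator), so we may drop $\mu$.

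So I am reduced to: $[0,s-1]\otimes(\Phi_1',U_1')\sim[0,s-1]\otimes(\Phi',U')$ when $(\Phi_1',U_1')\sim(\Phi',U')$. By Definition \ref{equivalence}(\ref{equivalence2}) and distributivity of $\otimes$ over $\oplus$, it is enough to take $(\Phi',U')$ indecomposable and $(\Phi_1',U_1')=(\Phi',\lambda U')$ for $\lambda\in\Fl^\times$. Now I would compute: $[0,s-1]\otimes(\Phi',U')$ has underlying space $\bigoplus_{k=0}^{s-1}\nu^k\Phi'$ with operator sending $(x_0,\dots,x_{s-1})\mapsto (U'x_0, x_0+U'x_1,\dots,x_{s-2}+U'x_{s-1})$ — i.e. $N(s)\otimes\Id + \Id\otimes U'$. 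Replacing $U'$ by $\lambda U'$ gives $N(s)\otimes\Id+\lambda(\Id\otimes U')$, and I claim the map $P$ which is $\lambda^{-k}\Id$ on the $k$-th summand $\nu^k\Phi'$ is a $\W_F$-intertwiner (clear, each block is scalar) realizing $P(N(s)\otimes\Id+\lambda\,\Id\otimes U')P^{-1}=\lambda(N(s)\otimes\Id+\Id\otimes U')$: on going from summand $k$ to $k-1$, $N(s)$ contributes a factor $\lambda^{-(k-1)}/\lambda^{-k}=\lambda$, and the diagonal $\Id\otimes\lambda U'$ term conjugates to $\Id\otimes\lambda U'$, so altogether the operator becomes $\lambda\cdot(N(s)\otimes\Id+\Id\otimes U')$. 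Hence $[0,s-1]\otimes(\Phi',\lambda U')\simeq([0,s-1]\otimes(\Phi',U'))$ with its operator scaled by $\lambda$, so the two are equivalent as Deligne representations (in the sense of Definition \ref{equivalence}), which is exactly what is required — provided $[0,s-1]\otimes(\Phi',U')$ is still $\W_F$-semisimple, which holds by part (\ref{goodtensor1}) since the first factor is a sum of the characters $\nu^k$.

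The main obstacle I anticipate is bookkeeping rather than conceptual: making the reduction from general $\Phi$ (a sum of characters) and general nilpotent $N$ to a single Jordan-type block $[0,s-1]\otimes\mu$ rigorous requires invoking the Krull–Schmidt/classification machinery (Main Theorem, or at least the uniqueness of the indecomposable decomposition) to know that $(\Phi,N)\sim\bigoplus[0,s_t-1]\otimes\mu_t$, and then checking that $\otimes$ interacts with these decompositions the way one wants — in particular that the equivalence relation on the left factor can be pushed through the tensor product. One subtlety worth flagging: equivalence on the left factor is \emph{not} assumed in the statement (only $(\Phi_1,U_1)\sim(\Phi,U)$ is), so I should be careful that I am using the explicit shape of the \emph{left} factor (sum of characters, nilpotent) and only perturbing the \emph{right} factor within its equivalence class; the computation above does exactly that. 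I expect the only genuinely non-formal input to be part (\ref{goodtensor1}), which is the semisimplicity of a character-twist, and that is trivial.
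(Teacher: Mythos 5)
Your proof strategy for part (1) matches the paper's intent (the paper leaves it to the reader) and is fine. For part (2) your argument is correct in spirit but takes a longer route than the paper's and has a sign error. Two remarks.

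\textbf{A sign error in the explicit intertwiner.} You define $P$ to act as $\lambda^{-k}\Id$ on the summand $\nu^k\Phi'$ and assert that $N(s)$ ``goes from summand $k$ to $k-1$.'' That is backwards: with the paper's convention $N(r)(x_0,\ldots,x_{r-1})=(0,x_0,\ldots,x_{r-2})$, the operator places $x_k$ in slot $k+1$, so $N(s)$ sends summand $k$ to summand $k+1$. Conjugating with your $P$ therefore gives
\[
P\bigl(N(s)\otimes\Id+\lambda\,\Id\otimes U'\bigr)P^{-1}=\lambda^{-1}\bigl(N(s)\otimes\Id\bigr)+\lambda\bigl(\Id\otimes U'\bigr),
\]
which is a scalar multiple of $N(s)\otimes\Id+\Id\otimes U'$ only when $\lambda^2=1$. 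The correct choice is $P=\lambda^{k}\Id$ on the $k$-th summand, which produces $\lambda\bigl(N(s)\otimes\Id+\Id\otimes U'\bigr)$ as you wanted. With this correction the step goes through.

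\textbf{A more economical route.} The paper does not decompose $(\Phi,N)$ via the classification theorem at all, and does not build an explicit intertwiner. After reducing to $(\Phi',U')$ indecomposable, it uses Proposition \ref{iso vs equiv nilp} to replace $(\Phi_1,U_1)$ by $(\Phi,U)$ (equivalence equals isomorphism when the operator is nilpotent), writes $(\Phi_1',U_1')=(\Phi',\lambda U')$, and then observes that the tensor-product operator satisfies
\[
U\otimes\Id+\lambda\,\Id\otimes U'=\lambda\bigl(\lambda^{-1}U\otimes\Id+\Id\otimes U'\bigr),
\]
so $(\Phi,U)\otimes(\Phi',\lambda U')$ is the operator of $(\Phi,\lambda^{-1}U)\otimes(\Phi',U')$ scaled by $\lambda$, hence equivalent to it; finally $(\Phi,\lambda^{-1}U)\simeq(\Phi,U)$ again by Proposition \ref{iso vs equiv nilp}. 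In other words, the $\lambda$ is transferred from the right factor to the left factor and then absorbed there using nilpotence, rather than absorbed by conjugation. This avoids invoking Theorem \ref{V_0} (which you need to split $(\Phi,N)$ into blocks $[0,s-1]\otimes\mu$) and the character-twist bookkeeping. Both approaches ultimately rest on the same elementary fact — scaling the Deligne operator of a $\W_F$-semisimple representation by a nonzero scalar preserves the equivalence class, seen by decomposing into indecomposables — but the paper's is shorter and cleaner, and does not require the explicit shape of the left factor beyond nilpotence.
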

\begin{proof}
We leave (\ref{goodtensor1}) to the reader. For (\ref{goodtensor2}), it is enough to treat the case where~$(\Phi',U')$ is indecomposable.  Take~$(\Phi_1,U_1)\sim (\Phi,U)$ and~$(\Phi_1',U_1')\sim (\Phi',U')$, then~$(\Phi_1,U_1)=(\Phi,U)$ as~$U$ is nilpotent, and~$(\Phi_1',U_1')=(\Phi',\l U')$. This implies
\[(\Phi_1,U_1)\otimes (\Phi_1', U_1')=(\Phi,U)\otimes (\Phi',\l U') \sim (\Phi,\l^{-1}U)\otimes (\Phi',U')= (\Phi,U)\otimes (\Phi',U')\]
because~$U$ is nilpotent.
\end{proof}

In the situation of the lemma we define
\[[\Phi,U]\otimes [\Phi',U']=[(\Phi,U)\otimes(\Phi',U')].\]

We introduce the following definition, which will be convenient when dealing with indecomposable objects in 
$\Rep_{\ss}(\WD, R)$.

\begin{definition}
Take~$\Psi\in \Irr(\W_F,R)$, we say that~$(\Phi,U)\in \Rep_{\ss}(\WD, R)$ is supported on~$\Z_\Psi$ if 
$\Phi=\bigoplus_i \Psi_i$, for~$\Psi_i\in \Z_\Psi$.
\end{definition}

Take~$(\Phi,U)\in \Rep_{\ss}(\WD, R)$ and~$\Psi\in \Irr(\W_F,R)$. We write~$\Phi(i,\Psi)$ for its~$\nu^i\Psi$ 
isotypic component which only depends on the class~$i\pmod{o(\Psi)}$.   As~$U$ belongs to~$\Hom_{\W_F}(\nu\Phi,\Phi)$, the~$\W_F$-subrepresentation~$\Phi(\Z_\Psi):=\bigoplus_{i=1}^{o(\Psi)-1} \Phi(i,\Psi)$ is stable under~$U$.   As there is a finite subset~$S$ of~$\Irr(\W_F,R)$ such that~$\Phi=\bigoplus_{\Psi\in S} \Phi(\Z_\Psi)$, we deduce the following lemma:

\begin{lemma}\label{indec supported on a line}
If~$(\Phi,U)\in \Indec_{\ss}(\WD, R)$, then there is~$\Psi\in \Irr(\W_F,R)$ such that~$(\Phi,U)$ is supported on 
$\Z_\Psi$.
\end{lemma}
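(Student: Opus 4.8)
The plan is to flesh out the argument sketched in the paragraph preceding the statement, whose core is that the Deligne operator $U$ respects the decomposition of $\Phi$ along ``irreducible lines''. First I would record the following elementary point: for $\Psi'\in\Irr(\W_F,R)$ and $j\in\Z$, multiplication by $\nu$ carries the $\nu^{j-1}\Psi'$-isotypic component of $\Phi$ onto the $\nu^{j}\Psi'$-isotypic component of $\nu\Phi$; since $U\in\Hom_{\W_F}(\nu\Phi,\Phi)$ is a $\W_F$-morphism it preserves isotypic components, so $U$ maps $\Phi(j-1,\Psi')$ into $\Phi(j,\Psi')$ for every $j$. Summing over $j$, the $\W_F$-subrepresentation $\Phi(\Z_\Psi)=\bigoplus_j \Phi(j,\Psi)$ is stable under $U$. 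The point is that twisting by $\nu$ fixes the line $\Z_\Psi$ setwise, merely shifting the index, so $U$ produces no mixing between distinct lines.

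Next I would choose a finite set $S\subseteq\Irr(\W_F,R)$ of representatives for the distinct lines $\Z_\Psi$ occurring in $\Phi$ --- finite because $\Phi$ is finite dimensional --- so that $\Phi=\bigoplus_{\Psi\in S}\Phi(\Z_\Psi)$. Each summand $\Phi(\Z_\Psi)$, being a sum of isotypic components of the semisimple representation $\Phi$, is again $\W_F$-semisimple, and it is $U$-stable by the previous step; hence
\[(\Phi,U)=\bigoplus_{\Psi\in S}\bigl(\Phi(\Z_\Psi),\,U|_{\Phi(\Z_\Psi)}\bigr)\]
is a direct sum decomposition inside $\Rep_{\ss}(\WD, R)$.

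Finally I would invoke the hypothesis $(\Phi,U)\in\Indec_{\ss}(\WD, R)$: the decomposition just produced must be trivial, so exactly one summand is nonzero, i.e.\ $S=\{\Psi\}$ is a singleton and $\Phi=\Phi(\Z_\Psi)$, which is precisely the assertion that $(\Phi,U)$ is supported on $\Z_\Psi$. I do not expect any serious obstacle here; the only point requiring a moment's care is the $U$-stability of $\Phi(\Z_\Psi)$, which is exactly where the defining property $U\in\Hom_{\W_F}(\nu\Phi,\Phi)$ (as opposed to $U\in\End_{\W_F}(\Phi)$) enters, together with the fact that twisting by $\nu$ fixes each irreducible line.
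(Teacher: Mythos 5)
Your proof is correct and is exactly the argument the paper intends: the paper does not even display a proof environment, but rather states in the paragraph immediately preceding the lemma that $U\in\Hom_{\W_F}(\nu\Phi,\Phi)$ stabilizes each line-component $\Phi(\Z_\Psi)$, that $\Phi$ decomposes as $\bigoplus_{\Psi\in S}\Phi(\Z_\Psi)$ over a finite $S$, and then ``deduces'' the lemma from indecomposability. You have merely made explicit the index-shift bookkeeping ($U$ carries $\Phi(j-1,\Psi')$ into $\Phi(j,\Psi')$) and the check that each $\Phi(\Z_\Psi)$ is again a $\W_F$-semisimple Deligne subrepresentation, which is the right level of detail.
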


\subsection{Irreducible~$\W_F$-semisimple Deligne representations}

We start with the following simple observation.  

\begin{lemma}
If~$(\Phi,U)\in \Irr_{\ss}(\WD, R)$, then either~$U$ is bijective, or~$U$ is zero.
\end{lemma}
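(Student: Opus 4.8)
The plan is to exploit the fact that $U \in \Hom_{\W_F}(\nu\Phi, \Phi)$ together with irreducibility of $(\Phi, U)$ as a Deligne representation. First I would observe that $\Ker(U)$ is a $\W_F$-stable subspace of $V_\Phi$: indeed if $U(v) = 0$ then for $w \in \W_F$ we have $U(\Phi(w)v) = \Phi(w)U(v) = 0$ since $U$ intertwines $\nu\Phi$ and $\Phi$ (the character $\nu$ being central, twisting by it does not affect the commutation of $U$ with the group action). Similarly $\Ima(U)$ is a $\W_F$-stable subspace. Moreover both $\Ker(U)$ and $\Ima(U)$ are stable under $U$ itself: $\Ker(U)$ trivially, and $\Ima(U)$ because $U(U(v)) \in \Ima(U)$. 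Hence $(\Ker(U), U|_{\Ker(U)})$ and $(\Ima(U), U|_{\Ima(U)})$ are sub-Deligne-representations of $(\Phi, U)$.

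Now since $(\Phi, U) \in \Irr_{\ss}(\WD, R)$, each of these sub-Deligne-representations is either $0$ or all of $(\Phi, U)$. If $\Ker(U) = V_\Phi$ then $U = 0$, which is the second alternative. Otherwise $\Ker(U) = 0$, i.e.\ $U$ is injective; since $V_\Phi$ is finite-dimensional, $U$ is then bijective, which is the first alternative. (Equivalently, one could run the argument with $\Ima(U)$: if $\Ima(U) = 0$ then $U = 0$, and if $\Ima(U) = V_\Phi$ then $U$ is surjective hence bijective.) These two cases are exactly the two alternatives in the statement, so we are done.

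I do not anticipate a genuine obstacle here — the only point requiring a moment's care is checking that $\Ker(U)$ and $\Ima(U)$ are genuinely $\W_F$-stable given that $U$ lives in $\Hom_{\W_F}(\nu\Phi, \Phi)$ rather than $\End_{\W_F}(\Phi)$, but as noted this is harmless because $\nu$ is a character and acts by scalars on the level of the vector space underlying $\Phi$, so the intertwining condition $U \circ (\nu\Phi)(w) = \Phi(w) \circ U$ reads $\nu(w)\, U \circ \Phi(w) = \Phi(w) \circ U$ and one divides by the nonzero scalar $\nu(w)$. Everything else is immediate from the definition of irreducibility of a Deligne representation and finite-dimensionality.
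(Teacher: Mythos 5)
Your proof is correct and is essentially the same argument the paper gives: the paper's entire proof is the single observation that $\Ker(U)$ is a $(\Phi,U)$-stable subspace, from which the dichotomy follows by irreducibility and finite-dimensionality, exactly as you spell out.
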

\begin{proof}
$\Ker(U)$ is a~$(\Phi,U)$-stable subspace.
\end{proof}

An irreducible Deligne representation~$(\Phi,0)$ is nothing more than an irreducible representation of~$\W_F$. In 
particular, when~$R=\Ql$, the map~$\Phi\mapsto (\Phi,0)$ is a bijection between 
$\Irr(\W_F,\Ql)$ and~$\Irr_{\ss}(\WD, \Ql)$.  When~$R=\Fl$, we record this as a lemma.

\begin{lemma}\label{irred nilp}
The map~$\Phi\mapsto (\Phi,0)$ is a bijection between~$\Irr(\W_F,\Fl)$ and~$\Irr_{\ss}(\WD, \Fl)\cap \Nilp_{\ss}(\WD, \Fl)$.
\end{lemma}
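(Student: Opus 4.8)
The plan is to verify the three components of the claimed bijection—well-definedness, injectivity, and surjectivity—each of which reduces to unwinding the definition of a sub-Deligne-representation. First I would check that $\Phi\mapsto(\Phi,0)$ lands in the stated target. If $\Phi\in\Irr(\W_F,\Fl)$ then $\Phi$ is in particular semisimple, so $(\Phi,0)$ is $\W_F$-semisimple, and the Deligne operator $0$ is trivially nilpotent, so $(\Phi,0)\in\Nilp_{\ss}(\WD,\Fl)$. For irreducibility in the Deligne sense, observe that a sub-Deligne-representation of $(\Phi,0)$ is simply a $\W_F$-subrepresentation $W$ of $\Phi$ satisfying $0(\nu W)\subseteq W$, which holds automatically; hence the sub-Deligne-representations of $(\Phi,0)$ are exactly the $\W_F$-subrepresentations of $\Phi$. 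Since $\Phi$ is irreducible these are only $0$ and $\Phi$, so $(\Phi,0)\in\Irr_{\ss}(\WD,\Fl)$.

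Injectivity is immediate: an isomorphism $(\Phi,0)\simeq(\Phi',0)$ is by definition an element of $\Iso_{\W_F}(\Phi,\Phi')$ intertwining the (zero) Deligne operators, so in particular $\Phi\simeq\Phi'$ in $\Rep(\W_F,\Fl)$.

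For surjectivity, take $(\Phi,U)\in\Irr_{\ss}(\WD,\Fl)\cap\Nilp_{\ss}(\WD,\Fl)$. Since $(\Phi,U)$ is irreducible as a Deligne representation, $\Phi\neq 0$. By the preceding lemma, $U$ is either bijective or zero; as $U=N$ is nilpotent on the nonzero space $V_\Phi$ it cannot be bijective, so $U=0$. Then the identification of sub-Deligne-representations of $(\Phi,0)$ with $\W_F$-subrepresentations of $\Phi$ used above shows that $\Phi$ has no nonzero proper $\W_F$-subrepresentation, i.e.\ $\Phi\in\Irr(\W_F,\Fl)$, and $(\Phi,U)=(\Phi,0)$ is the image of $\Phi$. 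There is no serious obstacle here; the only input beyond Definition \ref{def deligne rep} is the dichotomy ``$U$ bijective or zero'' for irreducible Deligne representations, everything else being a direct reading of the definitions.
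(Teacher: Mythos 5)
Your proof is correct and matches what the paper treats as immediate: the paper records the lemma without a written proof, relying on the preceding remark that an irreducible Deligne representation of the form $(\Phi,0)$ is the same thing as an irreducible representation of $\W_F$, together with the dichotomy lemma (``$U$ bijective or zero'') to force $U=0$ from nilpotency. Your argument simply makes those steps explicit.
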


\begin{notation}
By abuse of notation, we set~$\Psi=(\Psi,0)=[\Psi,0]$.
\end{notation}

We now consider the case~$U$ bijective, hence~$R=\Fl$. We first start with an example. We let 
$\Psi\in \Irr(\W_F,\Fl)$ and~$I \in\Iso_{\mathrm{W}_F}(\nu^{o(\Psi)}\Psi,\Psi)$.

\begin{lemma}\label{irreducible}
 Let~$C(\Psi,I)=(\Phi(\Psi),C_I)$ be the Deligne~$\Fl$-representation defined by
\begin{align*}
\Phi(\Psi)&=\bigoplus_{k=0}^{o(\Psi)-1} \nu^{k}\Psi\\ 
C_I(x_0,\dots,x_{o(\Psi)-1})&=( I(x_{o(\Psi)-1}),x_0,\dots,x_{o(\Psi)-2}).
\end{align*}
Then~$C(\Psi,I) \in\Irr_{\ss}(\WD, \Fl)$.
 \end{lemma}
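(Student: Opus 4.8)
The plan is to show directly that $C(\Psi,I)$ has no proper nonzero Deligne-subrepresentation. So let $(W,C_I|_W)$ be a nonzero $\W_F$-stable, $C_I$-stable subspace of $\Phi(\Psi)=\bigoplus_{k=0}^{o(\Psi)-1}\nu^k\Psi$. The first step is to record that the isotypic decomposition of $\Phi(\Psi)$ as a $\W_F$-module is exactly $\bigoplus_{k=0}^{o(\Psi)-1}\nu^k\Psi$, because the $\nu^k\Psi$ for $0\le k\le o(\Psi)-1$ are pairwise non-isomorphic by definition of $o(\Psi)$ (this is the minimal period), and each is irreducible. Hence any $\W_F$-submodule $W$ is a direct sum of a subcollection of these lines: $W=\bigoplus_{k\in S}\nu^k\Psi$ for some $S\subseteq\{0,\dots,o(\Psi)-1\}$.

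Next I would use $C_I$-stability to show $S$ must be all of $\{0,\dots,o(\Psi)-1\}$. By construction $C_I$ sends the summand $\nu^k\Psi$ (the $x_k$-slot) isomorphically onto the summand $\nu^{k+1}\Psi$ for $0\le k\le o(\Psi)-2$, and sends $\nu^{o(\Psi)-1}\Psi$ isomorphically onto $\nu^0\Psi=\Psi$ via $I$ (note $I\in\Iso_{\W_F}(\nu^{o(\Psi)}\Psi,\Psi)$, and $\nu^{o(\Psi)}\Psi\simeq\Psi$ so this makes sense as a $\W_F$-isomorphism between the relevant summands). In particular $C_I$ is a bijection of $\Phi(\Psi)$ cyclically permuting the set of isotypic lines. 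So if $k\in S$ then $C_I(\nu^k\Psi)\subseteq W$ forces $k+1\bmod o(\Psi)\in S$; since $S\neq\emptyset$ and the shift acts transitively on $\Z/o(\Psi)\Z$, we get $S=\{0,\dots,o(\Psi)-1\}$, i.e. $W=\Phi(\Psi)$. This proves irreducibility; $\W_F$-semisimplicity is immediate since $\Phi(\Psi)$ is a direct sum of irreducibles, so $C(\Psi,I)\in\Irr_{\ss}(\WD,\Fl)$.

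The only genuinely delicate point is the bookkeeping in the second step: one must be careful that $C_I$ really does map the $k$-th isotypic component onto the $(k{+}1)$-th and not into some larger or twisted piece, and that a $\W_F$-submodule of a multiplicity-free semisimple module is automatically a sum of isotypic components — but the latter is standard and the former is immediate from the displayed formula for $C_I$, where each slot is shifted by exactly one. I would also remark (or defer to the surrounding text) that $C(\Psi,I)$ is visibly not of the form $(\Psi',0)$ since $C_I$ is bijective, consistent with Lemma \ref{irred nilp}. A cleaner alternative, if one prefers, is: any Deligne-submodule $W$ is $C_I$-stable, hence $\Ker(C_I|_W)$ is a $\W_F$-stable $C_I$-stable subspace; but $C_I$ is bijective on $\Phi(\Psi)$, so $\Ker(C_I|_W)=0$, and then one argues as above that a nonzero $C_I$-stable $\W_F$-submodule must be everything. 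Either route is short; I expect the write-up to be under half a page.
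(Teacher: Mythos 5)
Your proof is correct and follows essentially the same route as the paper's: decompose a nonzero Deligne subrepresentation $W$ into its $\nu^i\Psi$-isotypic components (each of which is $0$ or all of $V(i)$ since the $\nu^i\Psi$ are pairwise non-isomorphic irreducibles occurring with multiplicity one), pick a nonzero one, and use the cyclic shift $C_I$ to propagate to all components. The paper phrases this by directly writing $W=\bigoplus W(i)$ with some $W(i)=V(i)$ and then applying $C_I^k$, rather than via the set $S$, but the underlying argument is identical.
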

\begin{proof} 
The endomorphism~$C_I$ belongs to~$\Hom_{\W_F}(\nu\Phi,\Phi)$ by definition. Whenever~$W$ is a~$\W_F$-stable subspace of~$V:=V_\Phi$, we set~$W(i)=W(i,\Psi)$. Let~$W$ be a nonzero Deligne subrepresentation of~$V$. As it is~$\W_F$-stable, we have \[W=\bigoplus_{i=0}^{o(\Psi)-1} W(i),\] and as~$W\neq 0$, there is an~$i\in\{0, \ldots,o({\Psi})-1\}$ such that~$W(i)\neq 0$. Hence~$W(i)=V(i)=\nu^{i}\Psi$ by irreducibility of~$V(i)$. But then~$C_I^k(W(i))=C_I^k(V(i))=V(i+k[o(\Psi)])\subset W$ for all~$k\in \Z$, and~$W=V$.
\end{proof}

\begin{lemma}\label{iso on the same line}
With~$\Psi$ and~$I$ as above,~$C(\Psi,I)=C(\nu^k\Psi,I)$ for all~$k\in \Z$.
\end{lemma}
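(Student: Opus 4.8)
The plan is to show that the two Deligne representations $C(\Psi,I)=(\Phi(\Psi),C_I)$ and $C(\nu^k\Psi,I)=(\Phi(\nu^k\Psi),C_I)$ are isomorphic by exhibiting an explicit $\W_F$-equivariant isomorphism between the underlying spaces that intertwines the two shift operators. First I would note that since $\nu^{o(\Psi)}(\nu^k\Psi)\simeq \nu^k\Psi$ via the very same map $I$ (because $I\in\Iso_{\W_F}(\nu^{o(\Psi)}\Psi,\Psi)$ and multiplying by $\nu^k$ on both sides, which only reindexes the summands, leaves $I$ a legitimate choice of isomorphism $\nu^{o(\Psi)}(\nu^k\Psi)\to \nu^k\Psi$), the representation $C(\nu^k\Psi,I)$ is well-defined, and its underlying $\W_F$-representation $\bigoplus_{j=0}^{o(\Psi)-1}\nu^{j}(\nu^k\Psi)=\bigoplus_{j=0}^{o(\Psi)-1}\nu^{j+k}\Psi$ is, as an abstract $\W_F$-representation, literally the same as $\Phi(\Psi)=\bigoplus_{j=0}^{o(\Psi)-1}\nu^j\Psi$ — only the bookkeeping of which summand sits in position $j$ has changed, by a cyclic shift of indices modulo $o(\Psi)$.

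The key step is therefore to define the intertwiner by a cyclic permutation of the $o(\Psi)$ coordinates, possibly corrected by a power of $I$ on the coordinates that "wrap around." Concretely, reducing to the case $k=1$ (the general case follows by iteration, or by replacing $k$ by its residue mod $o(\Psi)$ since $\nu^{o(\Psi)}\Psi\simeq\Psi$), I would set $P:\Phi(\Psi)\to\Phi(\nu\Psi)$, sending $(x_0,\dots,x_{o(\Psi)-1})$ to the tuple obtained by shifting coordinates by one slot and applying $I$ exactly where the index decreases past $0$ — that is, $P(x_0,\dots,x_{o(\Psi)-1})=(x_1,x_2,\dots,x_{o(\Psi)-1},I(x_0))$ (or its inverse, depending on the orientation of the shift one wants). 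One checks directly that $P$ is $\W_F$-equivariant: on each summand it is either the identity map between two copies of the same $\nu^j\Psi$, or the map $I:\nu^{o(\Psi)}\Psi\to\Psi$ landing in the correct summand, all of which are $\W_F$-maps by hypothesis. Then one verifies $P\circ C_I=C_I\circ P$ by a short computation comparing the two ways of composing a cyclic shift with a cyclic shift, using only the definition of $C_I$; the single place where an $I$ appears on each side matches up precisely because $C_I$ itself applies $I$ only on the "wrap-around" coordinate. Hence $P\in\Iso_{\WD}(C(\Psi,I),C(\nu\Psi,I))$, which is the claim for $k=1$, and the general statement follows by induction on $|k|$ and by using $\nu^{o(\Psi)}\Psi\simeq\Psi$ to reduce $k$ modulo $o(\Psi)$.

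The main obstacle, such as it is, is purely bookkeeping: one must be careful about the direction of the cyclic shift in the definition of $C_I$ versus the reindexing that turns $\Phi(\Psi)$ into $\Phi(\nu^k\Psi)$, and about ensuring that the single application of $I$ in $P$ and the single application of $I$ in $C_I$ are composed in a consistent order so that $P\circ C_I$ and $C_I\circ P$ genuinely agree (rather than differing by a sign or by $I$ versus $I^{-1}$). Since $o(\Psi)$ is the genuine period and $I$ is a fixed isomorphism, there is no ambiguity once the conventions are pinned down, so I expect the whole argument to occupy only a few lines once the intertwiner is written out. Note also that this lemma is exactly what makes the notation $C(\Z_\Psi)$ from the introduction legitimate: the equivalence class — indeed, as shown here, the isomorphism class — of $C(\Psi,I)$ depends only on the line $\Z_\Psi$ and not on the chosen representative $\nu^k\Psi$.
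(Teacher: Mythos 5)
Your proposal is correct in spirit and uses the same basic idea as the paper — a cyclic-shift intertwiner with a correction by a power of $I$ at the wrap-around position — but it misses the observation that makes the paper's proof a one-liner: the intertwiner you construct is not a new map, it is $C_I$ (or its inverse) itself. By definition $C_I\in\Hom_{\W_F}(\nu\Phi(\Psi),\Phi(\Psi))$ and is bijective, hence $C_I\in\Iso_{\W_F}(\Phi(\Psi),\nu^{-1}\Phi(\Psi))=\Iso_{\W_F}(\Phi(\Psi),\Phi(\nu^{-1}\Psi))$, and it commutes with the Deligne operator $C_I$ on both sides for trivial reasons; iterating yields $C(\Psi,I)\simeq C(\nu^{k}\Psi,I)$ for every $k\in\Z$ with no further computation. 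Your $P$ (with the correct choice of $I^{-1}$ at the wrap-around) is exactly $C_I^{-1}$, so the step where you say ``one verifies $P\circ C_I=C_I\circ P$ by a short computation'' is superfluous once this is noticed.

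One genuine (if minor) issue: the explicit formula you give,
\[
P(x_0,\dots,x_{o(\Psi)-1})=(x_1,\dots,x_{o(\Psi)-1},I(x_0)),
\]
with $I$ rather than $I^{-1}$ in the last slot, is wrong on both counts you claim to check. The equivariant map from $\Psi$ (source coordinate $0$) to $\nu^{o(\Psi)}\Psi$ (target coordinate $o(\Psi)-1$) is $I^{-1}$, since $I\in\Iso_{\W_F}(\nu^{o(\Psi)}\Psi,\Psi)$ goes the other way; and with $I$ in place one finds $P\circ C_I(x)=(x_0,\dots,x_{o-2},I^2(x_{o-1}))$ while $C_I\circ P(x)=(I^2(x_0),x_1,\dots,x_{o-1})$, which are unequal. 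You do hedge with ``(or its inverse, \dots)'' and flag the bookkeeping as the main obstacle, so you are aware of the issue, but as written the displayed formula does not satisfy either property you assert it does. With $I^{-1}$ in the wrap-around slot everything works, and then $P=C_I^{-1}$ exactly, recovering the paper's argument.
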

\begin{proof}
By definition~$C_I\in \Iso_{\W_F}(\Phi(\Psi),\nu^{-1}\Phi(\Psi))$ and commutes with~$C_I$.
\end{proof}

\begin{notation}
It thus makes sense to set \[C(\Z_\Psi,I):=C(\Psi,I).\]
\end{notation}

In fact, the dependence on~$I$ disappears when one considers the equivalence class:

\begin{lemma}\label{equiv irred}
With notations as in Lemma \ref{irreducible}, the equivalence class~$[C(\Z_\Psi,I)]$ is independent of 
$I\in \Hom_{\W_F}(\nu\Psi,\Psi)$.
\end{lemma}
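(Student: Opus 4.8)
The plan is as follows. Since $C(\Psi,I)$ is irreducible by Lemma~\ref{irreducible}, it is indecomposable, so by the first clause of Definition~\ref{equivalence} proving $C(\Z_\Psi,I)\sim C(\Z_\Psi,I')$ for two isomorphisms $I,I'\in\Iso_{\W_F}(\nu^{o(\Psi)}\Psi,\Psi)$ amounts to exhibiting $\mu\in\Fl^\times$ together with a Deligne isomorphism $C(\Psi,I')\simeq(\Phi(\Psi),\mu C_I)$.

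First I would record that, as $\Psi$ is irreducible and $\nu^{o(\Psi)}\Psi\simeq\Psi$, Schur's lemma gives $\dim_{\Fl}\Hom_{\W_F}(\nu^{o(\Psi)}\Psi,\Psi)=1$; hence $I'=\lambda I$ for some $\lambda\in\Fl^\times$, and $C_{I'}=C_{\lambda I}$ sends $(x_0,\dots,x_{o(\Psi)-1})$ to $(\lambda I(x_{o(\Psi)-1}),x_0,\dots,x_{o(\Psi)-2})$.

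Next I would look for the desired isomorphism among the diagonal $\W_F$-automorphisms of $\Phi(\Psi)=\bigoplus_{k=0}^{o(\Psi)-1}\nu^k\Psi$: since the summands are pairwise non-isomorphic irreducibles, Schur's lemma forces every $\W_F$-automorphism to have the form $P=\diag(a_0\Id,\dots,a_{o(\Psi)-1}\Id)$ with $a_i\in\Fl^\times$. Writing the Deligne-morphism condition $P\circ C_{\lambda I}=\mu C_I\circ P$ out coordinate by coordinate yields $a_i=\mu a_{i-1}$ for $1\le i\le o(\Psi)-1$ together with $\lambda a_0=\mu a_{o(\Psi)-1}$; these are equivalent to $a_i=\mu^i a_0$ for all $i$ and the single constraint $\lambda=\mu^{o(\Psi)}$.

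Finally, since $o(\Psi)$ divides $\ell-1$ (so is prime to $\ell$) and $\Fl$ is algebraically closed, the map $\mu\mapsto\mu^{o(\Psi)}$ is surjective on $\Fl^\times$, so one picks $\mu$ with $\mu^{o(\Psi)}=\lambda$ and sets $P=\diag(\Id,\mu\Id,\dots,\mu^{o(\Psi)-1}\Id)$; by the computation above this $P$ is a Deligne isomorphism $C(\Psi,\lambda I)\simeq(\Phi(\Psi),\mu C_I)$, whence $C(\Z_\Psi,I)\sim C(\Z_\Psi,I')$. There is no genuinely hard step here; the only point worth a moment's care is that the equation $\mu^{o(\Psi)}=\lambda$ is always solvable, which rests on $\Fl$ being algebraically closed and hence does not fail in small characteristic.
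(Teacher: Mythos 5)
Your proof is correct and follows essentially the same route as the paper's: both reduce (via Schur's lemma) to comparing $C_I$ with $C_{\lambda I}$, both construct a diagonal intertwiner $\diag(\Id,\mu\Id,\dots,\mu^{o(\Psi)-1}\Id)$, and both hinge on solving $\mu^{o(\Psi)}=\lambda$ in $\Fl^\times$ by algebraic closure. Your extra step — invoking Schur's lemma on the multiplicity-one isotypic decomposition to argue that \emph{any} $\W_F$-automorphism must be diagonal — is a small tightening the paper leaves implicit; note only that the parenthetical remark about $o(\Psi)$ being prime to $\ell$ is not needed for solvability, since algebraic closure of $\Fl$ already suffices.
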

\begin{proof}
Write~$C(\Psi,I)=(\Phi(\Psi),C_I)$ and take~$\l\in \Fl^\times$. Set~$V=V_{\Phi}$. Take~$u\in \Fl$ such that~$u^{o(\Psi)}=\l$. Define~$A\in \End(V)$ by the formula 
\[A(x_0,x_1,x_2,\dots,x_{o(\Psi-1)})=(x_0,u x_1,\dots,u^{o(\Psi)-2} x_{o(\Psi)-2},u^{o(\Psi)-1} x_{o(\Psi)-1}) .\] Then
\begin{align*}
A C_{\l I}(x_0,x_1,\dots,x_{o(\Psi)-2},x_{o(\Psi)-1})&=
 A(\l I(x_{o(\Psi)-1}),x_0,x_1,\dots,x_{o(\Psi)-2})\\
 &=(\l I(x_{o(\Psi)-1}),u  x_0,u^2 x_1,\dots,u^{o(\Psi)-1} x_{o(\Psi)-2})\\
 &=u(u^{o(\Psi)-1} I(x_{o(\Psi)-1}), x_0,u x_1,\dots,u^{o(\Psi)-2} x_{o(\Psi)-2})\\
 &= u C_IA(x_0,x_1,\dots,x_{o(\Psi)-2},x_{o(\Psi)-1}),\end{align*}
  i.e. \[A C_{\l I}= u C_IA.\]
 As~$A$ commutes with~$\Phi(w)$ for any~$w\in \mathrm{W}_F$, it defines an isomorphism between the Deligne representations 
~$(\Phi(\Psi),C_{\l I})$ and~$(\Phi(\Psi), uC_I)$, hence the result.
\end{proof}

\begin{notation}
It thus makes sense to write~$C(\Z_\Psi)=[C(\Z_\Psi,I)]$.
\end{notation}

We now want to show that all elements of~$\Irr_{\ss}(\WD, \Fl)$ with bijective Deligne operator are in such a class. Let~$(\Phi,U)$ be an irreducible~$\W_F$-semisimple Deligne representation with~$U$ bijective, and set~$V=V_\Phi$. By Lemma \ref{indec supported on a line}, we know that~$(\Phi,U)$ is supported on an irreducible line~$\Z_\Psi$.  Again 
we set~$W(i)=W(i,\Psi)$ for any~$\W_F$-stable subspace~$W$ of~$V$. We will now gather some information on the structure of 
$(\Phi,U)$. The first basic observation is that the relation \[\nu(w) U\Phi(w)= \Phi(w) U\] for~$w\in \W_F$, which can be rewritten 
\[(U \nu(w)\Phi(w)U^{-1}) U= \Phi(w) U,\] shows that~$U$ sends~$V(i)$ to~$V(i+1)$, in a necessarily bijective manner. We deduce the following. 

\begin{lemma}\label{basic structure of WD reps}
We have~$V(i)=U^i(V(0))$, for~$i=0,\dots,o(\Psi)-1$. In particular all the~$\W_F$-isotypic components~$V(i)$ have the same dimension for~$i=0,\ldots,o(\Psi)-1$. 
\end{lemma}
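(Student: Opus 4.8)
The plan is to exploit the computation recorded just before the statement. For every $w\in\W_F$ the defining relation $\nu(w)U\Phi(w)=\Phi(w)U$ can be rewritten as $\bigl(U\,\nu(w)\Phi(w)\,U^{-1}\bigr)U=\Phi(w)U$, which exhibits $U$ as an intertwiner carrying the $\nu^{i}\Psi$-isotypic component $V(i)$ into the $\nu^{i+1}\Psi$-isotypic component $V(i+1)$; here the indices are read modulo $o(\Psi)$, since $\Z_\Psi$ has exactly $o(\Psi)$ elements and $\Phi$ is supported on $\Z_\Psi$ by Lemma \ref{indec supported on a line}.

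First I would promote the inclusion $U(V(i))\subseteq V(i+1)$ to an equality. Since $V_\Phi=\bigoplus_{i=0}^{o(\Psi)-1}V(i)$ and $U$ is bijective on $V_\Phi$, applying $U$ gives $V_\Phi=U(V_\Phi)=\sum_{i}U(V(i))$; as each $U(V(i))$ lies in $V(i+1)$ and the $V(j)$ sit in pairwise non-isomorphic isotypic components, this sum is direct, which forces $U(V(i))=V(i+1)$ for every $i$. Hence $U$ restricts to a linear isomorphism $V(i)\to V(i+1)$ for each $i\in\Z/o(\Psi)\Z$ (equivariant for $\W_F$ up to the twist by $\nu$). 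A trivial induction on $i$ then yields $V(i)=U(V(i-1))=\cdots=U^{i}(V(0))$ for $i=0,\dots,o(\Psi)-1$, and since every map in this chain is a linear isomorphism between the relevant components we get $\dim_R V(i)=\dim_R V(0)$ for all such $i$, which is the second assertion.

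The only step requiring a little care — and the closest thing to an obstacle — is the passage from ``$U$ maps $V(i)$ into $V(i+1)$'' to ``onto'': it uses both the global bijectivity of $U$ and the fact that the twists $\nu^{i}\Psi$ are pairwise non-isomorphic, so that the pieces $U(V(i))$ occupy distinct, linearly independent isotypic summands and a dimension/direct-sum count closes the argument. Everything else is bookkeeping with the cyclic indexing modulo $o(\Psi)$, and no use of irreducibility of $(\Phi,U)$ is actually needed beyond what Lemma \ref{indec supported on a line} already provides.
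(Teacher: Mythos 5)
Your proof is correct and follows the same line the paper takes: it uses the Deligne relation to see that $U$ carries $V(i)$ into $V(i+1)$ (indices mod $o(\Psi)$), then uses global bijectivity of $U$ together with the directness of the isotypic decomposition to upgrade to equalities, exactly the content of the sentence preceding the lemma. The only thing you add is an explicit justification of the phrase ``necessarily bijective'' which the paper leaves implicit, and that justification is sound.
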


We now give a useful description the endomorphism ring of~$(\Phi,U)$.  In order to do so, we fix an isomorphism of~$\W_F$-modules \[J:V(0)\xrightarrow{\sim} V(0)\]  from~$\nu^{o(\Psi)} \Phi\mid _{V(0)}$ to~$\Phi\mid_{V(0)}$. Notice that as~$\Phi\mid_{V(0)}$ is the direct sum of~$m$ copies of~$\Psi$ (for some~$m\geq 1$), then \[A_0:=\End_{\mathrm{W}_F}(\Phi\mid_{V(0)})\] is isomorphic to 
$\M(m,R)$. This implies that all automorphisms of~$A_0$ are inner, in particular as~$A_0$ is also equal (not just isomorphic) to~$\End_{\mathrm{W}_F}(\nu^{o(\Psi)} \Phi\mid_{V(0)})$, there is~$P\in A_0^\times$ such that 
\[J^{-1} A J=P^{-1} A P,\] for all~$A\in A_0$. We fix such a~$P$ and put\[Y:=(U^{o(\Psi)}\mid_{V(0)})J P^{-1}\in A_0.\] 

\begin{prop}\label{algebra isomorphism}
The algebra~$C_{A_0}(Y)$ of elements of~$A_0$ commuting with~$Y$ is isomorphic to~$\End_{\WD}(V)$ via the map
\[L_0\mapsto L=L_0\oplus\dots \oplus L_{o({\Psi})-1}\] where \[L_i=U^i L_0 U^{-i}\in \End_{\mathrm{W}_F}(\Phi\mid_{V(i)}).\]
\end{prop}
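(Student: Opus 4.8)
The plan is to construct the isomorphism explicitly and check it is a well-defined algebra homomorphism with a two-sided inverse. First I would verify that the formula $L = L_0 \oplus \cdots \oplus L_{o(\Psi)-1}$ with $L_i = U^i L_0 U^{-i}$ actually lands in $\End_{\WD}(V)$, i.e.\ that $L$ commutes with $\Phi(w)$ for all $w\in\W_F$ and with $U$. The $\W_F$-equivariance of each $L_i$ is automatic: $U^i$ restricts to a bijection $V(0)\to V(i)$ intertwining the appropriate $\nu$-twists (Lemma \ref{basic structure of WD reps}), and $L_0\in A_0$ is $\W_F$-equivariant by definition, so $L_i = U^i L_0 U^{-i}$ is an intertwiner of $\Phi|_{V(i)}$. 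Commutation with $U$ requires checking $L_{i+1} U = U L_i$ on $V(i)$ for each $i$; for $i < o(\Psi)-1$ this is immediate from the definition $L_{i+1} = U L_i U^{-1}$, and the only genuine condition comes from the ``wrap-around'' index $i = o(\Psi)-1$, where one needs $L_0 U = U L_{o(\Psi)-1}$ on $V(o(\Psi)-1)$, equivalently $L_0 (U^{o(\Psi)}|_{V(0)}) = (U^{o(\Psi)}|_{V(0)}) L_0$ after transporting along $U^{-(o(\Psi)-1)}$. This last identity is exactly where the hypothesis $L_0\in C_{A_0}(Y)$ enters.

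The key computation is to translate ``$L_0$ commutes with $Y$'' into ``$L_0$ commutes with $U^{o(\Psi)}|_{V(0)}$ as a map of $\W_F$-modules modulo the twisting.'' Recall $Y = (U^{o(\Psi)}|_{V(0)}) J P^{-1}$, where $J$ intertwines $\nu^{o(\Psi)}\Phi|_{V(0)}$ with $\Phi|_{V(0)}$ and $P\in A_0^\times$ is chosen so that $J^{-1} A J = P^{-1} A P$ for all $A\in A_0$. The point is that $U^{o(\Psi)}|_{V(0)}$ is itself an isomorphism $\nu^{o(\Psi)}\Phi|_{V(0)}\to\Phi|_{V(0)}$ (iterating the bijection from Lemma \ref{basic structure of WD reps}), so $(U^{o(\Psi)}|_{V(0)}) J^{-1}\in A_0$; thus $U^{o(\Psi)}|_{V(0)} = ((U^{o(\Psi)}|_{V(0)})J^{-1})\cdot J$ and one can manipulate: for $L_0\in A_0$, the condition $L_0 (U^{o(\Psi)}|_{V(0)}) = (U^{o(\Psi)}|_{V(0)}) \cdot (\nu^{o(\Psi)}\text{-action conjugate of } L_0)$ — but since $L_0$ is already $\W_F$-equivariant, conjugating by $J$ is the same as conjugating by $P$, and a short calculation shows $L_0$ commuting with $U^{o(\Psi)}|_{V(0)}$ in the appropriate equivariant sense is equivalent to $L_0$ commuting with $Y = (U^{o(\Psi)}|_{V(0)})JP^{-1}$ in $A_0$. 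Running this identity backwards gives exactly the wrap-around relation $L_0 U = U L_{o(\Psi)-1}$, completing the proof that the map lands in $\End_{\WD}(V)$.

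For injectivity, note $L|_{V(0)} = L_0$, so $L = 0$ forces $L_0 = 0$. For surjectivity, given any $L\in\End_{\WD}(V)$, it preserves each isotypic component $V(i)$ (being $\W_F$-equivariant) and commutes with $U$, so $L|_{V(i)} = U^i (L|_{V(0)}) U^{-i}$ by descending from the commutation with $U$; setting $L_0 = L|_{V(0)}\in A_0$, the wrap-around commutation $L_0 U = U L|_{V(o(\Psi)-1)}$ on $V(o(\Psi)-1)$ rearranges (by the same equivalence as above, read in reverse) to $L_0\in C_{A_0}(Y)$, and $L$ is the image of $L_0$. Multiplicativity is clear since $(L L')_i = U^i L_0 L_0' U^{-i} = (U^i L_0 U^{-i})(U^i L_0' U^{-i}) = L_i L_i'$, and it sends $\Id$ to $\Id$.

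The main obstacle I anticipate is bookkeeping the $\nu$-twists correctly in the wrap-around step: one must be careful that $U^{o(\Psi)}|_{V(0)}$ is an honest element of $\Hom_{\W_F}(\nu^{o(\Psi)}\Phi|_{V(0)}, \Phi|_{V(0)})$ rather than of $\End_{\W_F}$, and that the reduction of $J$-conjugation to $P$-conjugation (using that $A_0\cong\M(m,R)$ has only inner automorphisms) is applied to the right object. Everything else is routine linear algebra, but getting this identification airtight — and in particular verifying that the specific choice $Y = (U^{o(\Psi)}|_{V(0)})JP^{-1}$ rather than some variant is what makes the equivalence work — is the crux.
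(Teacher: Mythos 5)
Your proposal is correct and follows essentially the same route as the paper's proof: restrict to $V(0)$, identify that the only nontrivial condition for commuting with $U$ is the wrap-around relation $L_0 U_0 = U_0 L_0$ with $U_0 = U^{o(\Psi)}|_{V(0)}$, and then use that automorphisms of $A_0\simeq\M(m,R)$ are inner (via the fixed $P$) to translate this into commuting with $Y$ inside $A_0$. The only difference is that you summarize the translation as ``a short calculation,'' where the paper writes out the explicit chain of equivalences $L_0U_0 = U_0 L_0 \Leftrightarrow L_0 XJ^{-1}=XJ^{-1}L_0 \Leftrightarrow L_0 X = XP^{-1}L_0P \Leftrightarrow L_0 XP^{-1} = XP^{-1}L_0$, but you have correctly identified the mechanism and where the inner-automorphism property enters.
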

\begin{proof}
Take~$L\in \End_{\WD}(V)$, then~$L$ stabilizes each~$V(i)$ as it commutes with the action of~$\W_F$, and we denote by 
$L_i$ the restriction~$L\mid_{V(i)}$.  Because~$L$ commutes with~$U$ we have \[L_i=U^i L_0 U^{-i}\in \End_{\mathrm{W}_F}(\Phi\mid_{V(i)}),\] and moreover~$L_0$ must commute with~$U_0:=(U^{o(\Psi)})\mid_{V(0)}$. Notice that~$U_0\in \End_{\Fl}(V(0))$ but has no reason to belong to~$A_0$. 

Conversely given~$L_0\in A_0$ commuting with~$U_0$, the map \[L=L_0\oplus\dots \oplus L_{o({\Psi})-1}\] with 
$L_i=U^i L_0 U^{-i}$ belongs to~$\End_{\WD}(V)$. Hence we have 
an isomorphism from the subalgebra 
\[B_0=\{L_0\in A_0,\ L_0U_0=U_0L_0\}\] of~$A_0$ to~$\End_{\WD}(V)$ defined by~$L_0\mapsto L$. Now the map 
$X=U_0 J$ belongs to~$A_0$, hence the relation \[L_0U_0=U_0L_0\] is equivalent to
\[L_0U_0=U_0 J J^{-1} L_0\Leftrightarrow L_0 X J^{-1}=X J^{-1} L_0\Leftrightarrow L_0 X =X J^{-1} L_0 J\Leftrightarrow 
 L_0 X =X P^{-1} L_0 P\]
\[\Leftrightarrow  L_0 X P^{-1}= X P^{-1} L_0 .\] This ends the proof.
\end{proof}

Let~$F$ be a field,~$X\in \M(n,F)$, and set 
\[C_{\M(n,F)}(X):=\{Y\in \M(n,F),\ YX=XY\},\] 
the subalgebra of matrices commuting with~$X$.  For use in the next proof, we recall the following lemma from basic linear algebra.

\begin{lemma}\label{commuting 1}
The algebra~$C_{\M(n,F)}(X)$ has dimension~$1$ if and only if~$n=1$.
\end{lemma}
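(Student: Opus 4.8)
The plan is to prove the two implications separately, the whole thing resting on the observations that $I_n$ is always central and that $X$ lies in its own centraliser.

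The forward direction is immediate: if $n=1$ then $\M(1,F)=F$ is itself a field, so $C_{\M(1,F)}(X)=\M(1,F)$ is one-dimensional over $F$.

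For the converse I would argue as follows. Suppose $\dim_F C_{\M(n,F)}(X)=1$. Since the identity matrix $I_n$ commutes with $X$, it lies in $C_{\M(n,F)}(X)$, and as that $F$-subspace is one-dimensional we get $C_{\M(n,F)}(X)=F\,I_n$. Now $X$ commutes with $X$, so $X\in C_{\M(n,F)}(X)=F\,I_n$; thus $X=\lambda I_n$ for some $\lambda\in F$. But a scalar matrix commutes with everything, so in fact $C_{\M(n,F)}(X)=\M(n,F)$, whence $1=\dim_F C_{\M(n,F)}(X)=\dim_F\M(n,F)=n^2$, forcing $n=1$.

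I do not expect any genuine obstacle here; the only point to keep straight is the order of the two key remarks — first use $I_n\in C_{\M(n,F)}(X)$ to pin the centraliser down to $F\,I_n$, then use $X\in C_{\M(n,F)}(X)$ to conclude that $X$ is scalar, after which a dimension count finishes the job. (One could instead bypass the case analysis by noting that $C_{\M(n,F)}(X)$ contains the commutative subalgebra $F[X]$, whose $F$-dimension equals the degree of the minimal polynomial of $X$, so that the hypothesis forces that degree to be $1$, i.e.\ $X$ scalar; the conclusion is the same.)
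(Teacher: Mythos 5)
Your proof is correct. Your main argument is a mild variant of the paper's: the paper observes directly that the commutative subalgebra $F[X]\subseteq C_{\M(n,F)}(X)$ has $F$-dimension equal to the degree of the minimal polynomial of $X$, so the hypothesis forces that degree to be $1$ and hence $X$ scalar, after which $C_{\M(n,F)}(X)=\M(n,F)$ gives $n^2=1$. You instead reach ``$X$ is scalar'' in two elementary steps (first $I_n\in C$ pins the centraliser down to $F\,I_n$, then $X\in C$ forces $X\in F\,I_n$), which avoids invoking the minimal polynomial; your parenthetical remark is precisely the paper's argument. Both routes are equally short and rigorous, so there is nothing substantive to choose between them.
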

\begin{proof}
Suppose that~$C_{\M(n,F)}(X)$ has dimension one, then in particular~$F[B]\simeq F[X]/(f_B)$ where~$f_B$ is the minimal polynomial of~$B$ must be one dimensional. Hence~$f_B$ has degree~$1$ and~$B$ is a multiple of~$I_n$. The statement follows.
\end{proof}

A corollary of Proposition \ref{algebra isomorphism} and Lemma \ref{commuting 1} is the following:

\begin{corollary}\label{crux}
We have~$\Phi(i)\simeq \nu^i\Psi$ for~$i=0,\dots,o(\Psi)-1$.
\end{corollary}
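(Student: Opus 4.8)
The statement to prove is that for an irreducible $\W_F$-semisimple Deligne $\Fl$-representation $(\Phi,U)$ with $U$ bijective, supported on the line $\Z_\Psi$, each isotypic component $\Phi(i)$ is isomorphic to $\nu^i\Psi$ — equivalently (by Lemma \ref{basic structure of WD reps}, which gives $V(i)=U^i(V(0))$ and hence equal dimensions) that the common multiplicity $m$ equals $1$. The plan is to deduce this directly from the machinery just set up: Proposition \ref{algebra isomorphism} identifies $\End_{\WD}(V)$ with $C_{A_0}(Y)$, where $A_0=\End_{\W_F}(\Phi\mid_{V(0)})\cong \M(m,\Fl)$ and $Y\in A_0$; and Lemma \ref{commuting 1} says $C_{\M(m,\Fl)}(Y)$ is one-dimensional precisely when $m=1$. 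So the whole argument reduces to showing $\dim_{\Fl}\End_{\WD}(V)=1$.

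To get that, I would invoke Schur's lemma for the Deligne representation $(\Phi,U)$: since $(\Phi,U)\in\Irr_{\ss}(\WD,\Fl)$ is irreducible as a Deligne representation and $\Fl$ is algebraically closed, any element of $\End_{\WD}(V)$ has an eigenvalue in $\Fl$, and subtracting it off produces a non-invertible endomorphism of the irreducible object $(\Phi,U)$, which must therefore be zero. Hence $\End_{\WD}(V)=\Fl\cdot\Id$ is one-dimensional. (One should check that the standard Schur argument applies: kernel and image of a $\WD$-endomorphism are Deligne subrepresentations of $(\Phi,U)$ — the kernel is $\W_F$-stable and killed appropriately by $U$, the image likewise — so irreducibility forces each nonzero endomorphism to be an isomorphism, and the endomorphism ring is a finite-dimensional division algebra over the algebraically closed field $\Fl$, hence equals $\Fl$.)

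Combining: $\dim_{\Fl} C_{A_0}(Y)=\dim_{\Fl}\End_{\WD}(V)=1$, so by Lemma \ref{commuting 1} applied to $Y\in\M(m,\Fl)$ we get $m=1$. Therefore $\Phi\mid_{V(0)}$ is a single copy of $\Psi$, i.e. $\Phi(0)\simeq\Psi$, and then $\Phi(i)=U^i(V(0))$ with $U$ intertwining $\nu$-twists shows $\Phi(i)\simeq\nu^i\Psi$ for $i=0,\dots,o(\Psi)-1$, as claimed.

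The only real subtlety — the "main obstacle" — is making sure the Schur-type input is clean: one must confirm that $\End_{\WD}(V)$ is genuinely a division ring (every nonzero Deligne-endomorphism of an irreducible Deligne representation is invertible), which requires noting that both $\Ker$ and $\mathrm{Im}$ of such a map are Deligne subrepresentations — routine, but it is the step where the Deligne-operator condition (not just $\W_F$-stability) is actually used. Everything else is a direct assembly of Proposition \ref{algebra isomorphism} and Lemma \ref{commuting 1}.
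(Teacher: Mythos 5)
Your proof is correct and follows the paper's argument exactly: Schur's lemma gives $\dim_{\Fl}\End_{\WD}(V)=1$, Proposition \ref{algebra isomorphism} transports this to $\dim_{\Fl}C_{A_0}(Y)=1$, and Lemma \ref{commuting 1} then forces $m=1$. The paper simply cites ``Schur's lemma'' without unpacking it; your elaboration (that kernel and image of a $\WD$-endomorphism are Deligne subrepresentations, so the endomorphism ring of an irreducible object is a finite-dimensional division algebra over the algebraically closed $\Fl$, hence $\Fl\cdot\Id$) is the correct justification of that step and is the same argument, just written out.
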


\begin{proof}
By Schur's lemma, the ring~$\End_{\WD}(V_\Phi)$ is one dimensional, hence by 
Proposition \ref{algebra isomorphism}, the algebra~$C_{A_0}(Y)$ is one-dimensional. However, as~$A_0\simeq \M(m,\Fl)$, if 
\[\Phi=\underbrace{\Psi\oplus\dots \oplus \Psi}_{m\times},\] Lemma \ref{commuting 1} implies that 
$m=1$ i.e.~$\Phi(0)\simeq \Psi$, and~$\Phi(i)\simeq \nu^i\Psi$ by Lemma \ref{basic structure of WD reps}.
\end{proof}

We obtain the main result of this section.

\begin{thm}\label{simple}
Take~$(\Phi,U)\in \Irr_{\ss}(\WD, \Fl)$ with~$U$ bijective, then there is a unique 
irreducible line~$\Z_\Psi$ of~$\W_F$, such that 
\[[\Phi,U]=C(\Z_\Psi).\]
\end{thm}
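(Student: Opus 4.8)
The plan is to piece together the structural lemmas already established so that the given Deligne representation is forced into the shape $C(\Z_\Psi,I)$ for a suitable $I$, and then invoke Lemma \ref{equiv irred} to pass to the equivalence class. First I would apply Lemma \ref{indec supported on a line} to $(\Phi,U)$ — since it is irreducible it is in particular indecomposable — to obtain an irreducible line $\Z_\Psi$ on which $(\Phi,U)$ is supported. Then I would feed this into Corollary \ref{crux}, which tells us that the $\nu^i\Psi$-isotypic component $\Phi(i)$ is isomorphic to $\nu^i\Psi$ for $i=0,\dots,o(\Psi)-1$; in particular $\Phi\simeq\bigoplus_{k=0}^{o(\Psi)-1}\nu^k\Psi=\Phi(\Psi)$, so $(\Phi,U)$ and $C(\Psi,I)$ (for any choice of $I$) live on the same underlying $\W_F$-module.

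Next I would use Lemma \ref{basic structure of WD reps}: $U$ carries $V(i)$ bijectively onto $V(i+1)$ for $i=0,\dots,o(\Psi)-2$, and $U$ carries $V(o(\Psi)-1)$ bijectively onto $V(0)$ — the latter map being an element of $\Iso_{\W_F}(\nu^{o(\Psi)}\Psi,\Psi)$ (this is exactly the point that makes $U^{o(\Psi)}|_{V(0)}$ an element of $A_0$ in the discussion preceding Proposition \ref{algebra isomorphism}, and by Corollary \ref{crux} each $V(i)$ is now one-dimensional so these are just nonzero scalars once bases are chosen). Fixing an isomorphism $V(i)\xrightarrow{\sim}\nu^i\Psi$ for each $i$ — which we may do compatibly with $U$ on the first $o(\Psi)-1$ steps, i.e. so that $U|_{V(i)}$ becomes the identity $\nu^i\Psi\to\nu^{i+1}\Psi$ under these identifications — the only remaining piece of data is $U|_{V(o(\Psi)-1)}$, which becomes an isomorphism $I\in\Iso_{\W_F}(\nu^{o(\Psi)}\Psi,\Psi)$. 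Under these identifications $(\Phi,U)$ is literally $C(\Psi,I)=(\Phi(\Psi),C_I)$, so $(\Phi,U)\simeq C(\Psi,I)$ as Deligne representations, whence $[\Phi,U]=[C(\Psi,I)]=C(\Z_\Psi)$ by Lemma \ref{iso on the same line}, Lemma \ref{equiv irred}, and the notation $C(\Z_\Psi)=[C(\Z_\Psi,I)]$.

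For uniqueness of the line: if $[\Phi,U]=C(\Z_\Psi)=C(\Z_{\Psi'})$ then in particular $\Phi$, the underlying semisimple $\W_F$-module, is both $\bigoplus_k\nu^k\Psi$ and $\bigoplus_k\nu^k\Psi'$ up to isomorphism (the equivalence relation does not touch $\Phi$), so $\Psi'\in\Z_\Psi$, i.e. $\Z_{\Psi'}=\Z_\Psi$. I would spell this out by noting that two equivalent Deligne representations have isomorphic underlying $\W_F$-representations, which is immediate from Definition \ref{equivalence} since twisting $U$ by $\l\in R^\times$ leaves $\Phi$ unchanged.

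The only genuinely delicate step is the "compatible choice of bases" in the middle paragraph: one has to make sure that after using $U$ to identify $V(1),\dots,V(o(\Psi)-1)$ with $\nu\Psi,\dots,\nu^{o(\Psi)-1}\Psi$ starting from a fixed identification of $V(0)$ with $\Psi$, the resulting picture really is the cyclic operator $C_I$ and not some twist of it — but this is exactly the content of Lemma \ref{basic structure of WD reps} ($V(i)=U^i(V(0))$) together with the $\W_F$-equivariance of $U$, so no real obstacle arises; it is bookkeeping. One could alternatively avoid choosing bases altogether and instead build the isomorphism $(\Phi,U)\to C(\Psi,I)$ directly out of the maps $U^i|_{V(0)}$, which is perhaps cleaner. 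Either way the substantive mathematical work has already been done in Corollary \ref{crux} and Proposition \ref{algebra isomorphism}; this theorem is the clean repackaging of it.
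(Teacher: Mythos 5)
Your proposal is correct and follows essentially the same route as the paper: support on a unique line via Lemma \ref{indec supported on a line} and uniqueness of the decomposition of $\Phi$ into irreducibles, Corollary \ref{crux} to force each isotypic component $V(i)$ to be a single copy of $\nu^i\Psi$, and then an explicit isomorphism built out of $U^i|_{V(0)}$ — which is precisely the paper's map $J_i = C_I^i J_0 U^{-i}$, so the "cleaner alternative" you mention at the end is in fact the proof the authors wrote.
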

\begin{proof}
We already explained that~$(\Phi,U)$ is supported on an irreducible line~$\Z_\Psi$, which is necessarily unique by uniqueness 
of the decomposition of~$\Phi$ into a direct sum if elements in~$\Irr(\W_F,\Fl)$. Set~$V=V_\Phi$ and~$V(i)$ its~$\nu^i\Psi$-isotypic component. Take~$I\in \Iso_{\W_F}(\nu\Psi,\Psi)$ and consider~$C(\Psi,I)=(\Phi(\Psi),C_I)$. By Corollary \ref{crux},~$V(i)\simeq \nu^i\Psi$. Let~$J_0$ be an isomorphism between~$V(0)$ and~$\Psi$, then for each~$i$ between~$0$ and~$o(\Psi)-1$, the map~$J_i=C_I^iJ_0 U^{-i}$ is a linear bijection from~$W(i)$ to~$\nu^i\Psi$ commuting with the action of~$\W_F$. Hence the map~$J=\bigoplus_{i=0}^{o(\Psi)-1} J_i$ is a linear bijection from~$W$ to~$V$ commuting with the action of~$\W_F$.  It intertwines the action of 
$U$ and~$C_I$ on each~$W(i)$: for~$i=0,\dots, o(\Psi)-2$, we have \[JU\mid_{W(i)}=C_IJ\mid_{W(i)}=C_{\mu I}J\mid_{W(i)}\] 
for any~$\mu\in \Fl^\times$. On~$W(o(\Psi)-1)$, by Schur's lemma, there is~$\l\in \Fl^\times$ such that 
\[J U\mid_{W(o(\Psi)-1)}=\l C(I_0)J\mid_{W(o(\Psi)-1)}=C_{\l I}J\mid_{W(o(\Psi)-1)}.\] This shows that 
$J$ is an isomorphism between~$(\Phi,U)$ and~$C(\Psi,\l I)$. 
\end{proof}

\subsection{Indecomposable~$\W_F$-semisimple Deligne representations}

Take~$(\Phi,U)\in\Indec_{\W_F,\ss}(R)$, in particular it is supported on~$\Z_\Psi$ for~$\Psi$ an irreducible representation of~$\W_F$. As before we write~$V=V_\Phi$ and~$U=D+N$, and we already observed that~$U=N$ when~$R=\Fl$. We consider~$R=\Fl$ for a moment, the Deligne relation satisfied by~$U$ implies that the nonzero eigenvalues of~$D$ can be partitioned into orbits of~$q^\Z=q^{\Z/o(\nu)\Z}\simeq \Z/o(\nu)\Z$ acting by multiplication.

\begin{lemma}\label{indec nilp or bijective}
For~$(\Phi,U)\in\Indec_{\W_F,\ss}(\Fl)$, the eigenvalues of~$U$ lie in a single orbit under this action of~$\Z/o(\nu)\Z$, in particular~$U$ is either bijective or nilpotent.
\end{lemma}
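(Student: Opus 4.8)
The plan is to exploit the Deligne relation $U \in \Hom_{\W_F}(\nu\Phi,\Phi)$ together with the fact (already noted, to be justified in the surrounding text) that over $\Fl$ we may write $U = D+N$ with $D$ semisimple, $N$ nilpotent, and both lying in $\Hom_{\W_F}(\nu\Phi,\Phi)$ by Remark \ref{U=N in char zero}(1). Since $(\Phi,U)$ is supported on $\Z_\Psi$, write $V = \bigoplus_{i=0}^{o(\Psi)-1} V(i)$ for the isotypic decomposition, and recall from the discussion preceding the lemma that $U$ (hence $D$ and $N$) shifts $V(i)$ into $V(i+1 \bmod o(\Psi))$; equivalently $D$ permutes the isotypic pieces cyclically. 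First I would pass to $D^{o(\Psi)}$, which preserves each $V(i)$, and analyze its eigenvalues: the Deligne relation forces $\Sp(D)$ to be stable under multiplication by $q$ modulo $\ell$, i.e. under the cyclic group $q^{\Z} = \Z/o(\nu)\Z$. So the set of nonzero eigenvalues of $D$ breaks up into $q^{\Z}$-orbits.

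The key step is to show $D$ has only one such orbit of eigenvalues (together with possibly the eigenvalue $0$, which I will also rule out along the way). Suppose $\Sp(D)$ meets two distinct orbits $\mathcal{O}_1, \mathcal{O}_2 \subseteq \Fl^\times$, or one nonzero orbit together with $0$. Because $D$ is semisimple and $\W_F$-equivariant, the generalized (here genuine) eigenspace decomposition of $V$ with respect to $D^{o(\Psi)}$ acting on $V(0)$, transported around by powers of $U$, gives a $D$-stable, $\W_F$-stable decomposition $V = V_{\mathcal{O}_1} \oplus V_{\mathcal{O}_2} \oplus \cdots$, where $V_{\mathcal{O}}$ collects the eigenlines whose eigenvalue-set under $D^{o(\Psi)}$ lies in $\mathcal{O}$. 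The point is that $N$ also respects this decomposition: since $N$ commutes with $D$ (they are the semisimple and nilpotent parts of the commuting-with-$\W_F$ operator $U$, so $N$ is a polynomial in $U$ hence commutes with $D$), $N$ preserves each $D$-eigenspace and hence each $V_{\mathcal{O}}$. Therefore $U = D+N$ preserves each $V_{\mathcal{O}}$, so $(\Phi,U) = \bigoplus_{\mathcal{O}} (\Phi|_{V_{\mathcal{O}}}, U|_{V_{\mathcal{O}}})$ is a direct sum of Deligne subrepresentations — each summand being again $\W_F$-semisimple — contradicting indecomposability unless there is exactly one summand. This forces $\Sp(D)$ to consist of a single $q^{\Z}$-orbit; in particular either all eigenvalues of $D$ are nonzero (so $D$, hence $U = D + N$ restricted appropriately, is invertible — and then since over $\Fl$ we have $U=N$, this case means $N$ is nilpotent and... ) — cleaner to phrase it as: either $0 \notin \Sp(U)$, giving $U$ bijective, or $\Sp(U) = \{0\}$, giving $U$ nilpotent, since $\{0\}$ is itself a $q^{\Z}$-orbit and cannot be mixed with a nonzero one.

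The main obstacle I anticipate is bookkeeping the cyclic shift: $D$ itself is \emph{not} diagonalizable on $V$ in the naive sense that respects the grading — it genuinely moves $V(i)$ to $V(i+1)$ — so "eigenspace of $D$" must be interpreted correctly. The clean device is to work with $D_0 := (D^{o(\Psi)})|_{V(0)} \in \End_{\W_F}(V(0))$, which \emph{is} semisimple and $\W_F$-equivariant on $V(0)$, decompose $V(0) = \bigoplus_\lambda V(0)_\lambda$ into its eigenspaces, and then set $V_{\mathcal{O}} := \bigoplus_{i} U^i\big(\bigoplus_{\lambda \in \mathcal{O}} V(0)_\lambda\big)$, checking this is well-defined (independent of representative, using that $U^{o(\Psi)}$ is, up to the semisimple/nilpotent split, multiplication by $D_0$ on $V(0)$) and $U$-stable. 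Once that decomposition is in hand, the contradiction with indecomposability is immediate, and the dichotomy "bijective or nilpotent" follows since $0$ is not in the same $q^{\Z}$-orbit as any nonzero scalar. I would also remark that the $R = \Ql$ case of the "nilpotent" conclusion is Remark \ref{U=N in char zero}(2), so nothing new is needed there; the content is entirely in the $\Fl$ case as stated.
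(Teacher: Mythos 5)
Your proof is correct and rests on the same idea as the paper's: partition $\Sp(D)$ into $q^{\Z}$-orbits (together with $\{0\}$), show the corresponding orbital eigenspace sums give a decomposition of $(\Phi,U)$ into $\W_F$-semisimple Deligne subrepresentations, and invoke indecomposability. The paper executes this more directly: since $D$ is a semisimple endomorphism of all of $V$, one writes $V = \Ker(D)\oplus\bigoplus_{i}\big(\bigoplus_{\lambda\in O_i}V_\lambda\big)$ where $V_\lambda$ is the $\lambda$-eigenspace of $D$ on $V$, checks each orbital sum is $\W_F$-stable (from $\nu(w)\,D\Phi(w)=\Phi(w)D$, so $\Phi(w)$ sends $V_\lambda$ to $V_{\nu(w)^{-1}\lambda}$) and $U$-stable (because $U$ commutes with $D$), and concludes. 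Your detour through $D^{o(\Psi)}\rvert_{V(0)}$ and transporting eigenspaces by powers of $U$ is unnecessary, and the concern that "eigenspace of $D$" must be interpreted carefully because $D$ shifts the isotypic grading is a red herring: the eigenspaces of $D$ are perfectly well-defined subspaces of $V$ even though they straddle several isotypic components, and nothing in the argument requires them to respect that grading. Also, a small slip: the parenthetical "and then since over $\Fl$ we have $U=N$" has it backwards — over $\Fl$ one generally has $D\neq 0$, while $U=N$ is the $\Ql$ phenomenon of Remark \ref{U=N in char zero}; you correct yourself immediately afterwards, so this does not affect the proof.
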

\begin{proof}
We write~$U=D+N$ as before:
\[\Sp(D)=\{0\}\sqcup O_1 \sqcup \dots \sqcup O_s\] as the union of the orbit of~$0$ and the~$s$ orbits of nonzero eigenvalues. We set~$V_0=\Ker(D)$, and~$V_i=\bigoplus_{\l\in O_i} V_{\l}$ so that ~$V=\bigoplus_{i=0}^s V_i$ is a decomposition into~$\W_F$-stable summands.  The summands are stable under~$U$ as~$U$ commutes with~$D$. Hence the decomposition~$V=\bigoplus_{i=0}^s V_i$ is a direct sum of Deligne representations. As~$(\Phi,U)$ is indecomposable, we must have~$V=V_i$ for some~$i$.\end{proof}

We go back to general~$R$. Let us first consider the case~$U$ nilpotent, and start by a classical example.

\begin{definition}\label{segment}
Take~$m\geq 1$, and denote by~$[0,r-1]$ the~$\W_F$-semisimple Deligne representation~$(\Phi(r),N(r))$ where 
\[\Phi(r)=\bigoplus_{k=0}^{r-1} \nu^k,\] and 
\[N(r)(x_0,\ldots,x_{r-1})=(0,x_0,\ldots,x_{r-2}).\]
\end{definition}

\begin{notation}\label{general segment}
For~$a\leq b$ in~$\Z$, we set~$[a,b]=\nu^a [0,b-a]$.
\end{notation}

\begin{lemma}\label{segment indec}
$[0,r-1]\in \Indec_{\ss}(\WD, R)$.
\end{lemma}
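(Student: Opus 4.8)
The plan is to show that $[0,r-1]=(\Phi(r),N(r))$ has no nontrivial direct sum decomposition as a Deligne representation, and the cleanest route is via its endomorphism ring: I will show that $\End_{\WD}([0,r-1])$ is local, which immediately gives indecomposability (it has no nontrivial idempotents). Alternatively one can argue directly that any Deligne subrepresentation complement to a Deligne subrepresentation must be trivial; I'll sketch the endomorphism-ring approach as the main line.

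First I would unwind what a Deligne endomorphism of $[0,r-1]$ is. Since $\Phi(r)=\bigoplus_{k=0}^{r-1}\nu^k$ and the characters $\nu^k$ for $k=0,\dots,r-1$ are pairwise distinct as $\W_F$-representations (here one uses that $\nu$ has order at least $r$; but in fact even when $o(\nu)<r$ the argument must be run more carefully — see below), any $f\in\End_{\W_F}(\Phi(r))$ that also commutes with $N(r)$ is heavily constrained. If the $\nu^k$ are distinct, $\End_{\W_F}(\Phi(r))$ is the diagonal algebra $\prod_k R$, i.e.\ $f$ acts by a scalar $a_k$ on the $\nu^k$-component; the condition $f\circ N(r)=N(r)\circ f$ then forces $a_0=a_1=\dots=a_{r-1}$, so $\End_{\WD}([0,r-1])=R\cdot\Id$, which is certainly local. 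When $o(\nu)\mid$ (something $\le r-1$), the characters $\nu^k$ repeat, $\End_{\W_F}(\Phi(r))$ is a product of matrix algebras, and one must redo the computation: write $f=(f_{jk})$ in block form, $f_{jk}\in\Hom_{\W_F}(\nu^k,\nu^j)$ (nonzero only when $\nu^k\cong\nu^j$), and impose $fN(r)=N(r)f$, which in coordinates reads $f_{j,k}$ (shifted) $=f_{j-1,k-1}$; chasing this recursion shows $f$ is "upper triangular Toeplitz" in a suitable sense and that its semisimple part is a scalar, so $\End_{\WD}([0,r-1])$ is a local ring (scalars plus a nilpotent ideal). Either way, a local endomorphism ring gives indecomposability.

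The main obstacle I anticipate is precisely the case where $o(\nu)\le r-1$, so that $[0,r-1]$ is \emph{not} supported on a single $\W_F$-isotypic line with multiplicity one — the "obvious" argument using distinctness of the $\nu^k$ breaks, and one genuinely has to handle the multiplicities. I expect the authors handle this by a direct kernel/image argument with $N(r)$ rather than via the endomorphism ring: suppose $[0,r-1]=(\Phi',U')\oplus(\Phi'',U'')$; projecting, one gets an idempotent $e\in\End_{\WD}([0,r-1])$, and one shows any such idempotent is $0$ or $\Id$ by observing that $e$ commutes with $N(r)$ and with the $\W_F$-action, hence preserves the flag $\{0\}\subset \Im N(r)^{r-1}\subset\dots\subset\Im N(r)\subset\Phi(r)$ and acts compatibly on the one-dimensional graded pieces; an idempotent of $R$ acting on each line is $0$ or $1$, and the $N(r)$-equivariance propagates a single value across all $r$ pieces. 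So the key step — and the only one with any content — is the propagation of the idempotent's eigenvalue through $N(r)$; everything else is bookkeeping about $\Hom_{\W_F}(\nu^j,\nu^k)$.

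Concretely, here is the skeleton I would write: (i) recall $\End_{\WD}([0,r-1])=\{f\in\End_{\W_F}(\Phi(r)): fN(r)=N(r)f\}$; (ii) take $e$ idempotent in this ring; (iii) note $e$ stabilizes each $\W_F$-isotypic summand and hence is block-diagonal, and that $N(r)$ maps the $\nu^k$-line isomorphically onto a summand of the $\nu^{k+1}$-space for $k<r-1$; (iv) deduce from $eN(r)=N(r)e$ that $e$ restricted to the image of $N(r)^k$ starting from the top line is determined by a single scalar, forcing $e|_{\Phi(r)}$ to be $0$ or $\Id$; (v) conclude $[0,r-1]$ is indecomposable, and since its endomorphism ring is then (automatically) local, this is consistent with the Krull--Schmidt machinery invoked in Remark~\ref{remark Krull}.
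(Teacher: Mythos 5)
Your sketch reaches the right conclusion, but you take a longer route than the paper, and your primary line (computing $\End_{\WD}([0,r-1])$ directly) does run into the bookkeeping issues you correctly anticipate when $o(\nu)\leq r-1$. The paper sidesteps all of this with a one-line observation: forget the $\W_F$-action entirely and note that $V_{[0,r-1]}$ is already indecomposable as an $R[N(r)]$-module. Since $N(r)$ is a single Jordan block, $V_{[0,r-1]}$ is cyclic over $R[N(r)]$, so $\End_{R[N(r)]}(V_{[0,r-1]})=R[N(r)]\cong R[X]/(X^r)$, which is local; any Deligne direct-sum decomposition is in particular an $R[N(r)]$-module decomposition, so indecomposability follows. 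Your fallback flag-of-images argument in steps (ii)--(iv) is essentially a disguised version of this (the filtration $\{0\}\subset \Im N(r)^{r-1}\subset\dots\subset V$ is exactly the one controlled by the local ring $R[X]/(X^r)$), and it is correct once you supply the standard lemma that an idempotent acting as $0$ (resp.\ $\Id$) on the associated graded is itself $0$ (resp.\ $\Id$). What you buy with your approach is an explicit description of $\End_{\WD}([0,r-1])$; what the paper's approach buys is that the whole $o(\nu)$ case analysis evaporates, since $\W_F$-equivariance is never invoked.
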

\begin{proof}
In fact it is already indecomposable as a~$R[N(r)]$-module, indeed~$\End_{R[N(r)]}(V_{[0,r-1]})$ is equal to~$R[N(r)]$ is cyclic, 
and~$R[N(r)]=R[X]/(X^r)$ is a local ring.
\end{proof}

More generally, we have: 

\begin{prop}\label{Sp are indec}
Take~$\Psi\in \Irr(\W_F,R)=\Irr_{\WD,\ss}(R)\cap \Nilp_{\WD,ss}(R)$ and~$r\geq 1$, then 
\[[0,r-1]\otimes \Psi\in \Indec_{\ss}(\WD, R).\]
Moreover, take~$N(r)$ as in Definition \ref{segment}. If~$R=\Fl$, and~$I\in \Iso_{\W_F}(\nu^{o(\Psi)}\Psi,\Psi)$, then \[\End_{\WD}([0,r-1]\otimes \Psi)=\Fl[N(r)^{o(\Psi)}\otimes I^{-1}].\] 
If~$R=\Ql$, then \[\End_{\WD}([0,r-1]\otimes \Psi)\simeq \Ql.\] 
In both cases they are local rings.

\end{prop}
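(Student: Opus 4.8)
The plan is to analyze the endomorphism ring of $[0,r-1]\otimes\Psi$ directly, exploiting that the underlying $\W_F$-representation is a direct sum of the isotypic pieces $\nu^k\Psi$ and that the Deligne operator is $N(r)\otimes\Id$. First I would set $V=V_{[0,r-1]\otimes\Psi}$ and note $V=\bigoplus_{k=0}^{r-1}\nu^k\otimes V_\Psi$, with the $\W_F$-action only seeing the second factor up to the twist $\nu^k$. An element $L\in\End_{\WD}(V)$ commutes with $\W_F$, so it preserves each $\nu^j\Psi$-isotypic component; writing out which summands $\nu^k\otimes V_\Psi$ contribute to a given isotypic class (this depends on $k\bmod o(\Psi)$), one sees $L$ is block-structured. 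Then the condition $L\circ(N(r)\otimes\Id)=(N(r)\otimes\Id)\circ L$ forces the usual ``Toeplitz/shift-commutant'' shape familiar from the classical computation of $\End$ of a single Jordan block, but now with the entries lying in $\End_{\W_F}(\Psi)=R$ (Schur) when $k$ ranges within a residue class mod $o(\Psi)$, and with the twisting isomorphism $I\colon\nu^{o(\Psi)}\Psi\xrightarrow{\sim}\Psi$ entering whenever the shift crosses a multiple of $o(\Psi)$.

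Concretely, the key steps in order: (1) Use Schur's lemma to identify $\Hom_{\W_F}(\nu^a\Psi,\nu^b\Psi)$, which is $R$ if $a\equiv b\pmod{o(\Psi)}$ (with an identification depending on a chosen power of $I$) and $0$ otherwise; this pins down the possible matrix entries of $L$. (2) Impose commutation with the nilpotent shift $N(r)\otimes\Id$; as in the scalar case this says $L$ is determined by its ``first column'' and is a polynomial in the shift, so $L$ lies in the span of $\{(N(r)\otimes\Id)^i\circ(\text{twist correction})\}$. (3) Track the twists: since $N(r)$ shifts the $\nu^k$-grading by one, $(N(r))^{o(\Psi)}$ shifts by a full period, and to land back in the same $\W_F$-isotypic class one must compose with $I^{-1}$; hence in the $\Fl$-case the commutant is exactly $\Fl[N(r)^{o(\Psi)}\otimes I^{-1}]$, while in the $\Ql$-case $o(\Psi)=\infty$ effectively (no nontrivial twist relation, $\nu^k\Psi\not\simeq\Psi$ for $k\neq 0$), so only scalars survive and $\End\simeq\Ql$. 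Wait — more precisely in the $\Ql$-case $\Psi$ need not be banal in the modular sense, but $\nu^k\Psi\simeq\Psi$ forces $k=0$ since $\nu$ has infinite order, so the shift $N(r)$ never returns to the same isotypic class except via the zero map beyond degree $r$; combined with nilpotency this gives $\End_{\WD}\simeq\Ql$. (4) Finally observe $\Fl[N(r)^{o(\Psi)}\otimes I^{-1}]$ is local: $N(r)^{o(\Psi)}$ is nilpotent, so $N(r)^{o(\Psi)}\otimes I^{-1}$ is nilpotent, hence this ring is $\Fl[T]/(T^{N})$ for suitable $N=\lceil r/o(\Psi)\rceil$, a local ring; similarly $\Ql$ is local; and indecomposability then follows from the standard fact that a module with local endomorphism ring is indecomposable.

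The main obstacle I anticipate is the bookkeeping in step (3): correctly identifying the ring generator and making sure the isomorphism $C_{A_0}(Y)\cong\End_{\WD}(V)$ style identification from Proposition~\ref{algebra isomorphism} is applied in the right way here — in particular, verifying that the product $N(r)^{o(\Psi)}\otimes I^{-1}$ is genuinely an element of $\End_{\WD}([0,r-1]\otimes\Psi)$ (i.e.\ that it commutes with both the $\W_F$-action and the full Deligne operator $N(r)\otimes\Id$, not just with $N(r)^{o(\Psi)}\otimes\Id$) and that it generates everything. This amounts to checking that $I$ being $\W_F$-equivariant makes $N(r)^{o(\Psi)}\otimes I^{-1}$ commute with $\nu^k$-twisted copies correctly, and that no ``off-period'' endomorphisms exist — which is exactly where Schur's lemma for $\Psi$ (and the fact that distinct $\nu^k\Psi$ within a period are pairwise non-isomorphic) does the work. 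Once the generator is correctly pinned down, the locality and indecomposability are immediate.
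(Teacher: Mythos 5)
Your proposal is correct and follows essentially the same route as the paper: compute $\End_{\WD}([0,r-1]\otimes\Psi)$ as the simultaneous commutant of the $\W_F$-action and the operator $N(r)\otimes\Id$, identify the generator $N(r)^{o(\Psi)}\otimes I^{-1}$ by tracking when the shift returns to the same isotypic class, and conclude locality and hence indecomposability. The only (cosmetic) difference is the order of restrictions — you first impose $\W_F$-equivariance via Schur's lemma on the isotypic blocks and then commutation with the shift, while the paper first passes to the $N(r)\otimes\Id$-commutant $\Fl[N(r)]\otimes\End_{\Fl}(\Psi)$ and then imposes $\W_F$-equivariance — but the computation is the same.
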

\begin{proof}
First, by Lemma \ref{good case tensor},~$[0,r-1]\otimes \Psi\in \Rep_{\ss}(\WD, R)$.
Let's compute the endomorphism algebra of~$[0,r-1]\otimes \Psi$. We do the case~$R=\Fl$, the case~$R=\Ql$ being similar. 
One has \[\End_{\Fl}([0,r-1]\otimes \Psi)=\End_{\Fl}([0,r-1])\otimes \End_{\Fl}(\Psi).\] The Deligne operator~$N(\Psi,r)$ of~$[0,r-1]\otimes \Psi\in \Indec_{\ss}(\WD, R)$ is~$N(r)\otimes \Id$, hence 
\[\End_{\Fl[N(\Psi,r)]}([0,r-1]\otimes \Psi)=\End_{\Fl[N(r)]}([0,r-1])\otimes \End_{\Fl}(\Psi)=\Fl[N(r)]\otimes \End_{\Fl}(\Psi).\]
Fixing~$I\in \Hom(\nu^{(o(\Psi)}\Psi,\Psi)$, and using the basis~$\Id, N(r),\ldots, N(r)^{r-1}$ of~$\Fl[N]$, one checks that 
the subalgebra of~$\Fl[N(r)]\otimes \End_{\Fl}(\Psi)$ commuting with the action of~$\W_F$ is 
$\F_l[N(r)^{o(\Psi)}\otimes I^{-1}]$, i.e. 
 \[\End_{\WD}([0,r-1]\otimes \Psi)=\Fl[N(r)^{o(\Psi)}\otimes I^{-1}].\] When~$R=\Ql$ we find 
 \[\End_{\WD}([0,r-1]\otimes \Psi)\simeq \Ql.\] In both cases, these algebras are local as they are of the form~$R[X]/(X^l)$ for~$l\geq 0$.
\end{proof}

Now we check that these are the only indecomposable~$\W_F$-semisimple Deligne representations with nilpotent Deligne operator. 
If~$N$ is a nilpotent endomorphism of an~$R$-vector space, we denote by~$\ind(N)$ its nilpotency index. 

\begin{thm}\label{V_0}
Let~$(\Phi,N)\in \Rep_{\ss}(\WD, \Ql)$, then there is a unique~$\Psi\in \Irr(\W_F,R)$ and a unique~$r\geq 1$ such that 
\[[\Phi,N]=(\Phi,N)=[0,r-1]\otimes \Psi=[0,r-1]\otimes [\Psi].\]
\end{thm}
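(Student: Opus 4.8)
The statement (as the surrounding discussion makes clear) concerns the \emph{indecomposable} nilpotent $\W_F$-semisimple Deligne $R$-representations, the general case being recovered by decomposing into indecomposables; recall also that $[\Phi,N]=(\Phi,N)$ by Proposition \ref{iso vs equiv nilp}. The plan is to strip off the $\W_F$-action and reduce to a classical piece of linear algebra: the classification of nilpotent representations of a cyclic quiver.

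By Lemma \ref{indec supported on a line} we may assume $(\Phi,N)$ is supported on a single irreducible line $\Z_\Psi$; write $e=o(\Psi)$ (with the orbit indexed by $\Z$ when $R=\Ql$, where $N$ is automatically nilpotent by Remark \ref{U=N in char zero}). Decompose $\Phi=\bigoplus_k\Phi(k)$ into its $\nu^k\Psi$-isotypic components and set $\Phi(k)=\nu^k\Psi\otimes_R M_k$ with $M_k=\Hom_{\W_F}(\nu^k\Psi,\Phi)$. Since $N\in\Hom_{\W_F}(\nu\Phi,\Phi)$, it sends $\Phi(k)$ into $\Phi(k+1)$, and — after fixing an isomorphism $I\colon\nu^e\Psi\xrightarrow{\,\sim\,}\Psi$ to absorb the wrap-around in the $\Fl$-case — it is induced by $R$-linear maps $n_k\colon M_k\to M_{k+1}$. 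Schur's lemma (valid as $R$ is algebraically closed) then shows that $(\Phi,N)\mapsto\big((M_k)_k,(n_k)_k\big)$ is an equivalence between $\W_F$-semisimple Deligne $R$-representations supported on $\Z_\Psi$ and finite-dimensional representations of the cyclic quiver with vertex set $\Z/e\Z$ (the linear quiver on $\Z$ when $R=\Ql$), under which nilpotency of $N$ corresponds to nilpotency of every cyclic composite $n_{k-1}\circ\cdots\circ n_k$, and indecomposability is preserved.

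On the quiver side, the indecomposable nilpotent representations are exactly the uniserial ``string'' modules $S(v,r)$ indexed by a vertex $v$ and an integer $r\geq 1$: total dimension $r$, one-dimensional at each of $v,v+1,\dots,v+r-1$ and zero elsewhere, with the arrows acting as the evident shift (and as $0$ out of the last vertex). This is the classification of indecomposables over the relevant Nakayama quotient $R[\text{quiver}]/(\text{arrow ideal})^{m}$; for $R=\Ql$ it is nothing more than Jordan normal form for the single shift-nilpotent operator $N$. Translating $S(v,r)$ back through the equivalence of the previous paragraph gives $\nu^{v}\big([0,r-1]\otimes\Psi\big)=[0,r-1]\otimes\nu^{v}\Psi$; hence $(\Phi,N)\simeq[0,r-1]\otimes\Psi'$ for some $r\geq 1$ and some $\Psi'\in\Z_\Psi$, and this also equals $[0,r-1]\otimes[\Psi']$ by Lemma \ref{good case tensor}(\ref{goodtensor2}) (which applies since $[0,r-1]=\bigoplus_k\nu^k$ is a direct sum of characters and $N(r)$ is nilpotent).

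For uniqueness, read off from $[0,r-1]\otimes\Psi'=\bigoplus_{k=0}^{r-1}\nu^{k}\Psi'$ with Deligne operator $N(r)\otimes\Id$ that $\operatorname{Im}(N)=\bigoplus_{k=1}^{r-1}\nu^{k}\Psi'$, so $\Phi/N\Phi\simeq\Psi'$ determines $\Psi'$, and then $r=\dim_R\Phi/\dim_R\Psi'$ is forced. The part I expect to cost the most is making the reduction of the second paragraph precise — checking that the isotypic description really is an equivalence of categories compatible with the Deligne operator, the one genuine subtlety being the wrap-around handled by $I$. If one prefers to stay inside the category of Deligne representations, the same argument runs by hand: for $r=\ind(N)$ choose an irreducible $\W_F$-subrepresentation $T_0\subseteq\Phi$ with $N^{r-1}|_{T_0}$ injective, form the Jordan string $S=\bigoplus_{j=0}^{r-1}N^{j}(T_0)$, observe that $S\simeq[0,r-1]\otimes T_0$ as a sub-Deligne-representation, and then — the one non-formal step — show that $S$ is a \emph{direct summand} of $(\Phi,N)$, after which indecomposability gives $S=\Phi$. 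That splitting is precisely what the serial (Nakayama) structure makes automatic, so it is the step to watch out for in the bare-hands route.
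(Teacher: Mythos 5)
Your proposal is correct, but it takes a genuinely different route from the paper for the main classification, while your ``bare-hands'' aside is closer to what the paper actually does.

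The paper's proof is entirely internal to the category of Deligne representations: starting from the filtration by iterated kernels of $N$, it constructs $\W_F$-stable complements $S_0,\dots,S_{r-1}$ with $N(S_i)\subset S_{i-1}$ (exactly as in the proof of Proposition \ref{iso vs equiv nilp}), and then argues by contradiction: if some $S_{r-i_0}$ has $\W_F$-length $\geq 2$, one splits $S_{r-i_0}=\Psi_{r-i_0}\oplus U_{r-i_0}$ and propagates the splitting down the Jordan string to exhibit a nontrivial $(\Phi,N)$-stable decomposition of $V$, contradicting indecomposability. Hence each $S_i$ is irreducible and $(\Phi,N)\simeq[0,r-1]\otimes S_{r-1}$. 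This is the same mechanism as your ``one non-formal step'' (that the Jordan string generated by an irreducible with $N^{r-1}$ injective on it is a direct summand), made concrete by the explicit choice of complements.

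Your primary route — pass to the multiplicity spaces $M_k=\Hom_{\W_F}(\nu^k\Psi,\Phi)$, observe via Schur's lemma that $N$ is faithfully encoded by the maps $n_k\colon M_k\to M_{k+1}$, and invoke the classification of indecomposable nilpotent representations of the cyclic quiver on $\Z/e\Z$ (resp.\ the linear quiver on $\Z$ when $R=\Ql$) as string modules — is a cleaner structural reformulation. It buys you a sharp separation of the $\W_F$-module-theoretic input (isotypic decomposition, Schur) from the pure linear algebra (Jordan/Nakayama), and it simultaneously handles the non-nilpotent indecomposables that the paper treats separately in Proposition \ref{construction indec} and Theorem \ref{indec classif}. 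The cost is that you import the quiver classification as a black box and must carefully justify the equivalence of categories, including the wrap-around isomorphism $I\colon\nu^e\Psi\xrightarrow{\sim}\Psi$ — exactly the subtlety you flagged. Your uniqueness argument ($\Phi/N\Phi$ determines $\Psi'$, then $r$ is forced by dimension) is also more explicit than the paper's, which asserts uniqueness without proof.
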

\begin{proof}
The uniqueness of~$\Psi$,~$r$ and~$k$ are clear. Set~$r=\ind(N)$ and~$V=V_\Phi$. As in the proof of Proposition \ref{iso vs equiv nilp}, we construct~$\W_F$-stable 
subspaces~$S_i$ such that~$V=\bigoplus_{i=0}^{r-1} S_i$ and~$N(S_i)\subset S_{i-1}$.  Suppose 
 one~$S_{r-i}$ has length~$\geq 2$ as a~$\W_F$-module and take~$i=i_0$ minimal for this property. Write 
\[S_{r-i_0}=\Psi_{r-i_0}\oplus U_{r-i_0}\] with~$\Psi_{r-i_0}\in \Irr(\W_F,R)$ and~$U_{r-i_0}$ stable under~$\W_F$, 
then take a complement~$Z_{r-i_0-1}$ of \[N(\Psi_{r-i_0})\oplus N(U_{r-i_0})\] in~$S_{r-i_0-1}$, set 
\[U_{r-i_0-1}=N(U_{r-i_0})\oplus Z_{r-i_0-1}\] and continue. We construct 
a nontrivial~$(\Phi,N)$-stable decomposition 
\[[S_{r-1}\oplus \dots \oplus S_{r-(i_0-1)}\oplus \Psi_{r-i_0}\oplus \dots \oplus N^{r-i_0}(\Psi_{r-i_0})]
\oplus [U_{r-i_0}\oplus U_{r-i_0-1}\oplus \dots \oplus U_{0}]\] of~$V$, a contradiction. 
Hence each~$S_{r-i}$ is an irreducible representation of~$\W_F$. It then suffices to choose 
$\Psi=S_{r-1}$.
\end{proof}

It remains to consider the indecomposable~$\W_F$-semisimple Deligne representations~$(\Phi,U)$ with~$U$ invertible, in particular~$R=\Fl$. Let us start with an example.

\begin{prop}\label{construction indec}
Take an irreducible line~$\Z_\Psi$,~$I\in \Iso_{\W_F}(\nu^{o(\Psi)}\Psi,\Psi)$, and~$r\geq 1$, then 
\[[0,r-1]\otimes C(\Z_\Psi,I)\in \Indec_{\ss}(\WD,\Fl),\] and its Deligne operator~$U(r,I)=D(r,I)+N(r,I)$ with 
\[D(r,I)=\Id\otimes C_I\] and \[N(r,I)=N(r)\otimes \Id\] is bijective. Its endomorphism algebra is the local ring 
\[\End_{\WD}([0,r-1]\otimes C(\Z_\Psi,I))=\Fl[N(r)\otimes C_I^{-1}]=\Fl[N(r,I)\circ D(r,I)^{-1}].\]
\end{prop}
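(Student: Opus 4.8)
The plan is to mirror the proof of Proposition \ref{Sp are indec} as closely as possible, replacing the nilpotent operator $N(r)\otimes \Id$ there by the operator $U(r,I) = \Id\otimes C_I + N(r)\otimes \Id$ here. First I would observe that $[0,r-1]\otimes C(\Z_\Psi,I)$ lies in $\Rep_{\ss}(\WD,\Fl)$ by Lemma \ref{good case tensor}(\ref{goodtensor1}), since $\Phi(r)$ is a direct sum of (unramified) characters. Next I would record the underlying vector space and the Deligne operator explicitly: $V = \bigoplus_{k=0}^{r-1} \nu^k\otimes \Phi(\Psi)$, and $U(r,I) = D(r,I) + N(r,I)$ with $D(r,I) = \Id\otimes C_I$ and $N(r,I) = N(r)\otimes \Id$; these commute (they act on different tensor factors), so this is genuinely a Jordan-type decomposition with $D(r,I)$ semisimple — indeed bijective, since $C_I$ is bijective by hypothesis on $I$ — and $N(r,I)$ nilpotent. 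Hence $U(r,I)$ is bijective, as the sum of a bijective operator and a commuting nilpotent one.

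The core computation is the endomorphism algebra. Following the template of Proposition \ref{Sp are indec}, I would write
\[
\End_{\Fl[U(r,I)]}\bigl([0,r-1]\otimes C(\Z_\Psi,I)\bigr) = \End_{\Fl[N(r)]}([0,r-1])\otimes \End_{\Fl}(\Phi(\Psi))
\]
after noting that the ordinary (non-$\W_F$-equivariant) endomorphism algebra of the tensor product is the tensor product of the endomorphism algebras, and that commuting with $U(r,I)$ amounts to commuting with $N(r)$ on the first factor (since $D(r,I)$ acts only on the second factor and is already accounted for once we impose $\W_F$-equivariance below). Then I would impose $\W_F$-equivariance: using the basis $\Id, N(r), \dots, N(r)^{r-1}$ of $\Fl[N(r)] = \End_{\Fl[N(r)]}([0,r-1])$ and the description $\End_{\WD}(C(\Z_\Psi,I)) = \Fl\cdot\Id$ coming from Lemma \ref{irreducible} (irreducibility of $C(\Psi,I)$ plus Schur), one checks exactly as in the proof of Proposition \ref{Sp are indec} — there the relevant subalgebra was $\Fl[N(r)^{o(\Psi)}\otimes I^{-1}]$ because equivariance forced one to land in the subalgebra of $\Fl[N(r)]\otimes\End_\Fl(\Psi)$ commuting with the $\W_F$-action — that here the subalgebra of $\Fl[N(r)]\otimes\End_\Fl(\Phi(\Psi))$ commuting with the $\W_F$-action on $[0,r-1]\otimes C(\Z_\Psi,I)$ is $\Fl[N(r)\otimes C_I^{-1}]$. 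Finally, since $C_I$ is invertible and commutes with $D(r,I)$ and $N(r,I)$, one rewrites $N(r)\otimes C_I^{-1} = (N(r)\otimes\Id)\circ(\Id\otimes C_I)^{-1} = N(r,I)\circ D(r,I)^{-1}$, giving the stated form of the algebra.

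Indecomposability and locality then come for free from this computation: the algebra $\Fl[N(r)\otimes C_I^{-1}]$ is a quotient of $\Fl[X]$, and because $N(r)$ is nilpotent of index $r$ while $C_I^{-1}$ is invertible, the element $N(r)\otimes C_I^{-1}$ is nilpotent of index exactly $r$, so the algebra is $\Fl[X]/(X^r)$ — a local ring whose only idempotents are $0$ and $1$. An object with local endomorphism ring is indecomposable, which is the remaining claim. I expect the main obstacle to be the bookkeeping in the equivariance step: one must be careful that $C_I$ is not merely an element of $\End_\Fl(\Phi(\Psi))$ but a genuine intertwiner $\nu\Phi(\Psi)\to\Phi(\Psi)$, so the condition "commute with the $\W_F$-action on the twisted tensor product" is what pins down $C_I^{-1}$ (rather than an arbitrary scalar) in the formula, and one should verify this against the explicit cyclic formula for $C_I$ rather than hand-wave it — this is the one place where the argument genuinely differs from, and is slightly more delicate than, the nilpotent case of Proposition \ref{Sp are indec}.
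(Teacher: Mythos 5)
Your proof plan follows the same overall route as the paper — $\W_F$-semisimplicity via Lemma~\ref{good case tensor}(\ref{goodtensor1}), bijectivity of the Deligne operator from bijectivity of its semisimple part, and then a computation of the endomorphism algebra to deduce indecomposability from the local-ring criterion. However, there are two gaps in the argument that, as written, would prevent it from going through.

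First, the semisimplicity of $D(r,I)=\Id\otimes C_I$ does not follow from bijectivity of $C_I$; over $\Fl$ these are independent. The paper justifies semisimplicity by observing that $C_I^{o(\nu)}$ is a nonzero scalar $\mu$, so $C_I$ satisfies $X^{o(\nu)}-\mu$, a polynomial with distinct roots because $o(\nu)$ is prime to $\ell$; hence $C_I$, and therefore $D(r,I)$, is semisimple. Without some argument of this kind you have not established that $U(r,I)=D(r,I)+N(r,I)$ is the Jordan decomposition, which is part of the assertion.

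Second, and more seriously, the displayed identity
\[
\End_{\Fl[U(r,I)]}\bigl([0,r-1]\otimes C(\Z_\Psi,I)\bigr)=\Fl[N(r)]\otimes\End_\Fl(\Phi(\Psi))
\]
is false: the right-hand side is the commutant of $N(r,I)$ alone, whereas the commutant of $U(r,I)$ is the intersection of the commutants of $D(r,I)$ and $N(r,I)$, which is strictly smaller. The parenthetical claim that $D(r,I)$ is ``already accounted for once we impose $\W_F$-equivariance'' is the real problem: imposing $\W_F$-equivariance on an element $\sum_k N(r)^k\otimes A_k$ only forces $A_k\in\Hom_{\W_F}(\Phi(\Psi),\nu^k\Phi(\Psi))$, a space of dimension $o(\Psi)$, not $1$. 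If you stopped there the algebra would have dimension $r\cdot o(\Psi)$ and would not be local. The paper imposes commutation with $D$ as a separate constraint, i.e.\ $A_kC_I=C_IA_k$; only combining the two, via $C_I^kA_k\in\End_{\WD}(C(\Z_\Psi,I))\simeq\Fl$, pins each $A_k$ down to $\lambda_kC_I^{-k}$ and yields the $r$-dimensional answer $\Fl[N(r)\otimes C_I^{-1}]$. You correctly sensed that this is the delicate point, but the fix is not to absorb $D$ into equivariance; it is to keep the $D$-commutation constraint explicit alongside it.
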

\begin{proof}
First notice that~$[0,r-1]\otimes C(\Z_\Psi,I)$ is indeed~$\W_F$-semisimple thanks to Lemma \ref{good case tensor}. Set 
$D=D(r,I)=\Id\otimes C_I$ and~$N=N(r,I)=N(r)\otimes \Id$. Then~$D$ and~$N$ clearly commute,~$N$ is nilpotent and 
$D$ is semisimple because~$C_I$ is ($C_I^{o(\nu)}$ is a nonzero scalar~$\mu$, and~$X^{o(\nu)}-\mu$ has simple roots 
because~$o(\nu)$ is prime to~$\ell$).~$U=D+N$ is bijective because~$D$ is, hence it remains to check that~$[0,r-1]\otimes C(\Z_\Psi,I)$ is indecomposable, we do this by looking at the endomorphism algebra again. We recall that the commutant of~$U$ is that of~$D$ intersected with that of~$N$. The commutant of~$N$ in 
$\End_{\Fl}([0,r-1]\otimes C(\Z_\Psi,I))$ is 
$\Fl[N]\otimes \End_{\Fl}(C(\Z_\Psi,I))$.  Hence we need to look at the commutant of the joint 
action of~$\W_F$ and~$D$ inside~$\Fl[N]\otimes \End_{\Fl}(C(\Z_\Psi,I))$. Writing an element in this commutant under the form 
$\sum_{k=0}^{r-1} N^k \otimes A_k$, the endomorphism~$A_k$ must belong to~$\Hom_{\W_F}(C(\Z_\Psi,I),\nu^k C(\Z_\Psi,I))$ and must commute with~$C_I$, i.e. \[C_I^{k}A_k \in \End_{\WD}(C(\Z_\Psi,I))\simeq \Fl,\] the latter isomorphism 
by Schur's lemma. Hence our element is of the form~$\sum_{k=0}^{r-1}\l_k N(r)^k \otimes C_I^{-k}$, and this implies that 
\[\End_{\WD}([0,r-1]\otimes C(\Z_\Psi,I))=\Fl[N(r)\otimes C_I^{-1}]=\Fl[N(r,I)\circ C(r,I)^{-1}].\]
As~$N(r)\otimes C_I^{-1}$ is nilpotent, this ring is local.
\end{proof}

We notice as in Lemma \ref{equiv irred}, the the equivalence class of such a Deligne representation 
is independent of~$I$. It is a consequence of Lemma \ref{good case tensor} and Lemma \ref{equiv irred}:

\begin{lemma}\label{equiv-indec}We have
\[[[0,r-1]\otimes C(\Z_\Psi,I)]=[0,r-1]\otimes [C(\Z_\Psi,I)]=[0,r-1]\otimes C(\Z_\Psi).\]
\end{lemma}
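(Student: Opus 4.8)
The plan is to read off both equalities directly from the two cited lemmas, with no new computation required. First I would note that $[0,r-1]=(\Phi(r),N(r))$ has underlying $\W_F$-representation $\Phi(r)=\bigoplus_{k=0}^{r-1}\nu^{k}$, a direct sum of characters, and Deligne operator $N(r)$ nilpotent, so it sits in exactly the situation of Lemma~\ref{good case tensor}. Part~(\ref{goodtensor1}) then guarantees that $[0,r-1]\otimes C(\Z_\Psi,I)$ is $\W_F$-semisimple, hence has an equivalence class; part~(\ref{goodtensor2}) guarantees that this equivalence class is unchanged if $C(\Z_\Psi,I)$ is replaced by an equivalent Deligne representation. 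These are precisely the facts that legitimise the definition $[\Phi,U]\otimes[\Phi',U']:=[(\Phi,U)\otimes(\Phi',U')]$ made right after Lemma~\ref{good case tensor}, applied with $[0,r-1]$ in the role of $[\Phi,U]$ and $C(\Z_\Psi,I)$ in the role of the arbitrary $\W_F$-semisimple factor $[\Phi',U']$. Spelling this out gives the first equality
\[[[0,r-1]\otimes C(\Z_\Psi,I)]=[0,r-1]\otimes[C(\Z_\Psi,I)],\]
where on the left $[0,r-1]$ and $[[0,r-1]]$ coincide by Proposition~\ref{iso vs equiv nilp}.

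For the second equality I would invoke Lemma~\ref{equiv irred}: the equivalence class $[C(\Z_\Psi,I)]$ does not depend on the choice of $I\in\Iso_{\W_F}(\nu^{o(\Psi)}\Psi,\Psi)$, which is exactly what is encoded by the notation $C(\Z_\Psi):=[C(\Z_\Psi,I)]$ fixed afterwards. Substituting this into the right-hand side above yields $[0,r-1]\otimes[C(\Z_\Psi,I)]=[0,r-1]\otimes C(\Z_\Psi)$, and concatenating the two identities finishes the proof.

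I do not expect any real obstacle here; the content has been front-loaded into Lemma~\ref{good case tensor} (semisimplicity of the tensor product together with its compatibility with $\sim$ when one tensor factor is nilpotent and supported on characters) and Lemma~\ref{equiv irred} (independence of $I$). The only thing to double-check is that the hypotheses of Lemma~\ref{good case tensor}(\ref{goodtensor2}) match the situation verbatim: the nilpotent factor with character support is $[0,r-1]$, while the possibly varying factor $C(\Z_\Psi,I)$ is the arbitrary $\W_F$-semisimple $(\Phi',U')$ --- which it is, being moreover indecomposable, so that even the reduction to an indecomposable $(\Phi',U')$ used in that proof applies directly.
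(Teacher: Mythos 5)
Your proposal is correct and follows the same route the paper takes: the text introducing the lemma says it "is a consequence of Lemma \ref{good case tensor} and Lemma \ref{equiv irred}," and you have correctly identified that Lemma \ref{good case tensor}(\ref{goodtensor1}) gives $\W_F$-semisimplicity of the tensor product (since $\Phi(r)$ is a direct sum of characters), Lemma \ref{good case tensor}(\ref{goodtensor2}) gives well-definedness on equivalence classes (since $N(r)$ is nilpotent), and Lemma \ref{equiv irred} supplies the independence of $I$ that converts $[C(\Z_\Psi,I)]$ into $C(\Z_\Psi)$. Your auxiliary remark that $[[0,r-1]]=[0,r-1]$ via Proposition \ref{iso vs equiv nilp} correctly explains why the first factor can be written unbracketed.
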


Now we move on to the description of~$[\Indec_{\ss}(D,\Fl)]$. We know by Lemma \ref{indec supported on a line} that 
$(\Phi,U)\in \Indec_{\ss}(D,\Fl)$ is always supported on an irreducible line. 
\textit{For the remainder of this section, we take~$(\Phi,U)\in \Indec_{\ss}(\WD,\Fl)$ supported on the irreducible line~$\Z_\Psi$ with~$U=D+N$ invertible}.

\begin{lemma}\label{semisimple part}
The Deligne representation~$(\Phi,D)\in \Rep_{\ss}(\WD, \Fl)$ is semisimple as a Deligne representation.
\end{lemma}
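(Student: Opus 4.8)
The plan is to show that $(\Phi,D)$ is semisimple as a Deligne representation by exhibiting a $\W_F$-stable, $D$-stable complement to any Deligne subrepresentation, or equivalently, by decomposing $(\Phi,D)$ into a direct sum of irreducible Deligne representations. Since $(\Phi,U)$ is supported on the single irreducible line $\Z_\Psi$ and $U=D+N$ is invertible, Lemma \ref{indec nilp or bijective} tells us $U$ (hence also its semisimple part $D$) has all eigenvalues in a single $q^{\Z}$-orbit of nonzero values; in particular $D$ is invertible on $V=V_\Phi$. First I would record that $D\in\Hom_{\W_F}(\nu\Phi,\Phi)$ by Remark \ref{U=N in char zero}(1), so $(\Phi,D)$ is a genuine Deligne representation, and that, exactly as in the proof of Lemma \ref{basic structure of WD reps}, the Deligne relation $\nu(w)D\Phi(w)=\Phi(w)D$ forces $D$ to map the $\nu^i\Psi$-isotypic component $V(i)$ isomorphically onto $V(i+1)$ (indices mod $o(\Psi)$).

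The key step is then to analyze $(\Phi,D)$ via its $o(\Psi)$-th power. Since $D$ is semisimple and invertible and $D^{o(\Psi)}$ preserves each $V(i)$, I would fix an isomorphism $J:V(0)\xrightarrow{\sim}V(0)$ of $\W_F$-modules from $\nu^{o(\Psi)}\Phi|_{V(0)}$ to $\Phi|_{V(0)}$ as in the setup before Proposition \ref{algebra isomorphism}, and consider the endomorphism $Y=(D^{o(\Psi)}|_{V(0)})J P^{-1}\in A_0=\End_{\W_F}(\Phi|_{V(0)})\cong\M(m,\Fl)$. Because $D$ is semisimple, $Y$ is a semisimple (diagonalizable) element of $A_0$, so $\Fl[Y]$ is a product of fields and $V(0)$ decomposes as a direct sum of $\Fl[Y]$-isotypic pieces, each of which is $A_0\cap C_{A_0}(Y)$-stable. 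Translating through the dictionary of Proposition \ref{algebra isomorphism} — which identifies $\End_{\WD}(\Phi,D)$ with $C_{A_0}(Y)$, and more precisely identifies $\W_F$-and-$D$-stable subspaces of $V$ with $Y$-stable $A_0$-submodules of $V(0)$ — each such piece spreads out (by applying $D,D^2,\dots,D^{o(\Psi)-1}$) to a $\W_F$-stable, $D$-stable summand of $V$, and these summands are irreducible Deligne representations precisely of the form $C(\Z_\Psi,I')$ by Theorem \ref{simple}. Hence $(\Phi,D)$ is a direct sum of irreducible Deligne representations, i.e. semisimple.

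The main obstacle will be setting up the correspondence between Deligne subrepresentations of $(\Phi,D)$ and $Y$-stable submodules of $V(0)$ cleanly: Proposition \ref{algebra isomorphism} is phrased for the endomorphism ring of an arbitrary $(\Phi,U)$ with $U$ invertible, and I need the analogous statement at the level of \emph{subobjects}, namely that a subspace $W\subseteq V$ is $\W_F$-stable and $D$-stable if and only if $W=\bigoplus_{i=0}^{o(\Psi)-1}D^i(W(0))$ with $W(0)\subseteq V(0)$ a $\W_F$-stable subspace satisfying $Y\cdot W(0)\subseteq W(0)$ (after transporting $D^{o(\Psi)}|_{V(0)}$ through $J$ and $P$). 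Once that bookkeeping is in place, semisimplicity of $Y$ (equivalently of $D$) does all the work. An alternative, possibly shorter route I would keep in reserve: since $D$ is semisimple and invertible with all eigenvalues a $q^{\Z}$-orbit, pass to $D^{o(\Psi)}$ on the $\W_F$-representation $V(0)$, use that $V(0)$ is $\Psi$-isotypic so $\End_{\W_F}(V(0))$ is a matrix algebra, decompose $V(0)$ into eigenspaces/generalized eigenspaces of $D^{o(\Psi)}$ refined to be $\W_F$-stable, and propagate back — this avoids invoking $P$ explicitly but is essentially the same argument.
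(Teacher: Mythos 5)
The paper's proof is considerably more direct than yours: given a Deligne subrepresentation $W$ of $(\Phi,D)$, it decomposes $W=\bigoplus_\lambda W_\lambda$ into $D$-eigenspaces $W_\lambda=W\cap V_\lambda$ (using only that $D$ is semisimple and $W$ is $D$-stable), observes that $\Phi(\Fr)$ permutes the $V_\lambda$ cyclically along the $q^\Z$-orbit, then chooses a complement $W'_{\mu}$ of $W_\mu$ inside $V_\mu$ for one $\lambda$ in each orbit and propagates it by $\Phi(\Fr)^k$. The resulting $W'=\bigoplus_\lambda W'_\lambda$ is a $\W_F$-stable, $D$-stable complement of $W$. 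No appeal to $\End_{\WD}$, to $J$, $P$, $Y$, or to the classification of irreducibles is needed. This is also more economical in that it avoids the delicate subobject-vs.-endomorphism bookkeeping you flag as the main obstacle.

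Your route through Proposition \ref{algebra isomorphism} can probably be made to work, but as written it has a real gap: the assertion that ``because $D$ is semisimple, $Y$ is a semisimple element of $A_0$'' does not follow. The operator $Y$ is not $D^{o(\Psi)}\vert_{V(0)}$ itself but a twist of it by the auxiliary isomorphism $J$ and the inner-automorphism corrector $P$, and the product of a semisimple operator with an arbitrary invertible operator is not semisimple in general. One can in fact prove $Y$ is semisimple — e.g.\ by observing that conjugation by $D_0:=D^{o(\Psi)}\vert_{V(0)}$ normalizes $A_0$, that the double-centralizer decomposition $V(0)\cong V_\Psi\otimes\Fl^m$ forces $D_0=S\otimes Q'$ with $S\in\End(V_\Psi)$, $Q'\in M(m,\Fl)$, that semisimplicity and invertibility of $D_0$ then force both $S$ and $Q'$ to be semisimple, and that (taking $J=D_0$) $Y$ is a scalar multiple of $\Id_\Psi\otimes Q'^{-1}$ — but this is a nontrivial argument that must be spelled out, not a formal consequence.

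Your ``shorter alternative'' at the end has a more serious flaw: the eigenspaces of $D^{o(\Psi)}\vert_{V(0)}$ are \emph{not} $\W_F$-stable in general, precisely because $D^{o(\Psi)}\vert_{V(0)}$ lies in $\Hom_{\W_F}(\nu^{o(\Psi)}\Phi\vert_{V(0)},\Phi\vert_{V(0)})$ rather than in $A_0=\End_{\W_F}(\Phi\vert_{V(0)})$; it intertwines a nontrivial twist of the $\W_F$-action, so there is no reason its eigenspaces should be $\W_F$-submodules. This is exactly the issue the paper's introduction of $J$ and $P$ is designed to fix, so the alternative cannot ``avoid invoking $P$'' without losing the conclusion. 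The paper's proof sidesteps all of this by never passing to $V(0)$ at all: it works directly with the eigenspaces of $D$ on the whole of $V$, where the $\W_F$-action is visibly compatible with the $q$-orbit structure on $\Sp(D)$.
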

\begin{proof}
Let~$W$ be a~$(\Phi,D)$-stable subspace of~$V=V_\Phi$, and~$V_\l$ be the eigenspace of~$D$ associated to an eigenvalue~$\l$. As~$W$ is~$D$-stable, it is the direct sum of the eigenspaces \[W=\bigoplus_{\l\in \Sp(D)} W_\l\] where~$W_\l=W\cap V_\l$.   Moreover as~$W$ is~$\W_F$-stable, all the eigenspaces~$W_\l$ for~$\l$ in an orbit~\[O_\mu=\{q^\Z\mu\}\] have the same dimension, as~$\Phi(\Fr)$ sends~$W_\l$ to~$W_{q^{-1}\l}$.  Hence the direct sum \[W(\mu)=\bigoplus_{\l\in O_\mu} W_\l\] is~$\W_F$-stable and we write \[W=\bigoplus_{i=1}^r W(\mu_i).\]  For each~$\mu_i$, take a complement~$W'_{\mu_i}$ of~$W_{\mu_i}$ inside~$V_{\mu_i}$, so that 
$W'_{q^-k\mu_i}:=\Phi(\Fr)^k(W'_{\mu_i})$ is a complement of~$W_{q^{-k}\mu_i}$ inside~$V_{q^{-k}\mu_i}$. Then we 
set \[W'(\mu_i)=\bigoplus_{\l\in O_{\mu_i}} W'_{\l},\] and we set 
\[W'=\bigoplus_{i=1}^r W'(\mu_i).\] As each~$W'(\mu_i)$ is~$\W_F$-stable, so is~$W'$, and as~$W'$ is a direct sum of subspaces of the eigenspaces~$V_\l$, the space~$W'$ is also~$D$-stable. To conclude, 
notice that~$V=W\oplus W'$ by construction.
\end{proof}

Now we notice that~$(\Phi,D)$ is isotypic, i.e. it is the direct sum of isomorphic (not only equivalent) irreducible Weil-Deligne representations:

\begin{lemma}\label{isotypiclemma}
There is~$I\in \Iso_{\mathrm{W}_F}(\nu^{o(\Psi)}\Psi, \Psi)$ such that~$(\Phi,D)$ is isotypic of type 
$C(\Z_\Psi,I).$
\end{lemma}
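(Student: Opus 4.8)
The plan is to decompose the semisimple Deligne representation $(\Phi,D)$ into irreducibles and then use the indecomposability of $(\Phi,U)$ to force these irreducible pieces to be mutually isomorphic. First, since $U=D+N$ is invertible and $N$ is nilpotent and commutes with $D$, the eigenvalues of $D$ are those of $U$, so $D$ is invertible; and by Lemma \ref{semisimple part} the Deligne representation $(\Phi,D)$ is semisimple. I would write $(\Phi,D)=\bigoplus_j(\Phi_j,D_j)$ as a direct sum of irreducible Deligne representations. Each $D_j=D\rest_{\Phi_j}$ is nonzero, hence bijective, and $\Phi_j$ is supported on $\Z_\Psi$ since $\Phi$ is. By Theorem \ref{simple} and its proof (using Corollary \ref{crux} to get $\Phi_j\simeq\Phi(\Psi)$), each $(\Phi_j,D_j)$ is isomorphic to $C(\Psi,I_j)$ for some $I_j\in\Iso_{\W_F}(\nu^{o(\Psi)}\Psi,\Psi)$. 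Fixing once and for all some $I_0\in\Iso_{\W_F}(\nu^{o(\Psi)}\Psi,\Psi)$, Schur's lemma gives $I_j=e_jI_0$ with $e_j\in\Fl^\times$, and the statement to prove becomes: all the scalars $e_j$ coincide.

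Next I would group the summands according to the value of $e_j$. Writing $f_1,\dots,f_s$ for the distinct values occurring, this yields a decomposition $V_\Phi=\bigoplus_{t=1}^sW_t$ into $\W_F$-stable and $D$-stable subspaces, where $(W_t,D\rest_{W_t})\simeq C(\Psi,f_t I_0)^{\oplus m_t}$ for suitable $m_t\geq1$. Now express the Deligne operator $N$ in block form $N=(N_{t't})$ relative to this decomposition. Because $N$ is $\W_F$-equivariant as a map $\nu\Phi\to\Phi$ and the $W_t$ are $\W_F$-stable, each entry $N_{t't}\colon W_t\to W_{t'}$ is $\W_F$-equivariant from $\nu\Phi\rest_{W_t}$ to $\Phi\rest_{W_{t'}}$; and since $N$ commutes with the block-diagonal operator $D$, we have $N_{t't}\,(D\rest_{W_t})=(D\rest_{W_{t'}})\,N_{t't}$.

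The crux — and the step I expect to be the main obstacle — is to show that $N_{t't}=0$ whenever $t\neq t'$. After transporting through the fixed isomorphisms, a matrix entry of $N_{t't}$ is a linear endomorphism of $\Phi(\Psi)$ that is $\W_F$-equivariant in the $\nu$-twisted sense and that intertwines $C_{f_t I_0}$ on the source with $C_{f_{t'} I_0}$ on the target; spelling out the cyclic structure of these $C$-operators exactly as in the proof of Lemma \ref{equiv irred}, one checks directly that the intertwining relation forces such a map to vanish as soon as $f_t\neq f_{t'}$. Granting this, $N$ is block-diagonal with respect to $\bigoplus_tW_t$, so $(\Phi,U)=(\Phi,D+N)=\bigoplus_{t=1}^s\bigl(W_t,(D+N)\rest_{W_t}\bigr)$, and indecomposability of $(\Phi,U)$ forces $s=1$. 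Hence all the $e_j$ equal a common $e\in\Fl^\times$, so $(\Phi,D)\simeq C(\Psi,e I_0)^{\oplus m}$, and taking $I=e I_0$ we conclude that $(\Phi,D)$ is isotypic of type $C(\Z_\Psi,I)$. Everything outside the block-vanishing computation is routine bookkeeping with the structure theory already established.
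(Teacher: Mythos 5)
Your proof is correct and in substance it coincides with the paper's: the paper forms $N_D=ND^{-1}\in\End_{\WD}(\Phi,D)$ and invokes the fact that a Deligne endomorphism of a semisimple Deligne representation preserves its isotypic components, which then packages your block-vanishing computation in one stroke. Your explicit verification that $N_{t't}=0$ for $f_t\neq f_{t'}$ is exactly Schur's lemma for Deligne homomorphisms applied to $N_{t't}D_t^{-1}\in\Hom_{\WD}\bigl((W_t,D_t),(W_{t'},D_{t'})\bigr)$, so the "crux'' is sound (and the cyclic-structure computation you sketch does go through); the only difference is that the paper abstracts this away while you carry it out by hand.
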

\begin{proof}
Set~$N_D=ND^{-1}\in \End_{\mathrm{W}_F}(\Phi)$, and as~$N$ commutes with~$D$, in fact   
$N_D\in \End_{\WD}((\Phi,D))$. Now~$(\Phi,D)$ is the direct sum of its isotypic components of type~$C(\Z_\Psi,I_k)$ for 
\[I_k\in \Iso_{\mathrm{W}_F}(\nu^{o(\Psi)}\Psi,\Psi)\] thanks to Lemma \ref{semisimple part} and Theorem \ref{simple}. As~$N_D$ belongs to~$\End_{\WD}((\Phi,D))$, it stabilizes each of these and so does~$D$ by definition of the isotypic components, so they are in fact stable under~$U=D(Id+N_D)$. As~$(\Phi,U)$ is indecomposable there is only one of them.
\end{proof}

Using this lemma, we can obtain more information about the structure of indecomposable Deligne representations:

Put~$N_D=ND^{-1}$ as in the proof of Lemma \ref{isotypiclemma}, and take the model~$C(\Psi,I)=(\Phi(\Psi),C_I)$ for the type~$C(\Z_\Psi,I)$ of the isotypic Deligne representation~$(\Phi,D)$. Define \[H=\Hom_{\mathrm{D}}(C(\Psi,I),(\Phi,D)).\] 
We then define a Deligne representation structure~$(\Phi_0,U_0=D_0+N_0)$ on~$H \otimes_{\Fl} V_{\Phi(\Psi)}$ as follows:  
\[\Phi_0(w)=\Id\otimes \Phi(\Psi)(w)\] for~$w\in \mathrm{W}_F$, \[D_0=\Id \otimes C_I\] and 
\[N_0(h\otimes v)=N h C_I^{-1} \otimes C_I \cdot v = N_D h\otimes C_I\cdot v.\]
\begin{prop}\label{structure of indecomposable}
The Deligne representations~$(\Phi,U)$ and~$(\Phi_0,U_0)$ are isomorphic. In particular setting~$m=\ind(N)=\ind(N_D)$, we have~$m=d_\Phi/d_\Psi o(\Psi)$.
\end{prop}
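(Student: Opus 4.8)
The plan is to show that the evaluation map $\mathrm{ev}\colon H\otimes_{\Fl}V_{\Phi(\Psi)}\to V_\Phi$, $h\otimes v\mapsto h(v)$, is the desired isomorphism, and then to extract the numerical statement from the indecomposability hypothesis.

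By Lemmas \ref{semisimple part} and \ref{isotypiclemma}, $(\Phi,D)$ is semisimple as a Deligne representation and isotypic of type $C(\Z_\Psi,I)$; fixing the model $C(\Psi,I)=(\Phi(\Psi),C_I)$, this means $(\Phi,D)\simeq C(\Psi,I)^{\oplus m}$ for some $m\geq 1$. Since $C(\Psi,I)$ is irreducible (Lemma \ref{irreducible}), Schur's lemma gives $\End_{\WD}(C(\Psi,I))=\Fl$, so $H=\Hom_{\WD}(C(\Psi,I),(\Phi,D))$ is an $\Fl$-vector space of dimension $m$, and $\mathrm{ev}$ is the usual isotypic isomorphism: a basis $h_1,\dots,h_m$ of $H$ realising the decomposition $V_\Phi=\bigoplus_{j}h_j(V_{\Phi(\Psi)})$ shows $\mathrm{ev}$ is injective, hence bijective by comparing dimensions. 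It is $\W_F$-equivariant because each $h_j$ is, and it carries $D_0=\Id\otimes C_I$ to $D$ because every $h\in H$ satisfies $h\circ C_I=D\circ h$. (The same relation, together with the fact that $N_D=ND^{-1}$ commutes with $\W_F$ and with $D$, shows $N_Dh\in H$ for $h\in H$, so that $(\Phi_0,U_0)$ is indeed a Deligne representation.)

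Next I would check that $\mathrm{ev}$ intertwines $N_0$ with $N$: using $N_0(h\otimes v)=N_Dh\otimes C_I\cdot v$, the relation $h\circ C_I=D\circ h$, and $N_D=ND^{-1}$, one computes
\[\mathrm{ev}\bigl(N_0(h\otimes v)\bigr)=(N_Dh)(C_Iv)=N_D\bigl(h(C_Iv)\bigr)=N_D\bigl(D(h(v))\bigr)=N(h(v))=N\bigl(\mathrm{ev}(h\otimes v)\bigr).\]
Together with the previous paragraph this makes $\mathrm{ev}$ an isomorphism $(\Phi_0,U_0)\xrightarrow{\sim}(\Phi,U)$ of Deligne representations.

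For the numerical claim, $\mathrm{ev}$ being a linear isomorphism gives $d_\Phi=\dim_{\Fl}H\cdot\dim_{\Fl}V_{\Phi(\Psi)}=m\cdot o(\Psi)d_\Psi$, so $m=d_\Phi/(o(\Psi)d_\Psi)$; it remains to identify $m$ with $\ind(N)=\ind(N_D)$. Transporting $N_D=ND^{-1}$ through $\mathrm{ev}$ turns it into $N_0D_0^{-1}=(N_D\mid_H)\otimes\Id$ on $H\otimes V_{\Phi(\Psi)}$, whence $\ind(N_D)=\ind(N_D\mid_H)$. Then I would compute $\End_{\WD}(\Phi_0,U_0)$: writing an endomorphism commuting with $\Id\otimes\Phi(\Psi)(w)$ for all $w$ and with $\Id\otimes C_I$ as $\sum_i B_i\otimes C_i$ with the $B_i$ linearly independent forces each $C_i$ into the commutant of $C(\Psi,I)$, which is $\Fl$ by Schur; hence the endomorphism has the form $A\otimes\Id$ with $A\in\End_{\Fl}(H)$, and commuting further with $N_0=(N_D\mid_H)\otimes C_I$ forces $A$ to commute with $N_D\mid_H$, so $\End_{\WD}(\Phi_0,U_0)\cong C_{\End_{\Fl}(H)}(N_D\mid_H)$. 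By Fitting's lemma the endomorphism ring of the indecomposable finite-length object $(\Phi_0,U_0)\simeq(\Phi,U)$ is local, and the commutant of a nilpotent operator is local precisely when that operator is a single Jordan block; therefore $\ind(N_D\mid_H)=\dim_{\Fl}H=m$, giving $m=\ind(N)=d_\Phi/(d_\Psi o(\Psi))$. The crux is this last step — computing $\End_{\WD}(\Phi_0,U_0)$ accurately enough to invoke indecomposability, in particular the reduction of endomorphisms to the form $A\otimes\Id$ and the linear-algebra fact that a nilpotent operator with local commutant is a single Jordan block; the rest is bookkeeping with the definitions of $\mathrm{ev}$, $N_0$, $D_0$ and $N_D$.
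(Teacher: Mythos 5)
Your proof is correct and for the isomorphism part takes the same route as the paper: the map $\mathrm{ev}$ is exactly the map $\alpha(\psi\otimes v)=\psi(v)$ the authors use, and the intertwining/dimension-count checks you perform are the ones the paper leaves implicit. Where you go beyond the printed proof is the ``in particular'' clause $\ind(N)=d_\Phi/(d_\Psi o(\Psi))$: the paper's proof only establishes $\dim_{\Fl}H=d_\Phi/(d_\Psi o(\Psi))$ via Schur's lemma and never actually argues that $\ind(N_D\mid_H)=\dim_{\Fl}H$, yet this identity is precisely what is later used in the proof of Theorem \ref{indec classif} when $H$ is given a basis $h_i=N_D^i(h_0)$. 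Your argument — transporting $N_D$ to $N_D\mid_H\otimes\Id$ through $\mathrm{ev}$, computing $\End_{\WD}(\Phi_0,U_0)\cong C_{\End_{\Fl}(H)}(N_D\mid_H)$, invoking Fitting's lemma to see this commutant is local because $(\Phi,U)$ is indecomposable of finite length, and then using that a nilpotent operator with local commutant is a single Jordan block — is exactly the missing step, and it is carried out correctly. So your proposal is not merely equivalent to the paper's proof; it supplies a needed supplement to it.
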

\begin{proof}
We introduce the map~$\alpha: H \otimes_{\Fl} V_{C(\Psi,I)} \rightarrow V$ defined by 
$\alpha(\psi\otimes v)=\psi(v)$. One checks that~$\alpha$ intertwines the action of~$\W_F$, of 
$N$ and~$N_0$ and of~$D$ and~$D_0$. It is surjective by Lemma \ref{isotypiclemma}. By Schur's lemma,~$H$ has dimension 
$d_\Phi/d_\Psi o(\Psi)$, hence both spaces have the same dimension, so~$\alpha$ is bijective.  
\end{proof}

\begin{thm}\label{indec classif}
Take~$(\Phi,U)\in \Indec_{\ss}(\WD, \Fl)$ with~$U$ bijective, there are a unique irreducible line~$\Z_\Psi$ and a unique~$r\geq 1$ such that 
$[\Phi,U]=[0,r-1]\otimes C(\Z_\Psi)$.
\end{thm}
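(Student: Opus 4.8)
The plan is to bootstrap from the structural results already established, reducing everything to a single identification of models.

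\emph{Uniqueness.} Suppose $[0,r-1]\otimes C(\Z_\Psi)=[0,r'-1]\otimes C(\Z_{\Psi'})$. The equivalence relation only rescales the Deligne operator, so it preserves the underlying $\W_F$-representation. For a representative $[0,r-1]\otimes C(\Psi,I)$ this underlying representation is $\bigoplus_{k=0}^{r-1}\nu^k\Phi(\Psi)$, and since each $\nu^k\Phi(\Psi)$ is $\W_F$-isomorphic to $\Phi(\Psi)=\bigoplus_{j=0}^{o(\Psi)-1}\nu^j\Psi$, it equals $\Phi(\Psi)^{\oplus r}$, in which every element of $\Z_\Psi$ occurs with multiplicity exactly $r$. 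Reading off its set of irreducible constituents together with their common multiplicity recovers $\Z_\Psi$ and $r$, giving uniqueness; these classes are moreover distinct from the $[0,r-1]\otimes\Psi$ of Proposition \ref{Sp are indec}, whose Deligne operator is nilpotent rather than invertible.

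\emph{Existence.} Let $(\Phi,U)\in\Indec_{\ss}(\WD,\Fl)$ with $U=D+N$ bijective. By Lemma \ref{indec supported on a line} it is supported on a unique irreducible line $\Z_\Psi$. As $U$ is invertible we have $D\neq 0$, so Lemma \ref{semisimple part} shows $(\Phi,D)$ is semisimple as a Deligne representation and Lemma \ref{isotypiclemma} produces $I\in\Iso_{\W_F}(\nu^{o(\Psi)}\Psi,\Psi)$ with $(\Phi,D)$ isotypic of type $C(\Z_\Psi,I)$. Proposition \ref{structure of indecomposable} then identifies $(\Phi,U)$ with the model $(\Phi_0,U_0)$ on $H\otimes_{\Fl}V_{\Phi(\Psi)}$, where $H=\Hom_{\WD}(C(\Psi,I),(\Phi,D))$ carries the nilpotent operator $N_D=ND^{-1}$, with $D_0=\Id\otimes C_I$, $N_0(h\otimes v)=N_Dh\otimes C_Iv$, and $\dim_{\Fl}H=m:=d_\Phi/(d_\Psi\, o(\Psi))=\ind(N_D)$. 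Now any $\Fl[N_D]$-submodule $H'\subseteq H$ yields a Deligne subrepresentation $H'\otimes V_{\Phi(\Psi)}$ of $(\Phi_0,U_0)$ (it is stable under $\Phi_0$, $D_0$ and $N_0$), and a direct-sum decomposition of $H$ over $\Fl[N_D]$ induces one of $(\Phi_0,U_0)$; by indecomposability $H$ must be a single Jordan block, i.e.\ $(H,N_D)\cong(V_{\Phi(m)},N(m))$ as $\Fl[N(m)]$-modules. Transporting this through $\otimes V_{\Phi(\Psi)}$ presents $(\Phi_0,U_0)$ on $V_{\Phi(m)}\otimes V_{\Phi(\Psi)}$ with a Deligne operator that is a twist of $N(m)\otimes\Id+\Id\otimes C_I$, the operator of $[0,m-1]\otimes C(\Psi,I)$. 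Reconciling the two — after re-identifying the two (isomorphic) underlying $\W_F$-modules and rescaling the $\nu^k\Psi$-strings exactly as in Lemma \ref{equiv irred} and at the end of the proof of Theorem \ref{simple} — shows $(\Phi,U)\simeq[0,m-1]\otimes C(\Psi,\mu I)$ for some $\mu\in\Fl^\times$. Finally Lemma \ref{equiv-indec}, which records that the equivalence class of $[0,r-1]\otimes C(\Z_\Psi,I)$ does not depend on $I$, gives $[\Phi,U]=[0,m-1]\otimes C(\Z_\Psi)$ with $m=d_\Phi/(d_\Psi\, o(\Psi))$.

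Almost all of this is bookkeeping on top of Proposition \ref{structure of indecomposable} and the preceding lemmas; the one genuinely delicate point is the last identification — untangling the interaction between the nilpotent $N$ and the cyclic operator $C_I$ so as to land on the standard form up to equivalence — and I expect it to absorb most of the work, exactly as the analogous manoeuvre did in the proof of Theorem \ref{simple}. A variant that trades this computation for a uniqueness statement: the previous paragraph shows that an indecomposable with invertible Deligne operator is determined up to isomorphism by the triple $(\Z_\Psi,I,m)$; since Proposition \ref{construction indec} exhibits $[0,m-1]\otimes C(\Psi,I)$ as an indecomposable with invertible operator and local endomorphism ring $\Fl[X]/(X^m)$ attached to exactly that triple, it must coincide with $(\Phi,U)$, hence be equivalent to it.
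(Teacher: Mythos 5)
Your proof is correct and follows essentially the same route as the paper's: reduce to the model $(\Phi_0,U_0)$ on $H\otimes V_{\Phi(\Psi)}$ via Lemmas \ref{semisimple part}, \ref{isotypiclemma} and Proposition \ref{structure of indecomposable}, and then identify $(\Phi_0,U_0)$ with $[0,m-1]\otimes C(\Psi,I)$ up to equivalence. One genuine addition on your part is the argument that $H$ is a single Jordan block under $N_D$: a direct-sum decomposition of $(H,N_D)$ would give a Deligne decomposition of $(\Phi_0,U_0)\cong(\Phi,U)$, contradicting indecomposability. The paper leaves this step implicit inside Proposition \ref{structure of indecomposable} (asserting $\ind(N_D)=\dim H$ without elaboration), so your explanation is welcome. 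Conversely, at the very step you single out as "genuinely delicate" — reconciling $U_0=(\Id+\bar N_D)\otimes C_I$ with $N(m)\otimes\Id+\Id\otimes C_I$ while simultaneously matching $\W_F$-actions — you gesture at Lemma \ref{equiv irred} and Theorem \ref{simple} rather than exhibiting a map; the paper settles it in one line by writing down the explicit isomorphism
\[
B\Big(\sum_{i=0}^{r-1} e_i\otimes v_i\Big)=\sum_{i=0}^{r-1} h_i\otimes C_I^i v_i,\qquad h_i=N_D^i h_0,
\]
whose verification is a short computation using $\nu^i(w)C_I^i\Phi(\Psi)(w)=\Phi(\Psi)(w)C_I^i$ and $N_D^r=0$. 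Your proposed "variant" (both sides are pinned down by the triple $(\Z_\Psi,I,m)$, hence coincide) is fine in principle but doesn't actually save work: to attach the triple $(\Z_\Psi,I,m)$ to $[0,m-1]\otimes C(\Psi,I)$ you must still unwind its $D$- and $N$-parts and check it maps to the same model $(\Phi_0,U_0)$, which is morally the same computation as exhibiting $B$. So: correct, same strategy, a small gap filled on one side and a small amount of hand-waving on the other.
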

\begin{proof}
Set~$r=d_\Phi/d_\Psi o(\Psi)$. We define~$(\Phi_0,U_0)$ as in Proposition \ref{structure of indecomposable}, it is enough to show that 
\[[\Phi_0,U_0]=[0,r-1]\otimes C(\Z_\Psi).\] Write~$V_{[0,r-1]}=\mathrm{Vect}_{\Fl}(e_0,\dots,e_{r-1})$ with~$e_i=N(r)^i(e_0)$, and 
$\W_F$ acting on~$D_i=\mathrm{Vect}_{\Fl}(e_i)$ as~$\nu^i$. 
Similarly write~$H=\mathrm{Vect}_{\Fl}(h_0,\dots,h_{r-1})$ with~$h_i=N_D^i(h_0)$. Denote by~$B$ the~$\Fl$-linear isomorphism 
sending~$V_{[0,r-1]}\otimes V_{\Phi}$ to~$H\otimes V_{\Phi}$ defined by
\[B(\sum_{i=0}^{r-1} e_i\otimes v_i)= \sum_{i=1}^{r-1} h_i\otimes C_I^i v_i.\] Then 
\[B\in \Iso_{\WD}([0,r-1]\otimes C(\Psi,I),(\Phi_0,U_0)).\]
The uniqueness of~$r$ and~$\Z_\Psi$ are immediate.
\end{proof}

\subsection{Tensor product for~$\W_F$-semisimple Deligne representations}\label{tensor section}

We already saw the two problems of the tensor product in our setting: by example \ref{counter examples} (3) it does not preserve 
$\W_F$-semisimplicity, and by example \ref{non equiv tensor} even when it does it does not preserve equivalence classes. In this 
section we take care of those two problems, starting with the first one. These problems occur only when~$R=\Fl$ (see the beginning of Section \ref{tensor for W_F} and remark \ref{good case tensor}), so for this section~$R=\Fl$.

First we define the~$\W_F$-semi-simplification~$(\Phi,D)_{\ss}=(\Phi_{\ss},D_{\ss})$ of~$(\Phi,D)$ when~$D$ is semisimple.

\begin{prop}\label{C semi-simplification}
Let~$(\Phi,D)$ be a Deligne representation with~$D$ semisimple.
\begin{enumerate}[{(1)}]
\item  There is a filtration 
\[\{0\}=V_0\subset V_1\subset \dots \subset V_n=V_{\Phi}\]
of~$V_{\Phi}$ by~$(\Phi,D)$-stable subspaces such that the induced Deligne representation 
$(\Phi_i,D_i)$ on~$V_{i}/V_{i-1}$ is in~$\Irr_{\ss}(\WD, \Fl)$.
\item The 
$\W_F$-semisimple Deligne representation \[(\Phi,D)_{\ss}= (\Phi_{\ss},D_{\ss}):=\bigoplus_{i=1}^n (\Phi_i,D_i)\] is independent of the filtration, and~$C_{\ss}$ and~$C$ are semisimple with the same characteristic polynomial.\end{enumerate}
\end{prop}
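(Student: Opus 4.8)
The plan is to run the ordinary Jordan--Hölder argument for the category of Deligne $\Fl$-representations with semisimple Deligne operator, and then upgrade it to the statement about characteristic polynomials using the structure theory already established. For part (1), I would argue that the category of finite-dimensional Deligne $\Fl$-representations $(\Phi,D)$ with $D$ semisimple is not quite abelian, but it is closed under subobjects and quotients: if $W \subset V_\Phi$ is $(\Phi,D)$-stable then the induced operators on $W$ and $V_\Phi/W$ are again semisimple (a restriction or quotient of a semisimple operator by an invariant subspace is semisimple), and $\W_F$-semisimplicity is inherited because $\Phi$ is semisimple as a $\W_F$-representation. Hence every such object has finite length, and one obtains a composition series $\{0\}=V_0 \subset V_1 \subset \cdots \subset V_n = V_\Phi$ by $(\Phi,D)$-stable subspaces with each subquotient in $\Irr_{\ss}(\WD,\Fl)$. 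By Theorem \ref{simple} (and Lemma \ref{irred nilp} for the $D=0$ case, though here $D$ semisimple and the ambient object having $D$ invertible on the relevant summands), each such subquotient is either an irreducible $\W_F$-representation or of the form $C(\Z_\Psi,I)$.

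For part (2), independence of the filtration is the Jordan--Hölder theorem applied in this length category: I would invoke the Schreier refinement argument, which only needs that the relevant Hom-sets behave well, i.e. that for $(\Phi_i,D_i)$ irreducible the isomorphism relation among composition factors (with multiplicity) is well-defined. This is standard once one knows finite length. Concretely, since $D$ is semisimple, $V_\Phi$ decomposes as $\bigoplus_\lambda V_\lambda$ into $D$-eigenspaces, and the orbit decomposition under $q^\Z$ used in Lemma \ref{indec nilp or bijective} and Lemma \ref{semisimple part} already gives a canonical $\W_F$-stable splitting; combining with the isotypic decomposition of each piece (Lemma \ref{isotypiclemma}) one gets directly that $(\Phi,D) \cong \bigoplus_{i=1}^n (\Phi_i,D_i)$ as a genuine direct sum, so the semisimplification coincides with $(\Phi,D)$ itself when it is already semisimple, and in general is the associated graded of any composition series — manifestly independent of the choice. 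Actually I expect the cleanest route is: (a) show $(\Phi,D)$ with $D$ semisimple is automatically semisimple as a Deligne representation, by the eigenspace-splitting argument of Lemma \ref{semisimple part} carried out in full generality rather than relative to a given subobject; then (b) $(\Phi,D)_{\ss}=(\Phi,D)$ and uniqueness is trivial, while the composition-series formulation of (1) follows from semisimplicity.

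Finally, the claim that $D_{\ss}$ is semisimple with the same characteristic polynomial as $D$: since I will have exhibited an actual $\W_F$- and $D$-equivariant direct-sum decomposition $V_\Phi = \bigoplus_i V_i/V_{i-1}$ (or the eigenspace decomposition directly), the operator $D_{\ss}$ is by construction the same linear operator up to this identification, hence has the same eigenvalues with multiplicities; semisimplicity of $D_{\ss}$ is then immediate since $D_{\ss}$ is conjugate to $D$ as an endomorphism of the underlying vector space, or simply because each $D_i$ is semisimple and a direct sum of semisimple operators is semisimple. The main obstacle, such as it is, is being careful that the category in which we take composition series really does have the finiteness and uniqueness properties needed — in particular verifying that a $(\Phi,D)$-subquotient of an object with $D$ semisimple again has semisimple Deligne operator, which is where I would spend the most care, though it reduces to the elementary fact that semisimplicity of an endomorphism passes to invariant subspaces and quotients over a perfect (here algebraically closed) field.
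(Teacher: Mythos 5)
There is a genuine gap, and it stems from a misreading of the hypotheses. The proposition does \emph{not} assume that~$\Phi$ is~$\W_F$-semisimple: it only assumes that the Deligne operator~$D$ is a semisimple endomorphism. This is essential for the way the proposition is used, namely to define~$[(\Phi_1,C_1)\otimes(\Phi_2,C_2)]_{\ss}$ in the definition of~$\otimes_{\ss}$, where the underlying~$\W_F$-representation~$\Phi_1\otimes\Phi_2$ is precisely one that can fail to be~$\W_F$-semisimple (Example \ref{counter examples}~(\ref{Part3})). Your opening sentence ``$\W_F$-semisimplicity is inherited because $\Phi$ is semisimple as a $\W_F$-representation'' therefore invokes a hypothesis that is not there, and without it the closure-under-subquotients argument as you have written it does not land the subquotients in~$\Irr_{\ss}(\WD,\Fl)$; you would need an extra argument that an irreducible Deligne subquotient automatically has a~$\W_F$-semisimple underlying representation.

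This same misreading makes your ``cleanest route'' definitely wrong. You claim that $D$ semisimple forces $(\Phi,D)$ to be semisimple as a Deligne representation, by generalising the eigenspace-splitting of Lemma \ref{semisimple part}. But Lemma \ref{semisimple part} works only under the standing hypothesis that~$\Phi$ is~$\W_F$-semisimple (and supported on one irreducible line with~$U$ invertible). Dropping~$\W_F$-semisimplicity kills the statement: take~$\Phi$ the uniserial length-two representation of Lemma \ref{uniserial example} and $D=0$. Then $D$ is semisimple, yet $(\Phi,0)$ has a unique nonzero proper Deligne subrepresentation (the socle, which is the unique~$\W_F$-subrepresentation), and it has no complement because any complement would be~$\W_F$-stable. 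So~$(\Phi,0)$ is indecomposable of length two, hence not semisimple, and~$(\Phi,0)_{\ss}\neq(\Phi,0)$. The semisimplification really does change the representation when~$\Phi$ is not~$\W_F$-semisimple, which is the whole point of the construction.

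By contrast, the paper's proof sidesteps both issues directly: it picks any irreducible~$\W_F$-subrepresentation~$\Psi\subset\Phi$ (no~$\W_F$-semisimplicity of~$\Phi$ needed), observes that~$\sum_{k\geq 0}D^k(\Psi)$ is a~$\W_F$-semisimple Deligne subrepresentation (each~$D^k(\Psi)$ being zero or an irreducible~$\W_F$-subrepresentation), then takes an indecomposable summand and invokes the classification (Theorem \ref{V_0} and Theorem \ref{indec classif}) to conclude this summand is actually \emph{irreducible} since its Deligne operator is semisimple; induction on dimension finishes part (1). Your Jordan--H\"older framing of part (2) is fine once part (1) is established correctly, and your argument for the characteristic polynomial and semisimplicity of~$D_{\ss}$ is correct (though circular in the phrasing ``conjugate to~$D$''; the right reason, which you also give, is that each~$D_i$ is a subquotient of the semisimple~$D$ and a direct sum of semisimple operators is semisimple). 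You would do well to be careful anywhere you write ``because~$\Phi$ is~$\W_F$-semisimple'' in this part of the paper: that is exactly the property that is being repaired, not assumed.
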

\begin{proof}
By induction, it is enough to show that~$(\Phi,D)$ contains an irreducible~$\W_F$-semisimple Deligne subrepresentation. 
The unicity follows from standard facts on Jordan-H\"older composition series. Take~$\Psi$ an irreducible~$\W_F$-subrepresentation of~$\Phi$. Then~$\sum_{k\geq 0} C^k(\Psi)$ is a~$\W_F$-semisimple Deligne subrepresentation of~$(\Phi,D)$. It thus contains an indecomposable~$\W_F$-semisimple Deligne subrepresentation~$(\Phi_0,D\mid_{\Phi_0})$. But~$D\mid_{\Phi_0}$ being semisimple, there is~$\Psi$ an irreducible representation of~$\W_F$ such that~$(\Phi_0,D\mid_{\Phi_0})$ is either of the form~$\Psi=(\Psi,0)$ or of the form~$C(\Z_\Psi,I)$ for~$I$ an isomorphism between~$\nu^{o(\Psi)}\Psi$ and~$\Psi$, thanks to Proposition \ref{V_0} and Theorem \ref{indec classif}.
\end{proof}

We can now define the operation~$\otimes_{\ss}$ in~$\Rep_{\ss}(\WD, \F_l)$. 

\begin{definition}
Let~$(\Phi,U)$ and~$(\Phi',U')$ belong to~$\Indec_{\ss}(\,\Fl)$. Write~$(\Phi,U)$ as~
$[0,r_1-1]\otimes (\Phi_1,C_1)$ and~$(\Phi',U')$ as~$[0,r_2-1]\otimes (\Phi_2,C_2)$ with~$(\Phi_i,C_i)$ in 
$\Irr_{\ss}(\WD, \F_l)$ thanks to Proposition \ref{V_0} and Theorem \ref{indec classif}. In particular~$C_i$ is semisimple so~$C_1\otimes Id \oplus I_d\otimes C_2$ as well. Hence 
$[(\Phi_1,C_1)\otimes (\Phi_2,C_2)]_{\ss}$ is well defined according to Lemma \ref{C semi-simplification}. 
We then set 
\[(\Phi,U)\otimes_{\ss}(\Phi',U')= [0,r_1-1]\otimes [0,r_2-1] \otimes [(\Phi_1,C_1)\otimes (\Phi_2,C_2)]_{\ss}.\]
We the extend~$\otimes_{\ss}$ as an operation from~$\Rep_{\ss}(\WD, \Fl)\times \Rep_{\ss}(\WD, \Fl)$ to 
$\Rep_{\ss}(\WD, \Fl)$ by distributivity with respect to~$\otimes$.
\end{definition}

\begin{rem}
The operation~$\otimes_{\ss}$ is bilinear by definition, and is associative and commutative, these properties inherited from~$\otimes$.
\end{rem}

We are now ready to define~$[\Phi,U]\otimes [\Phi',U']$ for any pair of Deligne representations in~$\Rep_{\ss}(\WD,\Fl)$. 
We first consider the indecomposable case. 
We recall that if~$U=D+N$, then~$D$ and~$N$ are polynomials in~$U$, and that the projections onto an eigenspace of~$D$ with respect to the sum of its other eigenspaces is a polynomial in~$D$, hence in~$U$.

\begin{prop}\label{tensor of equivalence classes}
Let~$(\Phi,U)$ and~$(\Phi',U')$ be two~$\W_F$-semisimple indecomposable Deligne representations with~$U$ and~$U'$ bijective, then for all~$(\l,\mu)$ in a subset~$Z(U,U')$ of~$(\Fl^\times)^2$ consisting of all elements outside a finite number of hyperplanes of 
~$\Fl^2$, all representations \[(\Phi,\l U)\otimes_{\ss} (\Phi',\mu U')\] are equivalent to one another (say equivalent to~$(\Phi,\l_0 U)\otimes_{\ss} (\Phi',\mu_0 U')$), and we set
\[[\Phi,U]\otimes_{\ss} [\Phi',U']=[(\Phi,\l_0 U)\otimes_{\ss} (\Phi',\mu_0 U')].\]
\end{prop}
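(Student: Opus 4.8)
The plan is to reduce the statement to an explicit computation with equivalence classes, isolate the only invariant of $[(\Phi,\l U)\otimes_{\ss}(\Phi',\mu U')]$ that can vary with $(\l,\mu)$, and show that invariant is governed by ranks of matrices with polynomial entries in $(\l,\mu)$, which are generically maximal off a union of hyperplanes. First, by Theorem~\ref{indec classif}, after replacing the two representations by isomorphic ones I may assume $(\Phi,U)=[0,r_1-1]\otimes C(\Psi_1,I_1)$ and $(\Phi',U')=[0,r_2-1]\otimes C(\Psi_2,I_2)$, so by Proposition~\ref{construction indec} the Deligne operator of $(\Phi,\l U)$ is $\l N(r_1)\otimes\Id+\Id\otimes\l C_{I_1}$. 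Conjugating by $P_0\otimes\Id$ with $P_0$ scaling the $\nu^k$-summand of $\Phi(r_1)$ by $\l^{-k}$ — the rescaling of the proof of Proposition~\ref{iso vs equiv nilp}, which commutes with $\Id\otimes C_{I_1}$ — gives $(\Phi,\l U)\simeq[0,r_1-1]\otimes(\Phi(\Psi_1),\l C_{I_1})$, and $(\Phi(\Psi_1),\l C_{I_1})\in\Irr_{\ss}(\WD,\Fl)$ (same argument as Lemma~\ref{irreducible}) with semisimple Deligne operator $\l C_{I_1}$; likewise for $(\Phi',\mu U')$. Unwinding the definition of $\otimes_{\ss}$ then yields
\[(\Phi,\l U)\otimes_{\ss}(\Phi',\mu U')=\big([0,r_1-1]\otimes[0,r_2-1]\big)\otimes T(\l,\mu),\qquad T(\l,\mu):=\big((\Phi(\Psi_1),\l C_{I_1})\otimes(\Phi(\Psi_2),\mu C_{I_2})\big)_{\ss}.\]
The first factor is a fixed $\W_F$-semisimple Deligne representation which is a direct sum of characters with nilpotent Deligne operator, so Lemma~\ref{good case tensor}~(\ref{goodtensor2}) shows $[(\Phi,\l U)\otimes_{\ss}(\Phi',\mu U')]$ depends on $(\l,\mu)$ only through $[T(\l,\mu)]$; it thus suffices to make $[T(\l,\mu)]$ constant off finitely many hyperplanes.

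Next I determine what part of $[T(\l,\mu)]$ can move. Put $M=\Phi(\Psi_1)\otimes\Phi(\Psi_2)$ and $\widetilde C(\l,\mu)=\l\,(C_{I_1}\otimes\Id)+\mu\,(\Id\otimes C_{I_2})\in\Hom_{\W_F}(\nu M,M)$, a sum of two commuting semisimple operators and so semisimple. Since $(M,\widetilde C(\l,\mu))$ has semisimple Deligne operator, Proposition~\ref{C semi-simplification} together with Theorems~\ref{simple},~\ref{V_0} and~\ref{indec classif} shows $[T(\l,\mu)]=\bigoplus_{\Z_\Psi}\big(p_\Psi\,C(\Z_\Psi)\oplus\bigoplus_{k}q_{\Psi,k}\,[\nu^k\Psi,0]\big)$, the sum over the finitely many irreducible lines occurring in $M$, for non-negative integers $p_\Psi=p_\Psi(\l,\mu)$ and $q_{\Psi,k}=q_{\Psi,k}(\l,\mu)$. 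As the underlying $\W_F$-representation of $T(\l,\mu)$ is $M_{\ss}$, independent of $(\l,\mu)$, comparing $\nu^k\Psi$-multiplicities gives $p_\Psi+q_{\Psi,k}=[M_{\ss}:\nu^k\Psi]$ for every $k$; hence each $q_{\Psi,k}$ is forced by $p_\Psi$, and $[T(\l,\mu)]$ is determined by the tuple $(p_\Psi(\l,\mu))_{\Z_\Psi}$.

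Finally — and this is the main point — I compute $p_\Psi$. The operator $\widetilde C(\l,\mu)^{o(\nu)}$ lies in $\End_{\W_F}(M)$ (because $\nu^{o(\nu)}=\1$) and is semisimple, so $M$ splits $\W_F$-equivariantly, and as a Deligne representation, as $M^{>0}\oplus M^0$ — the sum of the nonzero eigenspaces and the kernel of $\widetilde C(\l,\mu)^{o(\nu)}$ — with $\widetilde C(\l,\mu)$ bijective on $M^{>0}$ and zero on $M^0$. All $C$-type summands of $T(\l,\mu)$ come from $M^{>0}$, giving
\[p_\Psi(\l,\mu)=\dim_{\Fl}\Hom_{\W_F}(\Psi,M^{>0})=\operatorname{rank}\big(\widetilde C(\l,\mu)^{o(\nu)}\ \text{acting on}\ \Hom_{\W_F}(\Psi,M)\big),\]
the last step because this $\W_F$-linear semisimple operator induces on $\Hom_{\W_F}(\Psi,M)$ a map with image $\Hom_{\W_F}(\Psi,M^{>0})$ and kernel $\Hom_{\W_F}(\Psi,M^0)$. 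Expanding $\widetilde C(\l,\mu)^{o(\nu)}=\sum_{i=0}^{o(\nu)}\binom{o(\nu)}{i}\l^i\mu^{o(\nu)-i}\,(C_{I_1}^{\,i}\otimes C_{I_2}^{\,o(\nu)-i})$, with the operators $C_{I_1}^{\,i}\otimes C_{I_2}^{\,o(\nu)-i}\in\End_{\W_F}(M)$ independent of $(\l,\mu)$, the induced operator on $\Hom_{\W_F}(\Psi,M)$ has, in any basis, entries that are homogeneous polynomials of degree $o(\nu)$ in $(\l,\mu)$; its rank equals its generic value $r_\Psi$ exactly off the common zero set of its $r_\Psi\times r_\Psi$ minors, and choosing one such minor that is not identically zero — a nonzero homogeneous polynomial in two variables over the algebraically closed field $\Fl$, hence a product of linear forms — shows that locus is a finite union of hyperplanes of $\Fl^2$. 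Letting $Z(U,U')$ be $(\Fl^\times)^2$ minus the union of these loci over the finitely many lines $\Z_\Psi$, every $p_\Psi$, hence $[T(\l,\mu)]$, hence the equivalence class of $(\Phi,\l U)\otimes_{\ss}(\Phi',\mu U')$, is constant on $Z(U,U')$; since $\Fl$ is infinite, $Z(U,U')\neq\varnothing$, and one fixes $(\l_0,\mu_0)\in Z(U,U')$ and sets $[\Phi,U]\otimes_{\ss}[\Phi',U']=[(\Phi,\l_0U)\otimes_{\ss}(\Phi',\mu_0U')]$. The delicate part is exactly this last identification of $p_\Psi(\l,\mu)$ with the rank of an explicit polynomial matrix, which forces one to work with the Hom-multiplicity spaces $\Hom_{\W_F}(\Psi,M)$ rather than isotypic components — since $M$ itself need not be $\W_F$-semisimple — together with the observation that homogeneity in two variables over $\Fl$ makes the rank-drop locus a union of hyperplanes rather than an arbitrary hypersurface.
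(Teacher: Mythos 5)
Your reduction to the irreducible case via Theorem~\ref{indec classif}, the rescaling by $P_0\otimes\Id$, and the observation that $[T(\l,\mu)]$ is determined by the multiplicities $p_\Psi$ (given $M_{\ss}$) are all sound. The gap is the formula $p_\Psi(\l,\mu)=\dim_{\Fl}\Hom_{\W_F}(\Psi,M^{>0})$. By construction $p_\Psi$ counts the $C(\Z_\Psi)$-type irreducible subquotients of $(M^{>0},\widetilde C)$, so it equals the multiplicity of $\Psi$ in the $\W_F$-\emph{semisimplification} $(M^{>0})_{\ss}$; this strictly exceeds the socle multiplicity $\dim_{\Fl}\Hom_{\W_F}(\Psi,M^{>0})$ whenever $M^{>0}$ fails to be $\W_F$-semisimple, which is a live possibility here. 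Concretely, take $\ell=2$, $\Psi_1=\Ind_{\W_E}^{\W_F}(\chi)$ with $\chi$ a $\Gal(E/F)$-regular character, and $\Psi_2=\Psi_1^\vee$; both are non-banal, so $M=\Psi_1\otimes\Psi_2$ and $\widetilde C(\l,\mu)=(\l c_1+\mu c_2)\Id$ is a scalar. For generic $(\l,\mu)$ that scalar is nonzero, so $M^{>0}=M$; by Example~\ref{counter examples}~(3) and Lemma~\ref{uniserial example}, $M$ contains the uniserial $\Ind_{\W_E}^{\W_F}(\1)$, whence $[M_{\ss}:\1]=2$ while $\dim_{\Fl}\Hom_{\W_F}(\1,M)=1$, so your formula returns $p_{\1}=1$ where the true value is $p_{\1}=2$. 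The caveat you flag in your last sentence — that $M$ need not be $\W_F$-semisimple — is precisely what makes $\Hom_{\W_F}(\Psi,M)$ the wrong space to count in; it is the source of the error, not a workaround for it.

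The rank-of-a-polynomial-matrix machinery is in any case more than is needed. Since $U$ and $U'$ are bijective, so are $C_{I_1}$ and $C_{I_2}$, and the eigenvalues of $\widetilde C(\l,\mu)$ are the sums $\l a_i+\mu b_j$ with $a_i\in\Sp(C_{I_1})$, $b_j\in\Sp(C_{I_2})$, all nonzero; hence off the finitely many hyperplanes $\l a_i+\mu b_j=0$ the operator $\widetilde C(\l,\mu)$ is itself bijective. Then $M^0=0$, every irreducible Deligne subquotient of $(M,\widetilde C(\l,\mu))$ has bijective operator, all $q_{\Psi,k}$ vanish, and $p_\Psi=[M_{\ss}:\Psi]$ is manifestly independent of $(\l,\mu)$ — no multiplicity ever has to be read off from a possibly non-semisimple module. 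The paper's proof uses a somewhat stronger condition: it also insists the $\l a_i+\mu b_j$ be pairwise distinct, so that $D(\l,\mu)$ and $D(\l',\mu')$ are polynomials in one another, share a common filtration, and the resulting irreducible subquotients $(\Phi_*,U_*)$, $(\Phi_*,T_*)$ with the same $\Phi_*$ have $U_*,T_*$ simultaneously zero or simultaneously bijective, giving $[\Phi_*,U_*]=[\Phi_*,T_*]$ by Theorem~\ref{simple} and Lemma~\ref{irred nilp}. Either route avoids a $\Hom$-space count, which is exactly where yours breaks.
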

\begin{proof}
We write~$(\Phi,U)=[0,r_1-1]\otimes (\Phi_1,D_1)$ and 
$(\Phi',U')=[0,r_2-1]\otimes (\Phi_2,D_2)$ indecomposable, with~$(\Phi_i,D_i)$ irreducible. 
Let's first do the irreducible case, we are going to show that
\[(\Phi_1,\l D_1)\otimes_{\ss} (\Phi_2,\mu D_2)=((\Phi_1,\l D_2)\otimes (\Phi_2,\mu D_2))_{\ss}\] is equivalent to 
\[(\Phi_1, D_1)\otimes_{\ss} (\Phi_2, D_2)=((\Phi_1, D_1)\otimes (\Phi_2, D_2))_{\ss}\] for~$(\l,\mu)\in(\Fl^\times)^2$ outside a finite number of hyperplanes of~$(\Fl)^2$. We set \[\mathrm{Spec}(C_1)-\{0\}=\{a_1,\dots,a_m\},\] \[\mathrm{Spec}(C_2)-\{0\}=\{b_1,\dots,b_n\},\] and \[Z(D_1,D_2)=\{(\l,\mu) \in (\Fl^\times)^2, \ \forall \ i,\ j ,\ k, \ l, \ \l a_i+ \mu b_j \neq 0, \ \l a_i+ \mu b_j \neq \l a_k+ \mu b_l\}.\] The 
For all~$(\l,\mu)$ in~$Z(D_1,D_2)$, all 
$D(\l,\mu)=\l D_1\otimes \Id+ \Id\otimes \mu D_2$ have the same eigenspaces in the sense that they have the same kernel, and for~$(\l,\mu)$ and~$(\l',\mu')$ in~$Z(D_1,D_2)$, the eigenspace of~$D(\l,\mu)$ corresponding to~$\l a_i+\mu b_j$ is the 
same space as the eigenspace of~$D(\l',\mu')$ corresponding to~$\l' a_i+\mu' b_j$. In particuliar~$D(\l,\mu)$ and~$D(\l',\mu')$ are polynomials in one another. This implies that for~$(\l,\mu)$ and~$(\l',\mu')$ in~$Z(D_1,D_2)$, if~$(\Phi_*,U_*)$ is an irreducible subquotient of 
$((\Phi_1 \otimes \Phi_2), D(\l,\mu))$ (see Proposition \ref{C semi-simplification}) with~$U_*$ the Deligne endomorphism of~$V_{\Phi_*}$ induced by~$D(\l,\mu)$, then~$(\Phi_*,T_*)$ is also an irreducible subquotient of 
$((\Phi_1 \otimes \Phi_2), D(\l',\mu'))$ with~$T_*$ the Deligne endomorphism of~$V_{\Phi_*}$ induced by~$D(\l',\mu')$. As 
$U_*$ and~$T_*$ are bijective together, or zero together,  
Theorem \ref{simple} and Lemma \ref{irred nilp} imply that~$[\Phi_*,U_*]=[\Phi_*,T_*]$ (both representations 
have the same dimension and are supported on the same irreducible line). Now let's go back to the indecomposable case.

For~$(\l,\mu)$ and~$(\l',\mu')$ in~$Z(D_1,D_2)$: 
\[(\Phi,\l U)\otimes_{\ss} (\Phi',\mu U')=
(\Phi(r_1),\l N(r_1))\otimes (\Phi(r_2),\mu N(m_2)) \otimes (\Phi_1,\l D_1)\otimes_{\ss} (\Phi_2,\mu D_2)\]
\[=(\Phi(r_1),\l N(r_1))\otimes (\Phi(r_2),\mu N(m_2)) \otimes (\Phi_1,\l D_1)\otimes_{\ss} (\Phi_2,\mu D_2)\]
according to Proposition \ref{iso vs equiv nilp}. But 
\[(\Phi_1, \l D_1)\otimes_{\ss} (\Phi_2,\mu D_2)\sim (\Phi_1,\l' D_1)\otimes_{\ss} (\Phi_2,\mu' D_2)\] and 
the~$\W_F$-representation~$\Phi(r_1)\otimes \Phi(r_2)$ is a direct sum of characters, hence this implies 
\[(\Phi,\l U)\otimes_{\ss} (\Phi',\mu U')\sim (\Phi(r_1),N(r_1))\otimes (\Phi(r_2), N(r_2)) \otimes (\Phi_1,\l' D_2)\otimes_{\ss} (\Phi_2,\mu' C_2),\] which is itself isomorphic to 
\[(\Phi(m_1),\l'N(m_1))\otimes (\Phi(m_2), \mu' N(m_2)) \otimes (\Phi_1,\l' C_2)\otimes_{\ss} (\Phi_2,\mu' D_2)= 
(\Phi,\l U)\otimes_{\ss} (\Phi',\mu U')\] thanks to Proposition \ref{iso vs equiv nilp}. Finally one can set 
$Z(U,U')=Z(D_1,D_2)$
\end{proof}

For the general case, as~$[\Phi,U]\oplus [\Phi',U']$ is a well defined operation from 
\[[\Rep_{\ss}(\WD, \Fl)] \times [\Rep_{\ss}(\WD, \Fl)] \] to~$[\Rep_{\ss}(\WD, \Fl)]~$, we then extend the definition of 
$\otimes_{\ss}$ by distributivity (bilinearity) from indecomposable to all elements of~$[\Rep_{\ss}(\WD, \Fl)]$.

\begin{rem}\label{tensor product preserves non banal}
If~$(\Phi,U)$ and~$(\Phi',U')$ are in~$\Rep_{\ss}(\WD,\F_l)$ with~$U$ or~$U'$ both bijective, and if~$(\Phi_0,U_0)\in [\Phi,U]\otimes_{\ss} [\Phi',U']$, then~$U_0$ is bijective. Indeed it suffices to check this when~$(\Phi,U)$ and~$(\Phi',U')$ are both indecomposable. It then follows from the proof of Theorem \ref{tensor of equivalence classes} that 
$D(\l,\mu)_{\ss}$ is bijective for~$(\l,\mu)\in Z(U,U')$, hence~$U_0$ too.
\end{rem}

\section{Local constants of Deligne representations}\label{Sectionlocalfactors}

\subsection{Definition and basic properties}

Now~$R$ is either~$\overline{\Q_\ell}$ or~$\overline{\Z_\ell}$. 

\begin{definition}
Let~$(\Phi,U)$ be a~$\W_F$-semisimple Deligne~$R$-representation, we set
\[L(X,(\Phi,U))=\det((\Id-X\Phi(\Fr))\vert_{\Ker(U)^{I_F}})^{-1}.\]
\end{definition}

\begin{rem}
In particular $L(X,(\Phi,U))=1$ whenever $U$ is bijective.
\end{rem}

In what follows~$\psi:F\rightarrow R^\times$ is a nontrivial character, and we use a~$\psi$-self-dual additive measure on~$F$.

\begin{definition}
Let~$\Psi$ be an irreducible representation of~$\W_F$. The epsilon factor~$\e(X,\Psi,\psi)$ is defined in \cite{Deligne73}. If~$\Psi$ is not an unramified character, we set~$\gamma(X,\Psi,\psi)=\e(X,\Psi,\psi)$. If~$\Psi=\chi$ is an unramified character, viewing~$\chi$ as a character of 
~$F^\times$, we take~$\gamma(X,\chi,\psi)$ to be the Tate gamma factor defined in Tate's thesis when~$R=\Ql$ and in 
 \cite{MinguezZeta} or \cite{KM17} when~$R=\Fl$.
\end{definition}

\begin{definition}
Let~$(\Phi,U)$ be a~$\W_F$-semisimple Deligne representation, with~$\Phi=\bigoplus_{i=1}^r \Psi_i$ the underlying 
direct sum of irreducible representations of~$\W_F$, we set 
\[\gamma(X,(\Phi,U),\psi)=\prod_{i=1}^r \gamma(X,\Psi_i,\psi).\]
\end{definition}

\begin{definition}
Let~$(\Phi,U)$ be a~$\W_F$-semisimple Deligne representation, we set 
\[\e(X,(\Phi,U),\psi)=\gamma(X,(\Phi,U),\psi)\frac{L(X,(\Phi,U))}{L(q^{-1}X^{-1},(\Phi,U)^{\vee})}.\]
\end{definition}

The following proposition follows from the definitions.

\begin{prop}\label{multiplicative}
The maps~$(\Phi,U)\rightarrow L(X,(\Phi,U))$,~$(\Phi,U)\rightarrow \gamma(X,(\Phi,U),\psi)$, hence the map 
$(\Phi,U)\rightarrow \e(X,(\Phi,U),\psi)$ are multiplicative with respect to direct sums. 
\end{prop}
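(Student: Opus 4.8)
The plan is to reduce all three multiplicativity statements to the single observation that forming kernels, taking $I_F$-fixed points, restricting the $\Fr$-action, and passing to the contragredient each commute with finite direct sums. I would begin with $\gamma$, which is immediate: if $\Phi=\bigoplus_i\Psi_i$ and $\Phi'=\bigoplus_j\Psi_j'$ are the decompositions into irreducible representations of $\W_F$, then $\Phi\oplus\Phi'$ is the corresponding decomposition of the underlying representation of $(\Phi,U)\oplus(\Phi',U')$, and the defining product formula $\gamma(X,(\Phi,U),\psi)=\prod_i\gamma(X,\Psi_i,\psi)$ splits as a product over the two families, giving $\gamma(X,(\Phi,U)\oplus(\Phi',U'),\psi)=\gamma(X,(\Phi,U),\psi)\gamma(X,(\Phi',U'),\psi)$.

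Next I would treat $L$. Since the Deligne operator of $(\Phi,U)\oplus(\Phi',U')$ is $U\oplus U'$ acting on $V_\Phi\oplus V_{\Phi'}$, one has $\Ker(U\oplus U')=\Ker(U)\oplus\Ker(U')$ as $\W_F$-subrepresentations; because $I_F$ acts diagonally, $\Ker(U\oplus U')^{I_F}=\Ker(U)^{I_F}\oplus\Ker(U')^{I_F}$, and on this subspace $(\Phi\oplus\Phi')(\Fr)$ is block-diagonal with blocks $\Phi(\Fr)\mid_{\Ker(U)^{I_F}}$ and $\Phi'(\Fr)\mid_{\Ker(U')^{I_F}}$. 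Hence the determinant defining $L$ factors as a product of the two block determinants, and taking inverses gives $L(X,(\Phi,U)\oplus(\Phi',U'))=L(X,(\Phi,U))L(X,(\Phi',U'))$.

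For $\e$ I would first check that passing to the dual commutes with direct sums. Writing $U=D+N$ and $U'=D'+N'$ for the Jordan decompositions, the Jordan decomposition of $U\oplus U'$ is $(D\oplus D')+(N\oplus N')$; since $(-)^\vee$ is additive, Definition \ref{dualdef} then gives $((\Phi,U)\oplus(\Phi',U'))^\vee=(\Phi,U)^\vee\oplus(\Phi',U')^\vee$. Combining this with the multiplicativity of $\gamma$ and of $L$ already established — the latter applied both to $(\Phi,U)\oplus(\Phi',U')$ and to its dual — the defining formula $\e(X,(\Phi,U),\psi)=\gamma(X,(\Phi,U),\psi)L(X,(\Phi,U))/L(q^{-1}X^{-1},(\Phi,U)^{\vee})$ immediately yields $\e(X,(\Phi,U)\oplus(\Phi',U'),\psi)=\e(X,(\Phi,U),\psi)\e(X,(\Phi',U'),\psi)$.

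There is no real obstacle here; the only point deserving a line of justification is that the Jordan decomposition of $U\oplus U'$ is indeed $(D\oplus D')+(N\oplus N')$ — i.e.\ that the semisimple and nilpotent parts are formed summand by summand — so that the dual of a direct sum, as defined in Definition \ref{dualdef}, really is the direct sum of the duals; everything else is routine bookkeeping with block-diagonal matrices.
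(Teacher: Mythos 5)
Your proof is correct and matches the paper's approach: the paper simply states that the proposition ``follows from the definitions,'' and your write-up is precisely the routine verification that is being elided. The only point you rightly flag as needing a word of justification — that the Jordan decomposition of $U\oplus U'$ is formed summand by summand, so that Definition \ref{dualdef} gives $((\Phi,U)\oplus(\Phi',U'))^\vee=(\Phi,U)^\vee\oplus(\Phi',U')^\vee$ — is indeed the one non-trivial observation, and it holds by uniqueness of the Jordan decomposition since $D\oplus D'$ is semisimple, $N\oplus N'$ is nilpotent, and they commute.
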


\begin{corollary}\label{local constants depend on equiv only}
Two equivalent Deligne representations have the same constants~$L$,~$\gamma$ and~$\e$, in particular we can talk of the local 
constants~$L$,~$\gamma$ and~$\e$ of~$[\Phi,U]\in [\Rep_{\ss}(\WD, \F_l)]$.
\end{corollary}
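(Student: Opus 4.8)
The plan is to reduce to indecomposables and there check invariance under the scaling $U\mapsto\lambda U$.

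By Proposition \ref{multiplicative}, each of $L(X,-)$, $\gamma(X,-,\psi)$ and $\e(X,-,\psi)$ is multiplicative with respect to direct sums, and by Remark \ref{remark Krull} an equivalence $(\Phi,U)\sim(\Phi',U')$ is realised by decompositions of the two sides into indecomposable Deligne representations together with a bijection between the summands under which matched summands are equivalent. Hence it suffices to prove that $(\Phi,U)$ and $(\Phi,\lambda U)$ have the same $L$-, $\gamma$- and $\e$-factors for every $(\Phi,U)\in\Rep_{\ss}(\WD,R)$ and every $\lambda\in R^\times$ (in fact the argument never uses indecomposability).

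For $\gamma$ this is immediate: writing $\Phi=\bigoplus_{i=1}^r\Psi_i$, the definition $\gamma(X,(\Phi,U),\psi)=\prod_{i=1}^r\gamma(X,\Psi_i,\psi)$ makes no reference to $U$, so $\gamma(X,(\Phi,\lambda U),\psi)=\gamma(X,(\Phi,U),\psi)$. For $L$, since $\lambda\in R^\times$ we have $\Ker(\lambda U)=\Ker(U)$, hence $\Ker(\lambda U)^{I_F}=\Ker(U)^{I_F}$ and
\[L(X,(\Phi,\lambda U))=\det\bigl((\Id-X\Phi(\Fr))\vert_{\Ker(\lambda U)^{I_F}}\bigr)^{-1}=L(X,(\Phi,U)).\]
For the denominator appearing in $\e$, I would note that if $U=D+N$ is the Jordan decomposition then $\lambda D$ is semisimple, $\lambda N$ is nilpotent, they commute, and $\lambda D+\lambda N=\lambda U$; by uniqueness of the Jordan decomposition, $\lambda U$ has semisimple part $\lambda D$ and nilpotent part $\lambda N$. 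Therefore, by Definition \ref{dualdef},
\[(\Phi,\lambda U)^\vee=(\Phi^\vee,\lambda D^\vee-\lambda N^\vee)=\bigl(\Phi^\vee,\lambda(D^\vee-N^\vee)\bigr),\]
whose Deligne operator is $\lambda$ times that of $(\Phi,U)^\vee=(\Phi^\vee,D^\vee-N^\vee)$; in particular $\Ker\bigl(\lambda(D^\vee-N^\vee)\bigr)=\Ker(D^\vee-N^\vee)$, so $L(q^{-1}X^{-1},(\Phi,\lambda U)^\vee)=L(q^{-1}X^{-1},(\Phi,U)^\vee)$. Feeding these three equalities into the defining formula for $\e$ gives $\e(X,(\Phi,\lambda U),\psi)=\e(X,(\Phi,U),\psi)$. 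The final assertion follows at once, since the three invariants then descend to well-defined functions on $[\Rep_{\ss}(\WD,\Fl)]$.

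There is essentially no real obstacle here; the only point worth stating carefully is that scaling $U$ by $\lambda$ scales the Deligne operator of $(\Phi,U)^\vee$ by the same $\lambda$, which follows from uniqueness of the Jordan decomposition, after which the statement rests entirely on multiplicativity (Proposition \ref{multiplicative}) and the triviality $\Ker(\lambda U)=\Ker(U)$.
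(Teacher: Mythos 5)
Your proof is correct and takes essentially the same route as the paper: reduce via multiplicativity (Proposition \ref{multiplicative}), observe that $\gamma$ does not see the operator and that $\Ker(\lambda U)=\Ker(U)$ handles $L$, and deduce the statement for $\e$. You add one useful detail the paper leaves implicit, namely that scaling $U$ by $\lambda$ scales the Deligne operator of $(\Phi,U)^\vee$ by the same $\lambda$ (via uniqueness of the Jordan decomposition), which is precisely what is needed to conclude that the denominator $L(q^{-1}X^{-1},(\Phi,U)^\vee)$ is also unchanged; the paper simply asserts ``the result for $\e$ follows.''
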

\begin{proof}
By Proposition \ref{multiplicative} one reduces to the indecomposable case. But if~$(\Phi,U)$ is indecomposable, then~$L(X,(\Phi,U))$ and~$L(X,(\Phi,\l U))$ are equal for~$\l\neq 0$ as~$\Ker(U)=\Ker(\l U)$, and the definition of~$\gamma(X,(\Phi,U),\psi)$ and~$\gamma(X,(\Phi,\l U),\psi)$ does not see~$U$ and~$\l U$. Finally, the result for~$\e$ follows.
\end{proof}

The next proposition also follows from the definitions and the known multiplicativity properties of 
$\gamma$-factors in the~$\ell$-adic case.

\begin{prop}\label{recover usual constants in char zero}
When~$R=\Ql$, the factors defined here are the usual~$L$,~$\gamma$ and~$\e$ factors from \cite[8.12]{Deligne73}, see \cite{HM17} for~$\gamma$. 
When~$R=\Fl$, if~$\Phi$ is irreducible as a representation of~$\W_F$ and~$U=0$, then \[L(X,(\Phi,0))=L(X,\Phi)\] is the usual Artin~$L$-factor of~$\Phi$, and \[\e(X,(\Phi,0),\psi)= \e(X,\Phi ,\psi)\] is also the usual~$\e$-factor defined in \cite{Deligne73}.
\end{prop}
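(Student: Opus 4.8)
The plan is to unwind the definitions and match them with the classical objects, using the known properties of Deligne's constants in the $\ell$-adic case and Mínguez's in the modular case.

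\textbf{The $\ell$-adic case.} Since $R=\Ql$, Remark \ref{U=N in char zero} (\ref{U=N in char zero2}) gives $U=N$ nilpotent, so $(\Phi,N)$ is an ordinary Weil--Deligne $\Ql$-representation. I would first observe that, after the substitution $X=q^{-s}$, the formula $L(X,(\Phi,U))=\det((\Id-X\Phi(\Fr))\mid_{\Ker(N)^{I_F}})^{-1}$ is precisely the $L$-factor of $(\Phi,N)$ of \cite[8.12]{Deligne73}. For the $\gamma$-factor, I would decompose $\Phi=\bigoplus_{i=1}^{r}\Psi_i$ with $\Psi_i\in\Irr(\W_F,\Ql)$ (possible as $(\Phi,U)\in\Rep_{\ss}(\WD, \Ql)$), so that by definition $\gamma(X,(\Phi,U),\psi)=\prod_i\gamma(X,\Psi_i,\psi)$; on the other hand the Weil--Deligne $\gamma$-factor of $(\Phi,N)$ in the sense of \cite{HM17} is insensitive to the monodromy operator $N$ --- the term through which Deligne's $\e$-factor sees $N$ is cancelled by the corresponding ratio of $L$-factors --- hence equals the $\gamma$-factor of the underlying semisimple Weil representation $\bigoplus_i\Psi_i$, which by additivity is again $\prod_i\gamma(X,\Psi_i,\psi)$ (for $\Psi_i$ ramified the factor being Deligne's $\e(X,\Psi_i,\psi)$, for $\Psi_i$ unramified the Tate $\gamma$-factor, matching our conventions). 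Finally, since $(\Phi,U)^\vee=(\Phi^\vee,-N^\vee)$ by Definition \ref{dualdef} is the classical contragredient Weil--Deligne representation, the identity $\e(X,(\Phi,U),\psi)=\gamma(X,(\Phi,U),\psi)\,L(X,(\Phi,U))/L(q^{-1}X^{-1},(\Phi,U)^\vee)$ is exactly the relation between $\e$, $\gamma$ and $L$ of \cite[8.12]{Deligne73}, so our $\e$-factor will coincide with Deligne's. Checking that normalisations agree (the $\psi$-self-dual measure, the fixed square root of $q$, and $X\leftrightarrow q^{-s}$) is routine.

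\textbf{The $\ell$-modular case.} Here $R=\Fl$, $\Phi\in\Irr(\W_F,\Fl)$ and $U=0$, so $\Ker(U)^{I_F}=\Phi^{I_F}$. If $\Phi$ is not an unramified character, then $\Phi^{I_F}=0$ (Section \ref{Weil group}), so $L(X,(\Phi,0))=1=L(X,\Phi)$ is the Artin $L$-factor of $\Phi$; similarly $\Phi^\vee$ is ramified irreducible, so $L(q^{-1}X^{-1},(\Phi,0)^\vee)=1$, and therefore $\e(X,(\Phi,0),\psi)=\gamma(X,(\Phi,0),\psi)=\gamma(X,\Phi,\psi)=\e(X,\Phi,\psi)$. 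If instead $\Phi=\chi$ is an unramified character, then $\Phi^{I_F}=\Phi$ with $\Fr$ acting by $\chi(\Fr)$, so $L(X,(\chi,0))=(1-\chi(\Fr)X)^{-1}$ is the Artin $L$-factor of $\chi$, and the ratio $\e(X,(\chi,0),\psi)=\gamma(X,\chi,\psi)\,L(X,(\chi,0))/L(q^{-1}X^{-1},(\chi^{-1},0))$ is the usual $\e$-factor of $\chi$ by the construction of the modular Tate $\gamma$-factor in \cite{MinguezZeta} (see also \cite{KM17}).

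\textbf{Main obstacle.} Every step here is an unwinding of definitions; the only genuine input is, in the $\ell$-adic case, the insensitivity of the Weil--Deligne $\gamma$-factor to the monodromy operator, i.e.\ the fact that it is computed from the underlying semisimple Weil representation. This is the part I would regard as the crux, and it is exactly what \cite{HM17} provides on top of \cite[8.12]{Deligne73}. In the modular case there is no such difficulty, since with $U=0$ and $\Phi$ irreducible one only has to separate the ramified subcase --- where both $L$-factors equal $1$ and $\gamma=\e$ by definition --- from the unramified subcase, which is modular Tate theory.
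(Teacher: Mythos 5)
Your argument is correct and follows the same path as the paper: the crux in the $\ell$-adic case is exactly the insensitivity of the $\gamma$-factor to the monodromy operator, which is \cite[Lemma 4.4]{HM17}, followed by multiplicativity and a case split into ramified versus unramified irreducibles, and the modular case is a direct unwinding of definitions. The paper phrases the $\ell$-adic step slightly differently — introducing an auxiliary $\gamma'$ defined from Deligne's $L$ and $\e$, showing $\gamma'$ is multiplicative and then matching it with $\gamma$ on irreducibles — but this is the same argument in a different order of presentation.
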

\begin{proof} 
There second assertion follows from the definitions. Suppose that~$R=\Ql$, the quickest way to check our the first assertion is to use \cite[Lemma 4.4]{HM17}. Let~$\gamma'$ be the~$\gamma$ factor defined in [ibid. (4.6)], and~$(\Phi,U)$ a~$\W_F$-semisimple Deligne representation 
of~$\W_F$. By [ibid. Lemma 4.4], one has 
\[\gamma'(X,(\Phi,U),\psi)=\gamma'(X,\Phi,\psi):=\e(X,\Phi,\psi)\frac{L(q^{-1}X^{-1},\Phi^\vee)}{L(X,\Phi)},\] 
where~$\e$ and~$L$ are the local constants attached to representations of~$\W_F$ in \cite{Deligne73}. In particular as they are multiplicative with respect to direct sum, this implies that~$\gamma'$ is, hence if~$\Phi=\bigoplus_{i=1}^r \Psi_i$ with 
$\Psi_i$ irreducible representations of~$\W_F$, one has \[\gamma'(X,(\Phi,U),\psi)=\prod_{i=1}^r \gamma'(X,\Psi_i,\psi).\] So to prove that~$\gamma$ is equal to~$\gamma'$, it suffices to check that~$\gamma'(X,\Psi,\psi)=\gamma(X,\Psi,\psi)$ when 
$\Psi$ is an irreducible representation of~$\W_F$. If~$\Psi$ is not an unramified character then 
\[\gamma'(X,\Psi,\psi)=\e(X,\Psi,\psi)=\gamma(X,\Psi,\psi).\] If~$\chi$ is an unramified character, the assertion for~$L$ follows from the definitions, hence the assertion for~$\e$ is equivalent to~$\gamma'=\gamma$, which is by definition of $\e$ in this case.
\end{proof}

When~$R=\Ql$, the constant~$\e(X,(\Phi,U),\Psi)$ is invertible in the ring~$R[X^{\pm 1}]$. Let's check this when 
$R=\Fl$. We will write~$P(X)\underset{\Fl[X^{\pm 1}]^\times}{\sim}Q(X)$ if~$P,\ Q \ \in \Fl(X)$ are equal up to 
an element in~$\Fl[X^{\pm 1}]^\times$.

\begin{prop}\label{epsilon is invertible}
Let~$(\Phi,U)$ be an~$\ell$-modular~$\W_F$-semisimple Deligne representation, then 
\[\e(X,(\Phi,U),\Psi) \in \Fl[X^{\pm 1}]^\times.\]
\end{prop}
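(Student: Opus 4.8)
The plan is to reduce to the indecomposable case using multiplicativity, and then handle the two shapes of indecomposables coming from the classification (Theorems \ref{V_0} and \ref{indec classif}).

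First, by Proposition \ref{multiplicative} the assignment $(\Phi,U)\mapsto \e(X,(\Phi,U),\psi)$ is multiplicative with respect to direct sums, and a product of units in $\Fl[X^{\pm 1}]$ is a unit; so it suffices to prove the statement when $(\Phi,U)\in \Indec_{\ss}(\WD,\Fl)$. By Theorems \ref{V_0} and \ref{indec classif}, such a $(\Phi,U)$ is equivalent either to $[0,r-1]\otimes \Psi$ (nilpotent Deligne operator, $U$ not bijective) or to $[0,r-1]\otimes C(\Z_\Psi,I)$ ($U$ bijective); and by Corollary \ref{local constants depend on equiv only} the factor $\e$ only depends on the equivalence class, so I may work with these explicit representatives.

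Next, the bijective case is easy: if $U$ is bijective then $\Ker(U)=0$, so $L(X,(\Phi,U))=1$; the dual $(\Phi,U)^\vee=(\Phi^\vee,D^\vee-N^\vee)$ also has bijective Deligne operator (its semisimple part $D^\vee$ is invertible since $D$ is), so $L(q^{-1}X^{-1},(\Phi,U)^\vee)=1$ as well, and thus $\e(X,(\Phi,U),\psi)=\gamma(X,(\Phi,U),\psi)$. So it remains to see that $\gamma(X,(\Phi,U),\psi)\in\Fl[X^{\pm 1}]^\times$. Since $\gamma$ is the product over the irreducible constituents $\Psi_i$ of $\Phi$ of $\gamma(X,\Psi_i,\psi)$, and each such constituent is a twist $\nu^k\Psi$ of a single irreducible $\Psi$ of $\W_F$, this reduces to: the gamma factor of an irreducible $\Fl$-representation of $\W_F$ is a unit in $\Fl[X^{\pm 1}]$. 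For $\Psi$ not an unramified character this is $\e(X,\Psi,\psi)$, a unit essentially by construction of the Deligne--Langlands $\e$-factor (a monomial in $X$ times a nonzero scalar, as reduction mod $\ell$ of the $\ell$-adic local constant, which is integral here because $\Psi$ is defined over $\Fl$ hence its lift can be taken integral with unit determinant); for $\Psi$ an unramified character one invokes the explicit formula for the Tate $\gamma$-factor in \cite{MinguezZeta}/\cite{KM17}, which is again a nonzero rational monomial — indeed here $\gamma(X,\chi,\psi)$ visibly lies in $\Fl[X^{\pm 1}]^\times$ since an unramified character has trivial conductor.

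Finally, the nilpotent case $(\Phi,U)=[0,r-1]\otimes\Psi$. Here $\gamma(X,(\Phi,U),\psi)=\prod_{k=0}^{r-1}\gamma(X,\nu^k\Psi,\psi)$ is a unit by the previous paragraph, so the only thing to check is that the ratio $L(X,(\Phi,U))/L(q^{-1}X^{-1},(\Phi,U)^\vee)$ is a unit in $\Fl[X^{\pm 1}]$. Each of $L(X,(\Phi,U))$ and $L(q^{-1}X^{-1},(\Phi,U)^\vee)$ is the reciprocal of a polynomial of the form $\det(\Id - X\,\Phi(\Fr))$ restricted to $\Ker(U)^{I_F}$ (resp. the dual), and these are genuine Euler factors, so I only need that these two polynomials are \emph{equal} in $\Fl[X]$, up to a unit monomial. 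Compute both sides explicitly: for $[0,r-1]\otimes\Psi$, $\Ker(N(r))$ is the bottom line $\nu^{0}\otimes V_\Psi$, so $\Ker(U)=\nu^0\Psi=\Psi$, and $\Ker(U)^{I_F}$ is nonzero only when $\Psi$ is an unramified character $\chi$, in which case it equals $V_\Psi$ on which $\Fr$ acts by $\chi(\Fr)$; a parallel computation for the dual (using $(\Phi,U)^\vee=(\Phi^\vee,-N(r)^\vee)$ and that $\Ker(N(r)^\vee)$ is the \emph{top} line of $\Phi^\vee$, carrying the character $\nu^{-(r-1)}\chi^{-1}$) gives the dual $L$-factor. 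The expected main obstacle — and really the only computational content — is bookkeeping these two one-dimensional (or zero) contributions and checking that $\det(\Id-X\Phi(\Fr))|_{\Ker(U)^{I_F}}$ and $\det(\Id - q^{-1}X^{-1}\Phi^\vee(\Fr))|_{\Ker(U^\vee)^{I_F}}$ agree up to $\pm X^{\pm 1}\Fl^\times$, so that the ratio collapses to a unit; the non-banal/banal distinction for $\Psi$ and the fact that $o(\Psi)\mid \ell-1$ is prime to $\ell$ ensure there is no degeneration of these one-dimensional factors. Assembling the three cases with Proposition \ref{multiplicative} then yields $\e(X,(\Phi,U),\psi)\in\Fl[X^{\pm 1}]^\times$ in general.
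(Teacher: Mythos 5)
Your overall reduction (multiplicativity, pass to indecomposables and equivalence classes, split into the nilpotent and bijective shapes) matches the paper's opening moves. But both of your endgames have gaps, and the second one is a genuine misunderstanding of where the content lies.

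In the bijective case you reduce to the claim that the Tate gamma factor of an unramified $\Fl$-character $\chi$ is a unit in $\Fl[X^{\pm 1}]$ ``since an unramified character has trivial conductor.'' This is false whenever $\chi$ is banal. Writing $t=\chi(\Fr)$, one has
\[
\gamma(X,\chi,\psi)=\e(X,\chi,\psi)\,\frac{L(q^{-1}X^{-1},\chi^{-1})}{L(X,\chi)}
=\e(X,\chi,\psi)\cdot\bigl(-tqX\bigr)\cdot\frac{1-tX}{1-tqX},
\]
and $\frac{1-tX}{1-tqX}$ is a unit only when $q\equiv 1\ [\ell]$, i.e. in the non-banal case. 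The paper's point is precisely that a single Tate gamma factor is \emph{not} a unit; only the product $\prod_{k=0}^{o-1}\gamma(X,\nu^k\chi,\psi)$ over a full $\nu$-orbit is, because $\prod_{k=0}^{o-1}(1-tq^{-k}X)=1-(tX)^{o}$ and the ratio of $L$-factors telescopes. That telescoping is the content of the bijective case and your argument skips it.

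The nilpotent case has a matching structural error. You aim to show $L(X,(\Phi,U))/L(q^{-1}X^{-1},(\Phi,U)^\vee)$ is a unit by comparing the two determinant polynomials directly. For $[0,r-1]\otimes\chi$ with $\chi$ banal unramified this ratio is (up to a unit) $\frac{1-tqX}{1-tq^{1-r}X}$, which is not a unit for $r>0$. The factor $\e$ is nonetheless a unit because this ratio cancels against the Tate $L$-factor ratios hidden inside $\gamma(X,(\Phi,U),\psi)=\prod_{i=0}^{r-1}\gamma(X,\nu^i\chi,\psi)$ — the computation the paper actually carries out, showing $\prod_{i=0}^{r-1}\frac{1-tq^{-i}X}{1-t^{-1}q^{i-1}X^{-1}}\sim\frac{1-tq^{1-r}X}{1-t^{-1}q^{-1}X^{-1}}$. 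So the $L$-ratio you are trying to kill on its own is exactly the thing that survives to cancel $\gamma$'s non-unit part; treating them separately cannot work. (Minor but symptomatic: you place $\Ker(N(r))$ at the bottom summand $\nu^0$, but $N(r)(x_0,\dots,x_{r-1})=(0,x_0,\dots,x_{r-2})$ has kernel the top summand $\nu^{r-1}$, consistent with the paper's $L(X,(\Phi,U))=(1-tq^{1-r}X)^{-1}$.)
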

\begin{proof}
Thanks to Proposition \ref{multiplicative}, and Corollary \ref{local constants depend on equiv only}, we can suppose that 
$(\Phi,U)=[0,r-1]\otimes \Psi$ or~$(\Phi,U)=[0,r-1]\otimes C(\Psi,I)$. Let's start with the second case.

As~$L(X,(\Phi,U))=1$ we have 

\[\e(X,(\Phi,U),\psi)=\gamma(X,(\Phi,U),\psi)=\prod_{k=0}^{o(\Psi)-1}\gamma(X,\nu^k\Psi,\psi)^r.\] 

If~$\Psi$ is either ramified, or a non banal character (i.e.~$o(\chi)$ which is equal to~$o(\nu)$ is~$1$, or equivalently~$q \equiv 1[\ell]$) then~$\gamma(X,\nu^k\chi,\psi)=\e(X,\nu^k\chi,\psi)$ 
and we are done. 

If~$\Psi$ is a banal unramified character~$\chi$ (i.e.~$o:=o(\nu)=o(\chi)\geq 2$) with~$\chi(\w)=t\in \Fl^\times$  
\[\prod_{k=0}^{o-1}L(X,\nu^k\chi)=\prod_{k=0}^{o-1}(1-tq^{-k}X)^{-1}=(1-(tX)^{o})^{-1}\] and similarly 
\[\prod_{k=0}^{o-1}L(q^{-1}X^{-1},\nu^{-k}\chi^{-1})=(1-(tX)^{-o})^{-1},\] hence 
\[\e(X,(\Phi,U),\psi)=\prod_{k=0}^{o-1}\e(X,\nu^k\chi,\psi)^r(-(tX)^o)^r\in \Fl[X^{\pm 1}]^\times.\]
It remains to deal with the case~$(\Phi,U)=[0,r-1]\otimes \Psi$. If~$\Psi$ is not an unramified character or if~$\Psi$ is a non banal 
unramified character then 
\[\e(X,(\Phi,U),\psi)=\gamma(X,(\Phi,U),\psi)=\prod_{i=0}^{r-1}\gamma(X,\nu^i\Phi,\psi)=
\prod_{i=0}^{r-1}\e(X,\nu^i\Phi,\psi)\in \Fl[X^{\pm 1}]^\times.\]
If~$\Psi$ is a banal unramified character~$\chi$ ($q\not \equiv 1[\ell]$) with~$\chi(\w)=t\in \Fl^\times$, then 
\[\e(X,(\Phi,U),\psi)=\gamma(X,(\Phi,U),\psi)\frac{L(X,(\Phi,U))}{L(q^{-1}X^{-1},(\Phi,U)^\vee)}=\gamma(X,(\Phi,U),\psi)
\frac{(1-t^{-1}q^{-1}X^{-1})}{(1-tq^{1-r}X)} \]

However \[\gamma(X,(\Phi,U),\psi)=\prod_{i=0}^{r-1}\gamma(X,\nu^i\chi,\psi)=\prod_{i=0}^{r-1}\e(X,\nu^i\chi,\psi)
\prod_{i=0}^{r-1}\frac{(1-t q^{-i}X )}{(1-t^{-1}q^{i-1}X^{-1})}\]
Now \[\prod_{i=0}^{r-1}\frac{(1-t q^{-i}X )}{(1-t^{-1}q^{i-1}X^{-1})}=
\frac{\prod_{i=0}^{r-1}(1-t q^{-i}X )}{\prod_{i=-1}^{r-2}(1-t^{-1}q^{i}X^{-1})}\underset{\Fl[X^{\pm 1}]^\times}{\sim}
\frac{(1-tq^{1-r}X)}{(1-t^{-1}q^{-1}X^{-1})},\] hence 
\[\e(X,(\Phi,U),\psi)\underset{\Fl[X^{\pm 1}]^\times}{\sim} 1.\]
\end{proof}

We finally recall the multiplicativity relation for the~$L$-factors of nilpotent Deligne representations.

\begin{lemma}\label{multiplicativity}
Let~$\Psi$ and~$\Psi'$ be two irreducible representations of~$\W_F$ (over~$R$), and~$n\geq m\geq 1$, then: 
\[L(X,([0,n-1]\otimes \Psi)\otimes_{\ss} ([0,m-1]\otimes \Psi'))=\prod_{k=0}^{m-1} L(X,\nu^{n-1}\Psi\otimes_{\ss} \nu^k\Psi') \]
\end{lemma}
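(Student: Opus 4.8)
The plan is to unwind the definition of $\otimes_{\ss}$ and the definition of the $L$-factor, reducing the statement to a computation about which subquotients of the tensor product are nilpotent and have nonzero $I_F$-fixed vectors. By Proposition \ref{C semi-simplification}, write $([0,n-1]\otimes\Psi)\otimes_{\ss}([0,m-1]\otimes\Psi')$ as $[0,n-1]\otimes[0,m-1]\otimes(\Psi\otimes_{\ss}\Psi')$, where $\Psi\otimes_{\ss}\Psi'$ is the $\W_F$-semisimplification of $\Psi\otimes\Psi'$ in $\Rep_{\ss}(\WD,\Fl)$ (here both $\Psi,\Psi'$ have zero Deligne operator, so this is just the ordinary semisimplification as $\W_F$-representations). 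Now $[0,n-1]\otimes[0,m-1]$ is a nilpotent Deligne representation, so by Theorem \ref{V_0} it decomposes as a direct sum of segments $[0,a-1]\otimes\mathbf{1}$, and tensoring with the semisimple $\W_F$-representation $\Psi\otimes_{\ss}\Psi'$ keeps everything $\W_F$-semisimple and nilpotent. Then I would apply Proposition \ref{multiplicative} (multiplicativity of $L$ in direct sums) to reduce to computing $L(X,[0,a-1]\otimes\Sigma)$ for $\Sigma$ an irreducible constituent of $\Psi\otimes_{\ss}\Psi'$.

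The key computation is: for a nilpotent segment, $L(X,[0,a-1]\otimes\Sigma)=\det(\Id-X\Phi(\Fr)\mid_{\Ker(U)^{I_F}})^{-1}$, and $\Ker(U)$ for $N(a)\otimes\Id$ on $\bigoplus_{k=0}^{a-1}\nu^k\Sigma$ is the top graded piece $\nu^{a-1}\Sigma$. So $L(X,[0,a-1]\otimes\Sigma)=L(X,\nu^{a-1}\Sigma)$. Combining: $L(X,([0,n-1]\otimes\Psi)\otimes_{\ss}([0,m-1]\otimes\Psi'))$ equals the product of $L(X,\nu^{a-1}\Sigma)$ over all segments $[0,a-1]$ appearing in the decomposition of $[0,n-1]\otimes[0,m-1]$ and all constituents $\Sigma$ of $\Psi\otimes_{\ss}\Psi'$. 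The combinatorics of $[0,n-1]\otimes[0,m-1]$ with $n\geq m$: one computes (exactly as in the classical Clebsch--Gordan / segment decomposition) that $[0,n-1]\otimes[0,m-1]\simeq\bigoplus_{k=0}^{m-1}\nu^{k}\cdot[0,n+m-2-2k-\ (\text{shift})\ ]$; the precise shape is $\bigoplus_{j=0}^{m-1}[\,j,\,n-1+j\,]$ after normalizing, i.e. $m$ segments each of length $n$, the $j$-th one being $\nu^j[0,n-1]$. Each such segment $\nu^j[0,n-1]\otimes\Sigma$ contributes $L(X,\nu^{n-1+j}\Sigma)$, and summing the contribution of $\Sigma$ running over the constituents of $\Psi\otimes_{\ss}\Psi'$ (which is, by definition of $\otimes_{\ss}$, exactly $\nu^{n-1}\Psi\otimes_{\ss}\nu^k\Psi'$ in a suitable bookkeeping) yields $\prod_{k=0}^{m-1}L(X,\nu^{n-1}\Psi\otimes_{\ss}\nu^k\Psi')$.

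The main obstacle is getting the segment decomposition of $[0,n-1]\otimes[0,m-1]$ right and matching it correctly against the right-hand side: one must verify that $\bigoplus_{j=0}^{m-1}\nu^j[0,n-1]\otimes(\Psi\otimes_{\ss}\Psi')$, after taking $\Ker(U)^{I_F}$ on each summand, reassembles as $\bigoplus_{k=0}^{m-1}(\nu^{n-1}\Psi\otimes_{\ss}\nu^k\Psi')^{I_F}$ with the correct Frobenius action. This is a reindexing: the top piece of $\nu^j[0,n-1]\otimes\Sigma$ is $\nu^{n-1+j}\Sigma$, and as $\Sigma$ ranges over constituents of $\Psi\otimes\Psi'$, $\nu^{n-1+j}\Sigma$ ranges over constituents of $\nu^{n-1}\Psi\otimes\nu^j\Psi'$; one then uses that semisimplification commutes with the relevant twists and that $L$ only depends on the semisimplification. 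A subtle point worth a remark: one should check that $\otimes_{\ss}$ here genuinely involves semisimplification (the tensor $\Psi\otimes\Psi'$ of two irreducibles need not be semisimple by Example \ref{counter examples}(\ref{Part3})), but since $L$-factors are defined via $\Ker(U)^{I_F}$ on a $\W_F$-semisimple representation and are multiplicative, passing to the semisimplification is exactly what the definition of $L(X,\nu^{n-1}\Psi\otimes_{\ss}\nu^k\Psi')$ prescribes, so the two sides match on the nose.
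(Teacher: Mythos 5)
Your overall strategy is the same as the paper's up to the first step, namely writing
$([0,n-1]\otimes\Psi)\otimes_{\ss}([0,m-1]\otimes\Psi')=[0,n-1]\otimes[0,m-1]\otimes(\Psi\otimes_{\ss}\Psi')$,
and the reduction that $L$ only sees $\Ker(N)^{I_F}$ is exactly right, as is the key computation $L(X,[0,a-1]\otimes\Sigma)=L(X,\nu^{a-1}\Sigma)$. But your route diverges at the next step, and contains a genuine error there: the claimed ``precise shape'' $[0,n-1]\otimes[0,m-1]\simeq\bigoplus_{j=0}^{m-1}\nu^j[0,n-1]$ is \emph{not} the Jordan decomposition of $N(n)\otimes\Id+\Id\otimes N(m)$ in general. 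Already for $n=m=2$ and $\ell\neq 2$ (e.g.\ over $\Ql$) the correct decomposition is $[0,2]\oplus\nu$ (Clebsch--Gordan: segments of lengths $n+m-1,n+m-3,\dots,n-m+1$), which is \emph{not} $[0,1]\oplus\nu[0,1]$. Your claimed decomposition is what happens in some small characteristics but not over $\Ql$, and over $\Fl$ the lengths of the blocks depend on $\ell$ in a nontrivial (``modular Clebsch--Gordan'') way. Asserting a fixed shape for $[0,n-1]\otimes[0,m-1]$ valid over every $R$ is simply false.

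Fortuitously, this error does not propagate to your final answer: the $L$-factor only depends on $\Ker(N)$ together with its grading, and the graded pieces of $\Ker(N)$ sit in degrees $n-1,n,\dots,n+m-2$ (one line in each) regardless of which Jordan decomposition the field characteristic forces. So the product you obtain at the end, $\prod_{k=0}^{m-1}L(X,\nu^{n-1}\Psi\otimes_{\ss}\nu^k\Psi')$, is correct, even though the intermediate claim used to derive it is not. The paper sidesteps this entirely and is cleaner: after the common reduction to $[0,n-1]\otimes[0,m-1]\otimes(\Psi\otimes_{\ss}\Psi')$, it observes that the $I_F$-invariants commute with the unramified tensor factors, so one may replace $\Psi\otimes_{\ss}\Psi'$ by $(\Psi\otimes_{\ss}\Psi')^{I_F}$ and hence assume $\Psi=\Psi'=\1$; then it computes $\Ker(N)$ directly on $\Phi(n)\otimes\Phi(m)$ in an explicit basis, exhibiting the one-dimensional pieces $\Ker(N)\cap V_{n-1+k}$ for $k=0,\dots,m-1$ and noting they vanish in lower degree. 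This avoids any appeal to a characteristic-dependent segment decomposition. If you want to salvage your argument without invoking the Jordan form, replace step 2 by a direct computation of $\Ker(N)$ (or simply its graded dimension, which suffices for $L$): observe that $N$ raises the total degree by $1$ and is injective on $\bigoplus_{k<n-1}V_k$ (since $n\geq m$), so the kernel consists of $m$ lines in degrees $n-1,\dots,n+m-2$, which is exactly what the argument needs.
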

\begin{proof}
By definition  
\[([0,n-1]\otimes \Psi)\otimes_{\ss} ([0,m-1]\otimes \Psi')=[0,n-1]\otimes [0,m-1]\otimes (\Psi \otimes_{\ss} \Psi').\]
Moreover, writing as usual~$[0,r-1]=(\Phi(r),N(r))$, as~$\Phi(n)^{I_F}=\Phi(n)$ and~$\Phi(m)^{I_F}=\Phi(m)$, we deduce that 
\[(\Phi(n)\otimes \Phi(m)\otimes (\Psi \otimes_{\ss} \Psi'))^{I_F}=\Phi(n)\otimes \Phi(m)\otimes (\Psi \otimes_{\ss} \Psi')^{I_F}.\]
Hence we can suppose that~$\Psi$ and~$\Psi'$ are both the trivial character to prove the multiplicativity relation. 
Writing~$\Phi(l)=\bigoplus_{i=0}^{l-1}\nu^i.e_i$, one can decompose 
$\Phi(n)\otimes \Phi(m)$ as~$\bigoplus_{j=0}^{n+m-2} V_k$, where~$V_k=\Vect(e_i\otimes e_j)_{{i+j}=k}$. Write 
$N=N(n)\oplus \Id+ \Id\otimes N(m)$, then~$N(V_k)\subset V_{k+1}$ for~$k<n+m-2$ and~$N(V_{n+m-2})=\{0\}$. Hence 
\[\Ker(N)=\bigoplus_{j=0}^{n+m-2} \Ker(N)\cap V_k.\] However one checks that \[\Ker(N)\cap V_k=\{0\}\] for~$j<n-1$, and 
 \[\Ker(N)\cap V_{n-1+k}=\Vect(\sum_{i=0}^{m-1} (-1)^i e_{n+m-2-i}\otimes e_{i} )\] for~$k=0,\dots, m-1$. Denoting~$K_k$ the space 
~$\Ker(N)\cap V_k$, we thus showed that 
\[\Ker(N)=\bigoplus_{k=0}^{m-1} K_{n-1+k}\] where~$\W_F$ acts via~$\nu^{n-1+k}$ on the line~$K_{n-1+k}$, and the result follows.
\end{proof}

\subsection{Reduction modulo~$\ell$ of Deligne constants}

First state two results concerning good reduction of local constants. 
The first is a result of \cite{HM17}, which states that the~$\gamma$ are compatible with reduction modulo~$\ell$. For 
$P\in \Zl[X]$, we denote by~$r_\ell(P)=\Fl[X]$ the polynomial obtained by applying~$r_\ell$ to the coefficients of~$P$. For~$Q$ in~$\Zl[X]$, such that~$r_\ell(Q)\neq 0$, we set~$r_\ell(P/Q)=r_\ell(P)/r_\ell(Q)$.

\begin{prop}\label{HM}\cite[Theorem 1.1, 1]{HM17}: Let~$\Phi$ be an integral semisimple representation of~$\W_F$, then 
\[r_\ell(\gamma(X,\Phi,\psi))=\gamma(X,r_\ell(\Phi),\psi).\]
\end{prop}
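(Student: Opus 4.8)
This is \cite[Theorem 1.1, 1]{HM17}; here is the shape of the argument I would give. The plan is to reduce, via multiplicativity and a Brauer-induction/structure-theorem argument, to the case of characters, and then to use that Gauss sums are compatible with reduction modulo $\ell$.

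First I would reduce to the irreducible case. Both sides are multiplicative in direct sums: the right-hand side by Proposition~\ref{multiplicative}, and the left-hand side because the $\ell$-adic $\gamma$-factor is multiplicative (a consequence of Deligne's additivity--inductivity results for $\e$ and $L$). Since $r_\ell(\Phi)$ is by definition the semisimplification of the reduction of a $\W_F$-stable lattice, and semisimplification of a direct sum is the direct sum of semisimplifications, it suffices to prove the statement when $\Phi=\Theta$ is an integral \emph{irreducible} representation of $\W_F$ -- here $r_\ell(\Theta)$ may itself be reducible, and the right-hand side is read through Proposition~\ref{multiplicative}. Twisting by a suitable unramified character (using \cite[Proposition 28.6]{BHbook}) one may moreover assume $\Theta$ factors through a finite quotient of $\G_F$, reducing its unramified part to an explicit book-keeping of $q$-powers.

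Next I would apply Brauer's induction theorem to the relevant finite quotient (or use the explicit form of Theorem~\ref{irreps of the Weil group}) to write $\Theta$ as a virtual combination $\sum_i n_i\,\Ind_{\W_{E_i}}^{\W_F}(\chi_i)$ with $n_i\in\Z$ and each $\chi_i$ a finite-order, hence integral, character. Using additivity of $\gamma$ and its inductivity (for positive-dimensional pieces this produces Langlands constants), one gets, with $\lambda(E/F,\psi)$ the Langlands constant of the extension,
\[\gamma(X,\Theta,\psi)\;=\;\prod_i\big(\lambda(E_i/F,\psi)\,\gamma(X,\chi_i,\psi_{E_i})\big)^{n_i}.\]
The $\lambda(E_i/F,\psi)$ are $\ell$-adic units (for the tame extensions that occur they are, up to sign, products of quadratic Gauss sums, and $q$ is prime to $\ell$). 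So it remains to check: $(i)$ $r_\ell\!\big(\gamma(X,\chi,\psi_E)\big)=\gamma(X,r_\ell(\chi),\psi_E)$ for characters $\chi$; and $(ii)$ that the reduction of the displayed inductivity identity is the inductivity identity for the $\ell$-modular factors of \cite{MinguezZeta},\cite{KM17}, i.e.\ that $r_\ell(\lambda(E/F,\psi))$ is the $\ell$-modular Langlands constant. For $(i)$ with $\chi$ ramified, $\gamma(X,\chi,\psi_E)=\e(X,\chi,\psi_E)$ equals, up to a monomial in $X$ and a power of $q_E$, the Gauss sum $\sum_x \chi(x)\psi_E(x)$; since $\chi$ and $\psi_E$ take values in $\Zl^\times$ this sum lies in $\Zl$ and reduces term by term to the Gauss sum of $r_\ell(\chi)$. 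For $\chi$ unramified with $\chi(\w_E)=t\in\Zl^\times$, $\gamma(X,\chi,\psi_E)$ is an explicit rational function built from factors $1-tq_E^{j}X^{\pm1}$ times a monomial, whose reduction modulo $\ell$ is visibly the corresponding expression in $r_\ell(t)$, which is the definition of the $\ell$-modular $\gamma$-factor of $r_\ell(\chi)$.

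The genuine obstacle is point $(ii)$: controlling the reduction of the Langlands constants and verifying that the inductivity relation survives $r_\ell$ together with the \emph{ad hoc} $\ell$-modular normalisation at unramified characters. One should also note why the statement is about $\gamma$ and not about $\e$ or $L$ separately: under reduction modulo $\ell$ taking $I_F$-invariants does not commute with $r_\ell$, so the $L$-factor of a Deligne representation can acquire spurious factors, but those arising in $L(X,\cdot)$ and in $L(q^{-1}X^{-1},(\cdot)^\vee)$ cancel in the ratio defining $\gamma$. Making this precise is the technical core of \cite{HM17}, carried out there by working with a well-chosen integral model throughout rather than reducing only at the end; for this paper we simply invoke \cite[Theorem 1.1, 1]{HM17}.
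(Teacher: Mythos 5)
The paper offers no proof of this proposition: it is stated verbatim as a citation of \cite[Theorem~1.1,~1]{HM17}, and the authors never argue for it. Your proposal reaches the same endpoint --- invoking \cite{HM17} --- which is the correct move, but the argument you sketch on the way is neither a proof nor the argument of \cite{HM17}. Helm and Moss do not proceed by Brauer induction and term-by-term reduction of Gauss sums; they construct $\gamma$-factors as elements of (quotients of) the integral Bernstein centre, working over $\Zl$-algebras throughout, and compatibility with $r_\ell$ is then a specialization statement, not something one checks after the fact. You acknowledge this in your last sentence, but the body of your proposal reads as if the Brauer-induction route were the content of the cited theorem, which it is not.

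Beyond the attribution issue, the sketch has a genuine gap, and you flag it yourself at point~$(ii)$: one would need the $\ell$-modular Langlands constants to equal the reductions of the $\ell$-adic ones, together with an $\ell$-modular inductivity identity for the factors of \cite{MinguezZeta,KM17} that matches the reduction of the $\ell$-adic one. No such identity is available off the shelf, and establishing it is essentially as hard as the theorem itself. There is also a second unaddressed difficulty: once $\Theta$ is written as a virtual $\Z$-combination $\sum_i n_i\,\Ind_{\W_{E_i}}^{\W_F}(\chi_i)$, one must compare $r_\ell$ of that expression with the $\ell$-modular $\gamma$-factor of $r_\ell(\Theta)$, which is defined in this paper as a product over the (possibly many) irreducible constituents of the semisimplified reduction; reconciling the virtual Brauer decomposition over $\Ql$ with the actual decomposition of $r_\ell(\Theta)$ over $\Fl$ is not addressed. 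Since neither the Langlands-constant step nor this matching step is carried out, the sketch should not be presented as an argument; the honest and correct treatment --- and the one the paper actually gives --- is simply to cite \cite[Theorem~1.1]{HM17}.
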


Then we state the following immediate consequence of Proposition \ref{banal tensor unramified reduction}.

\begin{thm}\label{banal Deligne L functions}
Let~$\Psi$ and~$\Psi'$ be banal irreducible representations of~$\W_F$, and let~$\tilde{\Psi}$ and~$\tilde{\Psi}'$ be irreducible 
$\ell$-adic lifts of such representations, then one has:

\[L(X,\Psi\otimes_{\ss}\Psi')=r_\ell(L(X,\tilde{\Psi}\otimes \tilde{\Psi}')).\]
\end{thm}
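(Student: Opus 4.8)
The plan is to reduce both $L$-factors to the characteristic polynomial of~$\Fr$ acting on a space of~$I_F$-invariants, and then to apply Proposition \ref{banal tensor unramified reduction} verbatim; modulo that proposition the statement is essentially bookkeeping, which is why we call it an immediate consequence.

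First I would remark that the Deligne operator plays no role. Under the identifications~$\Psi=(\Psi,0)$ and~$\Psi'=(\Psi',0)$, the Deligne representation~$\Psi\otimes_{\ss}\Psi'$ has underlying~$\W_F$-representation~$(V_\Psi\otimes V_{\Psi'})_{\ss}$ and zero operator, while~$\tilde\Psi\otimes\tilde\Psi'$ is already~$\W_F$-semisimple (a tensor product of semisimple~$\Ql$-representations of~$\W_F$ is semisimple, as recalled in Section \ref{tensor for W_F}) and so also carries the zero operator. Thus, by definition of the~$L$-factor,
\[L(X,\Psi\otimes_{\ss}\Psi')=\det((\Id-X\Fr)\vert_{(V_\Psi\otimes_{\ss}V_{\Psi'})^{I_F}})^{-1}\]
and
\[L(X,\tilde\Psi\otimes\tilde\Psi')=\det((\Id-X\Fr)\vert_{(V_{\tilde\Psi}\otimes V_{\tilde\Psi'})^{I_F}})^{-1}.\]
Since~$\tilde\Psi$ and~$\tilde\Psi'$ are integral, so is~$M=L\otimes L'$, and~$(V_{\tilde\Psi}\otimes V_{\tilde\Psi'})^{I_F}\cap M$ is a~$\W_F$-stable, in particular~$\Fr$-stable, lattice in~$(V_{\tilde\Psi}\otimes V_{\tilde\Psi'})^{I_F}$; computing~$\det(\Id-X\Fr)$ in a~$\Zl$-basis of it gives a polynomial in~$\Zl[X]$ with constant term~$1$, whose image under~$r_\ell$ is the characteristic polynomial of~$\Fr$ on the lattice reduction~$\overline{(V_{\tilde\Psi}\otimes V_{\tilde\Psi'})^{I_F}}$ in the notation of Proposition \ref{banal tensor unramified reduction}. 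Hence~$r_\ell(L(X,\tilde\Psi\otimes\tilde\Psi'))$ is defined and equals~$\det((\Id-X\Fr)\vert_{\overline{(V_{\tilde\Psi}\otimes V_{\tilde\Psi'})^{I_F}}})^{-1}$.

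To conclude I would invoke Proposition \ref{banal tensor unramified reduction} twice: it gives~$\overline{(V_{\tilde\Psi}\otimes V_{\tilde\Psi'})^{I_F}}=(V_\Psi\otimes V_{\Psi'})^{I_F}$ as~$\W_F$-representations and~$((V_\Psi\otimes V_{\Psi'})^{I_F})_{\ss}\simeq(V_\Psi\otimes_{\ss}V_{\Psi'})^{I_F}$. Since the characteristic polynomial of an endomorphism is unchanged by passing to a semisimplification, chaining these two isomorphisms with the displayed formulas yields~$r_\ell(L(X,\tilde\Psi\otimes\tilde\Psi'))=L(X,\Psi\otimes_{\ss}\Psi')$. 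The only genuine content of the argument is Proposition \ref{banal tensor unramified reduction} itself --- the fact that no ramified constituent of~$\tilde\Psi\otimes\tilde\Psi'$ acquires nonzero~$I_F$-invariants upon reduction modulo~$\ell$ --- so I expect no real obstacle beyond being slightly careful that the bar operation of that proposition is precisely reduction of the lattice~$(V_{\tilde\Psi}\otimes V_{\tilde\Psi'})^{I_F}\cap M$, so that it genuinely computes the reduction of the characteristic polynomial of~$\Fr$.
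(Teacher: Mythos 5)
Your proof is correct and takes essentially the same route as the paper's: both arguments reduce to the two statements of Proposition \ref{banal tensor unramified reduction} and use that the characteristic polynomial of~$\Fr$ is invariant under semisimplification. You are somewhat more explicit about the lattice-theoretic bookkeeping (that~$(V_{\tilde\Psi}\otimes V_{\tilde\Psi'})^{I_F}\cap M$ is an~$\Fr$-stable lattice whose reduction is the ``bar'' of that proposition, so that~$r_\ell$ of the characteristic polynomial is well defined), which the paper passes over quickly, but there is no difference of substance.
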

\begin{proof}
Because~$\tilde{\Psi}\otimes \tilde{\Psi}'$ is integral: 
\[\det((\Id-X\Phi(\Fr))\mid_{(V_\Psi\otimes V_{\Psi'})^{I_F}})\in \Zl[X],\] hence it makes 
sense to consider~$r_\ell(L(X,\tilde{\Psi}\otimes \tilde{\Psi}')).$

Moreover thanks to Proposition \ref{banal tensor unramified reduction}, we know that~$\overline{(V_{\tilde{\Psi}}\otimes V_{\tilde{\Psi}'})^{I_F}}=(V_\Psi\otimes V_{\Psi'})^{I_F}$, hence 
\[r_\ell(L(X,\tilde{\Psi}\otimes \tilde{\Psi}'))=\det((\Id-X\Phi(\Fr))\mid_{(V_\Psi\otimes V_{\Psi'})^{I_F}})^{-1}.\]
Finally according to [ibid.] again, we have 
$((V_\Psi\otimes V_{\Psi'})^{I_F})_{\ss}\simeq (V_\Psi\otimes_{\ss} V_{\Psi'})^{I_F}$, hence 
\[\det((\Id-X\Phi(\Fr))\mid_{(V_\Psi\otimes V_{\Psi'})^{I_F}})^{-1}=L(X,\Psi\otimes_{\ss}\Psi').\]
\end{proof}

\section{The modular Langlands correspondence and local constants}

In \cite{Viginv}, Vignéras defined a bijection $\V$ (Theorem \ref{Vcorrespondence}) between $\Irr_{\gen}(G,\Fl)$ and $\Nilp_{\ss}(\WD,\Fl)$, the ``semisimplification'' of which is obtained by ``reducing modulo $\ell$'' the $\ell$-adic $\LLC$, and which moreover commutes with character twists, taking duals, and 
takes central characters to determinant. The aim of this last section is to define an injection $\C$ (Definition \ref{Vcorrespondence}) of $\Irr_{\gen}(G,\Fl)$ into 
$[\Rep_{\ss}(\D,\Fl)]$, which besides sharing all these properties with $\V$, takes the local factors of pairs of generic representations defined in \cite{KM17} to those of tensor products of elements in $[\Rep_{\WD,\ss}]$ (Theorem \ref{preservation of local factors}).

\subsection{Representations of $\GL(n,F)$}

We put~$G_n=\GL(n,F)$ ($G_0$ is trivial by convention), and denote by~$N_n$ the group of unipotent upper triangular matrices in~$G_n$.  By abuse of notation, for~$n\in N_n$ and~$\psi:F\rightarrow R^\times$ a non-trivial character, we set
\[\psi(n)=\psi{\textstyle\left(\sum_{i=1}^{n-1} n_{i,i+1}\right)}.\]
We say that an irreducible representation~$\pi$ of~$G_n$ is \textit{generic} if 
\[\Hom_{N_n}(\pi,\psi)\neq \{0\},\] 
in which case one knows it is well-known that~$\dim_R(\Hom_{N_n}(\pi,\psi))=1$, but we do not use this multiplicity one fact. 

For $\{n_1,\ldots, n_r\}$ positive integers, let~$\pi_i\in \Rep(G_{n_i},R)$ and put~$n=\sum_{i=1}^r n_i$.  We denote by 
\[\pi_1\times \dots \times \pi_r\in \Rep(G_n,R)\] the normalized parabolic induction of the~$\pi_i$'s. 
An irreducible~$R$-representation of~$G_n$ is called \emph{cuspidal} it does not
appear as a subrepresentation of a properly parabolically induced representation. It is called
\emph{supercuspidal} if moreover it does not appear as a subquotient of such a representation. By classical results (see 
for example \cite{BZ77}, \cite{Z} and \cite{Vselecta}), if the~$\pi_i$'s are generic, then~$\pi_1\times \dots \times \pi_r$ has a unique generic subquotient and cuspidal representations are always generic. If~$\rho$ is cuspidal, we will denote by 
\[\St(r,\rho)\] the unique generic subquotient of \[\rho\times \nu\rho \times \dots \times \nu^{r-1}\rho.\] 
By convention $\St(0,\rho)$ is the trivial representation of the trivial group $G_0$.

On the other hand, in \cite{BZ77} when~$R=\Ql$, and \cite{Vselecta} or \cite[Definitions 7.5]{MSDuke} when~$R=\Fl$, to a 
\textit{cuspidal segment} 
\[[a,b]_\rho=(\nu^a\rho,\dots,\nu^b\rho)\] with~$a\leq b$, the authors attach a certain irreducible quotient~$\La([a,b]_\rho)$ and a certain irreducible submodule~$\Ze([a,b]_\rho)$ of \[\nu^a\rho \times \dots \times \nu^{b}\rho,\] which are respectively the cosocle and socle of it when~$R=\Ql$.

Consider two cuspidal segments~$\D$ and~$\D'$, we say that~$\D$ precedes~$\D'$ if one can extract a segment longer than both from the sequence~$(\D,\D')$, in which case we set~$\D\prec \D'$. We say that two cuspidal segments are \textit{linked} if one of them precedes the other one, otherwise we say that they are \textit{unlinked}.


We denote by~${}^*$ the Aubert-Zelevinsky involution on~$\Irr(G,R)$ (see \cite{Z,Aubertinvolution} when~$R=\Ql$ and \cite{Vinvolution,MSinvolution} when~$R=\Fl$), it satisfies 
\[(\pi_1\times \pi_2)^*=\pi_1^*\times \pi_2^*\] when~$\pi_1\times \pi_2$ is irreducible. This~$^*$ involution also commutes with taking duals: in the modular case, using the notations of \cite[Theorem 8]{MSinvolution}, this property follows from the theorem itself and the fact that~${\rm \mathbf{D}}$ commutes with taking duals. It is shown in these references 
(for example \cite[Proposition 4.10]{MSinvolution}) that for a cuspidal segment~$\D$:
\[\La(\D)=\Ze(\D)^*.\]

\begin{notation}
Let now us fix some more notation:
\begin{itemize}
\item~$\Irr(G,R)=\coprod_{n\geq 0}  \Irr(G_n,R)$.
\item~$\Irr_{\gen}(G_n,R)$: the generic classes in~$\Irr(G_n,R)$.
\item~$\Irr_{\gen}(G,R)=\coprod_{n\geq 0} \Irr_{\gen}(G_n,R)$.
\item~$\Irr_{\c}(G_n,R)$: the cuspidal classes in~$G_n$.
\item~$\Irr_{\sc}(G_n,R)$: the supercuspidal classes in~$G_n$.
\item~$\Irr_{\c}(G,R)=\coprod_{n\geq 0} \Irr_{\c}(\GL(n,F))$.
\item~$\Irr_{\sc}(G,R)=\coprod_{n\geq 0} \Irr_{\sc}(G_n,R)$. 
\item We denote by a right index~$e$ when we restrict to integral representations: for example~$\Irr(G,\Ql)_e$,~$\Irr_c(G,\Ql)_e$, etc.\end{itemize}
\end{notation}

We denote by $c_\pi$ the central character of $\pi\in \Irr(G,R)$. By \cite[II.4.12]{Vbook},~$\Irr_c(G,\Ql)_e$ are the elements in~$\Irr_c(G,\Ql)$ with integral central character.

If~$\rho$ is a cuspidal representation, we denote by~$\Z_\rho$ the associated \emph{cuspidal line} 
\[\Z_\rho=\{\nu^k\rho,\ k\in\Z\}.\] If $\rho$ is supercuspidal, we say that $\pi\in \Irr(G,R)$ is \textit{supported on $\Z_\rho$} if all supercuspidal representations of its supercuspidal support (which exists by \cite{Vselecta} or \cite{MSDuke}) belong to $\Z_\rho$. The set $\Z_\rho$ is finite if and only if~$R=\Fl$, in which case we set~$o(\rho)=|\Z_\rho|$. Following \cite[Remarque 8.15]{MSDuke}, we say that $\pi\in \Irr(G,R)$ is banal if the cuspidal support of 
$\pi$ contains no cuspidal line, in particular non banal irreducible representations exist only when $R=\Fl$ and a cuspidal representation $\rho$ is non banal if and only if $o(\rho)=1$. By \cite{Vbook}, \cite{Vselecta} or \cite[Theorem 6.4]{MSDuke}, if a cuspidal~$\rho$ is banal, then it is 
supercuspidal. If~$\tau$ is cuspidal non supercuspidal (which happens only when~$R=\Fl$), then there is a non-negative integer~$k$ such that
\[\tau=\St(o(\rho)\ell^k,\rho)\] for a supercuspidal representation~$\rho$, the cuspidal line of which 
is unique ($\rho$ can be replaced by any supercuspidal representation on the same line). Therefore, in this case, we set
\[\St_k(\Z_\rho)=\St(o(\rho)\ell^k,\rho).\]

If~$\rho$ is cuspidal, then~$\La([0,r-1]_\rho)=\St(r,\rho)$ if and only if either~$r<o(\rho)$ when~$\rho$ is banal, or 
$r<\ell$ when~$\rho$ is non-banal (\cite[Remarque 8.14]{MSDuke}.

By \cite[Theorem 9.7]{Z} when~$R=\Ql$, and \cite[Theorem V.7]{Vselecta} or \cite[Theorem 9.10]{MSDuke} when 
$R=\Fl$, a representation~$\pi\in \Irr_{\gen}(G)$ can be written under the form of a commutative product 
\[\pi=\St(m_1,\rho_1)\times \dots \times \St(m_r,\rho_r)\] where, for~$i\in\{1,\ldots,r\}$,  the cuspidal segments 
$[0,m_i-1]_{\rho_i}$ are unlinked and unique up to ordering.

In particular, for~$\pi\in \Irr_{\gen}(G)$ on a supercuspidal line~$\Z_\rho$,~$\pi$ can be written 
in a unique manner~$\pi_{\tnb}\times \pi_{\b}$ as in \cite[Proposition 2.3]{KM17}. The representation 
$\pi_{\b}$ is a banal 
representation, which can be written in a unique manner as a (possibly empty) product 
\[\pi_{\b}=\prod_{i=1}^s \La([c_i,d_i]_\rho)=\prod_{i=1}^s \St(d_i-c_i+1,\nu^{c_i}\rho)\] with the segments~$[c_i,d_i]_\rho$ unlinked (in particular all 
lengths~$b_i-a_i+1$ are~$<o(\rho)$, hence the product is empty if~$\rho$ is non banal). The representation~$\pi_{\tnb}$ is a (possibly empty) product of the form 
\[\pi_{\tnb}=\prod_{k=0}^r \La([0,a_k-1]_{\St_k(\Z_\rho)})=\prod_{k=0}^r \St(a_k,\St_k(\Z_\rho))\] for 
the~$0\leq a_k < \ell$. 

By \cite[III.5.10]{Vbook}, if~$\rho\in \Irr_{\c}(G_n,\Fl)$, there is~$\tilde{\rho}\in \Irr_{\c}(G_n,\Ql)_e$ such that 
$r_\ell(\tilde{\rho})=\rho$.

\subsection{The $\V$-correspondence}\label{section Vcorresp}

In \cite[I.8.4]{Viginv}, Vignéras introduces a surjection \[J_\ell:\Irr(G_n,\Ql)_e\rightarrow \Irr(G_n,\Fl).\] 
Take $\psi$ a (necessarily integral) character of $F$ such that $r_\ell(\psi)$ is nontrivial. For $A$ a finite subset of $\{1,\dots,n-1\}$, we denote by $\psi_{A}$ the (degenerate when $A\neq \emptyset$) character of $N_n$ defined by
 \[\psi_{A}(n)=\sum_{i\notin A} n_{i,i+1}.\]
Then by \cite[Theorem 8.2]{Z}, for $\pi \in \Irr(G_n,\Ql)_e$, there is a unique $A$ such that $\pi$ has a Whittaker model (which is unique) with respect to $\psi_A$ (we will say of type $A$). By \cite[Proposition 9.19]{MSDuke}, the reduction 
modulo $\ell$ of $\pi$ has a unique irreducible summand $\pi'$ which has a Whittaker model with respect to $r_\ell(\psi)_A$. The map $J_\ell$ is then defined by $J_\ell(\pi)=\pi')$. Let's compute some examples.

\begin{ex}\label{Jexample}
\begin{itemize}
\item If $\D=[a,b]_{\tau}$ is a cuspidal segment such that $\rho:=r_\ell(\tau)$ is cuspidal, and we set 
$r_\ell(\D)=[a,b]_{\rho}$, then \[J_\ell(\Ze(\D))=\Ze(r_\ell(\D)).\]
\item Take $\tau\in \Irr_{\c}(G,\Q_l)_e$ such that 
$\rho_0:=r_\ell(\tau)$ is a cuspidal non supercuspidal representation of $G$. Then 
$\rho_0=\St_r(\Z_\rho)$. We set 
\[\rho_i:= \St_{r+i}(\Z_\rho).\] 
For 
$k\in \N-\{0\}$, write the $\ell$-adic expansion of $k$:
\[k=a_0+a_1\ell+\dots+a_d\ell^d,\] then:
\[J_\ell(\St(k,\tau))=\St(a_0,\rho_0)\times \St(a_1, \rho_1) \times \dots \times \St(a_d, \rho_d).\] 

\item Take $\tau\in \Irr_{\c}(G,\Ql)_e$ such that 
$\rho:=r_\ell(\tau)$ is supercuspidal. Take $k\geq 1$ and write the euclidean division of $k$ by $o(\rho)$:
\[k=u o(\rho)+r.\]
Again set \[\rho_i:= \St_i(\Z_\rho)\] and write the $\ell$-adic expansion of $u$:
\[u=a_0+a_1\ell+\dots+a_d\ell^d.\] Then:
\[J_\ell(\St(k,\tau))=\St(\rho,r)\times \St(\rho_0,a_0)\times \St(\rho_1, a_1) \times \dots \times \St(\rho_d,a_d).\]
\end{itemize}
\end{ex}

An element~$\Phi$ of~$\Rep_{\ss}(\WD,\Ql)_e=[\Nilp_{\ss}(\WD,\Ql)_e]$ supported on an irreducible line 
can be written in a unique manner under the form
\[\bigoplus_{i\geq 1}\bigoplus_{k\in \Z} a_{i,k} [0,i-1]\otimes \nu^k\Theta\] for~$\Theta\in \Irr(\W_F,\Ql)_e$ with all~$a_{i,k}$ except possibly a finite number being zero. 

By definition, we set \[r_\ell(\Phi)= \bigoplus_{i\geq 1}\bigoplus_{k\in \Z} a_{i,k} [0,i-1]\otimes \nu^k r_\ell(\Theta),\] 
where we recall that~$r_\ell(\Theta)$ is either irreducible, or of the form \[r_\ell(\Theta)=\ell^a({\textstyle\bigoplus_{k=0}^{o(\Psi)-1} }\nu^k \Psi)\] for~$a\geq 0$ and~$\Psi \in \Irr(\W_F,\Fl)$ thanks to Proposition \ref{reduction of irreps simpler}.

We now recall one of the main results of \cite{Viginv}, which is the~$\ell$-modular local Langlands correspondence. 
We denote by~$\LLC$ the~$\ell$-adic Langlands correspondence from~$\Nilp_{\ss}(\WD,\Ql)$ to~$\Irr(G,\Ql)$.

\begin{thm}\cite[Theorem 1.6 and 1.8.5]{Viginv}\label{Vcorrespondence}
There is a bijection \[\V:\Irr(G,\Fl)\simeq \Nilp_{\ss}(\WD,\Fl),\] characterized by the property 
\[\V(\J_\ell(\LLC(\Phi)^*)^*)=r_\ell(\Phi)\] for any \[\Phi\in \Nilp_{\ss}(\WD,\Ql)_e.\] 
It induces a bijection between~$\Irr_{\sc}(G,\Fl)$ and~$\Irr(\W_F,\Fl)$.
\end{thm}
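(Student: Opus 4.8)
The plan is to obtain $\V$ by transporting the $\ell$-adic correspondence $\LLC\colon\Nilp_{\ss}(\WD,\Ql)=\Rep_{\ss}(\WD,\Ql)\xrightarrow{\ \sim\ }\Irr(G,\Ql)$ — which carries integral parameters to integral representations — across the reduction maps $r_\ell$ on the Galois side and $\J_\ell$ on the automorphic side. Since the naive square does not commute, I would correct it with the Aubert--Zelevinsky involution $^*$, which is a bijection on both $\Irr(G,\Ql)_e$ and $\Irr(G,\Fl)$ and interchanges $\La$ and $\Ze$ on cuspidal segments, and then simply \emph{define} $\V$ by
\[\V\bigl(\J_\ell(\LLC(\Phi)^*)^*\bigr)=r_\ell(\Phi),\qquad\Phi\in\Nilp_{\ss}(\WD,\Ql)_e.\]
Four things then need checking: that every $\pi\in\Irr(G,\Fl)$ is of the form $\J_\ell(\LLC(\Phi)^*)^*$ — immediate, since $^*$ and $\LLC$ are bijections and $\J_\ell$ is surjective by \cite{Viginv}; that $r_\ell(\Phi)$ depends only on $\J_\ell(\LLC(\Phi)^*)^*$; that the resulting map is injective; and that its image is all of $\Nilp_{\ss}(\WD,\Fl)$. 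The three compatibilities (character twists, smooth duals, central character versus determinant) would then follow formally from the corresponding properties of $\LLC$, $r_\ell$, $\J_\ell$ and $^*$, and uniqueness of $\V$ is clear since the defining identity pins down its value on every $\pi$.

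Surjectivity is the easy part: by the classification of the previous section every element of $\Nilp_{\ss}(\WD,\Fl)$ is a direct sum of blocks $[0,r-1]\otimes\Psi$ with $\Psi\in\Irr(\W_F,\Fl)$, and choosing for each $\Psi$ an integral irreducible lift $\tilde\Psi$ with $r_\ell(\tilde\Psi)=\Psi$ (Theorem \ref{irreps of the Weil group} and \cite[2.6]{Vpartialcorrespondence}) exhibits it as $r_\ell$ of the integral parameter $\bigoplus[0,r-1]\otimes\tilde\Psi$. For well-definedness and injectivity I would first note that both $\Phi\mapsto\J_\ell(\LLC(\Phi)^*)^*$ and $\Phi\mapsto r_\ell(\Phi)$ are compatible with the decomposition of $G$-representations, resp. parameters, according to cuspidal, resp. irreducible, lines — a consequence of the analogous compatibilities of $\LLC$, $\J_\ell$ and $^*$ with supercuspidal support — so that one reduces to parameters supported on a single irreducible line. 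There the claim becomes a purely combinatorial matching: one must reconcile the explicit reduction of an integral irreducible representation of $\W_F$, governed by the numerology $m=o(\nu_E^d)\ell^a$ of Propositions \ref{reduction of irreps}--\ref{reduction of irreps simpler}, with the $\ell$-adic expansions governing $\J_\ell$ on Steinberg representations (Example \ref{Jexample}), all conjugated by $^*$. I expect this matching to be the main obstacle; it is carried out in \cite{Viginv} and I would follow that argument.

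Finally, granting $\V$, I would deduce the statement about supercuspidals from three ingredients: the existence of an integral cuspidal lift $\tilde\rho$ of any $\rho\in\Irr_{\sc}(G_n,\Fl)$ with $r_\ell(\tilde\rho)=\rho$ (\cite[III.5.10]{Vbook}); the fact that $\LLC$ sends cuspidal $\ell$-adic representations to parameters of the form $(\Theta,0)$ with $\Theta\in\Irr(\W_F,\Ql)_e$, both sides being fixed by the Aubert involution; and the explicit descriptions of $\J_\ell$ (Example \ref{Jexample}) together with Proposition \ref{reduction of irreps simpler}. Concretely: writing $(\Theta,0)=\LLC^{-1}(\tilde\rho)$, since $r_\ell(\tilde\rho)=\rho$ is irreducible and generic one gets $\J_\ell(\tilde\rho)=\rho=\rho^*$ and $\LLC(\Theta,0)^*=\tilde\rho$, so the characterizing identity forces $\V(\rho)=(r_\ell(\Theta),0)$; and one checks via Example \ref{Jexample} and Proposition \ref{reduction of irreps simpler} that $r_\ell(\Theta)$ is irreducible — were it of the form $\ell^a\bigoplus_{k=0}^{o(\Psi)-1}\nu^k\Psi$ the corresponding $\ell$-modular representation would fail to be supercuspidal. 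This gives $\V(\Irr_{\sc}(G,\Fl))\subset\{(\Psi,0):\Psi\in\Irr(\W_F,\Fl)\}$, identified with $\Irr(\W_F,\Fl)$ by Lemma \ref{irred nilp}. For the reverse inclusion I would run the argument backwards: if $\V(\pi)=(\Psi,0)$ with $\Psi$ irreducible, take $\Phi=(\tilde\Psi,0)$ for an integral irreducible lift $\tilde\Psi$ of $\Psi$; then $\pi=\J_\ell(\LLC(\Phi))^*$ with $\LLC(\Phi)$ cuspidal integral, $\pi$ is cuspidal (the Jacquet modules of $\LLC(\Phi)$ vanish, hence so do those of its reduction, and $^*$ preserves cuspidality), and $\pi$ must be supercuspidal — a cuspidal non-supercuspidal representation is some $\St(o(\rho')\ell^k,\rho')$ with $o(\rho')\ell^k>1$, and one checks from the same $\J_\ell$- and $\LLC$-formulas that $\V$ carries it to a parameter with nonzero Deligne operator, contradicting $\V(\pi)=(\Psi,0)$. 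Since $\V$ is a bijection, its restriction $\Irr_{\sc}(G,\Fl)\xrightarrow{\ \sim\ }\Irr(\W_F,\Fl)$ is one as well.
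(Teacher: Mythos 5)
This theorem is not proved in the paper at all: it is quoted verbatim from \cite[Theorem 1.6 and I.8.5]{Viginv}, so there is no ``paper's own proof'' to compare against. Your sketch is a reasonable high-level outline of how Vign\'eras's argument goes --- define $\V$ by the transport formula, reduce well-definedness and injectivity to parameters on a single irreducible/cuspidal line by the compatibility of $\LLC$, $\J_\ell$, $^*$ and $r_\ell$ with supercuspidal support, and then match the combinatorics of $r_\ell$ on the Galois side (Propositions \ref{reduction of irreps}--\ref{reduction of irreps simpler}) against those of $\J_\ell$ on the automorphic side (Example \ref{Jexample}); and you rightly flag that this combinatorial matching is the genuinely hard step and defer it to \cite{Viginv}. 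Your deduction of the supercuspidal bijection from the characterizing identity is also consistent with what one extracts from Example \ref{Vexample}. Since the present paper simply takes the theorem as input, there is nothing to criticize; just be aware that your proposal is an expository sketch of Vign\'eras's proof, not an independent one, and the load-bearing step remains the cited result.
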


The following immediate properties of~$\V$ are clear, though not explicitly stated in \cite{Viginv}:

\begin{lemma}\label{commutation with basic ops}
The bijection~$\V$ commutes with character twists, takes the central character to the determinant, and 
commutes with taking duals. Moreover if \[\pi=\prod_{i=1}^r\pi(\Z_{\rho_i})\] with 
$\pi(\Z_{\rho_i})$ supported on the supercuspidal line~$\Z_{\rho_i}$ and~$\Z_{\rho_i}\neq \Z_{\rho_j}$ for~$i\neq j$, then 
\[\V(\pi)=\bigoplus_{i=1}^r \V(\pi(\Z_{\rho_i})).\]
\end{lemma}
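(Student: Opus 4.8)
The plan is to verify each of the four assertions by reducing them, via the characterizing property $\V(\J_\ell(\LLC(\Phi)^*)^*)=r_\ell(\Phi)$, to the corresponding statements for the $\ell$-adic correspondence $\LLC$, for the reduction maps $r_\ell$ on both sides, for the Aubert--Zelevinsky involution $^*$, and for Vign\'eras' map $\J_\ell$. Since $\V$ is a bijection, to prove that $\V$ has a given compatibility it suffices to check that both sides of the claimed identity, after applying $\J_\ell(\LLC(-)^*)^*$ (resp.\ composing with the appropriate operation on $\ell$-adic parameters), agree with $r_\ell$ of the same $\ell$-adic object.

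\textbf{Character twists.} First I would recall that $\LLC$ intertwines twisting an irreducible representation of $G_n$ by a character $\chi$ with twisting the corresponding parameter by the associated Weil character (local class field theory), that $r_\ell$ on both sides is compatible with such twists since reduction modulo $\ell$ commutes with tensoring by an integral character, that $^*$ commutes with character twists (it is defined via parabolic induction, which commutes with twisting by a character of $G_n$), and that $\J_\ell$ commutes with character twists (the Whittaker-type of a twist is unchanged, and $r_\ell$ of $\chi\pi$ is $\bar\chi\, r_\ell(\pi)$). Chaining these through the characterizing equality gives $\V(\chi\pi)=\chi\V(\pi)$ for integral lifts, and every $\Fl$-representation admits such a lift, so the identity holds in general.

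\textbf{Central character to determinant, and duals.} For the central character one uses that $c_{\LLC(\Phi)}=\det(\Phi)$ in the $\ell$-adic case, that reduction modulo $\ell$ is compatible with taking central characters and determinants, that $^*$ preserves the central character (again visible from parabolic induction), and that $\J_\ell$ preserves the central character; assembling these yields $c_{\V(\pi)}=\det(\V(\pi))$. For duals, the key inputs are: $\LLC$ commutes with duals (with the Deligne-representation dual of Definition \ref{dualdef}, which on nilpotent parameters is the usual one), $r_\ell$ commutes with duals on both sides, $^*$ commutes with duals (cited in the excerpt, from \cite[Theorem 8]{MSinvolution}), and $\J_\ell$ commutes with duals (the dual of a representation of Whittaker-type $A$ has Whittaker-type the ``reversed'' set $A$, and this matches on $\ell$-adic and modular sides). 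Feeding these into $\V(\J_\ell(\LLC(\Phi)^*)^*)=r_\ell(\Phi)$ and replacing $\Phi$ by $\Phi^\vee$ gives $\V(\pi^\vee)=\V(\pi)^\vee$.

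\textbf{Compatibility with the supercuspidal-line decomposition.} Here I would argue as follows: write $\pi=\prod_{i=1}^r \pi(\Z_{\rho_i})$ with the $\Z_{\rho_i}$ pairwise distinct supercuspidal lines. Each factor, together with the whole product, admits $\ell$-adic lifts whose $\ell$-adic cuspidal supports lie on distinct lines, so on the $\ell$-adic side $\LLC$ sends such a product to a direct sum of parameters supported on distinct irreducible lines (this is the analogous, and easier, property of $\LLC$, which respects the decomposition of generic representations into unlinked $\St(m,\rho)$-blocks and sends $\St(m,\rho)$ to $[0,m-1]\otimes\phi_\rho$). The involution $^*$ and the map $\J_\ell$ both respect products of representations supported on distinct supercuspidal lines — for $^*$ because $(\pi_1\times\pi_2)^*=\pi_1^*\times\pi_2^*$ when the product is irreducible, which it is here, and for $\J_\ell$ because $\J_\ell$ is multiplicative on such products (Example \ref{Jexample}). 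Finally $r_\ell$ on the Galois side is visibly additive over distinct irreducible lines. Combining these via the characterizing property of $\V$ gives $\V(\pi)=\bigoplus_{i=1}^r \V(\pi(\Z_{\rho_i}))$.

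\textbf{Main obstacle.} The routine parts are the compatibilities of $\LLC$ and $r_\ell$ with twists, determinants and duals, which are standard. The genuinely delicate point is tracking the behaviour of $\J_\ell$ and of the involution $^*$ together — in particular making sure that the ``Whittaker-type'' bookkeeping in the definition of $\J_\ell$ interacts correctly with $^*$ and with duals, and that the multiplicativity of $\J_\ell$ really does hold across distinct supercuspidal lines (not just within a single line, where Example \ref{Jexample} is explicit). This is where one must cite \cite{MSDuke} and \cite{MSinvolution} carefully rather than reprove anything; the statement asserts these facts are ``clear, though not explicitly stated in \cite{Viginv}'', so the proof is essentially an assembly of known compatibilities rather than a new argument.
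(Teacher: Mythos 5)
Your proof follows essentially the same approach as the paper: both arguments track each of the four compatibilities through the characterizing identity $\V(\J_\ell(\LLC(\Phi)^*)^*)=r_\ell(\Phi)$ and reduce them to the corresponding known compatibilities of $\LLC$, $r_\ell$, the two Aubert--Zelevinsky involutions, and $\J_\ell$. Your write-up supplies a bit more intermediate justification (e.g.\ spelling out why $\J_\ell$ should commute with duals via Whittaker-type bookkeeping, and why $^*$ respects products on distinct supercuspidal lines via irreducibility), whereas the paper simply asserts these facts; one small citation slip is that Example \ref{Jexample} records explicit computations of $\J_\ell$ on segments, not the multiplicativity of $\J_\ell$ across disjoint cuspidal supports, which is invoked but not proved in the example --- but the underlying strategy and inputs are identical.
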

\begin{proof}
Both modular and~$\ell$-adic Aubert-Zelevinsky involutions,~$J_{\ell}$ and~$\LLC$ commute with character twists, and 
\[r_\ell:\Nilp_{\ss}(\WD,\Ql)_e\rightarrow \Nilp_{\ss}(\WD,\Fl),\] as well, hence the first statement.  
For~$\pi\in \Irr(G,\Ql)_e$, the central character~$c_{J_\ell(\pi)}$ of~$J_\ell(\pi)$ is equal to~$r_\ell(c_\pi)$, both Zelevinsky involutions do 
not touch the central character, and~$\LLC$ takes determinant to central character. The commutation with taking duals also from the 
fact that~$r_\ell$, both Aubert-Zelevinsky involutions,~$\LLC$ and~$J_\ell$ share this property. The last property is a consequence of the similar property for 
$\LLC$, the fact that 
both~$^*$-involutions commute with irreducible parabolic induction, and the fact 
that~$J_\ell(\pi_1\times \pi_2)=J_{\ell}(\pi_1)\times J_\ell(\pi_2)$ when~$\pi_1$ and~$\pi_2$ have disjoint cuspidal supports. 
\end{proof}

We end this section with some examples of~$\V$-parameters.

\begin{ex}\label{Vexample}
\begin{itemize}
\item If~$\rho=\St_k(\rho_0)$ is cuspidal non supercuspidal, with 
$\rho_0\in \Irr_{\sc}(G)$ and set \[\Psi_0=\V(\rho_0)\in \Irr(\W_F,\Fl)\] so that 
$o(\rho_0)=o(\Psi_0)$. Then if~$\tilde{\rho}_0$ is a (necessary cuspidal) 
lift of~$\rho_0$, and~$\tilde{\Psi}_0$ is a (necessary irreducible) lift of 
$\Psi_0$, by Theorem \ref{Vcorrespondence}, we have~$r_\ell(\tilde{\Psi}_0)=\Psi_0$. Now we also have 
\[\rho=\J_\ell(\St(o(\rho_0)\ell,\tilde{\rho_0})),\] hence 
\[\rho=\rho^*=\J_\ell(\St(o(\rho_0)\ell,\tilde{\rho_0}))^*=\J_\ell(\Ze([0,o(\rho_0)\ell^k-1]_{\tilde{\rho}_0})^*)^*.\]
On the other hand
 \[r_\ell(\LLC(\Ze([0,o(\rho_0)\ell^k-1]_{\tilde{\rho}_0}))=
 r_\ell({\textstyle\bigoplus_{i=0}^{o(\Psi_0)\ell^k-1}} \nu^i\tilde{\Psi}_0)=\ell^k({\textstyle\bigoplus_{i=0}^{o(\Psi_0)-1}}\nu^i \Psi_0),\] 
 hence
 \[\V(\rho)=\ell^k({\textstyle\bigoplus_{i=0}^{o(\Psi_0)-1}}\nu^i \Psi_0).\]
\item If~$\rho$ is supercuspidal, and~$\Psi=\V(\rho)\in \Irr(\W_F,\Fl)$, then \[\V(\La([a,b]_{\rho}))=[a,b]\otimes \Psi.\] 
Indeed start with an~$\ell$-adic lift~$\tilde{\rho}$ of~$\rho$ with Langlands parameter~$\tilde{\Psi}$ so
that~$\Psi=r_\ell(\tilde{\Psi})$. Then 
\[J_\ell(\La([a,b]_{\tilde{\rho}})^*)^*= J_\ell(\Ze([a,b]_{\tilde{\rho}}))^*=\Ze([a,b]_{\rho})^*=\La([a,b]_{\rho})\] 
but on the other hand 
\[r_\ell(\La([a,b]_{\tilde{\rho}})=r_\ell([a,b]\otimes \tilde{\Psi})= [a,b]\otimes \Psi.\]
Similarly if~$\rho=St_k(\rho_0)$ is cuspidal non supercuspidal with~$\V(\rho_0)=\Psi_0$, we find 
\[\V(\La([a,b]_{\rho}))=\ell^k [a,b]\otimes ({\textstyle\bigoplus_{i=0}^{o(\Psi_0)-1}}\nu^i \Psi_0).\] 
\item Take~$\pi\in \Irr_{\gen}(G,\Fl)$, supported on the supercuspidal line~$\Z_\rho$, and that~$\V(\rho)=\Psi$. Then according to 
\cite[Proposition 2.3]{KM17}, it can be written~$\pi_{\b}\times \pi_{\tnb}$ where~$\pi_b$ is banal, and 
no segment occurring in~$\pi_{\tnb}$ is banal ($\pi_{\tnb}$ is \emph{totally non-banal}). Write 
\[\pi_{\b}=\prod_{i\geq 1}\prod_{k=0}^{o(\Psi)-1} \La([0,i-1]_{\nu^k\rho})^{c_{i,k}},\]
where the occurring segments are unlinked, hence 
in particular for each fixed~$i$, there is a~$k$ such that~$c_{i,k}=0$.
Write \[\pi_{\tnb}=\prod_{k\geq 0} \La([0,a_k-1]_{\St_k(\rho)})\] with~$0\leq a_k < \ell$. The using a generic 
\textit{standard lift} of~$\pi$ as in \cite[Definition 2.24]{KM17}, one checks that 
\[\V(\pi)=\V(\pi_{\b})\oplus \V(\pi_{\tnb}),\] that 
\[\V(\pi_{\b})=  \bigoplus_{i\geq 1}\bigoplus_{k= 0}^{o(\Psi)-1} c_{i,k} [0,i-1]\otimes \nu^k\Psi \] and that 
\[\V(\pi_{\tnb})=\bigoplus_{j\geq 0} \ell^j [0,a_j-1]\otimes \bigoplus_{k=0}^{o(\Psi)-1} \nu^k \Psi.\] 

\end{itemize}
\end{ex}

\subsection{The $\C$-correspondence}\label{section C corresp}

A $\V$-parameter $\Phi$, by definition in $\Nilp_{\ss}(\W_D,\Fl)$, supported on an irreducible line 
$\Z_{\Psi}$ can be uniquely written in its \textit{standard form} as 
\[\bigoplus_{i\geq 0}\bigoplus_{k=0}^{o(\Psi)-1} a_{i,k} [0,i-1]\otimes \nu^k\Psi.\] 
We say that it is \textit{acyclic} if for each fixed $i$, there is $0\leq k \leq o(\Psi)-1$ such that $a_{i,k}=0$. We say that it is \textit{cyclic} if for each fixed $i$, the coefficient $a_{i,k}$ is independent of $k$.

Take a general $\V$-parameter as above, and set: \[b_i=\mathrm{min}_{k} a_{i,k},\] and \[c_{i,k}=a_{i,k}-b_i.\] It can then be rewritten as 
\[\Phi=\Phi_{\acyc}\oplus \Phi_{\cyc},\]
with \[\Phi_{\acyc}=\bigoplus_{i\geq 1}\bigoplus_{k= 0}^{o(\Psi)-1} c_{i,k} [0,i-1]\otimes \nu^k\Psi\] and  
\[\Phi_{\cyc}=\bigoplus_{j\geq 1} b_j [0,j-1]\otimes \bigoplus_{k=0}^{o(\Psi)-1}\nu^k\Psi.\] 
Notice that for each $i$, we have $c_{i,k}=0$ for one $k$ so it makes sense to call $\Phi_{\acyc}$ the \emph{acyclic part} of $\Phi$, and 
we call $\Phi_{\cyc}$ its \emph{cyclic part}.
Conversely, if a $\V$-parameter $\Phi$ is written as the sum of an acyclic and a cyclic parameter: 
\[\left(\bigoplus_{i\geq 1}\bigoplus_{k= 0}^{o(\Psi)-1} c_{i,k} [0,i-1]\otimes \nu^k\Psi\right) 
\oplus \left(\bigoplus_{j\geq 1} b_j [0,j-1]\otimes \bigoplus_{k=0}^{o(\Psi)-1}\nu^k\Psi\right),\] then its standard form is equal to 
\[\bigoplus_{i\geq 1}\bigoplus_{k=0}^{o(\Psi_0-1)} a_{i,k} [0,i-1]\otimes \nu^k\Psi\] with  
$a_{i,k}=c_{i,k}+b_i$ thus the decomposition of $\Phi$ as the direct sum of a cyclic and acyclic parameter is unique, 
and it is 
$\Phi=\Phi_{\acyc}\oplus \Phi_{\cyc}$.

We now define an injection $\CV$ of $\Nilp_{\ss}(\W_D,\Fl)$ into $[\Rep_{\ss}(\W_D,\Fl)]$, which is not the natural inclusion.

\begin{definition}\label{CVdefinition}
Take $\Phi=\Phi_{\acyc}\oplus \Phi_{\cyc} \in \Nilp_{\ss}(\W_D,\Fl)$ supported on an irreducible line $\Z_\Psi$, 
and write \[\Phi_{\cyc}=\bigoplus_{j\geq 1} b_j [0,j-1]\otimes \bigoplus_{k=0}^{o(\Psi)-1}\nu^k\Psi.\] 
 We set \[\CV(\Phi_{\cyc})=\bigoplus_{j\geq 1} b_j [0,j-1]\otimes C(\Z_\Psi)\in [\Rep_{\ss}(\W_D,\Fl)],\] and then 
 \[\CV(\Phi)=\Phi_{\acyc}\oplus \CV(\Phi_{\cyc}) \in [\Rep_{\ss}(\W_D,\Fl)].\]
 Finally, if $\Phi=\bigoplus_j \Phi(\Z_{\Psi_j})\in \Nilp_{\ss}(\W_D,\Fl)$ where $\Phi(\Z_{\Psi_j})$ is supported on the irreducible line $\Z_{\Psi_j}$, 
 and $\Z_{\Psi_k}\neq \Z_{\Psi_l}$ for $k\neq l$, we set 
 \[\CV(\Phi)=\bigoplus_j \CV(\Phi(\Z_{\Psi_j})).\]
\end{definition}

We have the following immediate lemma.

\begin{lemma}
The map $\CV:\Nilp_{\ss}(\W_D,\Fl)\rightarrow [\Rep_{\ss}(\W_D,\Fl)]$ is injective.
\end{lemma}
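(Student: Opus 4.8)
The plan is to show that the equivalence class $\CV(\Phi)$ determines $\Phi$, by reading off the multiplicities of the indecomposable constituents of $\CV(\Phi)$ and invoking the classification of indecomposable $\W_F$-semisimple Deligne representations together with the Krull--Schmidt theorem (Remark \ref{remark Krull}).

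First I would reduce to the single-line case. If $\Phi=\bigoplus_j \Phi(\Z_{\Psi_j})\in\Nilp_{\ss}(\WD,\Fl)$ with the $\Z_{\Psi_j}$ pairwise distinct, then by Definition \ref{CVdefinition} one has $\CV(\Phi)=\bigoplus_j \CV(\Phi(\Z_{\Psi_j}))$, and every indecomposable constituent of $\CV(\Phi(\Z_{\Psi_j}))$ is supported on $\Z_{\Psi_j}$ (by Proposition \ref{Sp are indec}, Proposition \ref{construction indec} and Lemma \ref{irred nilp}). Since the decomposition of an element of $[\Rep_{\ss}(\WD,\Fl)]$ into indecomposables is unique and grouping indecomposable constituents by their supporting irreducible line is intrinsic, $\CV(\Phi)$ determines each $\CV(\Phi(\Z_{\Psi_j}))$. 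Hence it is enough to prove that $\CV$ is injective on $\V$-parameters supported on a fixed irreducible line $\Z_\Psi$.

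So fix $\Z_\Psi$, and let $\Phi=\Phi_{\acyc}\oplus\Phi_{\cyc}$ and $\Phi'=\Phi'_{\acyc}\oplus\Phi'_{\cyc}$ be two such $\V$-parameters with $\CV(\Phi)=\CV(\Phi')$. By Definition \ref{CVdefinition}, $\CV(\Phi)=\Phi_{\acyc}\oplus\CV(\Phi_{\cyc})$, where $\CV(\Phi_{\cyc})=\bigoplus_{j\geq 1} b_j\,[0,j-1]\otimes C(\Z_\Psi)$ with the $b_j$ the coefficients of $\Phi_{\cyc}$. By Proposition \ref{construction indec} each summand $[0,j-1]\otimes C(\Z_\Psi)$ has bijective Deligne operator, whereas by Theorem \ref{V_0} every indecomposable summand of $\Phi_{\acyc}$ (of the form $[0,i-1]\otimes\nu^k\Psi$) has nilpotent Deligne operator; moreover equivalence preserves nilpotency and bijectivity of the Deligne operator of an indecomposable, since $\lambda U$ is nilpotent (resp.\ bijective) if and only if $U$ is, for $\lambda\in\Fl^\times$ (cf.\ Lemma \ref{indec nilp or bijective}). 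Therefore the Krull--Schmidt decomposition of $\CV(\Phi)$ splits canonically into a ``nilpotent part'', equal to $\Phi_{\acyc}$, and a ``bijective part'', equal to $\CV(\Phi_{\cyc})$, and similarly for $\Phi'$. Comparing nilpotent parts and using that on $\Nilp_{\ss}(\WD,\Fl)$ equivalence coincides with isomorphism (Proposition \ref{iso vs equiv nilp}) gives $\Phi_{\acyc}=\Phi'_{\acyc}$. Comparing bijective parts and using that, by Theorem \ref{indec classif}, the pair $(r,\Z_\Psi)$ is a complete invariant of the equivalence class of $[0,r-1]\otimes C(\Z_\Psi)$, we read off $b_j=b'_j$ for all $j$, hence $\Phi_{\cyc}=\Phi'_{\cyc}$. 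Since the decomposition of a $\V$-parameter as a direct sum of an acyclic and a cyclic parameter is unique (established before Definition \ref{CVdefinition}), we conclude $\Phi=\Phi_{\acyc}\oplus\Phi_{\cyc}=\Phi'_{\acyc}\oplus\Phi'_{\cyc}=\Phi'$.

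The only delicate point — and it is light, which is why the lemma is ``immediate'' — is the Krull--Schmidt bookkeeping: one must know that nilpotent-type and bijective-type indecomposable equivalence classes are disjoint families and that within each family the relevant discrete data ($\nu^k\Psi$ and the length $r$ on the nilpotent side by Theorem \ref{V_0}; $\Z_\Psi$ and $r$ on the bijective side by Theorems \ref{simple} and \ref{indec classif}) form a complete invariant. All of this is supplied by the classification results of Section \ref{motivation} together with Proposition \ref{iso vs equiv nilp}, so no further work is needed.
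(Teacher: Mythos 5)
The paper offers no proof for this lemma, introducing it only with the phrase ``We have the following immediate lemma''; your proposal simply spells out the immediate argument, and it is correct. You reduce to a single irreducible line, use that nilpotency versus bijectivity of the Deligne operator on an indecomposable is an equivalence-class invariant (since $\lambda U$ is nilpotent resp.\ bijective iff $U$ is) to split $\CV(\Phi)$ canonically into $\Phi_{\acyc}$ and $\CV(\Phi_{\cyc})$, and then recover the multiplicities $a_{i,k}$ and $b_j$ from the classification theorems (Theorem \ref{V_0} for the nilpotent side, Theorem \ref{indec classif} for the bijective side) together with Krull--Schmidt and Proposition \ref{iso vs equiv nilp}; the uniqueness of the acyclic/cyclic decomposition of a $\V$-parameter, established just before Definition \ref{CVdefinition}, then gives $\Phi=\Phi'$. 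This is precisely the bookkeeping the authors had in mind in calling the lemma immediate, and every step you invoke is supplied by the cited results.
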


We can define thanks to $\V$ and $\CV$, and injection $\C$ of $\Irr(G,\Fl)$ into $[\Rep_{\ss}(\W_D,\Fl)]$.

\begin{definition}
For $\pi\in \Irr(G)$, we set $\C(\pi)=\CV(\V(\pi))$.
\end{definition}

We do the $\C$-version of example \ref{Vexample}.

\begin{ex}\label{Cexample}
\begin{itemize}
\item If $\pi\in \Irr_{\sc}(G)$ and $\V(\pi)=\Psi)$, then \[\C(\pi)=\Psi\] if $\pi$ is banal, 
and \[\C(\pi)=C(\Z_\Psi)=C(\{\Psi\})\] if $\pi$ is non banal.

\item If $\rho=\St_k(\rho_0)$ is cuspidal non supercuspidal with 
\[\rho_0\in \Irr_{\sc}(G)\] and $\Psi_0=\V(\rho_0)\in \Irr(\W_F,\Fl)$, then 
\[\C(\rho)=\ell^k C(\Z_{\Psi_0}).\]

\item If $\rho$ is supercuspidal, and $\Psi=\V(\rho)\in \Irr(\W_F,\Fl)$, then 
\[\C(\La([a,b]_{\rho}))=[a,b]\otimes \Psi\] if $\pi$ is banal and 
\[\C(\La([a,b]_{\rho}))=[a,b]\otimes C(\Z_\Psi)\] if $\pi$ is non banal.

\item Take $\pi=\pi_{\b}\times \pi_{\tnb}\in \Irr_{\gen}(G,\Fl)$, supported on the suppercuspidal line $\Z_\rho$ with
$\V(\rho)=\Psi$. Write 
\[\pi_{\b}=\prod_{i\geq 1}\prod_{k=0}^{o(\Psi)-1} \La([0,i-1]_{\nu^k\rho})^{c_{i,k}},\] 
where the occuring segments are unlinked, hence 
in particular for each fixed $i$, there is a $k$ such that $c_{i,k}=0$.
Write \[\pi_{\tnb}=\prod_{k\geq 0} \La([0,a_k-1]_{\St_k(\rho)})\] with $0\leq a_k < \ell$. Then 
\[\V(\pi)_{\acyc}=\V(\pi_{b})\] and \[\V(\pi)_{\cyc}=\V(\pi_{\tnb}).\]
Hence \[ \C(\pi)=\C(\pi_{\b})\oplus \C(\pi_{\tnb})\] where 
\[\C(\pi_{\b})=\bigoplus_{i\geq 1}\bigoplus_{k= 0}^{o(\Psi)-1} c_{i,k} [0,i-1]\otimes \nu^k\Psi \] and  
\[\C(\pi_{\tnb})=\bigoplus_{j\geq 0} \ell^j [0,a_j-1]\otimes C(\Z_{\Psi}).\] 

\end{itemize}
\end{ex}

\subsection{Preservation of local constants}

It is an immediate verification to check that $\CV$ commutes with taking duals, direct sums, twisting by characters, and does not change the determinant. Hence the correspondence $\C$ shares with $\V$ the properties of Lemma \ref{commutation with basic ops}. 
Hence for the moment we lost nothing introducing $\C$, but we gained nothing neither. However there is 
one important property that the $\V$ correspondence does not share with the $\LLC$, which is 
the preservation of local constants. For the above sentence to make sense, one must have a definition of local factors for elements in $\Irr(G,\Fl)$. Indeed there is one: for standard local factors, they have been defined in \cite{MinguezZeta} (the so called Godement-Jacquet method), and for $L$-factors of pairs, they have been defined in \cite{KM17} for pairs of generic representations (the Rankin-Selberg convolution method of Jacquet, Piatetski-Shapiro and Shalika). It should be true that 
$L^{\RS}(X,\pi,\1)=L^{\GJ}(X,\pi)$, and similarly for $\gamma$ and $\e$ factors for $\pi\in \Irr_{\gen}(G)$, but we did not check this. In what follows we will only consider Rankin-Selberg $L$-factors defined in \cite{KM17}, and we will 
drop the $\mathrm{RS}$ exponent. 
 
We claim that $\C$ preserves local factors of pairs, whereas $\V$ does not. Let us give a basic example where we consider the $L$-factor only.

\begin{ex}\label{non preservation of local factors}
Consider the cuspidal representation $\rho=\St_0(\1)$ of $\G_n$. Then one has \[\V(\rho)=\bigoplus_{k=0}^{o(\nu)-1} \nu^k\] whereas 
 \[\C(\rho)=C(\Z_{\1}).\] According to \cite[Theorem 4.9]{KM17}, One has 
 \[L(X,\pi):=L(X,\pi,\1)=1.\] By definition of the Deligne $L$-factor, one also has \[L(X,C(\Z_{\1}))=1\] because 
 the Deligne operator associated to $C(\Z_{\1})$ is bijective. However, 
 \[L(X,{\textstyle\bigoplus_{k=0}^{o(\nu)-1} \nu^k})=\prod_{k=0}^{o(\nu)-1}\frac{1}{1-q^{-k} X}.\]
\end{ex}

We can finally prove the central result of this paper. We fix a nontrivial character $\psi$ of $F$ with values in 
$\Fl^\times$, and $\tilde{\psi}$ a lift of $\psi$.

\begin{thm}\label{preservation of local factors}
For $\pi,\ \pi'\in \Irr_{\gen}(G,\Fl)$, then:
\[\gamma(X,\C(\pi)\otimes_{\ss} \C(\pi'),\psi)=\gamma(X,\pi,\pi',\psi),\]
\[L(X,\C(\pi)\otimes_{\ss} \C(\pi'))=L(X,\pi,\pi'),\] hence 
\[\e(X,\C(\pi)\otimes_{\ss} \C(\pi'),\psi)=\e(X,\pi,\pi',\psi).\]
\end{thm}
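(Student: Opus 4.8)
The plan is to reduce to the case where both $\pi$ and $\pi'$ are supported on a single supercuspidal line, and then exploit the explicit description of $\C$ on such representations (Example \ref{Cexample}) together with the known multiplicativity properties on both sides. First I would handle the reduction: if $\pi = \prod_j \pi(\Z_{\rho_j})$ and $\pi' = \prod_k \pi'(\Z_{\sigma_k})$ are the factorizations of $\pi,\pi'$ into pieces supported on distinct supercuspidal lines (as recalled in Section 6.1), then the Rankin--Selberg $L$- and $\gamma$-factors of $\pi\times\pi'$ are the products over all pairs $(j,k)$ of the factors of $\pi(\Z_{\rho_j})\times\pi'(\Z_{\sigma_k})$ (multiplicativity of Rankin--Selberg factors of \cite{KM17}); on the Galois side, $\C(\pi)=\bigoplus_j\C(\pi(\Z_{\rho_j}))$ and $\C(\pi')=\bigoplus_k\C(\pi'(\Z_{\sigma_k}))$ by Lemma \ref{commutation with basic ops} and the definition of $\CV$, while $\otimes_{\ss}$ is bilinear and $L,\gamma,\e$ are multiplicative over direct sums (Proposition \ref{multiplicative}). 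So it suffices to prove the identities when $\pi,\pi'$ are each supported on a single supercuspidal line. Moreover, when the two lines $\Z_\rho$ and $\Z_{\rho'}$ are different, both sides reduce further: the $L$-factor is $1$ on the $\GL$ side (no linked segments across distinct cuspidal supports) and $\Ker(U)^{I_F}$ on the Galois side has no $\nu^k\Psi\otimes\nu^l\Psi'$ with $\Psi\not\simeq\Psi'^\vee$ contributing, and the $\gamma$-factors match by a banal/liftable computation; so the genuinely new content is when both are supported on the \emph{same} line $\Z_\rho$, with $\Psi=\V(\rho)$.

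On that single line, I would write $\pi = \pi_{\b}\times\pi_{\tnb}$ and $\pi'=\pi'_{\b}\times\pi'_{\tnb}$ as in \cite[Proposition 2.3]{KM17}, and use Example \ref{Cexample} to read off $\C(\pi)$ and $\C(\pi')$ as explicit direct sums of $[0,i-1]\otimes\nu^k\Psi$ (acyclic/banal part) and $\ell^j[0,a_j-1]\otimes C(\Z_\Psi)$ (cyclic/totally-non-banal part). Expanding $\C(\pi)\otimes_{\ss}\C(\pi')$ by bilinearity yields four types of cross terms. For the banal$\times$banal terms, the segments involved are genuinely \emph{banal} and hence liftable; here the strategy is to pass to $\ell$-adic lifts, invoke the fact that $\LLC$ preserves Rankin--Selberg/Artin--Deligne factors in characteristic zero, and then use Theorem \ref{banal Deligne L functions} (which itself rests on Proposition \ref{banal tensor unramified reduction}) together with Proposition \ref{HM} for the $\gamma$-factor to descend the equality modulo $\ell$. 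For the terms involving a $C(\Z_\Psi)$ factor, the key observation is that the semisimple tensor product then has \emph{bijective} Deligne operator (Remark \ref{tensor product preserves non banal}), so its Deligne $L$-factor is $1$; on the $\GL$ side, the corresponding Rankin--Selberg $L$-factor of a pair where one member is totally non-banal is also $1$ by \cite[Theorem 4.9]{KM17} (the computation in Example \ref{non preservation of local factors} is the prototype). The $\gamma$-factors in these cases are again handled by reduction-mod-$\ell$ of $\gamma$-factors of Speh-type constituents, using that $\gamma$ does not see the Deligne operator. Finally, assembling all four contributions and using Lemma \ref{multiplicativity} to match the $L$-factors segment-by-segment, and Proposition \ref{epsilon is invertible} to know everything lives in $\Fl[X^{\pm1}]^\times$ so that $\e = \gamma\cdot L/L^\vee$ follows formally from the $L$ and $\gamma$ identities, completes the proof.

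The main obstacle, I expect, is the bookkeeping in the banal$\times$banal case: one must match, on the nose, the Rankin--Selberg $L$-factor $L(X,\pi_{\b}\times\pi'_{\b})$ — which by \cite{KM17} is a product over pairs of unlinked Speh segments — with the Deligne $L$-factor $L(X,\bigoplus_{i,k}c_{i,k}[0,i-1]\otimes\nu^k\Psi \ \otimes_{\ss}\ \bigoplus_{i',k'}c'_{i',k'}[0,i'-1]\otimes\nu^{k'}\Psi')$, and the latter requires both Lemma \ref{multiplicativity} and the identification $(\Psi\otimes_{\ss}\Psi')^{I_F} = $ reduction of $(\tilde\Psi\otimes\tilde\Psi')^{I_F}$ from Proposition \ref{banal tensor unramified reduction}, which is exactly where the semisimplified tensor product $\otimes_{\ss}$ (rather than $\otimes$) must be used. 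The subtlety is that the classical $\LLC$ identity holds for lifts before semisimplification, and one needs Theorem \ref{banal Deligne L functions} to guarantee that passing to $\otimes_{\ss}$ does not change the $L$-factor — i.e., that no new $I_F$-invariants are created or destroyed under reduction mod $\ell$ in the banal range. Once this compatibility is in hand, the $\gamma$-factor identity is comparatively soft (it is insensitive to $U$, reduces to Speh constituents, and uses Proposition \ref{HM}), and the $\e$-factor identity is then purely formal.
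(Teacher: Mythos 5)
Your plan is correct and follows essentially the same route as the paper: prove the $\gamma$-identity by lifting to characteristic zero, applying $\ell$-adic $\LLC$, and descending via reduction mod $\ell$ (using Proposition \ref{HM} and \cite[Theorem 3.13]{KM17}); prove the $L$-identity by splitting $\pi=\pi_\b\times\pi_\tnb$, observing that the three cross-terms involving a totally non-banal factor have bijective Deligne operator (Remark \ref{tensor product preserves non banal}) and hence trivial $L$-factor matching \cite[Theorem 4.19]{KM17}, and reducing the banal$\times$banal piece to the supercuspidal case via \cite[Theorems 4.12, 4.18, Corollary 4.20]{KM17}, Lemma \ref{multiplicativity}, and Theorem \ref{banal Deligne L functions}. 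The one inessential deviation is that the paper proves the $\gamma$-identity in a single global step, with no reduction to supercuspidal lines and no case analysis, since the Deligne $\gamma$-factor depends only on the underlying semisimplified $\W_F$-representation; your per-line, per-term treatment of $\gamma$ works but is unnecessary, and your appeal to Proposition \ref{epsilon is invertible} for the $\e$-identity is likewise superfluous (it follows formally from the $L$ and $\gamma$ identities together with the compatibility of $\C$ with duals).
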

\begin{proof}
Let's prove the statement on~$\gamma$ factors first. Let $\pi$ and $\pi'$ belong to $\Irr_{\gen}(G)$ and set 
$\tilde{\pi}$ and $\tilde{\pi}'$ two~$\ell$-adic generic representations such that $\pi=\J_\ell(\tilde{\pi})$ and 
$\pi'=\J_\ell(\tilde{\pi'})$ (for example standard lifts as in \cite[Definition 2.24]{KM17}). Then according to 
\cite[Theorem 3.13]{KM17}, one has \[\gamma(X,\pi,\pi',\psi)=r_\ell(\gamma(X,\tilde{\pi},\tilde{\pi}',\tilde{\psi})).\]
Set $\tilde{\Phi}$ and 
$\tilde{\Phi}'$ be the semisimple representations of $\W_F$ corresponding to the supercuspidal support of 
$\tilde{\pi}$ and $\tilde{\pi}'$ via $\LLC$ (i.e. $\LLC(\tilde{\pi})=(\tilde{\Phi},\star)$ and $\LLC(\tilde{\pi}')=(\tilde{\Phi}',\star)$). The $\LLC$ and the standard properties of $\gamma$-factors tell us that 
\[\gamma(X,\tilde{\pi},\tilde{\pi}',\tilde{\psi})= \gamma(X,\tilde{\Phi}\otimes \tilde{\Phi}',\tilde{\psi}).\]
Set $r_\ell(\tilde{\Phi})=\Phi$ and $r_\ell(\tilde{\Phi'})=\Phi'$, hence $r_\ell(\tilde{\Phi}\otimes \tilde{\Phi'})=\Phi\otimes_{\ss} \Phi'$. According to Theorem \ref{Vcorrespondence}, or more simply \cite[Theorem 1.6]{Viginv}
which states that the semisimple $\LLC$ commutes with reduction modulo $\ell$, we deduce that $\Phi$ corresponds to the supercuspidal 
support of $\pi$, whereas $\Phi'$ corresponds to the supercuspidal support of $\pi'$, i.e. 
$\C(\pi)=[\Phi,\star]$ and $\C(\pi')=[\Phi',\star]$, so that 
\[\gamma(X,\C(\pi)\otimes_{\ss} \C(\pi'),\psi)=\gamma(X,\Phi\otimes_{\ss} \Phi',\psi).\] However 
\[\gamma(X,\Phi\otimes_{\ss} \Phi',\psi)=r_\ell(\gamma(X,\tilde{\Phi}\otimes \tilde{\Phi'},\tilde{\psi}))\] according to 
Proposition \ref{HM}. This ends the proof of the assertion on $\gamma$-factors. It remains to prove that on $L$-factors, the statement on $\e$ will follow.

Thanks to the discussion before \cite[Proposition 2.3]{KM17}, we write $\pi=\pi_{\b}\times \pi_{\tnb}$ and $\pi'=\pi'_{\b}\times \pi'_{\tnb}$, and \cite[Theorem 4.19]{KM17} 
tells us that \[L(X,\pi,\pi')=L(X,\pi_{\b},\pi'_{\b}).\] Set  
$[\Phi_{\b},U_{\b}]=\C(\pi_{\b})$, $[\Phi_{\tnb},U_{\tnb}]=\C(\pi_{\tnb})$,
 $[\Phi'_{\b},U'_{\b}]=\C(\pi'_{\b})$, $[\Phi'_{\tnb},U'_{\tnb}]=\C(\pi'_{\tnb})$, in particular 
 $U_{\tnb}$ and $U'_{\tnb}$ are bijective whereas $U_{\b}$ and $U'_{\b}$ are nilpotent. One has 
 \[\C(\pi)\otimes_{\ss} \C(\pi')=\] 
 \[\C(\pi_{\b})\otimes_{\ss} \C(\pi'_{\b})\oplus \C(\pi_{\b})\otimes_{\ss} \C(\pi'_{\tnb}) 
 \oplus \C(\pi_{\tnb})\otimes_{\ss} \C(\pi'_{\b})\oplus \C(\pi_{\tnb})\otimes_{\ss} \C(\pi'_{\tnb}).\]
 If one writes any of the latter three direct sums under the form $[\Phi, U]$, then $U$ is bijective (see in particular remark \ref{tensor product preserves non banal}). This implies that 
\[L (X,\C(\pi)\otimes_{\ss} \C(\pi'))=L (X,\C(\pi_{\b})\otimes_{\ss} \C(\pi'_{\b})).\]
Hence it remains to prove the equality:
\[L (X,\C(\pi_{\b})\otimes_{\ss} \C(\pi'_{\b}))=L(X,\pi_{\b},\pi'_{\b}) \]
Now appealing to \cite[Theorem 4.12 and Corollary 4.20]{KM17}, the multplicativity of Deligne $L$-factors with respect to direct sums and the multiplicativity relation of Lemma \ref{multiplicativity} show that it is enough to prove it for 
$\pi_b=\rho$ and $\pi'_{\b}=\rho'$ banal supercuspidal representations. Take $\tilde{\rho}$ (resp. $\tilde{\rho}'$) a cuspidal lift of $\rho$ (resp. $\rho'$), so that $\tilde{\Psi}:=\LLC(\tilde{\rho})$ (resp. 
$\tilde{\Psi'}:=\LLC(\tilde{\rho}')$) is an irreducible lift of $\Psi:=\C(\rho)$ (resp. $\Psi':=\LLC(\rho')$). 
Then \[L (X,\rho,\rho')=r_\ell(L(X,\tilde{\rho},\tilde{\rho}'))\] by \cite[Theorem 4.18]{KM17}, 
\[L (X,\Psi\otimes_{\ss}\Psi')=r_\ell(L(X,\tilde{\Psi}\otimes \tilde{\Psi'}))\] thanks to Theorem \ref{banal Deligne L functions}, and \[L(X,\tilde{\rho},\tilde{\rho})=L(X,\tilde{\Psi}\otimes \tilde{\Psi}')\] by the $\ell$-adic $\LLC$. This shows the following equality and ends the proof: \[L (X,\rho,\rho')=L (X,\Psi \otimes_{\ss} \Psi' ).\]
\end{proof}

\begin{rem}
As we said we leave for later the equality $L^{\GJ}=L^{\RS}$ on generic representations. We also believe 
$\C$ sends the Godement-Jacquet local factors of Minguez on $\Irr(G,\Fl)$ to the standard local factors on $[\Rep_{\ss}(\WD,\Fl)]$. This can be easily checked for $G_2$ by the calculations carried out in \cite{MinguezZeta}, we leave the general case for a further investigation.
\end{rem}

\bibliographystyle{plain}
\bibliography{Modlfactors}

\end{document}